\def\MR#1{}
\let\@fnsymbol\@arabic
\newtheorem{thm}{Theorem}[section]
\newtheorem{lem}[thm]{Lemma}
\newtheorem{prop}[thm]{Proposition}
\newtheorem{cor}[thm]{Corollary}
\newtheorem{claim}[thm]{Claim}
\theoremstyle{definition}
\newtheorem{definition}[thm]{Definition}
\newtheorem{remark}[thm]{Remark}
\newtheorem{example}[thm]{Example}
\newcommand{\FF}{\mathbb{F}}
\newcommand{\ZZ}{\mathbb{Z}}
\newcommand{\cG}{\mathcal{G}}
\newcommand{\cS}{\mathcal{S}}
\newcommand{\cT}{\mathcal{T}}
\newcommand{\cZ}{\mathcal{Z}}
\newcommand{\cB}{\mathcal{B}}
\newcommand{\cN}{\mathcal{N}}
\newcommand{\cJ}{\mathcal{J}}
\newcommand{\cD}{\mathcal{D}}
\newcommand{\cF}{\mathcal{F}}
\newcommand{\EE}{\mathbb{E}}
\newcommand{\Bi}{\mathrm{Bi}}
\newcommand{\Po}{\mathrm{Po}}
\newcommand{\dist}{\mathrm{d}}
\newcommand{\ord}{\mathrm{Ord}}
\newcommand{\Mjk}{\ensuremath{M_{j,k}}}
\newcommand{\Njk}{\ensuremath{M_{j,k}}}
\newcommand{\Mjkhat}{\ensuremath{\hat M_{j,k}}}
\newcommand{\Njkbar}{\ensuremath{M_{j,\bar k}}}
\newcommand{\Mjkzero}{\ensuremath{M_{j,k_0}}}
\newcommand{\Njkzero}{\ensuremath{M_{j,k_0}}}
\newcommand{\Mjkzerohat}{\ensuremath{\hat M_{j,k_0}}}
\newcommand{\Mjlhat}{\ensuremath{\hat M_{j,\ell}}}
\newcommand{\Njj}{\ensuremath{M_{j,j}}}
\newcommand{\Mjjhat}{\ensuremath{\hat M_{j,j}}}
\newcommand{\tim}{\ensuremath{\tau}}
\newcommand{\bq}{\ensuremath{\bar q}}
\newcommand{\critm}{\ensuremath{\mathcal{E}}}
\newcommand{\critset}{\ensuremath{\mathcal{C}}}
\newcommand{\pj}{\ensuremath{p_j}}
\newcommand{\allp}{\mathbf{p}}
\newcommand{\barallp}{\mathbf{\bar p}}
\newcommand{\varisol}{\ensuremath{X_{j,j}}}
\newcommand{\varNjk}{\ensuremath{X_{j,k}}}
\newcommand{\barvarNjk}{\ensuremath{\bar X_{j,k}}}
\newcommand{\varMjkhat}{\ensuremath{\hat X_{j,k}}}
\newcommand{\varNjkbar}{\ensuremath{X_{j,\bar k}}}
\newcommand{\varMjkzerohat}{\ensuremath{\hat X_{j,k_0}}}
\newcommand{\logp}{\lambda}
\newcommand{\sublogp}{\mu}
\newcommand{\constp}{\nu}
\newcommand{\allt}{\mathbf{t}}
\newcommand{\allu}{\mathbf{u}}
\newcommand{\formalconnected}{$R$-cohomologically $j$-connected}
\newcommand{\connected}{$j$-cohom-connected}
\newcommand{\connectedness}{$j$-cohom-connectedness}
\newcommand{\eps}{\varepsilon}
\newcommand{\largecon}{\ensuremath{h}}
\renewcommand{\Pr}{\mathbb{P}}
\renewcommand{\theenumi}{(\alph{enumi})}
\DeclareMathOperator{\im}{im}
\title{Phase transition in cohomology groups of non-uniform random simplicial complexes}
\author{Oliver Cooley\textsuperscript{$*$}, Nicola Del Giudice\textsuperscript{$*$}, Mihyun Kang\textsuperscript{$*$}, Philipp Spr{\"u}ssel\textsuperscript{$*$}}
\address{Institute of Discrete Mathematics, Graz University of Technology, Steyrergasse 30, 8010 Graz, Austria}
\email{\{cooley,delgiudice,kang,spruessel\}@math.tugraz.at}
\thanks{\textsuperscript{$*$}Supported by Austrian Science Fund (FWF): I3747, W1230.}
\keywords{Random hypergraphs, random simplicial complexes, cohomology groups, phase transition, threshold, hitting time}
\begin{document}

\begin{abstract}
  We consider a generalised model of a random simplicial complex,
  which arises from a random hypergraph. Our model is generated by
  taking the downward-closure of a non-uniform binomial random 
  hypergraph, in which for each $k$, each set of $k+1$ vertices forms an edge with
  some probability $p_k$ independently. As a special case, this contains an
  extensively studied model of a (uniform) random simplicial complex,
  introduced by Meshulam and Wallach [Random Structures \& Algorithms
  34 (2009), no.\ 3, pp.\ 408--417].

  We consider a higher-dimensional notion of connectedness on this
  new model according to the vanishing of cohomology groups over an
  arbitrary abelian group $R$. We prove that this notion of
  connectedness displays a phase transition and determine the
  threshold. We also prove a hitting time result for a natural process
  interpretation, in which simplices and their downward-closure are
  added one by one. 
  In addition, we determine the asymptotic behaviour of 
  cohomology groups inside the critical window around the time of
  the phase transition.
\end{abstract}

\maketitle

\section{Introduction}\label{sec:intro}

\subsection{Motivation}\label{sec:motivation}
One of the first and most famous results in the theory of random graphs, due
to Erd\H{o}s and R\'enyi~\cite{ErdosRenyi59}, states that the uniform random graph
$G(n,m)$ displays a phase transition threshold for the property
of being connected at about $m=\frac12n\log n$ edges (where $\log$ denotes the natural logarithm). Almost equivalently,
in modern terminology, with high probability
the binomial random graph $G(n,p)$ becomes
connected around $p=\frac{\log n}{n}$ (see~\cite{Stepanov70}).

The result was subsequently strengthened by Bollob\'as and Thomason~\cite{BollobasThomason85}
to a \emph{hitting time} result---the random graph process, in which
edges are added to an initially empty graph one by one in a uniformly random order,
is very likely to become connected at exactly the moment at which
the last isolated vertex disappears (i.e.\ acquires an edge).

More recently, there has been a focus on generalising graphs
to higher-dimensional structures. One very well-studied
higher-dimensional analogue of graphs is hypergraphs
(most often uniform hypergraphs),
in which one may consider vertex-connectedness
(see e.g.~\cite{BehrischCojaOghlanKang10b,BCOK14,BollobasRiordan12c,
BollobasRiordan16,BollobasRiordan17,BollobasRiordan18,DarlingNorris05,KaronskiLuczak96,Poole15,
SchmidtShamir85})
or high-order connectedness (also known as $j$-tuple-connectedness,
e.g.~\cite{CooleyKangKoch16,CKKgiant,CooleyKangPerson18,KahlePittel16}),
as well as the
appearance of spanning structures such as Hamilton cycles
(see e.g.~\cite{AllenBoettcherKohayakawaPerson15,AllenKochParczykPerson18,NenadovSkoric19,ParczykPerson16}).

Simplicial complexes have also seen a great deal of attention
as higher-dimensional analogues of graphs.
The study of random simplicial complexes was initiated by
Linial and Meshulam~\cite{LinialMeshulam06}, who studied
a model on vertex set $[n]$ in which each $2$-simplex is present 
with probability $p=p(n)$ independently, and all $1$-simplices
are always present.
The notion of connectedness they studied involved the
vanishing of the first homology group over $\FF_2$
(or equivalently the first cohomology group over $\FF_2$),
and they proved that this property undergoes a phase transition
at threshold $p=\frac{2\log n}{n}$. This threshold is
related to the disappearance of the last isolated $1$-simplex
(i.e.\ a $1$-simplex that does not lie in any $2$-simplex)
as was subsequently proved by Kahle and Pittel~\cite{KahlePittel16}.

Meshulam and Wallach~\cite{MeshulamWallach08} extended the result
of~\cite{LinialMeshulam06} to random simplicial $k$-complexes
with full $(k-1)$-skeleton
(for any $k\ge 2$), proving that the threshold
for the vanishing of the $(k-1)$-th (co)homology group over $\FF_2$,
or indeed over any finite abelian group $R$,
undergoes a phase transition at threshold $p=\frac{k\log n}{n}$.
In~\cite{CooleyDelGiudiceKangSpruessel20}, we proved the corresponding hitting time
result for cohomology over $\FF_2$, relating
cohomological connectedness to the disappearance of the last
isolated $(k-1)$-simplex,
as a corollary of results about a slightly different model
of random simplicial $k$-complexes generated from a
random binomial $(k+1)$-uniform hypergraph by taking the downward-closure
(so in particular, the complex does not necessarily
have a full $(k-1)$-skeleton).
A similar hitting time result in the Linial-Meshulam model and for homology groups over $\ZZ$
was proved by {\L}uczak and Peled~\cite{LuczakPeled16} in the case when $k=2$ and recently by
Newman and Paquette~\cite{NewmanPaquette18} for general $k\ge 2$.

Since the work of Linial and Meshulam, many different models of random simplicial
complexes have been introduced 
(see e.g.~\cite{CostaFarberHorak15,FountoulakisIyerMaillerSulzbach19,Kahle07,Kahle09,Kahle14,KahleSurvey14,
LinialPeled16}),
and several notions of connectedness
have been analysed 
(see e.g.\ \cite{AronshtamLinial15,AronshtamLinialLuczakMeshulam12,HoffmanKahlePaquette17,LuczakPeled16,NewmanPaquette18}),
as well as related concepts such as expansion~\cite{FountoulakisPrzykucki20}
and bootstrap percolation~\cite{FountoulakisPrzykucki19}.
In this paper, we consider
a model of random simplicial complexes generated from
\emph{non-uniform} random hypergraphs, and study
cohomology groups over an arbitrary (not necessarily
finite) abelian group $R$. We note that our model includes
both the model introduced by Linial and Meshulam, which was
extended by Meshulam and Wallach, and the model we introduced
in~\cite{CooleyDelGiudiceKangSpruessel20} as special cases, and therefore our
main result extends and unifies the results of~\cite{CooleyDelGiudiceKangSpruessel20},~\cite{LinialMeshulam06},
and~\cite{MeshulamWallach08}. We also note that our model is equivalent to the `upper model'
which was recently introduced independently by Farber, Mead, and Nowik~\cite{FMN19},
although they considered different properties and different ranges of probabilities
to the ones we focus on in this paper.

\bigskip
\subsection{Model}\label{sec:model}

Throughout 
the paper let $d\geq 2$ be a fixed integer and let $R$
be an abelian group with at least two elements. We use additive notation for the group
operation of $R$ and denote the identity element by $0_R$. For an integer
$k \ge 1$, we write $[k]\ :=\ \{1,\dotsc,k\}$ and $[k]_0 \ :=\ 
\{0,\dotsc,k\}$. If $A$ is a set with at least $k$
elements, we denote by $\binom{A}{k}$ the family of $k$-element subsets of $A$ and we call $K \in \binom{A}{k}$ a \emph{$k$-set} of $A$.

\begin{definition}
  A family $\cG$ of non-empty finite subsets of a vertex set $V$ is
  called a \emph{simplicial complex} on $V$ if it is downward-closed, i.e.\
  if every non-empty set $A$ that is contained in a set $B\in \cG$
  also lies in $\cG$, and if furthermore the singleton $\{v\}$ is in
  $\cG$ for every $v\in V$.
  
  The elements of a simplicial complex $\cG$ which have cardinality $i+1$ are
  called \emph{$i$-simplices} of $\cG$. If $\cG$ has no
  $(d+1)$-simplices, then we call it \emph{$d$-dimensional},
  or a
  \emph{$d$-complex}.\footnote{Note that we do not require $\cG$ to
    contain any $d$-simplices in order to be $d$-dimensional. This is
    in contrast to the usual terminology, but we adopt this convention
    for technical convenience.}
  If $\cG$ is a $d$-complex, then for each
  $j \in [d-1]_0$ the \emph{$j$-skeleton} of $\cG$ is the $j$-complex
  formed by \emph{all} $i$-simplices in $\cG$ with $i \in [j]_0$.
\end{definition}

We define a model of a random $d$-complex generated from a \emph{non-uniform} random hypergraph,
in which sets of vertices have different probabilities of forming an edge
depending on their size.

\begin{definition}\label{def:complexGp}
  For each $k\in[d]$, let $p_k=p_k(n)\in[0,1] \subset \mathbb{R}$ be
  given and write $\allp \ :=\  (p_1,\dotsc,p_d)$. Denote by
  $G(n,\allp)$ the (non-uniform) binomial random hypergraph on vertex set $[n]$ in
  which, for all $k\in[d]$, each element of $\binom{[n]}{k+1}$ forms an 
  edge with probability $p_k$ independently. By
  $\cG(n,\allp)$, we denote the random $d$-dimensional simplicial
  complex on $[n]$ such that
  \begin{itemize}
  \item the $0$-simplices of $\cG(n,\allp)$ are the singletons of $[n]$
    and
  \item for each $i\in[d]$, the $i$-simplices are precisely the
    $(i+1)$-sets which are contained in edges of $G(n,\allp)$.
  \end{itemize}
  In other words, $\cG(n,\allp)$ is the downward-closure of the set of
  edges of $G(n,\allp)$, together with all singletons of $[n]$ (if
  these are not already in the downward-closure).\footnote{Note that if $\binom{n}{d+1}p_d$ is small, then it is likely that
there are no $d$-simplices---it is for this reason that we
slightly abuse terminology by referring to a $d$-complex even
if there may not be any $d$-simplices.}
\end{definition}

Denote by $H^i(\cG;R)$ the $i$-th cohomology group of a simplicial
complex $\cG$ with coefficients in $R$ (see~\eqref{eq:cohomdef} in
Section~\ref{sec:cohom} for a formal definition). It is
well-known that $H^0(\cG;R)=R$ if and only if $\cG$ is
connected in the topological sense (see e.g.\ \cite[Theorem~42.1]{Munkres84}), which we call \emph{topologically
connected} in order to distinguish it from other notions of
connectedness. Observe that  
topological connectedness of $\cG$ is equivalent
to vertex-connectedness
of the underlying hypergraph.
For any integer $i \ge 1$, the  \emph{vanishing} of $H^i(\cG;R)$ can
be viewed as a `higher-order connectedness' of $\cG$.

\begin{definition} \label{def:cohomconn}
  Given a non-negative integer $j$, a simplicial complex $\cG$ is called
  \emph{\formalconnected} (\emph{\connected} for short) if
  \begin{enumerate}
  \item $H^0(\cG;R) = R\,$;
  \item $H^{i}(\cG;R)= 0$ for all $i \in [j]$.
  \end{enumerate}
\end{definition}

We note that the analogous definition of connectedness considered
by Meshulam and Wallach in~\cite{MeshulamWallach08} was only for
the case $j=d-1$, and only demanded the vanishing of the
$(d-1)$-th cohomology group---this was reasonable for their model since with the
complete $(d-1)$-dimensional skeleton, the $i$-th cohomology group
must always vanish for all $i\in[d-2]$
(or equal $R$ if $i=0$).

\bigskip
\subsection{Main results}\label{sec:main}

We will consider asymptotic properties of $\cG(n,\allp)$ as the
number of vertices $n$ tends to infinity, hence all asymptotics
in the paper are with respect to $n$. In particular, we say that
a property or an event holds \emph{with high probability},
abbreviated to \emph{whp}, if the probability tends to $1$ as
$n$ tends to infinity.

Our first main theorem will relate the \connectedness\ of $\cG(n,\allp)$
to the absence of any \emph{minimal obstructions} to this property. 
We call these obstructions \emph{copies of
$\Mjkhat$} for any $k$ with $j\le k\le d$ 
(these will be defined later,
see Definitions~\ref{def:Mjkhat} 
and~\ref{def:Mjjhat}), and   
we will see in Section~\ref{sec:minimal} that the 
presence of any of these configurations in 
$\cG(n,\allp)$ is a witness for the 
\emph{non-vanishing} of
$H^j(\cG(n,\allp);R)$ (Corollary~\ref{cor:obstruction}), 
which is `minimal' in a natural sense (Lemma~\ref{lem:minobst}).

In particular, the strongest relation between 
\connectedness\ and the absence of copies of $\Mjkhat$ will be a hitting time result,
analogous to the result of Bollob\'as and Thomason~\cite{BollobasThomason85}
for graphs, for which we will need to turn the 
random $d$-complex $\cG(n,\allp)$ into a \emph{process}.
We do this by assigning a \emph{birth time} to each $k$-simplex:
more precisely,
for each $k\in[d]$ and each $(k+1)$-set $K \in \binom{[n]}{k+1}$
independently, sample a birth time uniformly at random from $[0,1]$.
Then $\cG(n,\allp)$ is exactly the complex generated by the
$(k+1)$-sets with birth times at most $p_k$, for all $k\in[d]$, by
taking the downward-closure. If we fix a `direction' $\barallp =
(\bar p_1,\dotsc,\bar p_d)$ of non-negative real numbers (not necessarily
less than $1$) with $\bar p_d \not= 0$, set
\begin{equation*}
  \allp = \tim\barallp \ :=\  (\min\{\tim \bar p_1,1\},\dotsc,\min\{\tim \bar p_d,1\}),
\end{equation*}
and gradually increase $\tim$ from $0$ to
$$
\tim_{\max}\ :=\ 1/\bar p_d, \label{prob:timmax}
$$
then $\cG(n,\allp)$ becomes
a process in which simplices (together with their downward-closure)
arrive one by one.\footnote{Observe that by time $\tim = \tim_{\max}$, all $d$-simplices will be present
deterministically, and therefore also all simplices of dimension $k\le d$
will be present as part of their downward-closure.}
We will denote this process by
$(\cG(n,\tim \barallp))_{\tim\in [0,\tim_{\max }]}$, or sometimes just by $(\cG_\tim)$ when
the direction $\barallp$ is clear from the context.
In this way, $\tim$ may be thought of as a `time' parameter. 
Let us note that if we consider
a \emph{snapshot} of the process
$(\cG_\tim)$ at time $\tim=\tim_0$, then it has the same distribution
as $\cG(n,\tim_0\barallp)$. Therefore we will often give definitions or
state and prove results for
the random complex $\cG_\tim := \cG(n,\tim\barallp)$ for some appropriate value of $\tim$,
and subsequently apply them to the process at that time, meaning in particular that
$\cG_{\tim_0} \subseteq \cG_{\tim_1}$ if $\tim_0\le \tim_1$,
i.e.\ we have a natural coupling of the random complexes rather
than sampling them independently.
In other words, for the rest of the paper we take one sample of
random birth times uniformly from $[0,1]$ and independently for all simplices,
and whenever we refer to $\cG_\tim$, we mean the complex
generated by the simplices with \emph{scaled birth times} (scaled according to $\barallp$)
at most $\tim$ (see~\eqref{eq:scaledbirth} in 
Section~\ref{sec:prelim} for 
the formal definition of scaled birth time).

Note that the evolution of the process $(\cG_{\tim})$
is unchanged if the direction $\barallp$ is scaled by a multiplicative factor.
Therefore
we would like to scale $\barallp$ so that
we expect the last copy of $\Mjkhat$ to disappear when $\tim$ is close
to $1$. Indeed, 
our first main result (Theorem~\ref{thm:main}) in particular states that this happens for
a specific type of direction that we call 
\emph{$j$-critical} and that will be formally defined 
in Section~\ref{sec:parametrisation} (Definition~\ref{def:criticaldirection}).

\begin{thm}[Hitting time]\label{thm:main}
  For $j\in[d-1]$ and a $j$-critical direction 
  $\barallp = (\bar p_1,\bar p_2,\ldots, \bar p_d) $ with $\bar p_d \neq 0$,
  let $\tim_{\max}= 1/\bar p_d$ and consider the process
  $(\cG_\tim)=(\cG(n,\tim \barallp))_{\tim \in [0,\tim_{\max}]}$. 
  Let
\label{prob:timstar}
  \begin{equation*}
   \tim_j^*\ :=\ \sup\{ \tim \in \mathbb{R}_{\ge0} \mid \cG_\tim \text{ contains a copy of }\Mjkhat\text{ for some } k \text{ with }j \le k \le d \}.
  \end{equation*}
  Then for every function $\omega$ of $n$ which tends to infinity as
  $n\to\infty$, the following statements hold with high probability.
  \begin{enumerate}
  \item\label{main:hitting}
    $\tim_j^* = 1+o\left(\frac{\omega}{\log n}\right)$.
  \item\label{main:subcritical}
    For all $\tim \in [0,\tim_j^*)$, the random $d$-complex process $(\cG_\tim)$
    is not \formalconnected, i.e.
    \begin{equation*}
      H^0(\cG_\tim;R)\not= R \quad \mbox{ or } \quad H^i(\cG_\tim;R)\not= 0
      \mbox{ for some }i \in [j].
    \end{equation*}
  \item\label{main:supercritical}
    For all $\tim \in [\tim_j^*,\tim_{\max}]$, the random $d$-complex process
    $(\cG_\tim)$
    is \formalconnected, i.e.
    \begin{equation*}
      H^0(\cG_\tim;R) = R \quad \mbox{ and } \quad H^i(\cG_\tim;R) = 0
      \mbox{ for all }i \in [j].
    \end{equation*}
  \end{enumerate}
\end{thm}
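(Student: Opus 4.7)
The plan is to combine a structural \emph{characterisation}---that whp, $\cG_\tim$ is \connected\ if and only if it contains no copy of $\Mjkhat$ for any $k$ with $j \le k \le d$---with a sharp analysis of the hitting time $\tim_j^*$. Part~\ref{main:subcritical} is deterministic and immediate from Corollary~\ref{cor:obstruction}: for each $\tim < \tim_j^*$ a copy of $\Mjkhat$ is present in $\cG_\tim$ and already witnesses non-vanishing of $H^j(\cG_\tim;R)$. The real content therefore lies in~\ref{main:hitting} and~\ref{main:supercritical}.

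For~\ref{main:hitting} I would prove a two-sided sandwich around $\tim=1$. Setting $\tim_\pm := 1 \pm o(\omega/\log n)$, with the scaling dictated by the $j$-critical normalisation of $\barallp$, the upper bound $\tim_j^* \le \tim_+$ whp follows from a first-moment estimate: for each $k \in \{j,\dotsc,d\}$ compute $\EE[\varMjkhat]$ in $\cG_{\tim_+}$ and observe that the $j$-critical scaling is exactly what makes each of these $o(1)$; Markov and a union bound over the finitely many values of $k$ conclude. The lower bound $\tim_j^* \ge \tim_-$ whp is a second-moment method for the dominant summand, namely the index $k_0$ maximising $\EE[\varMjkzerohat]$ at $\tim_-$: one verifies $\EE[\varMjkzerohat] \to \infty$ and $\var(\varMjkzerohat) = o(\EE[\varMjkzerohat]^2)$, and Chebyshev supplies a copy of $\Mjkzerohat$.

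The substantial and technically demanding step is~\ref{main:supercritical}. My plan is to prove the snapshot statement that whp $\cG_{\tim_+}$ is simultaneously free of copies of $\Mjkhat$ (by~\ref{main:hitting}) \emph{and} is \connected, and then transfer this to the whole interval $[\tim_j^*,\tim_{\max}]$. For the snapshot cohom-connectedness I would use a Meshulam--Wallach-style cochain enumeration adapted to the non-uniform model and to arbitrary abelian $R$: parametrise a putative non-trivial $j$-cocycle $\phi$ by combinatorial invariants of its support (principally how it intersects the $k$-simplices for each $k \in \{j,\dotsc,d\}$), and via an entropy/first-moment argument show that every combinatorial type other than those giving a copy of $\Mjkhat$ contributes $o(1)$ to the expected number of non-trivial cocycles modulo coboundaries. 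Transfer to the full process interval then follows by applying the snapshot characterisation on a sufficiently dense grid of times together with the natural coupling of $(\cG_\tim)$, so that the characterisation propagates throughout $[\tim_j^*,\tim_{\max}]$. The principal obstacle is the cochain enumeration itself: in the classical Meshulam--Wallach setting the top-dimensional simplices are the only source of randomness and the proof is tailored to finite $R$, whereas here the probabilities $p_j, p_{j+1}, \dotsc, p_d$ all interact through the coboundary condition and $R$ may be infinite. Isolating precisely the cocycle classes supported on copies of $\Mjkhat$, showing vanishing expected contribution for every other combinatorial type at the sharp time $\tim_+$, and making the whole argument uniform in $R$ (for instance by a reduction to $R=\ZZ$, or by handling $2$-torsion separately) is where the delicate work will concentrate.
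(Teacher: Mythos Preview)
Your treatment of part~\ref{main:subcritical} contains a genuine gap. You claim it is ``deterministic and immediate'' because ``for each $\tim < \tim_j^*$ a copy of $\Mjkhat$ is present in $\cG_\tim$''. But $\tim_j^*$ is defined as a \emph{supremum}, and the presence of copies of $\Mjkhat$ is \emph{not} a monotone property (Example~\ref{ex:nonmono}): a copy may be destroyed by the arrival of a simplex and a new one created later. In particular, for $\tim$ close to $0$ there are no $k$-simplices at all and hence no copies of $\Mjkhat$, yet the complex is still not \connected---because $H^0 \neq R$, not because $H^j \neq 0$. The paper's actual argument covers $[0,\tim_j^*)$ by overlapping intervals $I_0(\eps),I_1(\eps),\ldots,I_j(\eps)$: on $I_0$ topological connectedness fails (Lemma~\ref{lem:topconn}); on each $I_i$ with $i\ge 1$ one exhibits a \emph{single} copy of $\hat M_{i,k}$ that persists throughout the interval (Lemma~\ref{lem:jinterval}, which itself splits $I_j$ into three sub-ranges, together with the rescaling of Lemma~\ref{lem:criticalscaling} to pass from $j$ to lower $i$). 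Proving that one obstruction survives an entire interval, rather than merely exists at a snapshot, is the real content of~\ref{main:subcritical}.

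Your plan for~\ref{main:supercritical} also underestimates non-monotonicity. A ``dense grid plus coupling'' argument does not propagate \connectedness\ between grid points, since adding a simplex can \emph{create} a new bad cocycle. The paper's route is: first show that near $\tim=1$ the only bad cocycles arise from copies of $\Njk$ (traversability plus Lemmas~\ref{lem:smallsupp} and~\ref{lem:largesupp}, the latter via Meshulam--Wallach); then prove that whp no new local $j$-obstacles are born after time $\tim' = 1 - \frac{\log\log n}{10d\log n}$ (Lemma~\ref{lem:obstacles}), so once the last $\Mjkhat$ vanishes nothing can resurrect a non-trivial cocycle (Lemma~\ref{lem:nonewobstructions}). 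Your cochain-enumeration sketch could plausibly replace the snapshot part, but you still need a separate mechanism ruling out the birth of new obstructions throughout $[\tim_j^*,\tim_{\max}]$.
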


Observe that in Theorem~\ref{thm:main} we do not
consider \connectedness\ for the case $j=0$. 
Indeed, the condition 
$H^0(\cG_\tim;R)=R$  corresponds to topological connectedness of  
$\cG_\tim$, i.e.\ 
vertex-connectedness of the underlying (non-uniform)
random hypergraph, which has been extensively  
studied, and for which much stronger
results are known (see e.g.\  \cite{CooleyKangKoch16,Poole15}).
However, topological connectedness is
a necessary condition for the
\connectedness\ of $\cG_\tim$ 
(see~Definition~\ref{def:cohomconn}), 
therefore in order to make this paper 
self-contained, this case is treated separately 
in Lemma~\ref{lem:topconn}.

Furthermore, we observe that neither \connectedness\ nor the presence
of copies of $\Mjkhat$ are necessarily monotone properties (as we will see in  Example~\ref{ex:nonmono}),
which makes the proofs significantly harder.
Indeed, it is not immediately clear that \connectedness\ should
have a single threshold---in principle, the random $d$-complex process
$(\cG_\tim)$ could switch between
being \connected\ or not several times. However,
Theorem~\ref{thm:main} implies that with high probability this does not happen
and there is indeed a single threshold.

Our second main result gives an asymptotic
description of the $j$-th cohomology
group of $\cG_\tim$ for values of $\tim$ in the 
\emph{critical window}, i.e.\ $\tim = 1+O(1/\log n)$.

\begin{thm}[Rank in the critical window]\label{thm:criticalwindow}
Let $c\in \mathbb{R}$ be a constant and suppose 
that $(c_n)_{n\ge 1}$
is a sequence of real numbers with $c_n 
\xrightarrow{n \to \infty} c$. 
Let $j \in [d-1]$, let $\tim=1+\frac{c_n}{\log n}$, and consider $\allp=\tim \bar\allp$ for a $j$-critical
direction $\bar\allp$. 
Then there exists
a constant $\critm = \critm(c,\bar\allp)$ such that
with high probability
\[ H^j(\cG_\tim; R) \quad = \quad R^{Y}, \] 
where $Y$ is a Poisson random variable with mean $\critm$.
\end{thm}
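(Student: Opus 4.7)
The plan is to combine a Poisson approximation for the number of minimal obstructions $\Mjkhat$ inside the critical window with an algebraic step that identifies $H^j(\cG_\tim; R)$ with a free $R$-module on the set of obstructions. First, by Theorem~\ref{thm:main} and the analysis foreshadowed in Section~\ref{sec:minimal}, only copies of $\Mjkhat$ with $j \le k \le d$ are relevant obstructions, and they fully determine whether the cohomology vanishes outside the critical window. The $j$-critical direction $\barallp$ is designed precisely so that the expected number of copies of $\Mjkhat$ at $\tim=1$ is $\Theta(1)$ for certain ``dominant'' values of $k$ and $o(1)$ for the rest; these dominant values will determine the constant $\critm=\critm(c,\barallp)$.

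For the Poisson convergence, I would use the method of moments (Bollob\'as-style), computing the $r$-th factorial moment $\EE[(Y)_r]$ of the number of copies of $\Mjkhat$ (summed over the dominant $k$). The first moment converges to $\critm$: with $\tim = 1 + c_n/\log n$, the factor $\tim^{\text{exponent}}$ appearing in the expected count turns into $e^{c\cdot(\text{exponent slope})}$, and the exponent is arranged by the critical scaling so the resulting constant is finite and nonzero. For higher factorial moments, I would argue that two distinct copies of $\Mjkhat$ must whp be ``essentially disjoint'' (share no $k$-simplex, and indeed have only small vertex overlap), so the appearance events are asymptotically independent and $\EE[(Y)_r]\to \critm^r$. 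The technical work here is to enumerate possible overlap patterns of two copies of $\Mjkhat$ and show that each pattern with nontrivial intersection contributes $o(1)$ to the factorial moment, exploiting the critical scaling that makes a single copy only barely survive.

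Finally---and this is the main obstacle---one must pass from the Poisson count $Y$ of obstructions to the isomorphism $H^j(\cG_\tim;R)\cong R^Y$. The idea is that each minimal obstruction contributes an independent copy of $R$ to $H^j$, but one must justify both \emph{no other contributions} and \emph{independence of contributions}. For the former, I would run a variant of the argument behind Theorem~\ref{thm:main}\eqref{main:supercritical} on the ``cleaned up'' complex obtained by removing the vertices hosting the obstructions, showing that this residual complex is already $j$-cohom-connected whp. For the latter, the Poisson moment calculation above gives that whp the surviving obstructions live in pairwise disjoint local neighbourhoods, so a Mayer--Vietoris or quotient/cochain-complex argument decomposes $H^j$ as a direct sum over obstructions, each summand being the local cohomology of a copy of $\Mjkhat$, which by Lemma~\ref{lem:minobst} (or its refinement) is exactly $R$. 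Combining this direct sum decomposition with the Poisson limit for $Y$ yields the stated distributional identity.
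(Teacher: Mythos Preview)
Your proposal has a genuine gap in the Poisson step: you are counting the wrong objects. A copy of $\Mjkhat$ is a $4$-tuple $(K,C,w,a)$, and in the critical window each underlying pair $(K,C)$ that forms a copy of $\Mjk$ extends to $\Theta(n)$ copies of $\Mjkhat$ (there are $k-j+1$ choices of $w$ and, by Lemma~\ref{lem:shells}, linearly many apex vertices $a$). So the expected number of copies of $\Mjkhat$ diverges and cannot converge to a Poisson with bounded mean. The quantity whose distribution is asymptotically Poisson is $\mathcal{X}=\sum_{k=j}^d \varNjk$, the number of copies of the \emph{reduced} obstruction $\Mjk$; this is what the paper proves in Lemma~\ref{lem:Njkdistribution} (via Stein--Chen coupling rather than factorial moments, though either method works). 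Correspondingly, the correct statement is $H^j(\cG_\tim;R)\cong R^{\mathcal{X}}$, since each copy $M=(K,C)$ of $\Mjk$ contributes the one-parameter family $\{f_{M,r}:r\in R\}$ of cocycles from Definition~\ref{def:f_M,r}, not one class per $\Mjkhat$.

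For the algebraic step, your ``remove the obstruction vertices and apply Mayer--Vietoris'' outline is both heavier and less robust than what is actually needed. The paper argues directly at the cochain level: Corollary~\ref{cor:Ntauemptycritwind} (which rests on the traversability machinery of Lemmas~\ref{lem:smallsupp} and~\ref{lem:largesupp}) gives that whp every $j$-cocycle is cohomologous to some $\sum_i f_{M_i,r_i}$, establishing surjectivity of $R^{\mathcal{X}}\to H^j$. Injectivity is elementary once one checks that whp the flowers $\cF(K_i,C_i)$ are pairwise disjoint (a first-moment computation) and that any nonzero $\sum_i f_{M_i,r_i}$ has a $j$-shell meeting its support in a single simplex (immediate from Lemma~\ref{lem:shells} since $|\mathcal{X}|=o(n)$), so Lemma~\ref{lem:shellobstruction} applies. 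No excision, vertex deletion, or long exact sequence is required; your proposed route would have to reprove Lemmas~\ref{lem:smallsupp}--\ref{lem:largesupp} in disguise to control the ``residual'' complex, and the Mayer--Vietoris bookkeeping would add complications without payoff.
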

\noindent The constant $\critm$ will be explicitly defined in~\eqref{eq:critm} in Section~\ref{sec:parametrisation}.

As a consequence of Theorems~\ref{thm:main} and~\ref{thm:criticalwindow}, 
we derive an explicit expression for the limiting probability of $\cG_\tim$
being \connected\ within the critical window.
\begin{cor}\label{cor:probcritwind}
Let $(c_n)_{n\ge 1}$, $c$, $j$, $\tim$, $\barallp$, $\allp$, and $\critm$ be given as 
in Theorem~\ref{thm:criticalwindow}. Then
\[ \Pr \left( \cG_\tim \text{ is \formalconnected} \right) \xrightarrow{n\to\infty} \exp(-\critm). \]
\end{cor}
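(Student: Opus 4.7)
The plan is to derive the corollary as a short combinatorial consequence of Theorems~\ref{thm:main} and~\ref{thm:criticalwindow}, by sandwiching \formalconnectedness\ of $\cG_\tim$ between the event $\{H^j(\cG_\tim;R)=0\}$ and the event $\{H^j(\cG_\tim;R)=0\}\cap\{\cG_\tim\text{ is \jminusconnected}\}$. Since $R$ has at least two elements, the finite-rank group $R^{Y}$ is trivial if and only if $Y=0$, and hence Theorem~\ref{thm:criticalwindow} together with $\Pr(\mathrm{Po}(\critm)=0)=e^{-\critm}$ gives
\[
\Pr\bigl(H^j(\cG_\tim;R)=0\bigr) \;\xrightarrow{n\to\infty}\; e^{-\critm}.
\]
Because \formalconnectedness\ requires in particular the vanishing of $H^j$, this already yields the upper bound $\Pr(\cG_\tim\text{ is \formalconnected})\le e^{-\critm}+o(1)$.

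For the matching lower bound, I would use the containment
\[
\{H^j(\cG_\tim;R)=0\}\setminus\{\cG_\tim\text{ is \formalconnected}\}\;\subseteq\;\{\cG_\tim\text{ is not \jminusconnected}\},
\]
which is immediate from Definition~\ref{def:cohomconn}: if $H^j=0$ but $\cG_\tim$ fails to be \formalconnected, the failure must come from $H^0\ne R$ or from $H^i\ne 0$ for some $i\in[j-1]$. Thus
\[
\Pr(\cG_\tim\text{ is \formalconnected})\;\ge\;\Pr(H^j=0)-\Pr(\cG_\tim\text{ is not \jminusconnected}),
\]
and it suffices to show the second probability is $o(1)$ at $\tim=1+c_n/\log n$.

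To establish \jminusconnected ness whp at this $\tim$, I would use the fact that for a $j$-critical direction the hitting times for the vanishing of $H^i$ for every $i<j$, as well as for topological connectedness, lie strictly before $\tim=1$. This should be immediate from the formal definition of $j$-critical direction in Section~\ref{sec:parametrisation}, which singles out $j$ as the dimension whose cohomology vanishes latest. Hence at $\tim=1+c_n/\log n$ the process is deeply supercritical for \jminusconnected ness, and a first-moment computation over the lower-dimensional obstructions $\hat M_{i,k}$ with $i<j$ and $i\le k\le d$ (analogous to the supercritical half of Theorem~\ref{thm:main}, but now with expected number tending to $0$) finishes the bound. When $j=1$ the desired statement reduces to topological connectedness, which is handled by Lemma~\ref{lem:topconn}.

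The main obstacle is precisely this last step: Theorem~\ref{thm:main} is stated only for $j$-critical directions and so cannot be invoked verbatim at level $j-1$. One therefore has to either re-run the relevant first-moment calculations for the lower-dimensional obstructions $\hat M_{i,k}$ under a $j$-critical direction, or verify that strict supercriticality at levels $i<j$ is already built into the definition of $j$-critical direction. Once this is settled, the upper and lower bounds match, and the limit $e^{-\critm}$ follows.
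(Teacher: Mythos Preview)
Your overall strategy matches the paper's exactly: reduce to $\Pr(H^j=0)$ via Theorem~\ref{thm:criticalwindow}, and show separately that the lower cohomology groups already vanish whp at $\tim=1+c_n/\log n$. The paper carries out precisely this two-step argument.

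The gap is the step you flag yourself, and your proposed resolutions do not quite close it. A first-moment bound on copies of $\hat M_{i,k}$ for $i<j$ only shows that the \emph{minimal} obstructions are absent; it does not by itself give $H^i=0$, since non-minimal bad $i$-cocycles must also be ruled out (this is the content of the supercritical analysis in Section~\ref{sec:supercritical}, not just a moment computation). Nor is it the case that ``strict supercriticality at levels $i<j$'' is a direct formal consequence of the definition of a $j$-critical direction. The paper's resolution is to rescale: by Lemma~\ref{lem:criticalscaling}, for each $i\in[j-1]$ there is a scaling $\frac{\eta_i+o(1)}{n^{j-i}}\barallp$ that is $i$-critical, so Corollary~\ref{cor:iintervals}\ref{iintervals:timi*} gives $\tim_i^*=\Theta(n^{-(j-i)})\ll 1$. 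Then Lemma~\ref{lem:supercritical}, applied with $j$ replaced by $i$, yields $H^i(\cG_\tim;R)=0$ whp for all $\tim\ge\tim_i^*$, in particular at $\tim=1+c_n/\log n$. Topological connectedness ($H^0=R$) is handled by Lemma~\ref{lem:topconn}, as you say. Once you invoke these three ingredients in place of the vague ``first-moment computation'', your argument is complete and coincides with the paper's.
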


\bigskip
\subsection{Proof techniques}
\label{sec:prooftechniques}

The three statements of the Hitting Time Theorem 
(Theorem~\ref{thm:main}) follow from auxiliary results presented
in Section~\ref{sec:proofmainthm},
which in turn are proved throughout the paper.

More precisely, we show in 
Lemma~\ref{lem:hittingtime_simplevers} that
the choice of a $j$-critical direction $\bar\allp$ (Definition~\ref{def:criticaldirection}) 
implies that the last minimal obstruction disappears
at around time $\tim = 1$, thus 
proving statement~\ref{main:hitting} of 
Theorem~\ref{thm:main}.

The main ingredient in the proof of
Theorem~\ref{thm:main}~\ref{main:subcritical} will be Lemma~\ref{lem:jinterval},  
which states that for every constant $\eps>0$,
whp $H^j(\cG_\tim;R) \neq
0$ for every $\tim$ in the interval
\[I_j(\eps) \ :=\  \left[\frac{\eps}{n}, \tim_j^*\right). \]
To prove this, in Section~\ref{sec:subcritical}
we split $I_j(\eps)$ into three subintervals and show that
whp in each of these there exists a copy of the obstruction $\Mjkhat$ for some $j\le k\le d$
(Lemmas~\ref{lem:firstinterval}, \ref{lem:secondinterval} and~\ref{lem:thirdinterval}), and thus $H^j(\cG_\tim;R) \neq 0$ (Corollary~\ref{cor:obstruction}).

In addition, we show that for every $i \in [j-1]$
there exists an appropriate scaling factor
$\tim$ such that the vector $\tim \barallp$ is
an $i$-critical direction (Lemma~\ref{lem:criticalscaling}).
Thus we can apply Lemma~\ref{lem:jinterval}
with $j$ replaced by $i$ and find intervals
$I_i(\eps)$ where whp 
$H^i(\cG_\tim;R) \neq 0$ (Corollary~\ref{cor:iintervals}).
We further define an interval $I_0(\eps)$
and show that
whp $\cG_\tim$ is not topologically 
connected
for every $\tim \in I_0(\eps)$ (Lemma~\ref{lem:topconn}).
In this way we can complete the proof of Theorem~\ref{thm:main}~\ref{main:subcritical}
by showing that we can 
choose $\eps$ such that $[0, \tim_j^*)
= \bigcup_{i=0}^j I_i(\eps)$ and thus
$\cG_\tim$ is not 
\connected\ throughout the subcritical case.

By definition of $\tim_j^*$, 
whp for any
$\tim \ge \tim_j^*$ there are no copies of
the minimal obstruction $\Mjkhat$ for any $k=j,\ldots,d$, thus in order to prove statement~\ref{main:supercritical} of 
Theorem~\ref{thm:main} we need to show 
that whp
no other `larger' obstructions to the vanishing
of $H^j(\cG_\tim;R)$ appear in the
complex.
This is given by
Lemma~\ref{lem:supercritical}, which we prove
in Section~\ref{sec:supercritical}.
We show that the
smallest support of any non-zero element of
the cohomology group
must be \emph{traversable} (Lemma~\ref{lem:traversability}), a very useful property
that allows us to define a search process,
with which
we can construct such a support. 
By bounding the number of ways this search process can
evolve, we also bound the
number of possible supports
and the probability that such 
a non-zero element of the cohomology
group exists
(Lemmas~\ref{lem:smallsupp} and~\ref{lem:largesupp}).

To prove the Rank Theorem (Theorem~\ref{thm:criticalwindow})
and Corollary~\ref{cor:probcritwind}, 
in Section~\ref{sec:proofofcritwind} we
will use the fact that for values of $\tim$ `close' to~$1$ whp the only obstructions to \connectedness\ are
copies of $\Mjkhat$ (Corollary~\ref{cor:Ntauemptycritwind}) and that indeed they
are a \emph{minimal} set of generators for 
$H^j(\cG_\tim;R)$. We conclude the proof of the Rank Theorem by showing that the number of such obstructions
converges in distribution to a Poisson random variable 
(Lemma~\ref{lem:Njkdistribution}). Finally we prove Corollary~\ref{cor:probcritwind} by applying
Theorem~\ref{thm:criticalwindow} to determine the probability that the $j$-th cohomology group vanishes
and Theorem~\ref{thm:main} to show that whp 
all lower cohomology groups vanish (except the zero-th, which is $R$).

\bigskip
\subsection{Outline of the paper} \label{sec:outlinepaper}
The paper is structured as follows.

In Section~\ref{sec:prelim}, we introduce some preliminary concepts
regarding the parametrisation
of a $j$-critical direction, as well as some standard concepts of cohomology theory.

Section~\ref{sec:proofmainthm} contains
the main auxiliary results that we combine to prove the Hitting Time Theorem (Theorem~\ref{thm:main}).

The proofs of the auxiliary results of 
Section~\ref{sec:proofmainthm} will follow
in Sections~\ref{sec:minimal}--\ref{sec:supercritical}.
In particular, the results of 
Section~\ref{sec:supercritical} will also lay
the foundation of the proof of the
Rank Theorem (Theorem~\ref{thm:criticalwindow}), 
which is presented in Section~\ref{sec:proofofcritwind},
together with the proof of Corollary~\ref{cor:probcritwind}.

In Section~\ref{sec:concrem}
we discuss our main results and present
some open problems.

In Appendix~\ref{sec:parameters}
we explain in more detail 
why with the choice of a $j$-critical
direction, Theorems~\ref{thm:main}
and~\ref{thm:criticalwindow} cover all
interesting cases.
Some standard but technical proofs are omitted 
from the main text,
but included in Appendices~\ref{sec:proofaux} and~\ref{sec:proofaux2} for completeness.
Finally, in Appendix~\ref{ap:glossary} we include a
glossary of some of the most important terminology
and notation used in the paper,
for easy reference.

\bigskip
\section{Preliminaries}\label{sec:prelim}

\subsection{Parametrisation}\label{sec:parametrisation}
In this section we will define the concept of $j$-critical 
directions, which appears in Theorems~\ref{thm:main} and~\ref{thm:criticalwindow}. 

Given a direction $\barallp = 
(\bar p_1, \ldots, \bar p_d)$, 
let $k\in [d]$ be an index such that $\bar p_k\neq 0$,
and let $K$ be a $(k+1)$-set with birth time $t_K$.
The \emph{scaled birth time} of $K$ is defined 
as
\begin{equation} \label{eq:scaledbirth}
\tim_K\ :=\ \frac{t_K}{\bar p_k}.
\end{equation}
(If $\bar p_k=0$ we view all $(k+1)$-sets as having infinite scaled birth time.) 
Thus $\tim_K$ is distributed uniformly in $[0,1/\bar p_k]$,
and $\cG_{\tim}$ consists of all those simplices with
\emph{scaled} birth time at most $\tim$, together with their downward-closure\footnote{With probability 1
no two simplices have the same scaled birth time, which is important for the process interpretation.}.

The motivation
for the following definitions will become apparent later (see Lemma~\ref{lem:expectedNjk} and
Appendix~\ref{sec:parameters}).

\begin{definition}\label{def:criticalp}
Given $j\in [d-1]$,
a vector $\barallp = (\bar p_1,\dotsc,\bar p_d)$\label{prob:barallp}
is called \emph{$j$-admissible} if for each $1\le k\le d$ there are real-valued
constants $\bar\alpha_k$,$\bar\gamma_k$, and a function $\bar \beta_k=\bar \beta_k(n)$\label{param:alphbetgamm}
such that \label{param:barpk}
  \begin{equation}\label{eq:parameters}
    \bar p_k = \frac{\bar\alpha_k\log n + \bar\beta_k}{n^{k-j+\bar\gamma_k}}(k-j)!,
  \end{equation}
and furthermore
  \renewcommand{\theenumi}{(A\arabic{enumi})}
  \begin{enumerate}
  \item\label{parameters:zero}
    at least one of $\bar\alpha_k,\bar\gamma_k$ is zero and neither of
    them is negative;
  \item\label{parameters:subpoly}
    if $\bar\alpha_k = 0$, then either $\bar\beta_k \equiv 0$ or $\bar\beta_k$ is positive and
    \emph{subpolynomial} in the sense that for every constant $\eps >
    0$, we have $\bar\beta_k = o(n^{\eps})$, but $\bar\beta_k =
    \omega(n^{-\eps})$;
  \item\label{parameters:sublog}
    if $\bar\gamma_k = 0$, then $|\bar\beta_k| = o(\log n)$; 
  \item\label{parameters:k0}
    there exists an index $j+1 \le k_0 \le d$ with $\bar\alpha_{k_0} > 0$.
  \end{enumerate}
    \renewcommand{\theenumi}{(\roman{enumi})}
\end{definition}  
The following observation follows immediately from the definition,
and will be used implicitly at many points in the paper.
\begin{remark}\label{rem:pkprob}
If $\barallp$ is $j$-admissible and $k\ge j+1$, then $\bar p_k =O\left(\frac{\log n}{n}\right)=o(1)$.
In particular, if $\allp = \tim \barallp$ for some $\tim=O(1)$, then $p_k = \tim \bar p_k \le 1$.
\end{remark}
This observation means that, for $k\ge j+1$ and for $\tim$ not too large, we have that $p_k=\tim \bar p_k$
is indeed a probability
term and we can use it in calculations without having to replace it by $1$. On the other
hand, for $k=j$ we often need to be slightly more careful.

Note that some of the properties in Definition~\ref{def:criticalp} can be guaranteed simply
by scaling $\barallp$ and choosing $\bar\alpha_k,\bar\gamma_k,\bar \beta_k$ appropriately,
but that some other properties place restrictions on the direction. However, we will see later
(Appendix~\ref{sec:parameters})
that it is reasonable to restrict attention to $j$-admissible vectors $\barallp$.
Indeed, by scaling appropriately we can even go further:
given a $j$-admissible vector $\bar\allp = (
\bar p_1, \ldots, \bar p_d)$,
for every index $k$ with $j \le k \le d$
and $\bar p_k \neq 0$ we define the parameters
  \begin{equation}\label{eq:lmn}
    \begin{split}
      \bar\logp_k &\ :=\  j+1-\bar\gamma_k-(k-j+1)\sum\nolimits_{i=j+1}^{d}\bar\alpha_i,\\
      \bar\sublogp_k &\ :=\  - (k-j+1)\sum\nolimits_{i=j+1}^{d}\frac{\bar\beta_i}{n^{\bar\gamma_i}} +
      \begin{cases}
        0 & \text{if } \bar p_k>1,\\
        \log\log n & \text{if } \bar p_k\le 1 \text{ and } \bar\alpha_k\not=0,\\
        \log(\bar\beta_k) & \text{if } \bar p_k\le 1 \text{ and } \bar\alpha_k=0,
      \end{cases}\\
      \bar\constp_k &\ :=\  
      \begin{cases}
        -\log((j+1)!) & \text{if }k=j,\\
        -\log(j!)-\log(k-j+1) +\log(\bar\alpha_k) & \text{if } k\not=j
        \text{ and } \bar\alpha_k\neq 0,\\
        -\log(j!)-\log(k-j+1) & \text{otherwise.}
      \end{cases}
    \end{split}
  \end{equation} \label{param:lambdmunu}
  Note that all $\bar\logp_k$, $\bar\constp_k$ are constants (since the $\bar \alpha_i$ are constants),
  while the $\bar\sublogp_k$ are
  functions of $n$ with $\bar\sublogp_k = o(\log n)$, by Definition~\ref{def:criticalp}.
  
\begin{definition} \label{def:criticaldirection}
  We say that a $j$-admissible vector $\barallp$ is a \emph{$j$-critical direction} if:
  \renewcommand{\theenumi}{(C\arabic{enumi})}
  \begin{enumerate}
  \item \label{crit:less}
  $\bar\logp_k\log n + \bar\sublogp_k + \bar\constp_k \le 0$, $\quad$ for all indices $k$ with $j \le k \le d$ and $\bar p_k \not= 0$;
  \item \label{crit:equal}
  $\bar\logp_{\bar k}\log n + \bar\sublogp_{\bar k} + \bar\constp_{\bar k} = 0$, $\quad$ for some $\bar k$ with $j \le \bar k \le d$.
  \end{enumerate} \label{param:bark}
    \renewcommand{\theenumi}{(\roman{enumi})}
\end{definition}
More generally, if we have a vector
$\allp=(p_1,\ldots,p_d)$ (where we will usually have $\allp=\tim\barallp$),
we would like to define parameters analogous
to those for $\barallp$.

\begin{definition}\label{def:generalparameters}
Given a vector $\allp=(p_1,\ldots,p_d)$,
for each $k \in [d]$, define
\begin{align*}
\alpha_k & \ :=\  \lim_{n\to\infty}\left(\frac{p_k n^{k-j}}{(k-j)!\log n}\right),\\
\gamma_k & \ :=\  \sup\{\gamma\in\mathbb{R} \mid p_k n^{k-j+\gamma} = o(1)\},\\
\beta_k & \ :=\  \frac{n^{k-j+\gamma_k} p_k}{(k-j)!}-\alpha_k \log n,
\end{align*}
if the limit and the supremum exist.

Furthermore, we define the parameters $\logp_k$, 
$\sublogp_k$, and $\constp_k$ analogously to  
\eqref{eq:lmn}, with $\bar\alpha_k$, $\bar\gamma_k$, and
$\bar\beta_k$ replaced by $\alpha_k$, $\gamma_k$, and
$\beta_k$, respectively.
\end{definition}

The following observation follows directly from the definition.

\begin{remark}\label{rem:scaledparameters:zero}
If $\barallp$ is a $j$-critical direction and $\allp=\tim\barallp$ for some $\tim=O(1)$,
then the analogue of~\ref{parameters:zero} also holds for $\allp$, i.e.\
for all $1\le k \le d$, at least one of $\alpha_k,\gamma_k$ is zero and neither of them is negative.
\end{remark}

In order to prove Theorem~\ref{thm:criticalwindow}, 
we will need to take a closer look at how the 
process behaves within the critical window, 
which is the range where whp the complex
$\cG_\tim$ switches from being not \connected\ 
to being \connected. 
More precisely, we consider
$\tim = 1 + O(1/\log n)$ (cf. Theorem~\ref{thm:criticalwindow}).
We also need the following concepts.
\begin{definition} \label{def:criticaldim}
Given a $j$-critical direction $\bar\allp=(\bar p_1,
\ldots , \bar p_d)$, an index $k$
with $j\le k\le d$ 
and $\bar p_k \neq 0$ is called
a \emph{critical dimension} if 
$\bar\logp_k \log n + \bar\sublogp_k + \bar\constp_k = 
O(1)$, i.e.\ $\bar\logp_k=0$ and
$\bar\sublogp_k = O(1)$ (recall that
$\bar\constp_k = O(1)$). 
We denote
by $\critset = \critset(\bar\allp,j)$ the set of all critical dimensions
for the $j$-critical direction $\bar\allp$.
\end{definition}

It will turn out (Lemma~\ref{lem:Njkdistribution}) that
for any $\tim = 1 + O(1/ \log n)$,
the critical dimensions are precisely those indices $k$
for which there is a positive asymptotic probability 
of having copies of a reduced version of $\Mjkhat$, 
called $\Njk$ (Definitions~\ref{def:Njk} and~\ref{def:Njj}), in $\cG_\tim$.
Furthermore, 
if we consider $\tim=1+\frac{c_n}{\log n}$ with
$c_n \xrightarrow{n \to \infty} c \in \mathbb{R}$,
then the constant $\critm$ which appeared in
Theorem~\ref{thm:criticalwindow} is precisely
\begin{equation} \label{eq:critm}
\critm \ :=\  \exp(-c(j+1)) \sum\nolimits_{k\in \critset}
\exp ( \bar\sublogp_k + \bar\constp_k + c \gamma_k ) ,
\end{equation}
as we will see in the proof of
Theorem~\ref{thm:criticalwindow}
(Section~\ref{sec:proofofcritwind}). We will also see that 
for any critical 
dimension $k$, the term
$\exp (\bar\sublogp_k + \bar\constp_k + c(\bar\gamma_k 
- j -1) )$ is
closely related to the number of copies of $\Njk$
(Corollary~\ref{cor:expectatcritwind}).

\bigskip
\subsection{Cohomology}\label{sec:cohom}

Let us review the standard notions of cohomology groups of a
$d$-dimensional simplicial complex $\cG$.

Let $j \in [d]_0$. To define cohomology groups, one considers
\emph{ordered $j$-simplices}, that is, $j$-simplices with an ordering
of their vertices.\footnote{When we consider simplices
  \emph{without} an ordering, we will often simply refer to them as
  `simplices' instead of `unordered simplices'.}
We adopt the notation $[v_0,\dotsc,v_j]$ for a $j$-simplex whose
vertices are ordered $v_0,\dotsc,v_j$. If $\sigma = [v_0,\dotsc,v_j]$
is an ordered $j$-simplex and $i\in[j]_0$, then
$[v_0,\dotsc,\hat{v}_i,\dotsc,v_j]$ denotes the ordered $(j-1)$-simplex
obtained from $\sigma$ by removing $v_i$ (and preserving the order on the remaining vertices).

Recall that we will be considering cohomology groups over an arbitrary (non-trivial) abelian group~$R$.
A function $f$ from the set of ordered $j$-simplices in $\cG$ to $R$
is called a \emph{$j$-cochain}\label{cohom:cochain} if $f(\sigma) = -f(\sigma')$ whenever
$\sigma'$ is obtained from $\sigma$ by exchanging the positions of two
vertices in the ordering of the simplex. For a $j$-cochain $f$, we
define its \emph{support} $\mathrm{supp}(f)$ to be the set of
\emph{unordered} simplices $\sigma$ such that $f$ maps some (and thus
every) ordering of $\sigma$ to a non-zero value.

The set $C^j(\cG;R)$\label{cohom:Cj} of
$j$-cochains in $\cG$ forms a group with respect to  pointwise summation, defined by $(f_1+f_2)(\sigma) \ :=\  f_1(\sigma) + f_2(\sigma)$.
For $j\in[d-1]_0$, we define the \emph{coboundary operator}\label{cohom:coboperator}
$\delta^j\colon C^j(\cG;R) \to C^{j+1}(\cG;R)$ by
\begin{equation*}
  (\delta^j f)([v_0,\dotsc,v_{j+1}]) \ :=\ 
  \sum\nolimits_{i=0}^{j+1}(-1)^i f([v_0,\dotsc,\hat{v}_i,\dotsc,v_{j+1}]).
\end{equation*}
Clearly, $\delta^j$ is a group homomorphism. Furthermore, let
$\delta^{-1}$ and $\delta^d$ denote the unique group homomorphisms
$\delta^{-1}\colon \{0\}\to C^0(\cG;R)$ and $\delta^{d}\colon
C^d(\cG;R)\to \{0\}$. For each $j\in[d]_0$, the $j$-cochains in
$\ker\delta^j$ and in $\im\delta^{j-1}$ are called \emph{$j$-cocycles}\label{cohom:cocycle}
and \emph{$j$-coboundaries}\label{cohom:coboundary}, respectively. A straightforward
calculation shows that every $j$-coboundary is also a $j$-cocycle,
i.e.\ $\im \delta^{j-1} \subseteq \ker \delta^j$.
Thus, we can define the \emph{$j$-th cohomology group of $\cG$ with
coefficients in $R$} as the quotient group\label{cohom:cohomgroup}
\begin{equation}\label{eq:cohomdef}
  H^j(\cG;R) \ :=\  \ker\delta^j / \im\delta^{j-1}.
\end{equation}

\smallskip
\subsection{Non-vanishing of cohomology groups}
\label{sec:non-vanish}

In view of Theorems~\ref{thm:main} 
and~\ref{thm:criticalwindow}, we are particularly interested in
when $H^j(\cG;R)$ vanishes for $j\in[d-1]$, which happens if and only
if every $j$-cocycle is also a $j$-coboundary. 
Hence, we need a criterion for a
$j$-cocycle (or more generally a $j$-cochain) 
\emph{not} to be a $j$-coboundary, which will be provided
by Lemma~\ref{lem:shellobstruction}.
To this end, we need
the following definition.

\begin{definition} \label{def:shell}
  For any $(j+2)$-set $A$ in a complex $\cG$, the collection of all
  $(j+1)$-sets of $A$ is called a \emph{j-shell}\label{comb:shell} if each of them
  forms a $j$-simplex in $\cG$.
\end{definition}
If the collection of all $(j+1)$-subsets of a $(j+2)$-set $A$ forms a
$j$-shell, with a slight abuse of terminology we also refer to the set
$A$ itself as a $j$-shell.

\begin{lem}\label{lem:shellobstruction}
  Let $j\in[d-1]$, let $f$ be a $j$-cochain in a $d$-dimensional
  complex $\cG$ on $[n]$ and suppose that there exists  $A \in \binom{[n]}{j+2}$ such that
  \begin{enumerate}
  \item\label{shell:sides}
    $A$ is a $j$-shell in $\cG$ and
  \item\label{shell:support}
    precisely one $(j+1)$-set of $A$ lies in the support of $f$.
  \end{enumerate}
  Then $f$ is not a $j$-coboundary in $\cG$.
\end{lem}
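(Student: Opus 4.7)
The plan is to argue by contradiction: suppose that $f = \delta^{j-1}g$ for some $(j-1)$-cochain $g$, and derive a contradiction by evaluating an alternating sum of $f$ over the $(j+1)$-subsets of $A$ in two different ways.

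First I would fix an arbitrary ordering $A = [v_0, v_1, \ldots, v_{j+1}]$ and consider the quantity
\begin{equation*}
S \ :=\ \sum\nolimits_{i=0}^{j+1} (-1)^i f\bigl([v_0, \ldots, \hat v_i, \ldots, v_{j+1}]\bigr).
\end{equation*}
Hypothesis~\ref{shell:sides} guarantees that every $(j+1)$-subset of $A$ is a $j$-simplex of $\cG$, so each summand is well defined. Hypothesis~\ref{shell:support} then says that exactly one index $i^*$ contributes a non-zero term, so $S = (-1)^{i^*} f([v_0,\ldots,\hat v_{i^*},\ldots,v_{j+1}]) \neq 0_R$ in $R$.

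On the other hand, substituting $f = \delta^{j-1}g$ into each summand and expanding via the definition of $\delta^{j-1}$ gives a double alternating sum over the $j$-subsets of $A$. Here I would use the fact that, since $\cG$ is downward-closed and $A$ is a $j$-shell, every $j$-subset of $A$ is itself a $(j-1)$-simplex of $\cG$, so $g$ is defined on each ordered version and the whole manipulation takes place inside $\cG$. A standard sign-tracking argument then shows that each $(j-1)$-simplex on $j$ vertices of $A$ appears exactly twice with opposite signs, so the double sum collapses to $0_R$. This is the usual identity $\delta^{j}\delta^{j-1} = 0$, applied here to the ``virtual'' $(j+1)$-simplex $A$.

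Combining the two evaluations yields $0_R = S \neq 0_R$, a contradiction; hence $f$ cannot be a $j$-coboundary. The only mildly delicate point is the last step: $A$ itself is not assumed to be a $(j+1)$-simplex of $\cG$, so one cannot simply invoke $(\delta^j f)(A) = 0$ on a cocycle of $\cG$. The resolution is that the cancellation argument needs only the values of $g$ on $(j-1)$-subsets of $A$, all of which lie in $\cG$ by downward-closure—so no cochain is ever evaluated outside its domain.
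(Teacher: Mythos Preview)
Your proof is correct and is essentially the same argument as the paper's: both compute the alternating sum $S=\sum_i(-1)^i f([v_0,\ldots,\hat v_i,\ldots,v_{j+1}])$, show it is nonzero by hypothesis~\ref{shell:support}, and conclude that $f$ cannot be a $j$-coboundary. The only cosmetic difference is packaging: the paper adjoins $A$ to $\cG$ to form $\cG'=\cG\cup\{A\}$ and then invokes $\im\delta^{j-1}\subseteq\ker\delta^j$ in $\cG'$ (noting that $\cG$ and $\cG'$ share the same $(j-1)$- and $j$-simplices, so the coboundary groups coincide), whereas you argue by contradiction and redo the $\delta^j\delta^{j-1}=0$ cancellation by hand on the virtual simplex $A$.
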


\begin{proof}
  Let $\cG' \ :=\  \cG\cup \{A\}$ and observe that this is a simplicial
  complex, because all proper non-empty subsets of $A$ were already
  simplices in $\cG$ by condition~\ref{shell:sides}. Denote the
  vertices in $A$ by $v_0,\dotsc,v_{j+1}$ such that
  $\{v_1,\dotsc,v_{j+1}\}\in \mathrm{supp}(f)$. By \ref{shell:support}, this means that
  \begin{equation*}
    (\delta^j f)([v_0,\dotsc,v_{j+1}]) = 
    \sum\nolimits_{i=0}^{j+1}(-1)^if([v_0,\dotsc,\hat{v}_i,\dotsc,v_{j+1}]) = f([v_1,\dotsc,v_{j+1}]) \not= 0_R.
  \end{equation*}
This implies that while $f$ may be a $j$-cocycle in $\cG$, it is certainly not a $j$-cocycle in $\cG'$.
Thus in particular $f$ is not a $j$-coboundary in $\cG'$.
Since $\cG$ and $\cG'$ have the same
  sets of $j$-simplices and of $(j-1)$-simplices, this means that $f$
  is also not a $j$-coboundary in $\cG$.
\end{proof}

\section{Hitting Time Theorem: proof of Theorem~\ref{thm:main}}
\label{sec:proofmainthm}
In this section,
we provide an outline of the most important auxiliary results
of the paper and show how together they prove the Hitting Time Theorem
(Theorem~\ref{thm:main}).
These auxiliary results are proved throughout the rest of the paper.

\smallskip
\subsection{Hitting time and subcritical case}
\label{sec:mainthm:hitandsub}
To prove Theorem~\ref{thm:main}~\ref{main:hitting}, 
recall that $\tim_j^*$ is the birth time
of the simplex whose appearance causes the last
copy of $\Mjkhat$ for any 
$j\le k \le d$ to disappear.
We want to show that 
this happens at around time $\tim=1$.
More precisely, we will prove the following.
\begin{lem}\label{lem:hittingtime_simplevers}
Let $\omega$ be a function of $n$ that tends to infinity as $n \to \infty$. If $\barallp$ is a $j$-critical direction, then
whp
\[ 1- \frac{\omega}{\log n} < \tim_j^* < 1 + \frac{\omega}{\log n}.\] 
\end{lem}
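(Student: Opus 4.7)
The plan is to establish the two inequalities separately: the upper bound $\tim_j^* < 1+\omega/\log n$ by a first-moment (Markov plus union bound) argument, and the lower bound $\tim_j^* > 1-\omega/\log n$ by exhibiting, at the single time $\tim_0 := 1-\omega/\log n$, a copy of $\Mjkhat$ for the index $\bar k$ furnished by condition~\ref{crit:equal}, via a second-moment (Chebyshev) argument. Both bounds rest on the same underlying estimate, to be supplied later in the paper (cf.~Lemma~\ref{lem:expectedNjk} and the heuristic in Section~\ref{sec:parametrisation}), namely that for every $k$ with $j\le k\le d$ and $\bar p_k\ne 0$ the expected number $N_{j,k}(\tim)$ of copies of $\Mjkhat$ in $\cG_\tim$ satisfies
\[
\log \EE[N_{j,k}(\tim)] \;=\; \logp_k(\tim)\log n + \sublogp_k(\tim) + \constp_k(\tim) + o(1),
\]
where $\logp_k,\sublogp_k,\constp_k$ are the parameters of Definition~\ref{def:generalparameters} applied to $\allp=\tim\barallp$.

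For the upper bound, the crucial feature is that $\logp_k(\tim)\log n$ is a \emph{linear} function of $\tim$. Indeed, since scaling by $\tim$ sends $\bar\alpha_i\mapsto \tim\bar\alpha_i$ while leaving $\bar\gamma_i$ unchanged, equation~\eqref{eq:lmn} gives $\logp_k(\tim)=\bar\logp_k-(k-j+1)(\tim-1)\sum_{i=j+1}^{d}\bar\alpha_i$, while $\sublogp_k(\tim)$ and $\constp_k(\tim)$ differ from $\bar\sublogp_k,\bar\constp_k$ only by perturbations of $o(\log n)$ and $O(1)$. Condition~\ref{crit:less} bounds the exponent at $\tim=1$ from above by $O(1)$, and~\ref{parameters:k0} together with the nonnegativity in~\ref{parameters:zero} makes $\sum_{i=j+1}^d\bar\alpha_i$ a strictly positive constant. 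Substituting $\tim=1+\omega/\log n$ therefore shifts the exponent by $-(k-j+1)\omega\sum_{i=j+1}^d\bar\alpha_i+o(\omega)\to-\infty$, so $\EE[N_{j,k}]=o(1)$ for every $k$. A union bound over the finitely many $k\in\{j,\dots,d\}$ and Markov's inequality yield $\tim_j^*<1+\omega/\log n$ whp. (Indices $k$ with $\bar p_k=0$ contribute no copies, since $\tim\bar p_k=0$ as well.)

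For the lower bound, fix $\bar k$ satisfying~\ref{crit:equal}, so that the exponent vanishes at $\tim=1$. The same linear expansion shows that at $\tim_0=1-\omega/\log n$ the exponent grows by $(\bar k-j+1)\omega\sum_{i=j+1}^d\bar\alpha_i-o(\omega)\to+\infty$, hence $\EE[N_{j,\bar k}(\tim_0)]\to\infty$. Chebyshev's inequality then reduces the lower bound to the estimate $\var(N_{j,\bar k}(\tim_0))=o\bigl(\EE[N_{j,\bar k}(\tim_0)]^2\bigr)$. This variance bound is the main obstacle and the only non-routine step: one classifies ordered pairs of copies of $\Mjkhat$ by the combinatorial type of their overlap, estimates the joint presence probability for each type as the product of the marginals times a factor accounting for the shared simplices, and verifies that every ``non-disjoint'' overlap pattern contributes only a lower-order term relative to the squared mean. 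This standard but delicate moment calculation, carried out in greater generality in Section~\ref{sec:subcritical} (in the context of Lemmas~\ref{lem:firstinterval}--\ref{lem:thirdinterval}), implies that $\cG_{\tim_0}$ whp contains a copy of $\Mjkhat$ and hence $\tim_j^*\ge\tim_0$, completing the proof.
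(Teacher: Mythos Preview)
Your lower bound is sound in outline: exhibiting a copy of $\Mjkhat$ at the single time $\tim_0=1-\omega/\log n$ does indeed give $\tim_j^*\ge \tim_0$, since $\tim_j^*$ is a supremum. (The paper implements this via Lemma~\ref{lem:manyNjk} for $\Njk$ together with Corollary~\ref{cor:NjkMjk} to upgrade to $\Mjkhat$, rather than a direct second-moment on $\Mjkhat$, but your route is workable.)

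Your upper bound, however, has a genuine gap. You argue that at the single time $\tim=1+\omega/\log n$ the expected number of copies of $\Mjkhat$ is $o(1)$, and conclude via Markov that $\tim_j^*<1+\omega/\log n$. But $\tim_j^*$ is the \emph{supremum} over all $\tim$ at which a copy exists, and the presence of a copy of $\Mjkhat$ is \emph{not monotone} in $\tim$ (Example~\ref{ex:nonmono}): the localisation condition~\ref{Mjkhat:flower} is decreasing while~\ref{Mjkhat:simplex} and~\ref{Mjkhat:shell} are increasing. So showing that no copy exists at one particular time does not preclude a new copy being created later, when some $k$-simplex $K$ is born already carrying a $K$-localised flower. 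A first-moment bound at a single time therefore does not control $\tim_j^*$.

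The paper closes this gap with an additional ingredient you are missing: Lemma~\ref{lem:obstacles} shows that whp no new local $j$-obstacle (in particular, no new copy of $\Njk$) can appear after time $\tim'=1-\frac{\log\log n}{10d\log n}$. This is a non-trivial argument conditioning on the birth time of the newly-arriving simplex $K$ and bounding the probability that $K$ is born before any of the $\Theta(\log n/\bar p_{k_0})$ simplices that would destroy its localisation condition (Claim~\ref{claim:firstbirthtime}). Only with this ``no new obstacles'' step can one deduce that the first time after $\tim'$ with no copies is in fact $\tim_j^*$, after which your first-moment calculation legitimately bounds $\tim_j^*$ from above.
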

Statement \ref{main:hitting} of
Theorem~\ref{thm:main} will follow directly from 
Lemma~\ref{lem:hittingtime_simplevers}, which is proved in Section~\ref{sec:hitting}.
Indeed, we will prove a slightly stronger result (Lemma~\ref{lem:hitting}).

For the subcritical case (i.e.\ statement~\ref{main:subcritical}) of Theorem~\ref{thm:main}, we first determine
the threshold for topological connectedness of $\cG(n,\allp)$, i.e.\ for when $H^0(\cG(n,\allp);R) = R$. 

\begin{lem}\label{lem:topconn}
  There exist positive constants $c^- = c^-(d)$ and $c^+ = c^+(d)$ such that
  \begin{enumerate}
  \item \label{topconn:no}
  whp $\cG(n,\allp)$ is not topologically connected if $p_k \le
    \frac{c^-\log n}{n^k}$ for all $k \in [d]$;
  \item \label{topconn:yes}
  whp $\cG(n,\allp)$ is topologically connected if $p_k \ge
    \frac{c^+\log n}{n^k}$ for some $k \in [d]$.
  \end{enumerate}
\end{lem}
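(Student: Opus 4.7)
The plan is to treat the two parts separately via standard moment arguments, exploiting the fact that the topological connectedness of $\cG(n,\allp)$ is equivalent to the vertex-connectedness of the underlying hypergraph $G(n,\allp)$. For part~\ref{topconn:yes}, a key simplification is that $G(n,\allp)$ contains the $(k+1)$-uniform binomial random hypergraph as a sub-hypergraph for every $k\in[d]$, so I would fix a single $k$ with $p_k\ge c^+\log n/n^k$ and prove connectedness using only the edges of that dimension.

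For part~\ref{topconn:no}, I would run a second-moment argument on the number $X$ of isolated vertices of $G(n,\allp)$. The probability that a fixed vertex is isolated is $\prod_{k=1}^d(1-p_k)^{\binom{n-1}{k}}$, which under $p_k\le c^-\log n/n^k$ is at least $n^{-c^-(\mathrm{e}-1)(1+o(1))}$, since $\sum_{k=1}^d p_k\binom{n-1}{k}\le c^-(\mathrm{e}-1)\log n(1+o(1))$. Choosing $c^-<1/(\mathrm{e}-1)$ thus makes $\EE[X]\to\infty$. For the variance, the joint probability that two distinct vertices $u,v$ are both isolated differs from the product of marginals by the factor $\prod_k(1-p_k)^{-\binom{n-2}{k-1}}=1+o(1)$, because $p_k\binom{n-2}{k-1}=O(\log n/n)$ for each $k$. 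Hence $\var(X)/\EE[X]^2\to 0$, and Chebyshev's inequality gives $X\ge 1$ whp, so $G(n,\allp)$ has an isolated vertex and $\cG(n,\allp)$ is not topologically connected.

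For part~\ref{topconn:yes}, I would use a first-moment argument on the vertex cuts of the $(k+1)$-uniform layer. For each $1\le s\le \lfloor n/2\rfloor$, I would bound the expected number of subsets $S\subseteq[n]$ of size $s$ that are isolated from $[n]\setminus S$ (no crossing $(k+1)$-edge). The elementary bound $\binom{n}{k+1}-\binom{s}{k+1}-\binom{n-s}{k+1}\ge s\binom{n-s}{k}$, obtained by picking one vertex in $S$ and $k$ vertices in $[n]\setminus S$, lower-bounds the number of crossing $(k+1)$-sets, so the expected number of bad $s$-cuts is at most $\binom{n}{s}\exp(-p_k\, s\binom{n-s}{k})$. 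Using $\binom{n-s}{k}\ge(n/2)^k/k!$ for $s\le n/2$ together with the hypothesis on $p_k$, this is at most $\exp\bigl(s\log(\mathrm{e}n/s)-c^+ s\log n/(2^k k!)\bigr)$. For $c^+$ large enough in terms of $d$, this is summable over $s$ and $o(1)$, so whp there is no disconnecting cut and $G(n,\allp)$ is vertex-connected.

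The main obstacle is really just choosing the constants $c^-,c^+$ uniformly in $d$: part~(a) needs $c^-<1/(\mathrm{e}-1)$, a $d$-independent bound, while part~(b) needs $c^+$ large enough to dominate the entropy term $s\log(\mathrm{e}n/s)$ in the cut estimate, so a choice along the lines of $c^+>2^{d+1}\cdot d!$ will suffice uniformly in $k\in[d]$. Beyond this, both parts are routine first- and second-moment computations, and no sharper estimates are needed, since Theorem~\ref{thm:main}~\ref{main:subcritical} only invokes this lemma on the initial subinterval $I_0(\eps)$, where much weaker statements about topological connectedness are enough to rule out \connectedness.
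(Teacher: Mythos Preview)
Your proposal is correct. Part~\ref{topconn:no} is essentially identical to the paper's proof: both run a second-moment argument on isolated vertices, with the only cosmetic difference that you bound $\sum_{k=1}^d 1/k!$ by $\mathrm{e}-1$ and take $c^- < 1/(\mathrm{e}-1)$, whereas the paper keeps the exact partial sum $\tilde d = \sum_{k=1}^d 1/k!$ and takes $c^- < 1/\tilde d$.

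For part~\ref{topconn:yes} the approaches diverge. The paper does not argue directly: it reduces to the single-layer case (as you do) and then for $k\ge 2$ invokes \cite[Lemma~4.1]{CooleyDelGiudiceKangSpruessel20}, while for $k=1$ it appeals to the classical graph result. Your route is instead a self-contained first-moment bound on vertex cuts in the $(k+1)$-uniform layer, using the crossing-edge lower bound $s\binom{n-s}{k}$ and a crude $c^+ > 2^{d+1}d!$ to kill the entropy term. This is more elementary and keeps the paper self-contained, at the cost of a worse constant; the paper's citation is shorter but relies on an external result. One minor point: the inequality $\binom{n-s}{k}\ge (n/2)^k/k!$ is not literally true, but replacing it by $\binom{n-s}{k}\ge c_k n^k$ for a suitable constant $c_k$ (valid for $s\le n/2$ and $n$ large) fixes this without affecting the argument or the shape of the final constant.
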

\noindent The proof of Lemma~\ref{lem:topconn}~\ref{topconn:no} consists of an easy application of the second moment method
to show that whp $\cG(n,\allp)$ contains isolated vertices, while Lemma~\ref{lem:topconn}~\ref{topconn:yes} follows from \cite[Lemma~4.1]{CooleyDelGiudiceKangSpruessel20}. For completeness, we 
include the proof of both parts of  Lemma~\ref{lem:topconn} in 
Appendix~\ref{sec:prooftopconn}.

\begin{remark}
In fact, with a slightly more careful extension of the argument,
one could strengthen Lemma~\ref{lem:topconn} to give the exact 
threshold. More precisely, if $p_k = \frac{c_k\log n}{n^k}$ for 
$k\in [d]$,
where each $c_k$ may now be a function in $n$, then $\cG(n,\allp)$
contains isolated vertices whp provided
$\sum_{k=1}^d \frac{c_k}{k!} = 1-\omega(\frac{1}{\log n})$,
whereas $\cG(n,\allp)$ is topologically connected whp if
$\sum_{k=1}^d \frac{c_k}{k!} = 1+\omega(\frac{1}{\log n})$.
\end{remark}

In particular, Lemma~\ref{lem:topconn} will imply that for every
sufficiently small $\eps>0$, whp the process $(\cG_\tim)$ is not
topologically connected, and thus also not  \connected, for every $\tim\in [0,\frac{\eps}{n^j}]$.

In order to cover the whole interval 
$[0,\tim_j^*)$, the following result, whose
proof is in Section~\ref{sec:subcritical}, will be  key.

\begin{lem}\label{lem:jinterval}
Let $\eps>0$ be a constant and define
\[ I_j (\eps): = \left[ \frac{\eps}{n}, \tim_j^*\right).\]
Then, whp $H^j(\cG_\tim;R)
\neq 0$ for every $\tim \in I_j(\eps)$.
\end{lem}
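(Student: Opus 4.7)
The plan is to cover $I_j(\eps)$ by three explicit subintervals,
\[
  \Big[\tfrac{\eps}{n},\,\tim^{(1)}\Big],\qquad
  \big[\tim^{(1)},\,\tim^{(2)}\big],\qquad
  \big[\tim^{(2)},\,\tim_j^*\big),
\]
and to show, for each of them, that whp at every $\tim$ in the subinterval the complex $\cG_\tim$ contains a copy of $\Mjkhat$ for some $k$ with $j \le k \le d$ (where the choice of $k$ may depend on the subinterval). By Corollary~\ref{cor:obstruction}, any such copy forces $H^j(\cG_\tim;R) \neq 0$, which proves the lemma. This is precisely the three-part strategy announced in Section~\ref{sec:prooftechniques} and realised by Lemmas~\ref{lem:firstinterval}, \ref{lem:secondinterval}, and \ref{lem:thirdinterval}.

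The breakpoints $\tim^{(1)},\tim^{(2)}$ should be chosen so that on each subinterval some dimension $k$ is \emph{dominant}, meaning that the expected number of copies of $\Mjkhat$ in $\cG_\tim$ is large (say, grows polynomially in $n$) throughout the subinterval. At the smallest scale, for $\tim$ near $\eps/n$, the dominant obstruction should be $\Mjjhat$, a local configuration centred on a $j$-simplex that is unlikely to be absorbed into a higher-dimensional simplex while $\tim n$ is still of constant order. As $\tim$ grows, successive higher dimensions $k$ become dominant; approaching $\tim_j^*$, the relevant choice is the critical dimension $\bar k$ from Definition~\ref{def:criticaldirection}, for which the $j$-critical scaling ensures that copies of $\Mjkhat$ are present essentially up to $\tim = 1$, and hence up to $\tim_j^*$ by Lemma~\ref{lem:hittingtime_simplevers}. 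The breakpoints can be read off by equating the expected counts of consecutive dominant dimensions, using the parametrisation of Section~\ref{sec:parametrisation}.

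For a fixed $\tim$, existence of a copy of $\Mjkhat$ will follow from a second moment argument: the first moment is large by the choice of $k$, the second moment is controlled by a standard case analysis on the vertex overlap between two copies of $\Mjkhat$, and Chebyshev's inequality then yields existence whp. To upgrade this to a uniform statement on the continuous range of $\tim$ in the subinterval, I would discretise it into a grid $\tim_0 < \tim_1 < \dots < \tim_N$ with $N = n^{O(1)}$, chosen so that whp each step $[\tim_i,\tim_{i+1}]$ witnesses only a subpolynomially small number of new simplex births. Since each newly born simplex destroys at most $n^{O(1)}$ copies of $\Mjkhat$, if $N$ is fine enough relative to the second-moment lower bound on the count at each $\tim_i$, the count of copies of $\Mjkhat$ stays positive throughout $[\tim_i,\tim_{i+1}]$. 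A union bound over the $N$ grid points then delivers the desired uniform whp statement.

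The main obstacle is precisely this non-monotonicity: copies of $\Mjkhat$ are simultaneously created and destroyed as simplices arrive, because being such a copy is a local \emph{minimality} condition. The delicate parts of the argument are therefore (i) selecting the dominant dimension $k$ for each subinterval so that the first moment is genuinely large, and (ii) calibrating the discretisation so that the destruction rate cannot outpace the abundance of copies. A secondary subtlety is the randomness of the right endpoint $\tim_j^*$ of the third subinterval: one copes with this by carrying out the deterministic discretisation up to some $\tim^{(3)} = 1 - \omega/\log n$, and handling the remaining critical-window sliver $[\tim^{(3)},\tim_j^*)$ by combining Lemma~\ref{lem:hittingtime_simplevers} with a direct analysis of copies of $\Mjkhat$ at the critical dimension that survive inside the critical window.
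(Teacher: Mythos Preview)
Your three-interval strategy is the right one and matches the paper (Lemmas~\ref{lem:firstinterval}--\ref{lem:thirdinterval}), but two parts of your plan would not go through as written.

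First, on the lowest subinterval you propose to use $\Mjjhat$, i.e.\ an isolated $j$-simplex with an attached shell. This fails in general: the $j$-critical direction $\barallp$ need not have $\bar p_j>0$ at all (Definition~\ref{def:criticalp} only guarantees some $k_0$ with $j+1\le k_0\le d$ and $\bar\alpha_{k_0}>0$), and even when $\bar p_j>0$, a $j$-simplex that appears as part of the downward-closure of a higher simplex is automatically non-isolated. The paper instead uses $\Mjkzerohat$ with $k_0\ge j+1$ (Lemma~\ref{lem:firstMjk}), which is always available and yields $\Theta((\log n)^{j+2})$ copies at $\tim=\eps/n$.

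Second, your discretisation-plus-union-bound mechanism for uniformity in $\tim$ has a quantitative gap. The second moment method at a single time $\tim$ gives, via Chebyshev, a failure probability of order $1/\EE(\varMjkhat)$, which on the first interval is only $1/\mathrm{polylog}(n)$; a union bound over $N=n^{O(1)}$ grid points then explodes. The paper avoids discretisation altogether. On $[\eps/n,\delta/\log n]$ it applies the second moment \emph{once} at $\tim=\eps/n$ and then bounds the number of ``dangerous'' $(k+1)$-sets that become simplices by $\delta/\log n$, choosing $\delta$ small enough that fewer copies are destroyed than exist (Lemma~\ref{lem:firstinterval}). On the later two intervals it argues \emph{backwards in time}: a copy of $\Njk$ present at the right endpoint has its $k$-simplex born at a scaled time uniform in $[0,\min\{\tim,1/\bar p_k\}]$, so whp it (and via Corollary~\ref{cor:NjkMjk} an attached shell) was already there at the left endpoint (Lemmas~\ref{lem:secondinterval} and~\ref{lem:thirdinterval}, the latter applied to the specific last obstruction defining $\tim_j^*$).
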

\noindent
To prove Lemma~\ref{lem:jinterval} we will show that whp,
for any $\tim \in I_j(\eps)$
there is an index $k$ with
$j\le k \le d$ such that
a copy of the minimal obstruction
$\Mjkhat$ exists in $\cG_\tim$. In fact,
we will show that whp just three minimal obstructions
exist in ranges which together
cover $I_j(\eps)$
(Lemmas~\ref{lem:firstinterval}, \ref{lem:secondinterval}, and 
\ref{lem:thirdinterval}).

For the remaining range of the subcritical
interval $[0,\tim_j^*)$, we want to 
consider the cohomology
groups $H^i(\cG_\tim;R)$ with
$i\in[j-1]$ and
determine in which subintervals they
do not vanish, i.e.\ we want
to find an analogue 
of Lemma~\ref{lem:jinterval}
for $H^i(\cG_\tim;R)$.
To do this, we need to show that starting
from a $j$-critical direction $\bar\allp$
we can use an appropriate rescaling
to obtain an $i$-critical direction.

\begin{lem}\label{lem:criticalscaling}
If $\bar\allp$ is a $j$-critical direction,
then for each $i\in[j-1]$ there
exist a constant $\eta=\eta_i>0$ and
a function $\epsilon=\epsilon_i(n)=o(1)$
such that the vector $\frac{\eta + \epsilon}{n^{j-i}} \barallp$ 
is an $i$-critical direction.
\end{lem}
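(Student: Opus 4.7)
The plan is to set $\bar q := \frac{\eta+\epsilon}{n^{j-i}}\barallp$ for suitably chosen $\eta=\eta_i>0$ and $\epsilon=\epsilon_i(n)=o(1)$, and to verify that $\bar q$ is an $i$-critical direction. The first step is to identify the $i$-admissibility parameters of $\bar q$. Using the $j$-admissibility parametrisation~\eqref{eq:parameters} of $\barallp$, I rewrite
\[
\bar q_k \;=\; \frac{(\eta+\epsilon)(\bar\alpha_k\log n+\bar\beta_k)(k-j)!}{n^{k-i+\bar\gamma_k}} \;=\; \frac{\tilde\alpha_k\log n+\tilde\beta_k}{n^{k-i+\tilde\gamma_k}}(k-i)!\,,
\]
with $\tilde\gamma_k=\bar\gamma_k$, $\tilde\alpha_k=\eta\,\bar\alpha_k\tfrac{(k-j)!}{(k-i)!}$, and $\tilde\beta_k=\tfrac{(k-j)!}{(k-i)!}\bigl((\eta+\epsilon)\bar\beta_k+\epsilon\,\bar\alpha_k\log n\bigr)$. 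Conditions \ref{parameters:zero}--\ref{parameters:k0} for $\bar q$ at index $i$ then follow routinely from the same conditions for $\barallp$ at index $j$: non-negativity and the joint-zero property of $(\tilde\alpha_k,\tilde\gamma_k)$ are inherited from $(\bar\alpha_k,\bar\gamma_k)$; since $\epsilon=o(1)$, the extra $\epsilon\,\bar\alpha_k\log n$ summand in $\tilde\beta_k$ is $o(\log n)$, preserving~\ref{parameters:subpoly} and~\ref{parameters:sublog}; and the index $k_0\ge j+1>i+1$ witnessing~\ref{parameters:k0} for $\barallp$ still has $\tilde\alpha_{k_0}=\eta\,\bar\alpha_{k_0}\tfrac{(k_0-j)!}{(k_0-i)!}>0$, witnessing~\ref{parameters:k0} for $\bar q$.

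Next, I would choose $\eta$ from the leading-order criticality condition. Plugging the new parameters into~\eqref{eq:lmn} gives
\[
\tilde\logp_k \;=\; i+1-\bar\gamma_k-(k-i+1)\,\eta\,S,\qquad S\;:=\;\sum_{l=i+1}^{d}\bar\alpha_l\,\tfrac{(l-j)!}{(l-i)!}.
\]
From the witness $k_0$ in~\ref{parameters:k0} for $\barallp$ we get $\bar\alpha_{k_0}>0$ and (via~\ref{parameters:zero}) $\bar\gamma_{k_0}=0$, whence $S>0$ and the maximum below is strictly positive. As $\tilde\logp_k$ is strictly decreasing in $\eta$, setting
\[
\eta_i \;:=\; \frac{1}{S}\,\max_{\substack{i\le k\le d\\ \bar p_k\ne 0}}\frac{i+1-\bar\gamma_k}{k-i+1} \;>\;0
\]
guarantees $\tilde\logp_k\le 0$ for every admissible $k$, with equality at some index $k'$ attaining the maximum. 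This establishes the leading-order part of~\ref{crit:less} and the first half of~\ref{crit:equal}.

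Finally, I would tune $\epsilon$ to secure the exact equality required by~\ref{crit:equal}. With $\eta=\eta_i$ fixed, (C2) collapses to $\tilde\sublogp_{k'}+\tilde\constp_{k'}=0$. Inspecting~\eqref{eq:lmn} one sees that the only $\epsilon$-dependent contribution to $\tilde\sublogp_{k'}$ arises from the $\epsilon\,\bar\alpha_l\log n$ piece of $\tilde\beta_l$: for each $l$ with $\bar\alpha_l>0$ one has $\bar\gamma_l=0$, and the total contribution is $-(k'-i+1)\,\epsilon\,S\log n$. All remaining terms --- the $\epsilon$-independent part of $\sum_l \bar\beta_l (l-j)!/((l-i)!\,n^{\bar\gamma_l})$, the case-dependent $\log\log n$ or $\log\tilde\beta_{k'}$ correction from~\eqref{eq:lmn}, and the constant $\tilde\constp_{k'}$ --- aggregate into a function $F(n)=o(\log n)$ (using subpolynomiality of the $\bar\beta_l$ and $|\bar\beta_l|=o(\log n)$ whenever $\bar\gamma_l=0$). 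The resulting linear equation $-(k'-i+1)\,\epsilon\,S\log n+F(n)=0$ then admits the solution $\epsilon_i(n):=F(n)/((k'-i+1)\,S\log n)=o(1)$, as required. For any $k\ne k'$ with $\bar p_k\ne 0$, the strict inequality $\tilde\logp_k<0$ makes $\tilde\logp_k\log n\to-\infty$ dominate $\tilde\sublogp_k+\tilde\constp_k=o(\log n)$, so~\ref{crit:less} holds for $\bar q$.

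The main obstacle is the last step. One must navigate the several case distinctions in~\eqref{eq:lmn} for $\tilde\sublogp_{k'}$ (whether $\bar q_{k'}\le 1$, whether $\tilde\alpha_{k'}=0$) to verify $F(n)=o(\log n)$, and one must handle the edge case where the maximum defining $\eta_i$ is attained at multiple indices: in that situation, tuning $\epsilon$ to zero out $\tilde\sublogp_{k'}+\tilde\constp_{k'}$ at one such $k'$ changes the value at every other tied index $k''$ by $(k'-k'')\,\epsilon\,S\log n=o(\log n)$, so one must pick $k'$ as the tied index maximising $\tilde\sublogp_k+\tilde\constp_k$ (at $\epsilon=0$) to ensure the remaining tied indices still satisfy~\ref{crit:less} rather than violate it.
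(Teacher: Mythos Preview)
Your outline matches the paper's: reparametrise the scaled vector, verify $i$-admissibility, choose $\eta$ to make all leading coefficients $\tilde\logp_k\le 0$ with equality somewhere, then tune $\epsilon$ for the sub-leading constraint. Two cosmetic differences: the paper iterates from $j$ down to $i$ one step at a time rather than jumping directly, and it uses intermediate-value arguments throughout rather than explicit formulae. Your closed form for $\eta_i$ is correct and arguably tidier than the paper's existence argument for $\eta$.

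The gap is in the $\epsilon$-step. First, the equation is not exactly linear in $\epsilon$: the $(\eta+\epsilon)\bar\beta_l$ summands and, when $\tilde\alpha_{k'}=0$, the term $\log\tilde\beta_{k'}=\log\bar\beta_{k'}+\log(\eta+\epsilon)+\text{const}$ also depend on $\epsilon$, so ``solve the linear equation'' needs an additional fixed-point or perturbation argument. More seriously, your proposed resolution of the tied case is incorrect. When several indices are tied, the coefficient of $-\epsilon S\log n$ in $\tilde\sublogp_k+\tilde\constp_k$ is $k-i+1$, not a common constant, so zeroing the expression at the index with the largest $\epsilon$-independent part $G_k$ need not push the others below zero. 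Concretely, with $i=1$, tied indices $2$ and $3$, and $G_2=0.9\log\log n$, $G_3=\log\log n$, your rule selects $k'=3$ and gives $\epsilon S\log n=G_3/3$; the $k=2$ expression then equals $G_2-\tfrac{2}{3}G_3\approx 0.23\log\log n>0$, violating~\ref{crit:less}. The paper's remedy avoids any explicit solving: one checks that for every tied $k$ the quantity $\tilde\sublogp_k+\tilde\constp_k$ tends to $+\infty$ for $\epsilon=-1/\omega$ and to $-\infty$ for $\epsilon=1/\omega$ (for a suitable $\omega\to\infty$ with $\omega f(n)=o(\log n)$, where $f(n)=o(\log n)$ bounds the $\epsilon$-independent contributions), so the continuous function $\epsilon\mapsto\max_k(\tilde\sublogp_k+\tilde\constp_k)$ has a zero at some $\epsilon=o(1)$, which simultaneously delivers the required equality at one index and the inequality at all others.
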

\noindent
Although Lemma~\ref{lem:criticalscaling} is 
intuitively obvious, its proof is rather
technical. 
We therefore delay the proof 
until
Appendix~\ref{sec:proofofcriticalscaling}. 

Using Lemma~\ref{lem:criticalscaling},
for general 
$i \in [j]$ we can consider the hitting time $\tim_i^*$
for the disappearance of the last
minimal obstruction $\hat{M}_{i,k}$.
More precisely, for the $j$-critical
direction $\barallp$ as in Theorem~\ref{thm:main},
consider the process  $(\cG_\tim)=(\cG(n,\tim \barallp))_{\tim \in [0,\tim_{\max}]}$
and for each $i\in[j]$ let
\begin{equation}\label{eq:timistar}
\tim_i^* \ :=\ 
\sup\{ \tim \in \mathbb{R}_{\ge0} \mid \cG_\tim \text{ contains a copy of } \hat{M}_{i,k}
\text{ for some } k \text{ with }i \le k \le d \}.
\end{equation}
Observe that for $i=j$, this matches
the definition of $\tim_j^*$ in 
Theorem~\ref{thm:main}.
We derive the following result
from Lemmas~\ref{lem:hittingtime_simplevers},
\ref{lem:jinterval}, and~\ref{lem:criticalscaling}.
\begin{cor} \label{cor:iintervals}
Let $\eps>0$ be a constant and $i \in [j]$. Define
\[ I_{i}(\eps) \ :=\  \left[ \frac{\eps}{n^{j-i+1}}, \tim_i^*\right).\]
Then,  whp 
\begin{enumerate}
\item $H^i(\cG_\tim;R)
\neq 0$ for every $\tim \in I_i(\eps)$; 
\label{iintervals:cohomgroup}
\item there exists a positive constant $\eta=\eta_i$ such that
$\tim_i^* = \frac{\eta + o(1)}{n^{j-i}}$.
\label{iintervals:timi*}
\end{enumerate}
\end{cor}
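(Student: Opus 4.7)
The plan is to deduce both statements by transferring the conclusions of Lemmas~\ref{lem:hittingtime_simplevers} and~\ref{lem:jinterval} through the rescaling supplied by Lemma~\ref{lem:criticalscaling}. The case $i=j$ requires no rescaling: statement~\ref{iintervals:cohomgroup} is literally Lemma~\ref{lem:jinterval}, and Lemma~\ref{lem:hittingtime_simplevers} (applied with any $\omega=\omega(n)\to\infty$ that is $o(\log n)$) gives $\tim_j^*=1+o(1)$ whp, so~\ref{iintervals:timi*} holds with $\eta_j=1$. So the substance is the case $i\in[j-1]$.

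For $i\in[j-1]$, I would invoke Lemma~\ref{lem:criticalscaling} to obtain $\eta_i>0$ and $\epsilon_i=\epsilon_i(n)=o(1)$ such that
\[
\barallp' \ :=\ \frac{\eta_i+\epsilon_i}{n^{j-i}}\,\barallp
\]
is an $i$-critical direction. The key observation is that the two processes $(\cG(n,\tim\barallp))_\tim$ and $(\cG(n,s\barallp'))_s$ describe \emph{the same} random sequence of simplicial complexes under the time change
\[
\tim \ =\  s\cdot\frac{\eta_i+\epsilon_i}{n^{j-i}},
\]
since $\tim\barallp=s\barallp'$ pointwise. In particular, writing $s_i^*$ for the analogue of~\eqref{eq:timistar} in the rescaled process, I have $\tim_i^*=s_i^*\cdot(\eta_i+\epsilon_i)/n^{j-i}$, and $H^i(\cG(n,\tim\barallp);R)=H^i(\cG(n,s\barallp');R)$ whenever $\tim,s$ are related as above.

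Since $\barallp'$ is $i$-critical, I can apply Lemma~\ref{lem:hittingtime_simplevers} with $j$ replaced by $i$ and $\barallp$ replaced by $\barallp'$ to conclude that whp $s_i^*=1+o(1)$; translating back through the time change gives $\tim_i^*=(\eta_i+o(1))/n^{j-i}$, which is~\ref{iintervals:timi*}. For~\ref{iintervals:cohomgroup}, I apply Lemma~\ref{lem:jinterval} again with $j\to i$ and $\barallp\to\barallp'$, choosing the constant $\eps' := \eps/(2\eta_i)>0$, to get that whp $H^i(\cG_s;R)\neq 0$ for every $s\in[\eps'/n,s_i^*)$. Under the time change, this interval becomes $[\eps'(\eta_i+\epsilon_i)/n^{j-i+1},\tim_i^*)$, and for $n$ sufficiently large $\eps'(\eta_i+\epsilon_i)\le 2\eps'\eta_i=\eps$, so $I_i(\eps)$ is contained in it, as required.

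The entire argument is essentially bookkeeping: the two non-trivial ingredients (Lemmas~\ref{lem:hittingtime_simplevers} and~\ref{lem:jinterval}) are already in place for \emph{$j$-critical} directions, and Lemma~\ref{lem:criticalscaling} lets me reinterpret the process as one driven by an \emph{$i$-critical} direction. The only point where I would pause to verify is the consistency of the time change with the definition of $\tim_i^*$ in~\eqref{eq:timistar} and the coupling of the process across reparametrisations; this follows immediately from the fact that $\tim\barallp=s\barallp'$ defines the \emph{same} edge-inclusion probabilities at matched times.
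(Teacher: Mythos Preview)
Your proposal is correct and follows essentially the same approach as the paper: for $i=j$ apply Lemmas~\ref{lem:jinterval} and~\ref{lem:hittingtime_simplevers} directly, and for $i\in[j-1]$ rescale via Lemma~\ref{lem:criticalscaling} to an $i$-critical direction and then apply those two lemmas with $j$ replaced by $i$. You have simply made explicit the time-change bookkeeping (the relation $\tim_i^*=s_i^*(\eta_i+\epsilon_i)/n^{j-i}$ and the choice $\eps'=\eps/(2\eta_i)$ ensuring interval containment) that the paper leaves implicit.
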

\noindent Indeed, Lemma~\ref{lem:hittingtime_simplevers} implies
that $\eta=\eta_j = 1$ for $i=j$ in statement~\ref{iintervals:timi*}
of Corollary~\ref{cor:iintervals}.

\begin{proof}
For any $i\in[j-1]$,
by Lemma~\ref{lem:criticalscaling} we can 
appropriately scale $\barallp$ to obtain
an $i$-critical direction 
$\frac{\eta+\epsilon}{n^{j-i}} \barallp$.
Thus, by 
Lemmas~\ref{lem:jinterval} and~\ref{lem:hittingtime_simplevers}
applied with $j$ replaced by $i$, we obtain
\ref{iintervals:cohomgroup} and 
\ref{iintervals:timi*}, respectively.

For $i=j$, Lemmas~\ref{lem:jinterval} and~\ref{lem:hittingtime_simplevers}
apply directly to $\barallp$.
\end{proof}

\medskip
\subsection{Supercritical case}
\label{sec:mainthm:sup}
In Theorem~\ref{thm:main}~\ref{main:supercritical} we consider
$\tim \in [\tim_j^*, \tim_{\max}]$.
By definition of $\tim_j^*$, we know that
whp in this range
there is no copy of the minimal obstruction
$\Mjkhat$ to \connectedness\
for any $j\le k \le d$, but
we also have to  
exclude other types of obstructions.
Indeed, in Section~\ref{sec:supercritical}
we prove the following.
\begin{lem} \label{lem:supercritical}
Whp 
for every $\tim \in [\tim_j^*,\tim_{\max}]$,
we have $H^j(\cG_\tim;R) = 0$.
\end{lem}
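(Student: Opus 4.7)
My plan is to prove the contrapositive: whp, for every $\tim \in [\tim_j^*, \tim_{\max}]$, no $j$-cochain on $\cG_\tim$ is a $j$-cocycle without being a $j$-coboundary. I would begin by taking, for a putative counterexample, a cochain $f$ representing a non-trivial element of $H^j(\cG_\tim;R)$ whose support $S$ has minimum cardinality over all such representatives. A standard argument shows that no proper subset of $S$ is itself the support of a non-trivial cocycle (otherwise modifying $f$ by a suitable coboundary would give a strictly smaller representative, contradicting minimality). By the definition of $\tim_j^*$, we may further restrict to the event that $\cG_\tim$ contains no copy of $\Mjkhat$ for any $k$ with $j\le k\le d$; thus $S$ cannot \emph{equal} a minimal obstruction and must be a strictly larger configuration.

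Next, I would apply Lemma~\ref{lem:traversability}, which guarantees that such a minimal support $S$ is \emph{traversable}: its $(j+1)$-simplices can be ordered $\sigma_1,\ldots,\sigma_s$ so that each $\sigma_{i+1}$ attaches to $\{\sigma_1,\ldots,\sigma_i\}$ through a controlled local configuration (for instance, sharing a $j$-shell). This lets us define a step-by-step search process that generates $S$, so that the number of traversable $S$ of size $s$ is bounded by the number of executions of this process, which is much smaller than the trivial bound $\binom{n^{j+1}}{s}$.

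With this structural bound in hand, the proof becomes a union bound. For each traversable $S$ of size $s$, the probability that $S$ supports a non-trivial cocycle in $\cG_\tim$ is controlled by the presence probabilities of the simplices in $S$, together with the boundary simplices forced by the cocycle condition, all expressed in terms of $\allp = \tim\barallp$. Small and large values of $s$ are handled separately: Lemma~\ref{lem:smallsupp} treats $s$ up to some threshold while Lemma~\ref{lem:largesupp} treats the tail. In each regime, (number of traversable supports) $\times$ (probability per support) must be shown to sum to $o(1)$, and the $j$-critical parametrisation of $\barallp$ is precisely what calibrates the $\bar p_k$ so that this sum converges.

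Finally, to upgrade from a single $\tim$ to the whole interval $[\tim_j^*, \tim_{\max}]$, I would use the fact that the process $(\cG_\tim)$ changes only at the scaled birth times of simplices, of which there are at most $O(n^{d+1})$ in the relevant range; thus it suffices to verify the statement at each such transition time and take a further union bound. The main obstacle will be ensuring that the bounds from Lemmas~\ref{lem:smallsupp} and~\ref{lem:largesupp} are small enough --- at least polynomially small in $n$ with sufficient margin --- to survive this extra polynomial union bound. This is delicate in the supercritical regime, where $\tim$ is only barely above $1$ and the expected counts of potential obstructions are not far from constant; the argument must therefore exploit that any support strictly larger than an $\Mjkhat$ is more constrained to appear than the minimal obstruction itself, producing the extra decay needed.
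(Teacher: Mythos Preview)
Your overall architecture --- take a minimum-support bad cocycle, show its support is traversable, and split into small and large support sizes via Lemmas~\ref{lem:smallsupp} and~\ref{lem:largesupp} --- matches the paper's approach for a \emph{fixed} time $\tim$. The gap is in the last step, upgrading to the whole interval $[\tim_j^*,\tim_{\max}]$ by a union bound over the $O(n^{d+1})$ transition times. This works for Lemma~\ref{lem:largesupp}, whose bound is designed to be $n^{-ch}$ with $h$ at our disposal; but it fails for Lemma~\ref{lem:smallsupp}. First, that lemma is proved only for $\tim = 1+o(1)$ (the estimates use $p_k = (1+o(1))\bar p_k$), whereas $\tim_{\max} = 1/\bar p_d$ can be polynomially large in $n$. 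Second, even in the regime where it applies, the probability bound it yields is only $n^{-c+o(1)}$ with $c$ of order $\sum_k \bar\alpha_k$, which can be an arbitrarily small positive constant depending on $\barallp$ --- not enough to beat the $n^{d+1}$ union bound. The ``extra decay'' you hope for from supports strictly larger than a minimal obstruction gives precisely this one extra factor of $\bq^{\,\tim}\approx n^{-\sum_k\bar\alpha_k}$, no more.

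The paper sidesteps this by a genuinely different mechanism for the process statement. It first uses Lemmas~\ref{lem:smallsupp} and~\ref{lem:largesupp} only once, at $\tim_j^*$ (which whp satisfies $\tim_j^* = 1+o(1)$), to conclude $\cN_{\tim_j^*}=\emptyset$. It then argues forward in time deterministically: Lemma~\ref{lem:nonewobstructions} shows that the first simplex whose birth creates a new element of $\cN_\tim$ would have to form a \emph{local $j$-obstacle} (a $k$-simplex containing $k-j+1$ many $K$-localised $j$-simplices), and Lemma~\ref{lem:obstacles} shows that whp no new local $j$-obstacles arise after time $\tim' < \tim_j^*$. This replaces your union bound over all $\tim$ by a single structural statement about the process, and is the missing idea in your proposal.
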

Observe that since the choice of $j$ was arbitrary, Lemma~\ref{lem:supercritical} 
also holds when $j$ is replaced 
by any $i\in [j-1]$. 

\bigskip
\subsection{Proof of Theorem~\ref{thm:main}}
We now apply the auxiliary results
of Sections~\ref{sec:mainthm:hitandsub} 
and~\ref{sec:mainthm:sup} to
prove the Hitting Time Theorem
(Theorem~\ref{thm:main}).

\smallskip
\noindent
\ref{main:hitting}  
Fix a function $\omega$ of $n$ which tends to
infinity as $n \to \infty$. To show that
whp $\tim_j^* = 1 +o ( \omega / \log n)$, it
suffices to apply Lemma~\ref{lem:hittingtime_simplevers} with any
function $\omega'$ which tends to infinity
but satisfies $\omega' = o(\omega)$,
e.g.\ picking $\omega' = \sqrt{\omega}$
will suffice. 

\smallskip
\noindent
\ref{main:subcritical}
For $\eps>0$, define 
\[I_0(\eps)\ :=\ \left[0, \frac{\eps}{n^j} \right].  \]
By definition of $j$-admissibility 
(Definition~\ref{def:criticalp}),
for every $k \in [d]$ we have
$\bar p_k = O( \log n / n^{k-j})$.
Thus, by Lemma~\ref{lem:topconn}
we can choose $\eps$ small enough such that 
whp 
\begin{equation}\label{eq:H0not}
\text{for every } 
\tim \in  I_0(\eps),
\qquad H^0(\cG_\tim;R) \neq R .
\end{equation}

Now consider the intervals $I_i(\eps)$.
By Corollary~\ref{cor:iintervals}~\ref{iintervals:timi*}, we can choose
$\eps$ small enough (namely
$\eps< \eta_i$ for every $i \in [j-1]$) 
such that whp for each such $i$
\[ I_i(\eps) \cap I_{i+1}(\eps) =
\left[\frac{\eps}{n^{j-i}}, \tim_i^* \right) 
\neq \emptyset, \]
and thus
\begin{equation} \label{eq:intervals}
\bigcup\nolimits_{i=0}^j I_i(\eps) = [0, \tim_j^*).
\end{equation}

By Corollary~\ref{cor:iintervals}~\ref{iintervals:cohomgroup}, for any
 $\eps>0$ whp
$H^i(\cG_\tim;R) \neq 0$ for every $\tim \in
I_i(\eps)$. Thus, by choosing $\eps$ such that
conditions
\eqref{eq:H0not} and \eqref{eq:intervals}
hold simultaneously, whp
the process $(\cG_\tim)$ is not
\connected\ for all $\tim \in [0,\tim_j^*)$, as required.

\smallskip
\noindent
\ref{main:supercritical}
Recalling that $\bar p_k = O(\log n/ n^{k-j})$
for any $k\in [d]$ by Definition~\ref{def:criticalp},
Lemma~\ref{lem:topconn} implies that we can find
a positive constant $\vartheta$ such that
whp $H^0(\cG_\tim;R) = R$ 
for every $\tim \in [\frac{\vartheta}{n^j}
,\tim_{\max}]$, which whp contains 
the interval $[\tim_j^*,\tim_{\max}]$
by Lemma~\ref{lem:hittingtime_simplevers}.

Furthermore, by Lemma~\ref{lem:supercritical}
applied for every $i\in [j]$, whp $H^i(\cG_\tim;R) = 0$ for 
every $\tim \in [\tim_i^*,\tim_{\max}]$,
which contains $[\tim_j^*,\tim_{\max}]$
by Corollary~\ref{cor:iintervals}~\ref{iintervals:timi*}. Thus,
whp the process $(\cG_\tim)$ is 
\connected\ for every 
$\tim \in [\tim_j^*,\tim_{\max}]$, as required.
\qed

\bigskip
\section{Minimal obstructions}\label{sec:minimal}

In this section we define copies of 
$\Mjkhat$ (Definitions~\ref{def:Mjkhat} 
and~\ref{def:Mjjhat}) and we explain why these objects 
can be interpreted as \emph{minimal obstrucions}
to \connectedness.

For the rest of the paper, let $j\in[d-1]$ be fixed. We first introduce the following necessary
concepts.

\begin{definition}\label{def:flower}
  Let $k$ be an integer with 
  $j \le k \le d$.
  Given a $k$-simplex $K$ in a $d$-dimensional simplicial complex
  $\cG$, we say that a collection $\mathcal{F} = \{P_0, \ldots,
  P_{k-j}\}$ of $j$-simplices forms a \emph{$j$-flower in $K$}\label{comb:flower} (see
  Figure~\ref{fig:flower}) if
  \renewcommand{\theenumi}{(F\arabic{enumi})}
  \begin{enumerate}
  \item\label{flower:union}
    $K = \bigcup_{i=0}^{k-j} P_i\,$;
  \item\label{flower:centre}
   there exists  $C$ with $|C|=j$ that is contained in
   $P_i$ for every $ i \in [k-j]_0$.
  \end{enumerate}
  \renewcommand{\theenumi}{(\roman{enumi})}
  We call the $j$-simplices $P_i$ the \emph{petals} and the set $C$
  the \emph{centre} of the $j$-flower $\mathcal{F}$.
 When $j$ is clear from the context, we often refer to the $j$-flower $\mathcal{F}$ simply as a \emph{flower}.
\end{definition}
\begin{figure}[htbp] 
    \centering
    \begin{tikzpicture}[scale=0.72]
      \draw[thick] [opacity=0.3, fill=gray] (-7.5,1.5) -- (-6,1.1);
      \draw[thick] [opacity=0.3, fill=gray] (-7.5,1.5) -- (-9.3,1);
      \draw[thick] [opacity=0.3, fill=gray] (-7.5,1.5) -- (-6.3,3.5);
              
      \draw[fill] (-7.5,1.5) circle [radius=0.1] node [above left]    
        {$c_1$};
      \draw[fill] (-6,1.1)  circle [radius=0.04] node [right]
        {$w_2$};
      \draw[fill] (-6.3,3.5) circle [radius=0.04]  node [below right]
        {$w_1$}; 
      \draw[fill] (-9.3,1) circle [radius=0.04] node [above]
        {$w_0$};

      \draw[thick,opacity=0.4] (-9.8,1.9) to [out=80,in=180] (-7.4,4);
      \draw[thick,opacity=0.4] (-7.4,4) to [out=0,in=100] (-4.65,1.9);
      \draw[thick,opacity=0.4] (-4.65,1.9) to [out=280,in=10]
        (-5.65,0.6);
      \draw[thick,opacity=0.4] (-5.65,0.6) to [out=200,in=0]
        (-7.6,0.7);
      \draw[thick,opacity=0.4] (-7.6,0.7) to [out=180,in=290]
        (-9.8,0.8);
      \draw[thick,opacity=0.4] (-9.8,0.8) to [out=110,in=260]
        (-9.8,1.9);
   
      \node at (-5.4,3.65) {$K$};

      \node at (-7.25,-0.3) {(a)};

      \filldraw [opacity=0.5, fill=gray] (-1.5,1.5) -- (0,1.1) --
        (-3.3,1) -- cycle; 
      \filldraw [opacity=0.5, fill=gray] (-1.5,1.5) -- (0,1.1) --
        (-0.3,3.5) -- cycle; 

      \draw[ultra thick] (-1.5,1.5) -- (0,1.1);  
                
      \draw[fill] (-1.5,1.5) circle [radius=0.1] node [above left]
        {$c_1$};
      \draw[fill] (0,1.1)  circle [radius=0.1] node [right]
        {$c_2$};
      \draw[fill] (-0.3,3.5) circle [radius=0.04]  node [below right]
        {$w_1$}; 
      \draw[fill] (-3.3,1) circle [radius=0.04] node [above]
        {$w_0$}; 
       
      \draw[thick,opacity=0.4] (-3.8,1.9) to [out=80,in=180] (-1.4,4);
      \draw[thick,opacity=0.4] (-1.4,4) to [out=0,in=100] (1.35,1.9);
      \draw[thick,opacity=0.4] (1.35,1.9) to [out=280,in=10]
        (0.35,0.6);
      \draw[thick,opacity=0.4] (0.35,0.6) to [out=200,in=0]
        (-1.6,0.7);
      \draw[thick,opacity=0.4] (-1.6,0.7) to [out=180,in=290]
        (-3.8,0.8);
      \draw[thick,opacity=0.4] (-3.8,0.8) to [out=110,in=260]
        (-3.8,1.9);
   
      \node at (0.6,3.65) {$K$};

      \node at (-1.25,-0.3) {(b)};

      \filldraw [opacity=0.5, fill=gray] (5.7,3.5) -- (6,1.1) --
        (2.7,1) -- cycle;
      \filldraw [opacity=0.5, fill=black] (4.5,1.5) -- (6,1.1) -- (5.7,3.5) --
        cycle; 
      \draw[very thick] (4.5,1.5) -- (6,1.1) -- (5.7,3.5) -- cycle; 
                
      \draw[fill] (4.5,1.5) circle [radius=0.1] node [above left]
        {$c_1$};
      \draw[fill] (6,1.1)  circle [radius=0.1] node [right]
        {$c_2$};
      \draw[fill] (5.7,3.5) circle [radius=0.1]  node [below right]
        {$c_3$}; 
      \draw[fill] (2.7,1) circle [radius=0.04] node [above] {$w_0$};

      \draw[dashed,opacity=0.5] (4.5,1.5) -- (2.7,1);     
       
      \draw[thick,opacity=0.4] (2.2,1.9) to [out=80,in=180] (4.6,4);
      \draw[thick,opacity=0.4] (4.6,4) to [out=0,in=100] (7.35,1.9);
      \draw[thick,opacity=0.4] (7.35,1.9) to [out=280,in=10]
        (6.35,0.6);
      \draw[thick,opacity=0.4] (6.35,0.6) to [out=200,in=0] (4.4,0.7);
      \draw[thick,opacity=0.4] (4.4,0.7) to [out=180,in=290]
        (2.2,0.8);
      \draw[thick,opacity=0.4] (2.2,0.8) to [out=110,in=260]
        (2.2,1.9);
   
      \node at (6.6,3.65) {$K$};
    
      \node at (4.75,-0.3) {(c)};    
    \end{tikzpicture}
    \caption{Examples of $j$-flowers in a $k$-simplex $K$, for $k=3$
      and $j=1,2,3$.\newline
      (a) The $1$-flower in $K$ with centre $C=\{c_1\}$ (bold black)
      and petals $P_i = C \cup \{w_i\}$, $i=0,1,2$ (grey).\newline
      (b) The $2$-flower in $K$ with centre $C=\{c_1,c_2\}$ (bold
      black) and petals $P_i = C \cup \{w_i\}$, $i=0,1$
      (grey).\newline
      (c) The $3$-flower in $K$ with centre $C=\{c_1,c_2,c_3\}$
      (bold black) and the unique petal $P_0 = C \cup \{w_0\} = K$ (grey).}
    \label{fig:flower}
  \end{figure}
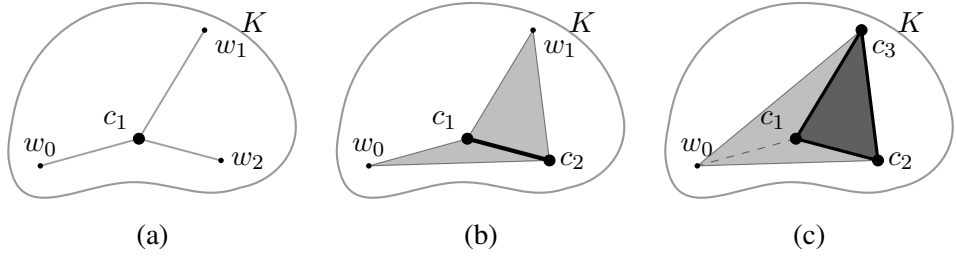

  Observe that for each $k$-simplex $K$ and each $(j-1)$-simplex $C
  \subset K$, there is a unique $j$-flower in $K$ with centre $C$,
  namely
  \begin{equation}\label{eq:flowerinKwithcentreC}
    \cF(K,C) \ :=\  \{C \cup \{w\} \mid w \in K \setminus C\}.
  \end{equation}
Note that if $k=j$, then any choice of a centre
$C\subset K$ produces the same flower
$\cF(K,C) = \{K\}$.

\begin{definition}\label{def:Mjkhat}
 Let $k$ be an integer with $j+1 \le k \le d$. 
 We say that a
  $4$-tuple $(K,C,w,a)$
  forms
  a \emph{copy of $\Mjkhat$}\label{comb:Mjkhat}
  (see Figure~\ref{fig:Mjkhat}) 
  in a simplicial complex $\cG$ if
  \renewcommand{\theenumi}{(M\arabic{enumi})}
  \begin{enumerate}    
  \item\label{Mjkhat:simplex}
    $K$ is a $k$-simplex in $\cG$;
  \item\label{Mjkhat:flower}
    $C$ is a $(j-1)$-simplex in $K$ such that every simplex of
    $\cG$ that contains a petal of the flower $\mathcal{F} =
    \mathcal{F}(K,C)$ is itself contained in $K$;
  \item\label{Mjkhat:shell}
    $w \in K\setminus C$ and $a\in[n]\setminus K$ are such that
    $C \cup \{w\} \cup \{a\}$ is a $j$-shell in $\cG$.
  \end{enumerate}
  \renewcommand{\theenumi}{(\roman{enumi})}
  We call the $j$-simplex $C \cup \{w\}$ the \emph{base} and $a$ the
  \emph{apex vertex} of the $j$-shell $C\cup \{w\} \cup \{a\}$. Every
  other $j$-simplex in the $j$-shell $C\cup\{w\}\cup\{a\}$ is called a \emph{side}
  of the $j$-shell.
\end{definition}

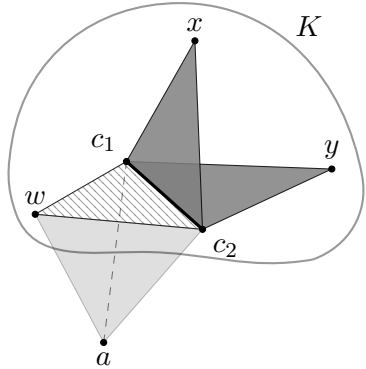
\begin{figure}[htbp]
\centering
  \begin{tikzpicture}[scale=1.0]
    \filldraw [opacity=0.85,fill=gray] (1.8,2.7) -- (4.5,2.6) --
      (2.8,1.8) -- cycle;
    \filldraw [opacity=0.85,fill=gray] (1.8,2.7) -- (2.7,4.3) --
      (2.8,1.8) -- cycle;
    \draw[fill,opacity=0.85,pattern=north west lines,pattern color=gray]
      (1.8,2.7) -- (0.6,2) -- (2.8,1.8) -- cycle;

    \filldraw [opacity=0.25,fill=gray] (0.6,2) -- (1.5,0.3) --
      (2.8,1.8) -- cycle;
    \draw[dashed,opacity=0.5] (1.5,0.3)  -- (1.8,2.7);

    \draw[very thick] (1.8,2.7) -- (2.8,1.8);
    \draw[fill] (1.8,2.7) circle [radius=0.04] node [above left]
      {$c_1$};
    \draw[fill] (2.8,1.8) circle [radius=0.04] node [below right]
      {$c_2$};

    \draw[fill] (4.5,2.6) circle [radius=0.04]
    node [above] {$y$};
    \draw[fill] (2.7,4.3) circle [radius=0.04]
    node [above] {$x$};
    \draw[fill] (0.6,2) circle [radius=0.04]
     node [above] {$w$};
    \draw[fill] (1.5,0.3) circle [radius=0.04]  node [below] {$a$};

    \draw[thick,opacity=0.4] (0.3,3.0) to [out=80,in=180] (2.5,4.8);
    \draw[thick,opacity=0.4] (2.5,4.8) to [out=0,in=100] (4.9,2.5);
    \draw[thick,opacity=0.4] (4.9,2.5) to [out=280,in=20] (4.25,1.4);
    \draw[thick,opacity=0.4] (4.25,1.4) to [out=190,in=0] (2.0,1.5);
    \draw[thick,opacity=0.4] (2.0,1.5) to [out=180,in=290] (0.35,2.0);
    \draw[thick,opacity=0.4] (0.35,2.0) to [out=110,in=260] (0.3,3.0);
    \node at (4.2,4.5) {$K$};
  \end{tikzpicture}
  \caption{A copy of $\Mjkhat$, for $k=4$ 
  and $j=2$.\newline
  (a) The $k$-simplex $K$ contains 
  the flower $\cF (K,C)$
    with centre $C=\{c_1,c_2\}$, whose
    petals $C \cup \{w\}$, $C \cup \{x\}$, and
    $C \cup \{y\}$  are not present 
    in any simplex 
    which is not contained in $K$.\newline
   (b) The $(j+2)$-set $C\cup\{w\}\cup\{a\}$ 
   is a $j$-shell with apex vertex $a 
    \not\in K$ and whose
    base is the petal
    $C\cup \{w\}$.}
  \label{fig:Mjkhat}
\end{figure}
We aim to give an analogous definition for the case 
$k=j$ and it will be convenient to use unified terminology.
However, as observed before, if $K$ is a $(j+1)$-set, then a
$j$-flower $\cF(K,C)$ is always equal to $K$ itself, 
independently of the choice of the centre $C$ in $K$.
In particular, in this case condition~\ref{Mjkhat:flower} 
simply says that $K$ is an \emph{isolated $j$-simplex}
in $\cG$, i.e.\ a $j$-simplex that is not contained 
in any other simplex of $\cG$.
This means that given $K$,
the sets that would be required to be 
simplices or not in $\cG$ do not change
for different choices of the centre $C$, and 
therefore we do not want to consider
two copies of $\Mjjhat$ to be distinct if they
share the same $j$-simplex but have
different centres.
For this reason, to define $\Mjjhat$ we will use 
the following `canonical' choice for the centre.
\begin{definition}\label{def:Mjjhat}
We say that a $4$-tuple $(K,C,w,a)$ forms a 
\emph{copy of $\Mjjhat$} in $\cG$ if
  \begin{itemize}    
  \item
    $K$ is an isolated $j$-simplex in $\cG$;
  \item
    $a\in[n]\setminus K$ is such that
    $K \cup \{a\}$ is a $j$-shell in $\cG$;
  \item
    $C$ consists of the first $j$ vertices of $K$
 in the increasing order on $[n]$, 
    and $w$ is the last vertex of $K$ in this order.
  \end{itemize}
The notions of \emph{base}, \emph{apex vertex}, and 
  \emph{side} are analogous to Definition~\ref{def:Mjkhat}.
\end{definition}
It is easy to see that a copy of $\Mjjhat$ in 
Definition~\ref{def:Mjjhat} satisfies conditions~\ref{Mjkhat:simplex}--\ref{Mjkhat:shell}
of Definition~\ref{def:Mjkhat}.

Let us now define a `reduced' version of $\Mjkhat$, denoted by $\Mjk$,
by omitting the condition~\ref{Mjkhat:shell} on the 
$j$-shell $C\cup\{w\}\cup\{a\}$ in Definitions~\ref{def:Mjkhat} and~\ref{def:Mjjhat}.

\begin{definition}\label{def:Njk}
  Let $k$ be an integer with $j+1 \le k \le d$. A pair $(K,C)$ is
  called 
  a \emph{copy of $\Njk$}\label{comb:Mjk} if it satisfies the following
  conditions.
  \renewcommand{\theenumi}{(M\arabic{enumi})}
  \begin{enumerate}
  \item\label{Njk:simplex}
    $K$ is a $k$-simplex in $\cG$;
  \item\label{Njk:flower}
    $C$ is a $(j-1)$-simplex in $K$ such that every simplex of
    $\cG$ that contains a petal of the flower $\mathcal{F} =
    \mathcal{F}(K,C)$ is contained in $K$.
  \end{enumerate}
  \renewcommand{\theenumi}{(\roman{enumi})}
\end{definition}

Similarly to
$\Mjkhat$, we also define an analogous concept for the case $k=j$. 

\begin{definition} \label{def:Njj}
A pair $(K,C)$ is called a \emph{copy of $\Njj$} if
\begin{itemize}
\item $K$ is an isolated $j$-simplex;
\item $C$ consists of
    the first $j$ vertices in $K$ in the increasing
    order on $[n]$.
\end{itemize} 
\end{definition}
We will see later
(Corollary~\ref{cor:NjkMjk}) that the shell
required for \ref{Mjkhat:shell} in 
Definition~\ref{def:Mjkhat} (and the analogous condition in Definition~\ref{def:Mjjhat})
is very likely to exist if
$\tim$ is `large enough', which will be the case well before the critical
range for the disappearance of $\Mjk$. Thus the presence of $\Mjkhat$ and of 
$\Mjk$ are essentially equivalent events for sufficiently large $\tim$, allowing us to switch our focus
to the simpler $\Mjk$.

We also define the following random variables, 
which we
will later use to count the 
number of minimal obstructions in the complex
(e.g.\ Lemma~\ref{lem:expectedNjk}).

\begin{definition}\label{def:variables}
 For $j \le k \le d$, let
\[ \varNjk = \varNjk(\tim) \ :=\  | \{ \text{ copies of } \Njk \text{ in } \cG_\tim \text{ } \} |\] \label{var:Njk}
and
\[ \varMjkhat = \varMjkhat(\tim)\ :=\  | \{ \text{ copies of } \Mjkhat \text{ in } \cG_\tim \text{ } \}|.\] \label{var:Mjkhat}
\end{definition}

We now justify our interpretation of $\Mjkhat$ as a minimal obstruction
to \connectedness, first observing that it is certainly an obstruction (Corollary~\ref{cor:obstruction}).
To show this, we define a $j$-cocycle
which is not a $j$-coboundary---the function
we choose will depend only on the underlying
copy $(K,C)$ of $\Njk$. 

\begin{definition} \label{def:f_M,r}
Let $M=(K,C)$ be a copy of $\Njk$ in a simplicial complex.
\begin{enumerate}
\item We denote by $\ord (K,C)$ the
(unique)
  ordering $v_0,\dotsc,v_{n-1}$ of all vertices in $[n]$ such that $C = \{v_0,\dotsc,v_{j-1}\}$,
  $K
  = \{v_0,\dotsc,v_{k}\}$, 
  and furthermore the vertices within $C$, 
  within $K\setminus C$, and within 
  $[n]\setminus K$ are ordered
  according to the  
  increasing order in $[n]$.
  \item Given $\ord (K,C)$, for any $r \in R$ 
  we define the following $j$-cochain
 $f_{M,r}$. 
 For every
  ordered $j$-simplex $\sigma = [v_{i_0},\dotsc,v_{i_j}]$ with $i_0 <
  \dotsb < i_j$, we set
  \begin{equation*}
    f_{M,r}(\sigma) \ :=\ 
    \begin{cases}
      r & \text{if }\sigma\in\mathcal{F}(K,C), \text{ i.e.\ } i_s=s \text{ for }0\le s \le j-1
           \text{ and }j\le i_j\le k,\\
      0_R & \text{otherwise},
    \end{cases}
  \end{equation*}
  and we extend this function to all $j$-simplices with different
  orderings so as to obtain a $j$-cochain.
\end{enumerate} 
\end{definition}

\begin{prop}\label{prop:fMrarisesfromMjk}
Let $M=(K,C)$ be a copy of $\Njk$ 
in a simplicial complex $\cG$ and let $f$ be 
a $j$-cochain whose support is contained within
the flower $\cF(K,C)$. Then the following hold.
\begin{enumerate}
\item \label{fMrari:cocycle} 
The $j$-cochain $f$ is a $j$-cocycle if and only if $f=f_{M,r}$ 
for some $r\in R$.
\item \label{fMrari:bad} 
Suppose that there exist $w \in K 
\setminus C$ and $a\in [n]\setminus K$ such that
$(K,C,w,a)$ is a copy of $\Mjkhat$ in $\cG$. Then
$f$ is a $j$-cocycle but 
not a $j$-coboundary if and only if
$f=f_{M,r}$ for some $r\in R\setminus \{ 0_R\}$.
\end{enumerate}
\end{prop}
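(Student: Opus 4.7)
The plan is to prove part (a) first, and then use it for part (b).

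For the ``if'' direction of part (a), I would verify that $\delta^j f_{M,r}=0$ by going through the $(j+1)$-simplices $\sigma$ of $\cG$ case by case. If $\sigma$ does not contain $C$, none of its $j$-faces contains $C$, hence none is a petal of $\cF(K,C)$, and all terms in $(\delta^j f_{M,r})(\sigma)$ vanish. If $\sigma=C\cup\{u,v\}$ contains $C$, I would split on how many of $u,v$ lie in $K\setminus C$: if both do, then the two petal-faces $C\cup\{u\}$ and $C\cup\{v\}$ appear in positions $j$ and $j+1$ of the ordering induced by $\ord(K,C)$, contributing $(-1)^j r + (-1)^{j+1} r = 0$, while the other faces are not petals; if exactly one, say $u$, lies in $K\setminus C$, then $\sigma$ contains the petal $C\cup\{u\}$ but $v\notin K$, contradicting condition~\ref{Njk:flower}, so this case is empty; if neither lies in $K\setminus C$, no face is a petal and the sum is zero. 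For $k=j$ the same argument is trivial because $K$ is isolated.

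For the ``only if'' direction of part (a), since $\mathrm{supp}(f)\subseteq\cF(K,C)$, the cochain $f$ is determined by values $r_\ell:=f([v_0,\dotsc,v_{j-1},v_\ell])$ for $j\le \ell\le k$. For $k\ge j+1$ I would test the cocycle condition on each $(j+1)$-simplex $C\cup\{v_{\ell_1},v_{\ell_2}\}\subseteq K$ with $j\le \ell_1<\ell_2\le k$: only the two faces $C\cup\{v_{\ell_1}\}$ and $C\cup\{v_{\ell_2}\}$ lie in $\mathrm{supp}(f)$, and the resulting identity $(-1)^j r_{\ell_2}+(-1)^{j+1}r_{\ell_1}=0$ forces $r_{\ell_1}=r_{\ell_2}$, so all $r_\ell$ equal a common $r\in R$ and $f=f_{M,r}$. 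For $k=j$ there is a unique petal and any such $f$ is automatically $f_{M,r}$ for $r=f(K)$.

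For part (b), part (a) already reduces the claim to showing that $f_{M,r}$ is a $j$-coboundary if and only if $r=0_R$. The direction $r=0_R$ is trivial since $f_{M,0_R}\equiv 0=\delta^{j-1}0$. For $r\ne 0_R$, I would invoke Lemma~\ref{lem:shellobstruction} with $A:=C\cup\{w\}\cup\{a\}$: condition~\ref{Mjkhat:shell} (or its analogue in Definition~\ref{def:Mjjhat}) guarantees that $A$ is a $j$-shell in $\cG$, and among its $(j+1)$-subsets only $C\cup\{w\}$ is a petal of $\cF(K,C)$ and hence in $\mathrm{supp}(f_{M,r})$; the subset $C\cup\{a\}$ is excluded because $a\notin K$, and the subsets $(C\setminus\{c_i\})\cup\{w,a\}$ do not contain $C$ and so cannot be petals. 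Lemma~\ref{lem:shellobstruction} therefore shows $f_{M,r}$ is not a $j$-coboundary.

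The main obstacle will be the bookkeeping with signs and orderings in part (a), specifically verifying that the two surviving terms in the cocycle identity really cancel under the ordering $\ord(K,C)$; once this is written down carefully using the fact that $C$ occupies the first $j$ positions and $K\setminus C$ the next $k-j+1$ positions, the rest of the argument is essentially combinatorial and uses condition~\ref{Njk:flower} exactly once, to rule out the mixed case where $\sigma$ contains a petal but is not contained in $K$.
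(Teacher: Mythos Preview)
Your proposal is correct and follows essentially the same approach as the paper: both arguments use condition~\ref{Njk:flower} to reduce the cocycle check to $(j+1)$-simplices $\rho$ with $C\subset\rho\subseteq K$, compute $(\delta^j f)(\rho)$ on such $\rho$ to see that the petal values must all coincide, and then invoke Lemma~\ref{lem:shellobstruction} for part~\ref{fMrari:bad}. The only difference is cosmetic---the paper handles both directions of~\ref{fMrari:cocycle} with a single computation rather than separating the ``if'' and ``only if'' cases.
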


\begin{proof}
\ref{fMrari:cocycle} First observe that if $k=j$ then 
$\cF(K,C)=\{K\}$ and $K$ is an isolated $j$-simplex by
Definition~\ref{def:Njj}.
Hence a $j$-cochain with support contained in $K$ 
is necessarily of the form $f_{M,r}$, where $r \in R$
is the value it assigns to (the appropriate
ordering of) $K$, and is a
$j$-cocycle since no $(j+1)$-simplex contains $K$.

Now consider $k$ with $k\ge j+1$.
 Let $K=\{v_0,\ldots,v_k\}$ and 
$C=\{v_0,\ldots,v_{j-1}\}$ according to $\ord(K,C)$,
and let $\rho$ be a $(j+1)$-simplex.
By~\ref{Njk:flower},
$(\delta^j f)(\rho) = 0_R$ 
follows immediately
unless $C \subset \rho \subseteq K$.
We may therefore assume that $\rho = \{v_0, \ldots, v_{j-1},
v_{i_1},v_{i_2}\}$, with $j\le i_1 \le i_2 \le k$. 
Then we have
\[
  (\delta^j f)(\rho) = (-1)^j f([v_0,\dotsc,v_{j-1},v_{i_2}])
    + (-1)^{j+1} f([v_0,\dotsc,v_{j-1},v_{i_1}]).
\]
This implies that $f$ is a $j$-cocycle if and only if
on each petal $[v_0,\ldots,v_{j-1},v_i]$ with $j \le i \le k$,
it takes the same value $r \in R$, i.e.\ $f=f_{M,r}$.

\noindent \ref{fMrari:bad} If $f$ is a $j$-cocycle 
and not a $j$-coboundary, by \ref{fMrari:cocycle}
we already know that there exists $r \in R$ such that 
$f=f_{M,r}$. If $r=0_R$, then $f\equiv
0_R$ and thus 
$f$ is a $j$-coboundary, 
a contradiction.

Conversely, if $f=f_{M,r}$ for some $r\in R\setminus \{0_R\}$, 
then $f$ is a $j$-cocycle by \ref{fMrari:cocycle}. 
Furthermore, property~\ref{Mjkhat:shell} 
in Definition~\ref{def:Mjkhat}
implies that $C\cup\{w\}\cup\{a\}$
  is a $j$-shell that meets the support of $f$ in precisely one
  petal, namely in $C\cup\{w\}$. 
  Thus Lemma~\ref{lem:shellobstruction}
  yields that $f$ is not a $j$-coboundary.
\end{proof}

\begin{cor}\label{cor:obstruction}
Suppose that in a simplicial complex $\cG$ the $4$-tuple
$M=(K,C,w,a)$ forms a copy of $\Mjkhat$. Then 
$H^j(\cG;R) \neq 0$.
\end{cor}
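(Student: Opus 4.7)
The plan is to deduce this corollary directly from Proposition~\ref{prop:fMrarisesfromMjk}\ref{fMrari:bad}, which is essentially tailored for this purpose. The key point is that the hypothesis of Corollary~\ref{cor:obstruction} immediately provides all ingredients needed for that proposition: since $M=(K,C,w,a)$ is a copy of $\Mjkhat$, the pair $(K,C)$ is a copy of $\Njk$ in $\cG$ by Definitions~\ref{def:Mjkhat} and~\ref{def:Njk} (resp.\ Definitions~\ref{def:Mjjhat} and~\ref{def:Njj} in the case $k=j$), and moreover the required pair of vertices $w\in K\setminus C$, $a\in[n]\setminus K$ with $C\cup\{w\}\cup\{a\}$ a $j$-shell is present by condition~\ref{Mjkhat:shell}.

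First I would invoke the standing assumption that $R$ has at least two elements, so that we can select some $r\in R\setminus\{0_R\}$. With this $r$ in hand, I would consider the $j$-cochain $f_{M,r}$ from Definition~\ref{def:f_M,r}, whose support is by construction contained in the flower $\cF(K,C)$. Then Proposition~\ref{prop:fMrarisesfromMjk}\ref{fMrari:bad} applies directly and yields that $f_{M,r}$ is a $j$-cocycle but not a $j$-coboundary, i.e.\ $f_{M,r}\in \ker\delta^j\setminus\im\delta^{j-1}$.

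Finally, I would conclude by the very definition of cohomology: the class $[f_{M,r}]\in H^j(\cG;R)=\ker\delta^j/\im\delta^{j-1}$ is non-trivial, and therefore $H^j(\cG;R)\neq 0$, as required.

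There is really no substantive obstacle here; all of the combinatorial and algebraic work has been done in Lemma~\ref{lem:shellobstruction} and Proposition~\ref{prop:fMrarisesfromMjk}. The only thing to be (briefly) careful about is the use of the hypothesis $|R|\ge 2$ to guarantee a non-zero coefficient $r$, since without this the cochain $f_{M,r}$ would be identically zero and the argument would collapse.
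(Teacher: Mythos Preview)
Your proposal is correct and follows essentially the same approach as the paper's own proof: pick $r\in R\setminus\{0_R\}$, apply Proposition~\ref{prop:fMrarisesfromMjk}\ref{fMrari:bad} to the cochain $f_{M,r}$, and conclude that its cohomology class is non-zero. The only difference is that you spell out the hypothesis check (that $(K,C)$ is a copy of $\Njk$ and that $w,a$ exist) a bit more explicitly than the paper does.
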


\begin{proof}
For any $r\in R\setminus \{ 0_R\}$, the function $f_{M,r}$ defined in Definition~\ref{def:f_M,r} is a
$j$-cocyle but not a $j$-coboundary 
by Proposition~\ref{prop:fMrarisesfromMjk}~\ref{fMrari:bad}, 
i.e.\ the cohomology class of $f_{M,r}$ is a
non-zero element of $H^j(\cG;R)$.
\end{proof}

The next lemma shows that copies of $\Mjkhat$ are also (in a natural sense)
\emph{minimal} obstructions. Given a $k$-simplex $K$ and a collection
$\cS$ of $j$-simplices, define $\cS_K$ to be the set of $j$-simplices of
$\cS$ contained in $K$.

\begin{lem} \label{lem:minobst} 
  Let $\cS$ be the support of a $j$-cocycle $f$ in a $d$-complex $\cG$.
  Then for each $k$ with $j+1 \le k \le d$ 
  and each $k$-simplex $K$,
  \begin{enumerate}
  \item\label{minobst:size}
    either $\cS_K = \emptyset$ or both $|\cS_K|\geq k-j+1$
    and $\bigcup_{\sigma \in \cS_K} \sigma = K$;
  \item\label{minobst:flower}
    if $|\cS_K| = k-j+1$, then $\cS_K$ forms a $j$-flower in $K$.
  \end{enumerate}
\end{lem}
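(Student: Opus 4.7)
The entire argument rests on a single consequence of the cocycle condition $\delta^j f = 0$: for any $(j+1)$-simplex $\rho$ in $\cG$, the signed sum of $f$ over its $j$-faces vanishes. In particular, no $(j+1)$-simplex $\rho$ can have \emph{exactly one} $j$-face in $\cS$, since otherwise the sum would reduce to $\pm f(\tau) \ne 0_R$. I will apply this principle to carefully chosen $(j+1)$-simplices contained in $K$.

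For part~\ref{minobst:size}, assume $\cS_K \ne \emptyset$ and fix $\sigma \in \cS_K$. For each $v \in K\setminus \sigma$, consider the $(j+1)$-simplex $\rho_v := \sigma \cup \{v\}$, whose $j$-faces are $\sigma$ together with the sets $(\sigma\setminus\{u\})\cup\{v\}$ for $u \in \sigma$. The principle above forces at least one further face $\tau_v = (\sigma\setminus\{u_v\})\cup\{v\}$ to lie in $\cS_K$. Distinct $v$ yield distinct $\tau_v$, and each $\tau_v$ differs from $\sigma$ (as $v \notin \sigma$), so $|\cS_K| \ge 1 + (k-j) = k-j+1$, and $\sigma$ together with the $\tau_v$ covers every vertex of $K$, giving $\bigcup_{\tau\in\cS_K}\tau = K$.

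For part~\ref{minobst:flower}, assume $|\cS_K| = k-j+1$. Then the map $v \mapsto \tau_v$ from $K\setminus\sigma$ to $\cS_K \setminus \{\sigma\}$ is a bijection, so each $u_v$ is uniquely determined, and every element of $\cS_K \setminus \{\sigma\}$ contains exactly one vertex of $K\setminus\sigma$. I claim $u_v$ is independent of $v$: calling this common vertex $u^*$ and setting $C := \sigma\setminus\{u^*\}$ (so $|C|=j$), it follows that $\cS_K = \{C\cup\{w\} : w \in K\setminus C\} = \cF(K,C)$, which is the desired $j$-flower. To prove the claim, fix distinct $v, v' \in K\setminus\sigma$ and any $u \in \sigma$, and apply the cocycle principle to $\rho := (\sigma\setminus\{u\})\cup\{v,v'\}$. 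Its $j$-faces are $(\sigma\setminus\{u\})\cup\{v\}$, $(\sigma\setminus\{u\})\cup\{v'\}$, and faces containing both $v$ and $v'$; the last group cannot lie in $\cS_K$, since its elements contain two vertices of $K\setminus\sigma$. Hence the cocycle identity at $\rho$ reduces to $\pm f((\sigma\setminus\{u\})\cup\{v\}) \pm f((\sigma\setminus\{u\})\cup\{v'\}) = 0_R$, so either both of these faces lie in $\cS_K$ or neither does. Choosing $u = u_v$ makes the first equal $\tau_v \in \cS_K$, forcing the second into $\cS_K$; by uniqueness it must equal $\tau_{v'}$, which yields $u = u_{v'}$, so $u_v = u_{v'}$.

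The main obstacle is this last step: the cocycle relation at $\rho_v$ alone pairs $\sigma$ with some $\tau_v$ but says nothing about which vertex $u_v\in\sigma$ gets swapped out. The key idea is to introduce the auxiliary $(j+1)$-simplices $(\sigma\setminus\{u\})\cup\{v,v'\}$ that span \emph{two} outside vertices at once, and to exploit the tight size hypothesis $|\cS_K| = k-j+1$ to ensure that every $j$-face containing both $v$ and $v'$ lies outside $\cS_K$; this is what couples the choices of $u_v$ for different $v$ and forces a common centre.
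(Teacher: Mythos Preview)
Your proof is correct and follows essentially the same approach as the paper's: for part~\ref{minobst:size} both arguments adjoin each outside vertex to a fixed $\sigma\in\cS_K$ and use the cocycle identity to produce a new support element containing that vertex, and for part~\ref{minobst:flower} both use the auxiliary $(j+1)$-simplex spanned by $j$ vertices of $\sigma$ together with two outside vertices to force a common centre. Your version is slightly more explicit in tracking the ``swapped-out'' vertex $u_v$ and proving it is independent of $v$, whereas the paper phrases the same conclusion via intersections ($\sigma_1\cap\sigma_i = \sigma_0\cap\sigma_1$), but the underlying idea is identical.
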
 

Note in particular that Lemma~\ref{lem:minobst} implies that the support $\cS$ of any non-trivial
$j$-cocycle satisfies at least one of the following three properties:
\begin{itemize}
\item $\cS_K$ is empty for every $k$-simplex $K$;
\item $|\cS|\ge k-j+2$;
\item $|\cS|= k-j+1$ and $\cS$ forms a $j$-flower in some $k$-simplex $K$.
\end{itemize}
Since in the third case a $j$-shell containing a petal, with its single additional apex vertex,
is the simplest (though by no means the only) way of ensuring that
the corresponding $j$-cocycle is not a $j$-coboundary, this justifies
why a copy of $\Mjkhat$ may be considered a minimal obstruction
to the vanishing of the $j$-th cohomology group.

\begin{proof}[Proof of Lemma~\ref{lem:minobst}]
  \ref{minobst:size} Suppose $\cS_K \ne \emptyset$ and let $\sigma_0 \in
  \cS_K$. Denote the vertices of $\sigma_0$ and of $K\setminus\sigma_0$
  by $u_0,\dotsc,u_j$ and by $v_1,\dotsc,v_{k-j}$, respectively. For
  each $i\in[k-j]$, the ordered $(j+1)$-simplex $[u_0,\dotsc,u_j,v_i]$
  has to be mapped to $0_R$ by $\delta^jf$ and thus the underlying
  unordered simplex $\sigma_0 \cup \{v_i\}$ contains some $j$-simplex
  $\sigma_i \in \cS_K \setminus \{ \sigma_0\}$,
  which therefore contains $v_i$. The simplices $\sigma_0, \ldots,
  \sigma_{k-j}$ are distinct, because each $v_i$ lies in $\sigma_i$ but in
  no other $\sigma_{i'}$. Therefore  $|\cS_K| \ge k-j+1$ and 
   \[ K \supseteq \bigcup\nolimits_{i=0}^{k-j} \sigma_i \supseteq \sigma_0 \cup \{v_1,\ldots, v_{k-j}\} = K .\]
  \noindent
  \ref{minobst:flower} Suppose now that
  $\cS_K=\{\sigma_0,\ldots,\sigma_{k-j}\}$ with
  $\sigma_0,\dotsc,\sigma_{k-j}$ defined as above. For $2 \le i \le 
  k-j$ (if such indices exist), the $(j+1)$-simplex $\sigma \ :=\  \sigma_1\cup\{v_i\}$ contains
  $\sigma_1$, but no $\sigma_{i'}$ with $i' \notin \{1,i\}$. By the
  choice of $f$ as a $j$-cocycle, $\delta^j f$ maps each ordering of
  $\sigma$ to $0_R$ and thus $\sigma$ has to contain at least two
  elements of $\cS_K$, implying that $\sigma_i \subset \sigma$. This
  means that
  \begin{equation*}
    \sigma_1\cap\sigma_i = \sigma\setminus\{v_1,v_i\} =
    \sigma_0\cap\sigma_1.
  \end{equation*}
  As this holds for all $i$, $\cS_K$ forms a flower in $K$ with centre
  $C = \sigma_0\cap\sigma_1$.
\end{proof}

The proofs of our main results 
(Theorems~\ref{thm:main} and~\ref{thm:criticalwindow})
are significantly more difficult than might naively be expected due to the fact that 
both the presence of copies of $\Mjkhat$ and 
\connectedness\ in $\cG_\tim$  
are \emph{not} monotone 
properties. 
Indeed,
we observe that in Definition~\ref{def:Mjkhat} while \ref{Mjkhat:simplex} and 
\ref{Mjkhat:shell} are monotone increasing
properties, property~\ref{Mjkhat:flower}
is monotone decreasing.
Thus, in principle the random process
$(\cG_\tim)$ could oscillate between
being \connected\ or not, as the following example shows.

\begin{example}\label{ex:nonmono}
We consider the case $j=1$. 
  Let $\cG$ be the simplicial complex 
  on vertex set $\{1,2,3,4\}$ generated
  by the hypergraph with edges $\{1,2\}$,
  $\{2,3\}$, and $\{3,4\}$, as in 
  Figure~\ref{fig:nonmon}. It is easy to see
  that $\cG$ is
  $1$-cohom-connected and thus
  contains no copies of $\hat M_{1,k}$ 
  for any $k\ge 1$. 
  If we add the $2$-simplex $\{1,3,4\}$ 
  (and its downward-closure) to $\cG$,
  the $4$-tuple 
  $(\{1,3,4\},\{3\},1,2)$ creates
  a copy of $\hat M_{1,2}$ and 
  thus we obtain a complex $\cG'$ which is
  not $1$-cohom-connected. 
   Adding the $2$-simplex
  $\{1,2,3\}$ to $\cG'$ yields the 
  complex $\cG''$ which is
  again $1$-cohom-connected and 
  thus contains no copies of $\hat M_{1,k}$ 
  for any $k\ge 1$.
\end{example}

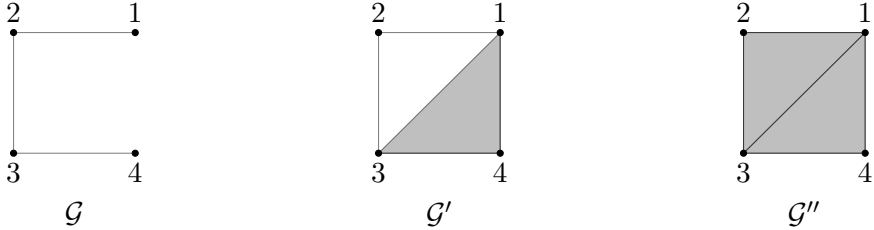
\begin{figure}[htbp]
  \centering
  \begin{tikzpicture}[scale=0.8]
  
    \draw[opacity=0.5]  (3.5,0) -- (1.5,0) -- (1.5,2) -- (3.5,2);
    
    \draw[fill] (3.5,2) circle [radius=0.05] node [above] {$1$};  
    \draw[fill] (1.5,2) circle [radius=0.05] node [above] {$2$};
    \draw[fill] (1.5,0) circle [radius=0.05] node [below] {$3$};
    \draw[fill] (3.5,0) circle [radius=0.05] node [below] {$4$};
            
    \node at (2.5,-1) {$\cG$};

    \begin{scope}[xshift=6cm]
    
    \filldraw [opacity=0.5,fill=gray] (1.5,0) -- (3.5,0) -- (3.5,2) -- cycle;

    \draw[opacity=0.5] (3.5,0) -- (1.5,0) -- (1.5,2) -- (3.5,2) -- cycle;

    \draw[fill] (3.5,2) circle [radius=0.05] node [above] {$1$};  
    \draw[fill] (1.5,2) circle [radius=0.05] node [above] {$2$};
    \draw[fill] (1.5,0) circle [radius=0.05] node [below] {$3$};
    \draw[fill] (3.5,0) circle [radius=0.05] node [below] {$4$};

    \node at (2.5,-1) {$\cG'$};

    \end{scope}

    \begin{scope}[xshift=12cm]
      
    \filldraw [opacity=0.5,fill=gray] (1.5,0) -- (3.5,0) -- (3.5,2) -- cycle;
    \filldraw [opacity=0.5,fill=gray] (1.5,0) -- (1.5,2) -- (3.5,2) -- cycle;
    
    \draw[opacity=0.5] (3.5,0) -- (1.5,0) -- (1.5,2) -- (3.5,2) -- cycle;

    \draw[fill] (3.5,2) circle [radius=0.05] node [above] {$1$};  
    \draw[fill] (1.5,2) circle [radius=0.05] node [above] {$2$};
    \draw[fill] (1.5,0) circle [radius=0.05] node [below] {$3$};
    \draw[fill] (3.5,0) circle [radius=0.05] node [below] {$4$};

    \node at (2.5,-1) {$\cG''$};
    
    \end{scope}
  \end{tikzpicture}
  \caption{Adding simplices might create new copies of $\Mjkhat$ or
    destroy existing ones.}
  \label{fig:nonmon}
\end{figure}

In order to determine the critical range for the disappearance of copies
of $\Njk$, in Lemma~\ref{lem:expectedNjk} 
we will calculate the expectation of $\varNjk$,
i.e.\ the number of copies of $\Njk$
(Definition~\ref{def:variables}).
We first estimate the probability of~\ref{Njk:flower}. Define
  \begin{equation}\label{eq:q}
    \bq = \bq(\bar\allp,n,j) \ :=\  \prod\nolimits_{k=j+1}^{d}(1- \bar p_k)^{\binom{n-j-1}{k-j}}. 
  \end{equation}
Observe that $\bq$ is the probability that a given set of $j+1$ vertices
(which may or may not form a $j$-simplex)
is not in any $k$-simplex of $\cG(n,\barallp)$ (i.e.\
at time $\tim=1$) for any $k\ge j+1$.
Moreover if $\bar\allp$ is a $j$-admissible 
direction, by Definition~\ref{def:criticalp} we have 
\begin{equation} \label{eq:q_computing}
\bq  = (1+o(1))
\exp\left(-\sum\nolimits_{k=j+1}^d
\left(\bar\alpha_k \log n+ \frac{\bar \beta_k}{n^{\bar \gamma_k}}\right)\right) ,
\end{equation}
  because by~\ref{parameters:zero} 
   at least one of
  $\bar\alpha_k,\bar\gamma_k$ is zero and thus $\frac{\bar\alpha_k}{n^{\bar\gamma_k}}
  = \bar\alpha_k$.

The next lemma implies that
for any $\tim = O(1)$, the probability of~\ref{Njk:flower} in
 $\cG_{\tim}$ is
approximately $\bq^{\; \tim (k-j+1)}$---we state the lemma in a slightly more general setting, since
we will need to apply it in different situations (for example when calculating the second moment
of $\varNjk$).

\begin{prop}\label{prop:localisedprob}
Let $\tim=O(1)$ and let $\allp = (p_1,\ldots,
p_d) = \tim \bar\allp$ for
a $j$-admissible direction $\bar\allp$.
Let $\cJ$ be a collection of $O(1)$ many $(j+1)$-sets in $[n]$ and let $\cS$ be a collection of
$O(1)$ many sets of vertices of size between $j+2$ and $d+1$.
Let $A$ be the event that no $(j+1)$-set of $\cJ$ lies in any $k$-simplex $K$ of $\cG_\tim$
with $j+1\le k \le d$ and $K\notin \cS$. Then
$$\Pr (A)= (1+o(1))\bq^{\; \tim|\cJ|}.$$
\end{prop}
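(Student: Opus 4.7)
My plan is to reduce the event $A$ to a statement about which $(k+1)$-edges of the underlying non-uniform hypergraph $G(n,\tim\barallp)$ are present, then exploit the independence of edges of different sizes and compare logarithms via a Taylor expansion. To this end, observe first that a $(k+1)$-set $K$ is a $k$-simplex of $\cG_\tim$ iff it lies in some edge of $G(n,\tim\barallp)$. Hence $A$ fails iff some edge $L$ of $G(n,\tim\barallp)$ (of some size $k+1$ with $j+1 \le k \le d$) contains a \emph{valid} subset $K$---one with $K \supseteq J$ for some $J \in \cJ$, $|K| \ge j+2$, and $K \notin \cS$. Crucially, for any $(k+1)$-set $L$ with $L \supseteq J$ for some $J \in \cJ$ and $L \notin \cS$, the choice $K := L$ is itself valid, so $L$ being an edge automatically makes $A$ fail. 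Letting $\mathcal{U}_k$ denote the set of $(k+1)$-sets $L$ such that some valid $K \subseteq L$ exists, a short inclusion-exclusion---using that distinct $J, J' \in \cJ$ satisfy $|J \cup J'| \ge j+2$, so the number of $(k+1)$-supersets of both is $O(n^{k-j-1})$---together with $|\cS| = O(1)$ yields
\[
|\mathcal{U}_k| \;=\; |\cJ|\binom{n-j-1}{k-j} + O(n^{k-j-1}).
\]

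Next, since edges of different sizes in $G(n,\allp)$ are independent and each $(k+1)$-set is present independently with probability $p_k = \tim\bar p_k$,
\[
\Pr(A) \;=\; \prod_{k=j+1}^d (1-p_k)^{|\mathcal{U}_k|}.
\]
By admissibility (Remark~\ref{rem:pkprob}) one has $\bar p_k = O(\log n / n^{k-j}) = o(1)$, and since $\tim = O(1)$, Taylor expansion gives
\[
\log(1-\tim\bar p_k) \;=\; \tim \log(1-\bar p_k) + O(\bar p_k^2).
\]
Substituting this together with the estimate for $|\mathcal{U}_k|$ into $\log\Pr(A) = \sum_{k=j+1}^d |\mathcal{U}_k|\log(1-p_k)$, the main term collects to $\tim|\cJ|\sum_{k=j+1}^d \binom{n-j-1}{k-j}\log(1-\bar p_k) = \tim|\cJ|\log\bq$, while the error contributions are $O(n^{k-j}\bar p_k^2) = O(\log^2 n / n^{k-j})$ and $O(\bar p_k \cdot n^{k-j-1}) = O(\log n/n)$, both $o(1)$. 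Since only $O(1)$ many values of $k$ contribute, the total error is $o(1)$, and exponentiating yields $\Pr(A) = (1+o(1))\bq^{\tim|\cJ|}$.

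The main obstacle is the set-theoretic bookkeeping in the first step: one must carefully unwind the downward-closure definition of a simplex in $\cG_\tim$ to produce the clean family $\mathcal{U}_k$, and verify that the $O(1)$ ambiguities arising from edges lying in $\cS$ (where one must check whether a smaller valid $K \subsetneq L$ exists) perturb the count only by an amount absorbed into $O(n^{k-j-1})$. After that, the remaining calculation is standard and the $o(1)$ error bounds are entirely driven by admissibility.
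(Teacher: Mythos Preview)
Your proof is correct and follows essentially the same approach as the paper: count the $(k+1)$-sets containing some $J\in\cJ$ (up to $O(n^{k-j-1})$ corrections from overlaps and $\cS$), use independence to get a product formula, and verify that the error terms are $o(1)$ via the admissibility bound $\bar p_k=O(\log n/n^{k-j})$. If anything, you are slightly more careful than the paper in unwinding the downward-closure definition of a $k$-simplex (your families $\mathcal{U}_k$ and the discussion of edges $L\in\cS$), whereas the paper passes directly to the product $\prod_k(1-p_k)^{|\cJ|\binom{n-j-1}{k-j}-O(n^{k-j-1})}$ without dwelling on this point.
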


The proof of this proposition is straightforward and appears in Appendix~\ref{sec:proofoflocalisedprob}.
We now apply Proposition~\ref{prop:localisedprob} to calculate the 
expectation of $\varNjk$, for $\tim =O(1)$.

Suppose first that $k \ge j+1$. There are $\binom{n}{k+1}\binom{k+1}{j}
  = (1+o(1))\frac{n^{k+1}}{j!(k-j+1)!}$ ways to choose a pair $(K,C)$ that might
  form a copy of $\Njk$. The $(k+1)$-set $K$ forms a $k$-simplex
  in $\cG_\tim$ with probability $p_k$ (recall that $p_k\le 1$ by Remark~\ref{rem:pkprob}).
  By Proposition~\ref{prop:localisedprob} applied with $\cJ=\cF(K,C)$ being the
  set of petals and with $\cS = \{\sigma\subseteq K \,:\, |\sigma| \ge 
  j+2\}$, the probability that~\ref{Njk:flower} holds
  is $(1+o(1))\bq^{\; \tim(k-j+1)}$. Therefore,
  \begin{equation}\label{eq:expXjk}
    \EE(\varNjk) = (1+o(1))\frac{n^{k+1}p_k}{j!(k-j+1)!}\bq^{\; \tim (k-j+1)}.
  \end{equation}

The case $k=j$ is very similar, but for a pair $(K,C)$ 
  that forms a copy of $\Njj$ we only require that the 
  set $K$ is an isolated $(j+1)$-simplex,
  since the centre $C$ is uniquely defined 
  (see Definition~\ref{def:Njj}).
   On the other hand, we need to be careful if $p_j>1$,
  since then $p_j$ must be replaced by $1$ in any probability calculations.
We have
\begin{equation}\label{eq:expXjj}
\EE(\varisol) = (1+o(1)) 
\frac{n^{j+1} \min\{p_j,1\}}{(j+1)!} \bq^{\; \tim}.
\end{equation}
In the next lemma we use \eqref{eq:expXjk} and
\eqref{eq:expXjj} to obtain an explicit
expression for $\log \left( \EE(\varNjk) \right)$, which we 
will need in Section~\ref{sec:findingminimal}.
Recall that given a vector $\allp$, the parameters $\logp_k$, $\sublogp_k$, and 
$\constp_k$
are as defined in Definition~\ref{def:generalparameters}.

\begin{lem}\label{lem:expectedNjk}
 Let $\allp = (p_1,\ldots,p_d) = \tim \bar\allp$ for a $j$-admissible
  direction $\bar\allp$, where $\tim=O(1)$, but $\tim=\Omega(n^{-c})$
  for some positive constant $c$.
  Then the number
  $\varNjk$ of copies of $\Njk$ in $\cG_{\tim}$ satisfies
  \begin{equation*}
    \log \left(\EE(\varNjk) \right) = \logp_k\log n + \sublogp_k + \constp_k + o(1)
  \end{equation*}
  for all $j \le k \le d$ with $\bar p_k \not= 0$.
\end{lem}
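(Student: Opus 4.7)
The plan is to start from the explicit expressions for $\EE(\varNjk)$ given in~\eqref{eq:expXjk} (for $k\ge j+1$) and~\eqref{eq:expXjj} (for $k=j$), take logarithms, and match the resulting expression termwise against the definitions of $\logp_k$, $\sublogp_k$, and $\constp_k$ from Definition~\ref{def:generalparameters}. The key preliminary step is to translate the $\barallp$-parameters into $\allp$-parameters under the scaling $\allp = \tim\barallp$. Using the formula $\bar p_k = \frac{\bar\alpha_k \log n + \bar\beta_k}{n^{k-j+\bar\gamma_k}}(k-j)!$, one checks directly from Definition~\ref{def:generalparameters} that $\gamma_k = \bar\gamma_k$, $\alpha_k = \tim\bar\alpha_k$, and $\beta_k = \tim\bar\beta_k$ (here one uses $\tim=O(1)$ together with $\tim=\Omega(n^{-c})$ to ensure the limit defining $\alpha_k$ and the supremum defining $\gamma_k$ behave as expected, and exploits~\ref{parameters:zero}--\ref{parameters:sublog} to kill lower-order terms).

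Next I compute $\tim(k-j+1)\log\bq$. From~\eqref{eq:q_computing},
\[
  \log \bq \;=\; -\sum\nolimits_{i=j+1}^d \bar\alpha_i\log n - \sum\nolimits_{i=j+1}^d \frac{\bar\beta_i}{n^{\bar\gamma_i}} + o(1),
\]
and multiplying by $\tim(k-j+1)$ and substituting $\alpha_i = \tim\bar\alpha_i$, $\beta_i=\tim\bar\beta_i$ gives
\[
  \tim(k-j+1)\log\bq \;=\; -(k-j+1)\sum\nolimits_{i=j+1}^d \alpha_i\log n - (k-j+1)\sum\nolimits_{i=j+1}^d \frac{\beta_i}{n^{\gamma_i}} + o(1).
\]

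Now for the case $k\ge j+1$, I take the logarithm of~\eqref{eq:expXjk} to obtain
\[
  \log\EE(\varNjk) = (k+1)\log n + \log p_k - \log(j!) - \log((k-j+1)!) + \tim(k-j+1)\log\bq + o(1),
\]
and split according to whether $\alpha_k\ne 0$ or $\alpha_k=0$. If $\alpha_k\ne 0$ then $\gamma_k=0$ and $\log p_k = \log((k-j)!) - (k-j)\log n + \log\alpha_k + \log\log n + o(1)$ (using $|\beta_k|=o(\log n)$); using $\log((k-j)!) - \log((k-j+1)!) = -\log(k-j+1)$, everything collects into the claimed form with $\sublogp_k$ containing the $\log\log n$ term and $\constp_k = -\log(j!)-\log(k-j+1)+\log\alpha_k$. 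If $\alpha_k=0$ (so $\beta_k>0$ subpolynomial, as $\bar p_k\ne 0$), then $\log p_k = \log((k-j)!) + \log\beta_k - (k-j+\gamma_k)\log n$, producing the $-\gamma_k$ contribution to $\logp_k$, the $\log\beta_k$ contribution to $\sublogp_k$, and $\constp_k = -\log(j!)-\log(k-j+1)$.

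For the case $k=j$, I start from~\eqref{eq:expXjj}. First observe that $\bar p_j>1$ forces $\bar\alpha_j\ne 0$ (since otherwise $\bar p_j = \bar\beta_j/n^{\bar\gamma_j}$ is subpolynomial), and conversely $\bar p_j\le 1$ forces $\bar\alpha_j=0$. In the subcase $\bar p_j\le 1$ one has $\min\{p_j,1\}=p_j$, $\alpha_j=0$, and $\log p_j = \log\beta_j - \gamma_j\log n$, yielding the extra $\log\beta_j$ term in $\sublogp_j$ and $\constp_j=-\log((j+1)!)$. In the subcase $\bar p_j>1$ eventually $p_j>1$ so $\min\{p_j,1\}=1$ and $\log p_j$ contributes nothing; since $\gamma_j=0$, this also matches $\logp_j \log n + \sublogp_j + \constp_j$ with no extra term in $\sublogp_j$. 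The main obstacle is bookkeeping across these three subcases and making sure the $\tim$-factors and the cap at $1$ are tracked correctly; this is why the proof splits along $(k=j)$ versus $(k\ge j+1)$ and along $\alpha_k$ being zero or not. No genuinely new estimate is needed beyond~\eqref{eq:expXjk},~\eqref{eq:expXjj},~\eqref{eq:q_computing}, and the scaling relations above.
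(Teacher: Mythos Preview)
Your overall approach coincides with the paper's: take logarithms of~\eqref{eq:expXjk} and~\eqref{eq:expXjj}, expand $\log\bq^{\,\tim}$ via~\eqref{eq:q_computing}, and match against Definition~\ref{def:generalparameters}. The case $k\ge j+1$ goes through. (A minor imprecision: $\alpha_k$ is by definition a \emph{limit}, hence a constant, so writing $\alpha_k=\tim\bar\alpha_k$ is not quite right when $\tim$ varies with $n$; but this is harmless, since what you actually use is the identity $p_kn^{k-j}/(k-j)!=(\alpha_k\log n+\beta_k)/n^{\gamma_k}$, which holds by definition of $\beta_k$.)

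The case $k=j$, however, has a genuine gap. Your claim that ``$\bar p_j>1$ forces $\bar\alpha_j\ne 0$'' is false: take $\bar\alpha_j=\bar\gamma_j=0$ and $\bar\beta_j=\sqrt{\log n}$, which satisfies~\ref{parameters:zero}--\ref{parameters:sublog} and gives $\bar p_j=\sqrt{\log n}>1$ with $\bar\alpha_j=0$. More importantly, the definition of $\sublogp_j$ branches on whether $p_j>1$, not on whether $\bar p_j>1$, and since $p_j=\tim\bar p_j$ with $\tim$ allowed anywhere in $[\Omega(n^{-c}),O(1)]$, neither implication ``$\bar p_j\le 1\Rightarrow p_j\le 1$'' nor ``$\bar p_j>1\Rightarrow$ eventually $p_j>1$'' is valid in general (e.g.\ $\bar p_j=2$, $\tim=1/4$ gives $p_j=1/2$; or $\bar p_j=3/4$, $\tim=2$ gives $p_j=3/2$). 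The fix, which is what the paper does, is to split directly on $p_j\le 1$ versus $p_j>1$: if $p_j\le 1$ then $\alpha_j=\lim p_j/\log n=0$ and $\log\min\{p_j,1\}=\log p_j=-\gamma_j\log n+\log\beta_j$; if $p_j>1$ then necessarily $\gamma_j=0$ (otherwise $p_j=o(1)$) and $\log\min\{p_j,1\}=0$. In each case the resulting expression matches the corresponding branch in the definition of $\sublogp_j$.
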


The proof of Lemma~\ref{lem:expectedNjk} consists of (standard, but involved)
technical calculations and therefore
is deferred to Appendix~\ref{sec:proofofexpectedNjk}.

Recall that in our main Theorems~\ref{thm:main} 
and~\ref{thm:criticalwindow} we consider a
$j$-critical direction $\bar\allp$
which in particular is a $j$-admissible direction (cf.\ 
Definitions~\ref{def:criticalp} and~\ref{def:criticaldirection}).
Thus Proposition~\ref{prop:localisedprob} and 
Lemma~\ref{lem:expectedNjk} are applicable.
By Lemma~\ref{lem:expectedNjk}, heuristically the critical range for the disappearance
of copies of $\Njk$ is when $\logp_k\log n + \sublogp_k = O(1)$, that is,
$\logp_k = 0$ and $\sublogp_k = O(1)$. This justifies the conditions~\ref{crit:less} and~\ref{crit:equal}
in Definition~\ref{def:criticaldirection},
which together with Lemma~\ref{lem:expectedNjk}
yield that for $\allp = \barallp$,
\begin{equation}\label{eq:expMjkbar}
\begin{array}{lcr}
\EE(\varNjkbar) = 1+o(1) &\text{and } &\EE(\varNjk) \le 1+o(1) \text{ for all other indices } k.
\end{array}
\end{equation}
In other words, heuristically
$\barallp$ is in a critical range for the disappearance of copies of
$\Njkbar$, while for all other $k$, $\barallp$ is either in or already
beyond the critical range for the disappearance of copies of $\Njk$. We
will see later (Corollary~\ref{cor:NjkMjk}) that in this range, whp all
copies $(K,C)$ of $\Njk$ can be extended to copies $(K,C,w,a)$ of $\Mjkhat$.
Thus $\barallp$ is also in the critical range for the disappearance of minimal obstructions.

Recall that in Theorem~\ref{thm:main}, we consider 
\begin{equation*}
  \tim_j^* \ :=\  \sup\{ \tim \in \mathbb{R}_{\ge 0} \mid \cG_{\tim} \text{ contains a copy of }\Mjkhat \text{ for some } k \text{ with } j \le k \le d \}.
\end{equation*}
In other words, $\tim_j^*$ is the scaled birth time of a
simplex whose appearance causes the last minimal obstruction to
disappear. We denote the dimension of this obstruction by $\ell$
(i.e.\ let $\ell$ be the index such that this obstruction is a copy of $\Mjlhat$). \label{param:ell}
For future reference, we collect the definitions of the special indices
$\bar k,k_0,\ell$ which we have fixed so far.

\begin{definition}\label{def:indices}
  Let $\bar k,k_0,\ell$ be integers such that
  \begin{enumerate}
  \item \label{def:indices:bark} $j \le \bar k \le d$ and $\bar\logp_{\bar k}\log n + \bar\sublogp_{\bar k} + \bar\constp_{\bar k} = 0$
  (see~\ref{crit:equal});
  \item \label{def:indices:kzero} $j+1 \le k_0 \le d$ and $\bar\alpha_{k_0} \not= 0$ (see~\ref{parameters:k0});
  \item \label{def:indices:ell} at time $\tim_j^*$, a (last) copy of $\Mjlhat$ vanishes.
  \end{enumerate}
\end{definition}

\bigskip
\section{Finding minimal obstructions}\label{sec:findingminimal}

To prove Lemma~\ref{lem:jinterval},
the strategy is to show that whp a copy of 
$\Mjkhat$ (for some $j \le k \le d$) exists 
in $\cG_\tim$ for every $\tim \in I_j(\eps) = 
[\eps/n, \tim_j^*)$.
Therefore in this section we study the behaviour
of the minimal obstructions $\Mjkhat$.

We start by showing that
at the beginning of the interval
$I_j(\eps)$
we will already
have a growing number of copies of $\Mjkzerohat$, where $k_0$ is as in
Definition~\ref{def:indices}~\ref{def:indices:kzero}.

\begin{lem}\label{lem:firstMjk}
  Let $\eps>0$ be constant. If 
  $\tim =
  \frac{\eps}{n}$, then whp $\cG_{\tim}=\cG(n,\tim \barallp)$ contains $\Theta((\log n)^{j+2})$ copies of
  $\Mjkzerohat$ whose associated copies of $\Njkzero$ are all distinct.
\end{lem}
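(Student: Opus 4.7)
The plan is to apply the first and second moment methods to $\varMjkzerohat$. First I estimate $\EE(\varMjkzerohat)$. By Definition~\ref{def:indices}~\ref{def:indices:kzero} we have $\bar\alpha_{k_0}>0$, which together with~\ref{parameters:zero} forces $\bar\gamma_{k_0}=0$, giving $p_{k_0}=\tim\bar p_{k_0}=\Theta(\log n/n^{k_0-j+1})$ at $\tim=\eps/n$. Since $\tim\to 0$, we have $\bq^{\,\tim(k_0-j+1)}=1-o(1)$, so the calculation leading to~\eqref{eq:expXjk} yields $\EE(\varNjkzero)=\Theta(n^j\log n)$.

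Next I estimate the probability that a fixed copy $(K,C)$ of $\Njkzero$ extends to a copy of $\Mjkzerohat$ via some $w\in K\setminus C$ and $a\in[n]\setminus K$. A direct computation in the spirit of~\eqref{eq:q_computing}, but at $\tim=\eps/n$, shows that the probability $q^*$ that a fixed $(j+1)$-set is a $j$-simplex of $\cG_\tim$ is $\Theta(\log n/n)$, dominated by the contribution of $k$-edges with $\bar\alpha_k\neq 0$. For each candidate $(w,a)$, the $j+1$ sides of the shell $C\cup\{w\}\cup\{a\}$ pairwise share only the vertex $a$, and the events that the sides are $j$-simplices are asymptotically independent; hence the joint probability of~\ref{Mjkhat:shell} is $(1+o(1))(q^*)^{j+1}=\Theta((\log n/n)^{j+1})$. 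Summing over the $\Theta(n)$ choices of $(w,a)$, the expected number of extensions per $\Njkzero$ is $\Theta((\log n)^{j+1}/n^j)=o(1)$. A similar estimate shows that~\ref{Njk:flower} and~\ref{Mjkhat:shell} are asymptotically independent, since a simultaneous violation would require specific $k$-edges of negligible probability at this time; combining, $\EE(\varMjkzerohat)=\Theta((\log n)^{j+2})$.

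For concentration I apply Chebyshev by expanding $\EE(\varMjkzerohat^2)$ over ordered pairs of candidate copies, stratified by vertex overlap. Disjoint pairs contribute $(1+o(1))\EE(\varMjkzerohat)^2$, while every additional shared vertex saves a factor of $n$ in the counting without a comparable gain in joint probability, so overlapping terms sum to $o(\EE(\varMjkzerohat)^2)$. Thus whp $\varMjkzerohat=\Theta((\log n)^{j+2})$, and since the expected number of extensions per $\Njkzero$ is $o(1)$, by Markov whp at most $o(\EE(\varMjkzerohat))$ copies share an underlying $\Njkzero$ pair, so we can select $\Theta((\log n)^{j+2})$ copies with pairwise distinct $\Njkzero$.

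The main obstacle is the overlap bookkeeping in the second moment, complicated by~\ref{Mjkhat:flower} being monotone \emph{decreasing}; however, at $\tim=\eps/n$ every forbidden-simplex event has probability $1-o(1)$, so this non-monotonicity only contributes negligible error and the case analysis is manageable rather than structural.
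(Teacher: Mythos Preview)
Your approach is the same as the paper's---first and second moment for $\varMjkzerohat$, plus a separate argument for distinctness of the underlying $\Njkzero$. The first moment computation is correct and matches the paper (which writes $r := \sum_k p_k n^{k-j}/(k-j)! = \Theta(\log n/n)$ for your $q^*$ and obtains $\EE(\varMjkzerohat) = (1+o(1)) p_{k_0} r^{j+1} n^{k_0+2}/(j!(k_0-j)!) = \Theta((\log n)^{j+2})$).

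The second moment is where the real work lies, and your heuristic ``every additional shared vertex saves a factor of $n$ in the counting without a comparable gain in joint probability'' is not accurate enough to carry the argument. When two copies share a \emph{side} of the $j$-shell, the joint probability gains a factor of $r^{-1} = \Theta(n/\log n)$, which \emph{is} comparable to the factor of $n$ lost in the counting, up to only a logarithm. In the extremal case where the two $(j+2)$-sets $C_i\cup\{w_i\}\cup\{a_i\}$ coincide but with different bases (this can happen with $K_1\neq K_2$), one shares $i\ge j+2$ vertices while needing only $t\ge j$ distinct sides, giving a ratio to $\EE(\varMjkzerohat)^2$ of order $n^{-i}r^{-(j+2)} = O((\log n)^{-(j+2)})$; so the bound \emph{does} hold, but only after tracking the polylog factors your heuristic discards. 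The paper therefore stratifies not by $i$ alone but by $(i,s,t)$ with $s\in\{1,2\}$ recording whether $K_1=K_2$ and $t$ the number of distinct sides actually required, precisely to control how many factors of $r$ are present. Stratifying only by vertex overlap is insufficient.

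Your distinctness argument is also imprecise: knowing that the expected number of extensions of a fixed $\Njkzero$ is $o(1)$ does not by itself bound the number of colliding $\Mjkzerohat$, since there are $\Theta(n^j\log n)$ copies of $\Njkzero$ to sum over. The clean argument (which the paper embeds in its $s=1$ cases) is a first moment on \emph{pairs} of distinct $\Mjkzerohat$ with the same $(K,C)$: this expectation is $\Theta(n^{k_0+3}p_{k_0}r^{2j+2}) + \Theta(n^{k_0+2}p_{k_0}r^{2j+1}) = O((\log n)^{2j+3}/n^j) = o(1)$, so whp no such pair exists and all copies have distinct associated $\Njkzero$.
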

The proof of Lemma~\ref{lem:firstMjk} is a standard but 
slightly technical application of the second moment method, and is therefore postponed 
to Appendix~\ref{sec:prooffirstMjk}.

In Lemma~\ref{lem:shells} we will 
show that in a range closer to criticality
(i.e.\ for $\tim$ closer to $1$)
$j$-shells are very likely to exist. In order to formulate the statement,
we first define the operation of `adding a simplex'.

\begin{definition} \label{def:addingsimplex}
  Given a complex $\cG$ on vertex set $V$ and a non-empty set $B
  \subseteq V$, we define $\cG + B$ to be the complex obtained by
  adding the set $B$ and its downward-closure to $\cG$, i.e.\
  \begin{equation*}
    \cG + B \ :=\  \cG \cup \{ 2^B \setminus \{\emptyset\} \}.
  \end{equation*}
\end{definition} 
\noindent
Observe that if $B$ is already a simplex of $\cG$, then $\cG + B =
\cG$.

\begin{lem}\label{lem:shells}
  For every $\eps>0$ there exists a constant $\zeta>0$ such that if 
  $\tim \ge \frac{\eps}{\log n}$, then whp for every
  $(j+1)$-set $B$, the complex $\cG_{\tim}+B$ contains at least
  $\zeta n$ many $j$-shells that contain $B$.
\end{lem}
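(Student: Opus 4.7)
The plan is to prove the lemma by exhibiting, for each fixed $(j+1)$-set $B$, a large family of mutually \emph{independent} events each of which produces a distinct $j$-shell containing $B$ in $\cG_\tim + B$. A Chernoff bound then yields a failure probability small enough to absorb a union bound over the $\binom{n}{j+1}=O(n^{j+1})$ choices of $B$.

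First observe that the property in question is monotone increasing in $\tim$: enlarging the underlying hypergraph can only create further $j$-simplices, hence further $j$-shells containing $B$. So it suffices to treat the boundary case $\tim = \eps/\log n$. Let $k_0$ be the index from Definition~\ref{def:indices}~\ref{def:indices:kzero}. Because $\bar\alpha_{k_0} > 0$, conditions~\ref{parameters:zero} and~\ref{parameters:sublog} of Definition~\ref{def:criticalp} force $\bar\gamma_{k_0} = 0$ and $|\bar\beta_{k_0}| = o(\log n)$, so that $\bar p_{k_0} = (1+o(1))\,\bar\alpha_{k_0}(k_0-j)!\log n/n^{k_0-j}$. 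At $\tim = \eps/\log n$ this yields $p_{k_0} = \tim\bar p_{k_0} \ge c/n^{k_0-j}$ for some positive constant $c = c(\eps, \bar\alpha_{k_0}, k_0, j)$.

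Now fix $B = \{b_0,\ldots,b_j\}$ and partition $[n] \setminus B$ into $j+2$ sets $A, V_0, \ldots, V_j$, each of size at least $\lfloor (n-j-1)/(j+2) \rfloor$. For each $a \in A$ and $i \in [j]_0$, set $B_i := B \setminus \{b_i\}$ and let $\cE_{a,i}$ consist of all $(k_0+1)$-sets of the form $B_i \cup \{a\} \cup T$ with $T \in \binom{V_i}{k_0-j}$; let $E_{a,i}$ be the event that at least one element of $\cE_{a,i}$ is an edge of $G_\tim$. The key observation is that the families $\cE_{a,i}$ are pairwise disjoint across all $(a,i) \in A \times [j]_0$: varying $a$ toggles membership of a vertex of $A$ (which is disjoint from $B \cup V_0 \cup \ldots \cup V_j$), and varying $i$ toggles membership of the distinguished vertex $b_i$ of $B$. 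Hence the events $E_{a,i}$ depend on pairwise disjoint sets of edge-indicators and are mutually independent. With $|\cE_{a,i}| = \binom{|V_i|}{k_0-j} = \Theta(n^{k_0-j})$ and $p_{k_0} \ge c/n^{k_0-j}$, the estimate $(1-x)^m \le e^{-xm}$ gives
\[
\Pr(E_{a,i}) \ =\ 1 - (1-p_{k_0})^{|\cE_{a,i}|} \ \ge\ 1 - \exp(-c') \ =:\ \rho \ >\ 0
\]
for some constant $c' > 0$.

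Let $X'_{B,a}$ be the indicator of $E_{a,0} \cap \ldots \cap E_{a,j}$, so by independence $\Pr(X'_{B,a}=1) \ge \rho^{j+1}$ and the family $(X'_{B,a})_{a \in A}$ is itself mutually independent. Whenever $X'_{B,a} = 1$, each side $B_i \cup \{a\}$ lies in some edge of $G_\tim$ and is therefore a $j$-simplex of $\cG_\tim$; combined with the fact that $B \in \cG_\tim + B$ by construction, this makes $B \cup \{a\}$ a $j$-shell of $\cG_\tim + B$. Consequently $\sum_{a \in A} X'_{B,a}$ lower bounds the number of such shells, and a standard Chernoff bound for this sum of independent Bernoullis with mean $\Theta(n)$ gives
\[
\Pr\!\left(\sum\nolimits_{a\in A} X'_{B,a} \ <\ |A|\,\rho^{j+1}/2\right) \ \le\ \exp(-\Omega(n)).
\]
Since $\binom{n}{j+1}\exp(-\Omega(n)) = o(1)$, a union bound over $B$ completes the proof with, say, $\zeta := \rho^{j+1}/(2(j+3))$. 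The main subtlety is the design of the witness families $\cE_{a,i}$: they must be large enough that each $E_{a,i}$ has probability bounded away from zero, yet disjoint enough across $(a,i)$ to furnish honest independence (rather than mere FKG-type positive correlation), since only exponentially small concentration per $B$ is strong enough to survive the $n^{j+1}$-sized union bound.
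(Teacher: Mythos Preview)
Your proof is correct and follows essentially the same approach as the paper: both arguments witness each side $B_i\cup\{a\}$ of the potential shell via a $k_0$-simplex whose extra $k_0-j$ vertices come from a reserved vertex pool disjoint from the apex set $A$, so that the relevant $(k_0+1)$-sets are pairwise distinct and the indicators are genuinely independent, after which Chernoff plus a union bound over $B$ finishes. The only cosmetic difference is that you split the pool into separate parts $V_0,\dots,V_j$ (one per side) whereas the paper uses a single pool $D$; both choices yield the needed disjointness of witness families.
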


\begin{proof}
Let $L_1,L_2,\ldots,L_{j+1}$ be the $j$-sets contained in $B$. 
We are interested in the vertices $a \in [n]\setminus B$ such that
$B \cup \{ a \}$ forms a $j$-shell in $\cG_{\tim}+B$, i.e.\ such that
$L_i \cup \{a\}$ is a $j$-simplex in $\cG_{\tim}$ for every $i \in [j+1]$. 
We only consider a certain type of such $j$-shells, obtaining a lower bound on their total number.

Let $A,D \subset [n]$ be disjoint sets, both of size $\lceil n/3 \rceil$
and disjoint from $B$.
Recall (Definition~\ref{def:indices}~\ref{def:indices:kzero}) that $k_0$
is an index with $j+1\le k_0\le d$ such that $\bar\alpha_{k_0}=0$.
We consider (potential) $j$-shells $B \cup \{a\}$ formed in the following way:
\begin{itemize}
\item the (apex) vertex $a$ is in $A$;
\item for each $i \in [j+1]$ there exists a set $R_i \subset D$, with $|R_i| =k_0-j$,
such that $L_i \cup \{a\}$ forms a $j$-simplex in $\cG_{\tim}$ (and thus also in $\cG_{\tim} + B$)
as a subset of the $k_0$-simplex $R_i'\ :=\   L_i \cup \{a\} \cup R_i$
with scaled birth time at most $\tim$ (i.e.\ with birth time at most $p_{k_0}=\tim \bar p_{k_0}$).
\end{itemize}
Since a different choice of the triple $(L_i, a, R_i)$ never gives the same simplex $R_i'$,
we have independence in the following calculations.

For fixed $L_i$ and $a$, the probability that no such $R_i$ exists is
\begin{align} \label{eq:probnoRi}
(1-p_{k_0})^{\binom{|D|}{k_0-j}}
& \le \exp \left( - \left( p_{k_0} \frac{n^{k_0-j}}{4^{k_0-j}(k_0-j)!} \right) \right) \nonumber \\
&\le (1+o(1)) \exp \left(- \frac{\bar{\alpha}_{k_0} \eps}{4^{k_0-j}}\right) \le \exp \left(- \frac{\bar{\alpha}_{k_0} \eps}{4^{k_0}}\right),
\end{align}
where we used that by~\ref{parameters:sublog} we have
$\bar{\beta}_{k_0} = o(\log n)$ since $\bar{\gamma}_{k_0} = 0$. 

For any $a \in A$, let $E_a$ be the event that $B \cup \{a\}$ is a $j$-shell
in $\cG_{\tim} + B$. Using~\eqref{eq:probnoRi}, we obtain
\[ \Pr(E_a) \ge \left( 1 - \exp \left( \frac{\bar{\alpha}_{k_0} \eps}{4^{k_0}}\right)\right)^{j+1} =: c >0. \]

Since the events $E_a$ are independent, the number of such $j$-shells dominates $\mbox{Bi}(\lceil n/3\rceil, c)$. By Chernoff's bound, we have
\[ \Pr \Big( \mbox{Bi}\left( \lceil n/3 \rceil, c \right) < \zeta n \Big) \le \exp \left( - 
\frac{\left( \frac{cn}{3}  - \zeta n\right)^2}{\frac{2cn}{3}} \right) = \exp \left(- \frac{n}{6 c} \left( c - 3 \zeta\right)^2\right). \]

Choosing $0<\zeta<c/3$ and by taking a union bound over all possible choices for the set $B$, we obtain that the probability that there are less than $\zeta n$ many $j$-shells containing $B$ is bounded above by
\[ \binom{n}{j+1} \exp \left(- \frac{n}{6 c} \left( c - 3 \zeta\right)^2\right) = o(1),\]
as required.
\end{proof}

As an immediate corollary, we obtain that in this range, whp every copy of $\Njk$ can be extended
to a copy of $\Mjkhat$, allowing us to consider just copies of $\Njk$ as obstructions to \connectedness.

\begin{cor}\label{cor:NjkMjk}
Let $\varepsilon>0$ be constant and consider the process $(\cG_\tim)$. Then whp any copy $(K,C)$ of $\Njk$
which exists in $\cG_\tim$
for any $\tim \ge \frac{\varepsilon}{\log n}$
can be extended to a copy $(K,C,w,a)$ of $\Mjkhat$ in $\cG_\tim$.
\end{cor}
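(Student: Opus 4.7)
The plan is to apply Lemma~\ref{lem:shells} once at the earliest time $\tim_0 \ :=\  \varepsilon/\log n$ of the range of interest, and then use a monotonicity argument to transfer the conclusion to all larger $\tim$ in the coupled process simultaneously. So suppose $(K,C)$ is a copy of $\Njk$ in $\cG_\tim$ for some $\tim \ge \tim_0$ and some $k$ with $j \le k \le d$. To upgrade $(K,C)$ to a copy of $\Mjkhat$, I need to exhibit $w \in K\setminus C$ and $a \in [n]\setminus K$ such that $C\cup\{w\}\cup\{a\}$ is a $j$-shell in $\cG_\tim$; properties~\ref{Mjkhat:simplex} and~\ref{Mjkhat:flower} are then inherited directly from conditions~\ref{Njk:simplex} and~\ref{Njk:flower} on the copy of $\Njk$.

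Next I would apply Lemma~\ref{lem:shells} with $\tim$ replaced by $\tim_0$: whp, for every $(j+1)$-set $B$, the complex $\cG_{\tim_0} + B$ contains at least $\zeta n$ many $j$-shells that contain $B$. The key observation is that the property ``$\cG + B$ contains at least $\zeta n$ many $j$-shells containing $B$'' is monotone increasing in the underlying set of simplices of $\cG$; since $\cG_{\tim_0} \subseteq \cG_\tim$ for every $\tim \ge \tim_0$ in the coupled process, the same conclusion then holds for $\cG_\tim + B$ simultaneously for all $\tim \ge \tim_0$ and all $(j+1)$-sets $B$. This single application of Lemma~\ref{lem:shells} therefore suffices to cover the whole range.

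Finally, I would choose any $w \in K \setminus C$ and let $B\ :=\ C \cup \{w\}$. Since $K$ is a $k$-simplex in $\cG_\tim$ by~\ref{Njk:simplex}, every subset of $K$ is already a simplex of $\cG_\tim$; in particular $B$ is a $j$-simplex of $\cG_\tim$, so that $\cG_\tim + B = \cG_\tim$. By the preceding paragraph, whp there are at least $\zeta n$ vertices $a \in [n]$ for which $B \cup \{a\}$ is a $j$-shell in $\cG_\tim$. At most $k - j \le d = O(1)$ of these lie in $K \setminus B$, so for $n$ sufficiently large there are $\Theta(n)$ valid choices of $a \in [n] \setminus K$. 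Any such $a$, together with the chosen $w$, yields the desired extension $(K,C,w,a)$ satisfying~\ref{Mjkhat:shell}. The case $k = j$ is handled identically, with the only modification that $w$ must be taken to be the last vertex of $K$ as prescribed by Definition~\ref{def:Njj}.

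The only mildly delicate point in this plan is the passage from the fixed-$\tim$ statement of Lemma~\ref{lem:shells} to a ``whp, for all $\tim \ge \tim_0$'' conclusion, but this is resolved cleanly by the monotonicity of the $j$-shell property noted above. Everything else is essentially a routine counting argument using the constant upper bound on $|K|$.
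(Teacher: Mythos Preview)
Your proposal is correct and follows essentially the same approach as the paper: apply Lemma~\ref{lem:shells} once at the earliest time $\tim_0=\varepsilon/\log n$, then use monotonicity of the process to carry the conclusion forward. The paper phrases the monotonicity step slightly differently---fixing a specific witness apex $a=a_{K,C}$ whose shell sides are present in $\cG_{\tim_0}$ and hence in every later $\cG_\tim$---but this is equivalent to your formulation via monotonicity of the $j$-shell count.
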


\begin{proof}
Let $(K,C)$ be any pair of sets that \emph{could} form a copy of
$\Njk$, i.e. $K$ is a $(k+1)$-subset of $[n]$ and $C$ is a $j$-subset
of $K$ (and, if $k=j$, then $C$ consists of the first $j$ vertices of
$K$ in the increasing order on $[n]$). Fix any vertex
$w = w_{K,C} \in K\setminus C$. Lemma~\ref{lem:shells} implies that at
time $\tim = \frac{\varepsilon}{\log n}$, whp for \emph{all} such 
pairs $(K,C)$, there are linearly many $j$-shells in $\cG_\tim + 
(C\cup\{w\})$ that contain $C\cup\{w\}$. For each $(K,C)$, only
$O(1)$ many of these $j$-shells can be subsets of $K$. Therefore
there exist vertices $a = a_{K,C} \in[n]\setminus K$ and $w=w_{K,C}\in K \setminus C$ such that whp,
for every pair $(K,C)$, 
the sides of the $j$-shell $C\cup \{w\} \cup \{a\}$ are all present as $j$-simplices
in $\cG_{\frac{\eps}{\log n}}$, and therefore for any $\tim \ge \frac{\eps}{\log n}$
such that the pair $(K,C)$ forms a copy of $\Njk$ in $\cG_\tim$, also $(K,C,w,a)$ forms a copy of
$\Mjkhat$ in $\cG_\tim$.
\end{proof}

The following proposition describes 
more precisely the 
parameters in Definition~\ref{def:generalparameters} for
$\allp = \tim \bar\allp$ and
$\tim$ `close' to $1$, in terms of the 
analogous parameters defined 
in~\eqref{eq:parameters} and~\eqref{eq:lmn} 
for $\bar\allp$. 

\begin{prop}\label{prop:scaledparameters}
Let $\tim=1+\xi$ with $\xi=\xi(n)=o(1)$  and let $\allp = \tim \barallp$.
Then for all $j\le k \le d$ with $\bar p_k\not=0$,
\begin{align*}
\alpha_k & =\bar \alpha_k, &
\beta_k & = \left(1+\xi\right)\bar \beta_k 
+ \bar \alpha_k  \xi \log n,
& \gamma_k & = \bar \gamma_k,\\
\logp_k & = \bar \logp_k, &
\sublogp_k & = \bar \sublogp_k - (1+o(1))
(k-j+1) \xi \sum\nolimits_{i=j+1}^d \bar \alpha_i \log n, &
\constp_k & =\bar\constp_k.
\end{align*}
\end{prop}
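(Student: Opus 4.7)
The plan is to derive the six identities by direct substitution from the parametrisation~\eqref{eq:parameters} of $\bar p_k$ into the defining expressions in Definition~\ref{def:generalparameters} and~\eqref{eq:lmn}, using that $p_k=(1+\xi)\bar p_k$. I would first handle the ``base'' parameters $\alpha_k,\gamma_k,\beta_k$, because the identities for $\logp_k,\sublogp_k,\constp_k$ then follow formally from these.

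For $\alpha_k=\bar\alpha_k$: substitution yields
\[
\frac{p_k n^{k-j}}{(k-j)!\log n}=\frac{(1+\xi)(\bar\alpha_k\log n+\bar\beta_k)}{n^{\bar\gamma_k}\log n},
\]
and a short case analysis using~\ref{parameters:zero} together with~\ref{parameters:subpoly} (if $\bar\gamma_k>0$) or~\ref{parameters:sublog} (if $\bar\gamma_k=0$), combined with $\xi=o(1)$, shows that the right-hand side tends to $\bar\alpha_k$. For $\gamma_k=\bar\gamma_k$: the quantity $p_k n^{k-j+\gamma}=(1+o(1))(\bar\alpha_k\log n+\bar\beta_k)(k-j)!\,n^{\gamma-\bar\gamma_k}$ is $o(1)$ iff $\gamma<\bar\gamma_k$, since the bracket cannot vanish (as $\bar p_k\neq 0$) and grows at most polylogarithmically. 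Substituting $\gamma_k=\bar\gamma_k$ and $\alpha_k=\bar\alpha_k$ into the definition of $\beta_k$ gives
\[
\beta_k=\frac{(1+\xi)n^{k-j+\bar\gamma_k}\bar p_k}{(k-j)!}-\bar\alpha_k\log n=(1+\xi)(\bar\alpha_k\log n+\bar\beta_k)-\bar\alpha_k\log n,
\]
which rearranges to the claimed expression.

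The identities $\logp_k=\bar\logp_k$ and $\constp_k=\bar\constp_k$ are then immediate from~\eqref{eq:lmn}: $\logp_k$ depends only on $\gamma_k$ and the $\alpha_i$'s, and $\constp_k$ only on whether $k=j$ and whether $\alpha_k=0$, all of which coincide with their barred counterparts. For the sum part of $\sublogp_k$, I would compute
\[
-(k-j+1)\sum_{i=j+1}^d\frac{\beta_i}{n^{\gamma_i}}=-(k-j+1)\left[(1+\xi)\sum_{i=j+1}^d\frac{\bar\beta_i}{n^{\bar\gamma_i}}+\xi\log n\sum_{i=j+1}^d\bar\alpha_i\right],
\]
using that $\bar\alpha_i\neq 0$ forces $\bar\gamma_i=0$ by~\ref{parameters:zero}. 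Comparing to the analogous sum in $\bar\sublogp_k$, the difference contributes the main term $-(k-j+1)\xi\log n\sum_{i=j+1}^d\bar\alpha_i$ together with an error $-(k-j+1)\xi\sum_{i=j+1}^d\bar\beta_i/n^{\bar\gamma_i}$; the error is of strictly smaller order because $\sum_{i=j+1}^d\bar\beta_i/n^{\bar\gamma_i}=o(\log n)$ by~\ref{parameters:subpoly}/\ref{parameters:sublog}, while $\sum_{i=j+1}^d\bar\alpha_i>0$ by~\ref{parameters:k0}, and so it is absorbed into the $(1+o(1))$ factor.

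The main obstacle will be the piecewise part of $\sublogp_k$, which takes three different forms depending on whether $p_k>1$, $p_k\le 1$ with $\alpha_k\neq 0$, or $p_k\le 1$ with $\alpha_k=0$. I would argue that for each $k$ the subcase for $p_k$ matches that for $\bar p_k$ up to an $o(1)$ discrepancy: for $k>j$ both $\bar p_k$ and $p_k$ are $O(\log n/n)=o(1)$, and for $k=j$ they diverge to infinity together when $\bar\alpha_j\neq 0$. In the remaining subcase $\bar\alpha_k=0$ and $\bar p_k\le 1$, one has $\bar\beta_k>0$ (otherwise $\bar p_k=0$), hence $\beta_k=(1+\xi)\bar\beta_k$ and $\log\beta_k-\log\bar\beta_k=\log(1+\xi)=o(1)$, again harmlessly absorbed into the $(1+o(1))$ factor. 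Combining the computations for the base parameters, the sum identity above, and this case analysis yields the six claimed equalities.
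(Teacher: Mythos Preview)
Your approach is essentially identical to the paper's: compute $\alpha_k,\beta_k,\gamma_k$ by direct substitution, deduce $\logp_k=\bar\logp_k$ and $\constp_k=\bar\constp_k$ from their dependence only on the $\alpha_i$ and $\gamma_k$, and then treat the sum and piecewise parts of $\sublogp_k$ separately. The computation of the sum part and the absorption of the $-(k-j+1)\xi\sum_i\bar\beta_i/n^{\bar\gamma_i}$ error into the $(1+o(1))$ factor is correct and matches the paper.

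There is, however, a gap in your case analysis for the piecewise part when $k=j$. Your cases cover (i) $k>j$, (ii) $k=j$ with $\bar\alpha_j\neq 0$, and (iii) $\bar\alpha_k=0$ with $\bar p_k\le 1$; but you omit $k=j$, $\bar\alpha_j=0$, $\bar p_j>1$, and in (iii) you implicitly assume $p_k\le 1$ as well. Since $p_j=(1+\xi)\bar p_j$ with $\xi=o(1)$ of arbitrary sign, it can happen that exactly one of $p_j,\bar p_j$ exceeds $1$, so that the two piecewise branches genuinely differ (one contributes $0$, the other $\log\beta_j$ or $\log\bar\beta_j$). The paper handles precisely this crossing: it observes that $p_j\le 1\le\bar p_j$ (or the reverse) forces $\bar\alpha_j=\bar\gamma_j=0$ and $\bar p_j=\bar\beta_j=1+O(\xi)$, whence the stray $\log\bar\beta_j$ or $\log\beta_j$ term is $O(\xi)$ and is absorbed into the $(1+o(1))$ factor of the main term $-(k-j+1)\xi\log n\sum_i\bar\alpha_i$, which is nonzero by~\ref{parameters:k0}. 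Adding this observation closes the gap.
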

\noindent The easy but technical proof of 
Proposition~\ref{prop:scaledparameters} appears in
Appendix~\ref{sec:proofofscaledparam}.

We derive the following corollary about the expected number of
copies of $\Njk$ in the critical window, which will be crucial for the proof
of the Rank Theorem (Theorem~\ref{thm:criticalwindow}).

\begin{cor}\label{cor:expectatcritwind}
Let $c\in \mathbb{R}$ be a constant and suppose
$(c_n)_{n\ge 1}$ is a sequence of real numbers such 
that $c_n \xrightarrow{n \to \infty} c$. 
Let $\tim=\left(1 + \frac{c_n}{\log n}\right)$  and  
$\allp = \tim \barallp$. 
Then for any $k$ with $j \le k \le d$,
\[ \EE(\varNjk) =  \begin{cases}
       (1+o(1)) 
\exp (\bar\sublogp_k + \bar\constp_k + c(\bar\gamma_k 
- j -1) ) & \text{if }k \text{ is a critical dimension,}\\
  o(1) & \text{otherwise.}
   \end{cases}\]
\end{cor}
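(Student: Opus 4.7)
My plan is to combine Lemma~\ref{lem:expectedNjk} with Proposition~\ref{prop:scaledparameters} and then exploit the defining conditions of critical dimensions (Definition~\ref{def:criticaldim}) to simplify the resulting expression. First, if $\bar p_k = 0$, then $p_k = 0$ and there are no copies of $\Njk$, so $\EE(\varNjk) = 0$ and $k$ is not critical, placing us in the $o(1)$ branch trivially. Otherwise, since $\tim = 1 + c_n/\log n$ with $c_n = O(1)$, we have $\tim = O(1)$ and $\tim = \Omega(n^{-c'})$ for any positive constant $c'$, so Lemma~\ref{lem:expectedNjk} gives
\[
\log\EE(\varNjk) \;=\; \logp_k\log n + \sublogp_k + \constp_k + o(1).
\]

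Next, I would substitute $\xi = c_n/\log n$ into Proposition~\ref{prop:scaledparameters}. This yields $\logp_k = \bar\logp_k$, $\constp_k = \bar\constp_k$, and
\[
\sublogp_k \;=\; \bar\sublogp_k \;-\; (1+o(1))\,(k-j+1)\,c_n\sum\nolimits_{i=j+1}^d \bar\alpha_i,
\]
because the $\xi\log n$ factor exactly cancels the $\log n$. Using the definition of $\bar\logp_k$ in~\eqref{eq:lmn}, I rewrite
\[
(k-j+1)\sum\nolimits_{i=j+1}^d \bar\alpha_i \;=\; j+1-\bar\gamma_k-\bar\logp_k,
\]
so that
\[
\logp_k\log n + \sublogp_k + \constp_k \;=\; \bar\logp_k\log n + \bar\sublogp_k + \bar\constp_k - (1+o(1))\,c_n\bigl(j+1-\bar\gamma_k-\bar\logp_k\bigr).
\]

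Now I split into cases. If $k$ is a critical dimension, then by Definition~\ref{def:criticaldim} we have $\bar\logp_k = 0$ and $\bar\sublogp_k + \bar\constp_k = O(1)$ (in fact, by~\ref{crit:less}, $\bar\sublogp_k + \bar\constp_k \le 0$, but this sign plays no role). Combined with $c_n \to c$, the displayed expression converges to $\bar\sublogp_k + \bar\constp_k + c(\bar\gamma_k - j - 1)$, and exponentiating gives the stated formula. If $k$ is not critical, then~\ref{crit:less} of Definition~\ref{def:criticaldirection} forces $\bar\logp_k\log n + \bar\sublogp_k + \bar\constp_k \to -\infty$ (since this quantity is $\le 0$ but not $O(1)$). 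Because $c_n(j+1-\bar\gamma_k-\bar\logp_k)$ is either $O(1)$ (when $\bar\logp_k = 0$) or of order $\log n$ with the sign of $-\bar\logp_k$ (when $\bar\logp_k > 0$), in both sub-cases the corrective term cannot compensate for the divergence to $-\infty$, so $\logp_k\log n + \sublogp_k + \constp_k \to -\infty$ and hence $\EE(\varNjk) = o(1)$.

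The main point where care is needed is the non-critical case: one must verify that the perturbation from $\xi$ never pushes a strictly negative $\bar\logp_k\log n + \bar\sublogp_k + \bar\constp_k$ back up to $O(1)$. This is where the explicit algebraic identity $(k-j+1)\sum \bar\alpha_i = j+1-\bar\gamma_k-\bar\logp_k$ is essential: it ensures the correction is at most linear in $\log n$ with coefficient $-\bar\logp_k$ (plus bounded terms), so it can only reinforce, not cancel, the divergence coming from $\bar\logp_k > 0$, and is merely $O(1)$ when $\bar\logp_k = 0$.
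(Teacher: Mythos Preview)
Your approach is essentially the same as the paper's: apply Lemma~\ref{lem:expectedNjk}, translate the parameters via Proposition~\ref{prop:scaledparameters} with $\xi=c_n/\log n$, and then use the algebraic identity $(k-j+1)\sum_{i=j+1}^d\bar\alpha_i = j+1-\bar\gamma_k-\bar\logp_k$ to simplify. For the critical case your computation matches the paper's exactly.

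Your handling of the non-critical case, however, is confused. The correction term $-(1+o(1))\,c_n\bigl(j+1-\bar\gamma_k-\bar\logp_k\bigr)$ is \emph{always} $O(1)$, since $c_n\to c$ and $j$, $\bar\gamma_k$, $\bar\logp_k$ are all constants; there is no scenario in which it is ``of order $\log n$''. Moreover, condition~\ref{crit:less} together with $\bar\sublogp_k=o(\log n)$ and $\bar\constp_k=O(1)$ forces $\bar\logp_k\le 0$, so the sub-case $\bar\logp_k>0$ you discuss never occurs. The correct (and simpler) argument is just: the correction is $O(1)$, while $\bar\logp_k\log n+\bar\sublogp_k+\bar\constp_k\to-\infty$ for non-critical $k$ (either $\bar\logp_k<0$, or $\bar\logp_k=0$ and $\bar\sublogp_k\to-\infty$), so $\log\EE(\varNjk)\to-\infty$. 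This is exactly what the paper does, and your conclusion is right despite the muddled justification.
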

\noindent
We delay the proof of Corollary~\ref{cor:expectatcritwind} until
Appendix~\ref{sec:proofexpectcritwind}.

As the last result of this section, we show that for $\tim$ slightly
less than $1$, whp we have many copies of $\Njk$.

\begin{lem}\label{lem:manyNjk}
  Let $\omega_0=\omega_0(n)=o(\log n)$ be a function that tends to infinity
  and let $\tim = \left(1-\frac{1}{\omega_0}\right)$.
  Then there exists a constant $c>0$ such that for any critical dimension
  $k\ge j$,  
  whp there are
  at least $\exp (\frac{c\log n}{\omega_0})$ many copies of $\Njk$ in $\cG_{\tim}$.
\end{lem}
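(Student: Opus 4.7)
The plan is to apply a standard second moment argument to $\varNjk$ for any critical dimension $k\ge j$.

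First I would estimate $\mu:=\EE[\varNjk]$ from below. Applying Proposition~\ref{prop:scaledparameters} with $\xi=-1/\omega_0$, the critical-dimension assumption ($\bar\logp_k=0$ and $\bar\sublogp_k=O(1)$, together with $\bar\constp_k=O(1)$) combined with Lemma~\ref{lem:expectedNjk} gives
\[
\log\mu \;=\; \sublogp_k+\constp_k+o(1) \;=\; (1+o(1))\,(k-j+1)\frac{\log n}{\omega_0}\sum\nolimits_{i=j+1}^{d}\bar\alpha_i \;+\; O(1).
\]
The sum $\sum_{i=j+1}^d \bar\alpha_i$ is strictly positive: by \ref{parameters:k0} we have $\bar\alpha_{k_0}>0$, and by \ref{parameters:zero} every $\bar\alpha_i\ge 0$. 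Hence there exists a constant $c'>0$ such that $\mu\ge\exp(c'\log n/\omega_0)$. In particular $\mu\to\infty$.

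Next I would bound $\EE[\varNjk^2]$. Writing $\varNjk=\sum_M X_M$ over potential copies $M=(K,C)$, I would split
\[
\EE[\varNjk^2] \;=\; \sum\nolimits_{s=0}^{k+1}\;\sum\nolimits_{|K\cap K'|=s}\Pr[X_M=X_{M'}=1].
\]
For disjoint pairs ($s=0$) the two $k$-simplices $K,K'$ are independent and the flower conditions share no $(j+1)$-sets, so Proposition~\ref{prop:localisedprob} applied with $\cJ=\cF(K,C)\cup\cF(K',C')$ and $\cS$ the set of proper $(\ge\!j+2)$-subsets of $K$ and of $K'$ yields
\[
\Pr[X_M=X_{M'}=1] \;=\; (1+o(1))\,p_k^{\,2}\,\bq^{\,2\tim(k-j+1)} \;=\; (1+o(1))\Pr[X_M]\Pr[X_{M'}].
\]
Summing over the $(1+o(1))\binom{n}{k+1}^2\binom{k+1}{j}^2$ disjoint pairs contributes $(1+o(1))\mu^2$. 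For $s\ge 1$ the number of pairs drops by a factor $\Theta(n^{-s})$, while the joint probability increases by at most a constant factor (even in the delicate case when the flowers $\cF(K,C)$ and $\cF(K',C')$ share petals, since then the common "no extension" events are merely counted once rather than twice, giving a bounded multiplicative gain). Thus the total contribution from overlapping pairs is $O(\mu^2/n)+O(\mu)$, and $\EE[\varNjk^2]\le(1+o(1))\mu^2+O(\mu)$.

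Finally, Chebyshev's inequality gives
\[
\Pr\bigl[\varNjk\le\mu/2\bigr] \;\le\; \frac{4\var(\varNjk)}{\mu^2} \;=\; o(1)+O(1/\mu) \;=\; o(1),
\]
since $\mu\to\infty$. Hence whp $\varNjk\ge\mu/2\ge\exp(c\log n/\omega_0)$ for any constant $c<c'$, as required. The main obstacle is the second moment bookkeeping for overlapping pairs: when $K\cap K'$ contains a common petal, the flower conditions are positively correlated, but because such pairs are rarer by a factor $\Theta(n^{-1})$ or more and the correlation boost is only $O(1)$, their contribution remains $o(\mu^2)$, which is what makes the argument go through.
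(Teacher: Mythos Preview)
Your approach is correct and essentially identical to the paper's: the first moment via Proposition~\ref{prop:scaledparameters} and Lemma~\ref{lem:expectedNjk}, then a second moment computation and Chebyshev. One point worth sharpening: for $K\neq K'$, if a petal of $\cF(K,C)$ lies inside $K'$ then the joint event has probability \emph{zero} (since $K'$ would be a simplex containing that petal with $K'\not\subseteq K$, violating~\ref{Njk:flower}), rather than merely enjoying an $O(1)$ correlation boost---this is how the paper dispatches that case, and it matters because the ``shared no-extension events'' heuristic you describe would actually give an unbounded gain of order $\bq^{-\tim}$ per shared petal.
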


The proof is similar to the proof of Lemma~\ref{lem:firstMjk},
although the second moment calculation is significantly simpler
without the $j$-shell of $\Mjkhat$, and can be found 
in Appendix~\ref{sec:proofmanyNjk}.

\bigskip
\section{Determining the hitting time: proof of Lemma~\ref{lem:hittingtime_simplevers}}\label{sec:hitting}
In this section we consider the hitting time $\tim_j^*$ for the disappearance of the last minimal obstruction, i.e.\
  \begin{equation*}
    \tim_j^* \ :=\  \sup\{ \tim \in \mathbb{R}_{\ge0} \mid \cG_{\tim} \text{ contains a copy of }\Mjkhat\text{ for some } k \text{ with } j \le k \le d \},
  \end{equation*}
as defined in Theorem~\ref{thm:main}.
We will show that whp this happens at around the claimed threshold $\tim=1$ (Lemma~\ref{lem:hittingtime_simplevers}).

Consider the time \label{prob:timprime}
\[ \tim'\ :=\ 1-\frac{\log \log n}{10 d \log n}, \]
and let $\tim''$\label{prob:timsecond}
be the first scaled birth time larger than $\tim'$ such that there are no copies of $\Mjkhat$
in $\cG_{\tim''}$.
Lemmas~\ref{lem:shells} and \ref{lem:manyNjk} tell us that whp $\cG_{\tim'}$
contains a growing number of copies of $\Mjkhat$, thus by definition of $\tim_j^*$
we have $\tim'' \le \tim_j^*$.
The following main result of this section says that they are in fact
equal whp, and indeed whp both are close to $1$.

\begin{lem}\label{lem:hitting}
Whp $\tim_j^*=\tim''$.
Furthermore, suppose
$\omega$ is a function of $n$ that tends to infinity as $n \to \infty$. Then, whp
\[ 1- \frac{\omega}{\log n} < \tim_j^* < 1 + \frac{\omega}{\log n}.\] 
\end{lem}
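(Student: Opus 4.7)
My plan is to sandwich $\tim_j^*$ between $1-\omega/\log n$ and $1+\omega/\log n$ and simultaneously identify it with $\tim''$, by combining three ingredients: Lemma~\ref{lem:manyNjk} and Corollary~\ref{cor:NjkMjk} to produce copies of $\Mjkhat$ below the threshold; a first-moment bound showing that no \emph{new} copy of $\Njk$ is born after time $\tim'$; and Lemma~\ref{lem:expectedNjk} combined with Proposition~\ref{prop:scaledparameters} to drive $\EE(\varNjk)$ below $1$ just above the threshold. For the lower bound I may assume $\omega = o(\log n)$ (otherwise the statement is trivial); applying Lemma~\ref{lem:manyNjk} with $\omega_0 = \log n/\omega$ produces whp an unboundedly large collection of copies of $\Njk$ in $\cG_{1-\omega/\log n}$ for any critical dimension $k$, and Corollary~\ref{cor:NjkMjk} extends at least one of them to a copy of $\Mjkhat$. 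Hence $\tim_j^* \ge 1-\omega/\log n$, and a perturbation of $\omega$ by a constant factor converts this into a strict inequality.

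The main obstacle is the \emph{no new $\Njk$} estimate. Let $Y_k$ denote the number of pairs $(K,C)$ for which the scaled birth time $\tim_K$ of $K$ exceeds $\tim'$ and $(K,C)$ forms a copy of $\Njk$ at time $\tim_K$. Integrating the density $\bar p_k$ of the uniform distribution of $\tim_K$ against the probability $\bq^{\tim(k-j+1)}$ from Proposition~\ref{prop:localisedprob} that no simplex outside $K$ contains a petal of $\cF(K,C)$, and using $\bq^{\tim(k-j+1)} = (1+o(1))\exp(-\tim(k-j+1)\Lambda)$ with $\Lambda := \sum_{i=j+1}^d(\bar\alpha_i\log n + \bar\beta_i/n^{\bar\gamma_i}) = \Theta(\log n)$, one obtains
\[
  \EE[Y_k] \;\le\; (1+o(1))\,\frac{n^{k+1}\,\bar p_k\,\bq^{\tim'(k-j+1)}}{j!\,(k-j+1)!\,(k-j+1)\,\Lambda}
  \;=\; O\!\left(\frac{\EE(\varNjk(\tim'))}{\log n}\right).
\]
The factor $10d$ in the definition of $\tim'$ is chosen precisely so that, combining Lemma~\ref{lem:expectedNjk} with Proposition~\ref{prop:scaledparameters} at $\tim = \tim'$ and using the critical-direction conditions~\ref{crit:less}--\ref{crit:equal}, one has $\EE(\varNjk(\tim')) = O((\log n)^{1-\delta})$ for some constant $\delta>0$. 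Dividing by $\log n$ gives $\EE[Y_k] = O((\log n)^{-\delta}) = o(1)$; Markov together with a union bound over the constantly many $k$ in range then forces $\sum_k Y_k = 0$ whp.

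From here the remainder is routine. The inequality $\tim_j^* \ge \tim''$ is immediate from the definition of $\tim''$, since copies of $\Mjkhat$ exist at every $\tim \in [\tim', \tim'')$. For the converse, let $\tim > \tim''$ and suppose $\cG_{\tim}$ contains a copy $(K,C,w,a)$ of $\Mjkhat$; then $(K,C)$ is a copy of $\Njk$ in $\cG_{\tim}$. Since~\ref{Njk:flower} is monotone decreasing in $\tim$ once $K$ is born, the case $\tim_K \le \tim'$ forces $(K,C)$ to be a copy of $\Njk$ already at $\tim''$, which via Corollary~\ref{cor:NjkMjk} contradicts the absence of $\Mjkhat$'s at $\tim''$; the case $\tim_K > \tim'$ contradicts $Y_k = 0$. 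This proves $\tim_j^* = \tim''$. Finally, for the upper bound, set $\tim^+ := 1+\omega/\log n$: Proposition~\ref{prop:scaledparameters} with $\xi = \omega/\log n$ produces a shift in $\sublogp_k$ of $-(1+o(1))(k-j+1)\omega \sum_{i=j+1}^d\bar\alpha_i \to -\infty$, so Lemma~\ref{lem:expectedNjk} yields $\EE(\varNjk(\tim^+)) = o(1)$ for every $k$. Markov combined with Corollary~\ref{cor:NjkMjk} then forces whp no copy of $\Mjkhat$ in $\cG_{\tim^+}$, hence $\tim'' \le \tim^+$, and together with $\tim_j^* = \tim''$ and a final perturbation of $\omega$ this yields the strict upper bound $\tim_j^* < 1+\omega/\log n$.
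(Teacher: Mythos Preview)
Your proof is correct and follows the same overall strategy as the paper: a first-moment argument showing that whp no copy of $\Njk$ with $\tim_K>\tim'$ is ever created, combined with Corollary~\ref{cor:NjkMjk} to close the loop between $\Njk$ and $\Mjkhat$. The paper packages this first-moment step into the slightly more general Lemma~\ref{lem:obstacles} about \emph{local $j$-obstacles} (a $k$-simplex $K$ containing $k-j+1$ many $K$-localised $(j+1)$-sets, with no flower structure required), proved via the decomposition into events $L_1,L_2,L_3$ and the birth-time estimate of Claim~\ref{claim:firstbirthtime}; that more general form is needed later in the supercritical analysis (Lemma~\ref{lem:nonewobstructions}), whereas your direct integral computation of $\EE[Y_k]$ is equivalent for the present lemma and arguably cleaner. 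Two small points worth making explicit: (i) the bound $\EE(\varNjk(\tim'))=O((\log n)^{1-\delta})$ needs the case split that either $\bar\logp_k<0$ (so $\EE(\varNjk(\tim'))\to 0$), or $\bar\logp_k=0$, in which case $(k-j+1)\sum_{i=j+1}^d\bar\alpha_i = j+1-\bar\gamma_k \le j+1 \le d$ and the exponent of $\log n$ arising from Proposition~\ref{prop:scaledparameters} is at most $(j+1)/(10d)\le 1/10$; (ii) the reduction to $\omega=o(\log n)$ is also needed for the upper bound, since your application of Proposition~\ref{prop:scaledparameters} at $\tim^+$ requires $\xi=\omega/\log n = o(1)$.
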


Observe that Lemma~\ref{lem:hittingtime_simplevers} is an immediate
corollary of Lemma~\ref{lem:hitting}. 
To prove Lemma~\ref{lem:hitting}, we will need some further concepts and some auxiliary results.

\begin{definition}
Given $k\ge j$ and a $(k+1)$-set $K$, a $(j+1)$-set $J\subseteq K$ is
\emph{$K$-localised}\label{comb:Klocal} if every simplex $\sigma$ with $J \subseteq \sigma$
is such that $\sigma \subseteq K$.
\end{definition}

Note that we do not demand that $J$ is a $j$-simplex---if it is not, then it is trivially
$K$-localised for any $K\supseteq J$ since there is no simplex $\sigma \supseteq J$.

\begin{definition}
Given an integer $k$ with $j \le k \le d$, a $k$-simplex $K$ is called a \emph{local $j$-obstacle}\label{comb:localobst} if it contains at least $k-j+1$ many $j$-simplices that are $K$-localised.
\end{definition}

In particular a $j$-simplex is a local $j$-obstacle if and only if it is isolated.
More generally, any copy of $\Mjk$ for $j\le k \le d$ is certainly a local $j$-obstacle,
although a local $j$-obstacle is not necessarily an obstruction to \connectedness.

\begin{lem}\label{lem:obstacles}
  Whp, for all $\tim \ge \tim'$, every local $j$-obstacle in $\cG_{\tim}$
  also exists in $\cG_{\tim'}$.
\end{lem}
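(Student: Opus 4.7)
The plan is to show, via a first-moment argument combined with two monotonicity observations, that whp no new local $j$-obstacle emerges after time $\tim'$. First, being $K$-localised is monotone decreasing in $\tim$: if $J\subseteq K$ is $K$-localised at $\tim$, then at any $\tim_0\le\tim$ every simplex $\sigma\supseteq J$ was already a simplex at $\tim$ and hence $\sigma\subseteq K$. Second, if $K$ is a local $j$-obstacle at some time, then $K$ itself must be an edge of the hypergraph at that time with no proper superset as an edge, since any $E\supsetneq K$ that is an edge would contain every $(j+1)$-subset of $K$ while $E\not\subseteq K$, breaking $K$-localisedness for each such subset. Combining these: if $K$ is a local $j$-obstacle at some $\tim\ge\tim'$ but not at $\tim'$, then $K$ was not a $k$-simplex at $\tim'$ (else monotonicity would already make $K$ a local $j$-obstacle at $\tim'$), so $K$'s own edge is born at some $\tim_K\in(\tim',\tim_{\max}]$, and $K$ is already a local $j$-obstacle at $\tim_K$.

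It thus suffices to bound $\sum_k\binom{n}{k+1}\bar p_k\int_{\tim'}^{\min(1/\bar p_k,\tim_{\max})}\Pr(K\text{ loc obs at }\tim\mid K\text{ edge})\,d\tim$, using that $\tim_K$ has density $\bar p_k$ on $[0,1/\bar p_k]$ independently of all other edges. I control the inner probability by union-bounding over the $O(1)$ many collections $\cL\subseteq\binom{K}{j+1}$ of size $k-j+1$: every $J\in\cL$ is $K$-localised at $\tim$ iff no $(k'+1)$-edge containing some $J\in\cL$ and not contained in $K$ is present at $\tim$. By inclusion-exclusion the count of such forbidden $(k'+1)$-sets is asymptotically $(k-j+1)\binom{n-j-1}{k'-j}$; invoking $\bar p_{k'}\binom{n-j-1}{k'-j}\sim\bar\alpha_{k'}\log n$ (when $\bar\gamma_{k'}=0$, with smaller-order contributions otherwise), the probability evaluates to $n^{-\tim A(k-j+1)(1+o(1))}$, where $A:=\sum_{k'=j+1}^{d}\bar\alpha_{k'}$.

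Substituting $\binom{n}{k+1}\bar p_k=\Theta(n^{j+1-\bar\gamma_k}\log n)$ and evaluating the integral bounds the expected count by
\[
O\!\left(\frac{n^{\bar\logp_k+(1-\tim')A(k-j+1)+o(1)}}{A(k-j+1)}\right),
\]
using $\bar\logp_k=j+1-\bar\gamma_k-A(k-j+1)$ from \eqref{eq:lmn}. For non-critical $k$ with $\bar\logp_k<0$ constant, this is $n^{-\Omega(1)}=o(1)$, handled routinely. The main obstacle is the critical dimension $\bar k$, where $\bar\logp_{\bar k}=0$: the careful choice $\tim'=1-\log\log n/(10d\log n)$ together with $A(\bar k-j+1)=j+1-\bar\gamma_{\bar k}\le d$ limits the exponent to at most $\log\log n/10$, giving only a polylogarithmic intermediate bound. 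Extracting the final $o(1)$ decay in this regime requires a sharper probability estimate exploiting the combinatorial constraints forced upon any set of $k-j+1$ many $K$-localised $(j+1)$-subsets (analogous to the flower characterisation of Lemma~\ref{lem:minobst}), which reduces the effective number of relevant configurations; once this is established, a union bound over $k\in\{j,\dotsc,d\}$ completes the proof.
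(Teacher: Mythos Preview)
Your overall strategy is correct and is essentially the paper's: integrating the localisation probability over the birth time $\tim_K$ computes exactly the probability the paper decomposes as $L_1\wedge L_2\wedge L_3$ (namely, that $\tim_K>\tim'$ and every $J\in\cL$ is $K$-localised at time $\tim_K$). Done carefully, both routes yield the bound $O\bigl(n^{k+1}\bar p_k\,\bq^{\,\tim'(k-j+1)}/\log n\bigr)$, which is $o(1)$.

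The gap is purely in the bookkeeping, and your proposed rescue goes in the wrong direction. By writing the localisation probability as $n^{-\tim A(k-j+1)(1+o(1))}$ and then carrying an unspecified $o(1)$ into the exponent of $n$, you discard the very term that produces the decay at a critical dimension. Concretely, $\bq^{\,\tim(k-j+1)}=\exp\bigl(-\tim(k-j+1)(A\log n+B)\bigr)$ with $B=\sum_i\bar\beta_i/n^{\bar\gamma_i}$, and for a critical $k$ with $\bar\alpha_k\neq 0$ one has $(k-j+1)B=\log\log n+O(1)$ (this is precisely the condition $\bar\sublogp_k=O(1)$). Thus the ``hidden'' $o(1)$ in your exponent equals $-(1+o(1))\log\log n/\log n$, of the same order as your displayed main term $(1-\tim')A(k-j+1)$ and of the right sign to make the total exponent negative. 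Your suggested fix via flower-type constraints cannot work: a local $j$-obstacle requires \emph{any} $k-j+1$ many $K$-localised $(j+1)$-subsets, with no further structural restriction, so there is nothing to prune (Lemma~\ref{lem:minobst} concerns cocycle supports, not arbitrary collections of localised sets). The clean repair, which the paper uses, is to invoke the full strength of $j$-criticality rather than only $\bar\logp_k\le 0$: since $n^{k+1}\bar p_k\,\bq^{\,k-j+1}=\Theta(\EE(\barvarNjk))=O(1)$, one gets $n^{k+1}\bar p_k\,\bq^{\,\tim'(k-j+1)}\le O\bigl((n^{k+1}\bar p_k)^{1-\tim'}\bigr)\le O\bigl((n^{j+1}\log n)^{1-\tim'}\bigr)$; dividing by the $\log n$ from the integral (equivalently, from the race against $\Theta(\log n)$ competing simplices) gives $(\log n)^{(j+1)/(10d)-1+o(1)}=o(1)$.
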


\begin{proof}
We will prove the statement for local obstacles of size $k+1$, for some $j\le k \le d$.
The lemma then follows by applying a union bound over all $k$.

We first note that by Remark~\ref{rem:pkprob}, $\bar p_k <1$ if $k\ge j+1$. 
On the other hand, if $k=j$ and $\bar p_j \ge \frac{1}{\tim'}=1+o(1)$,
every $j$-simplex is present in $\cG_{\tim'}$ deterministically,
and therefore the statement of the lemma trivially holds.

Thus in the following we may assume that
\begin{equation} \label{eq:condlocalobst}
\text{either } \quad k\ge j+1 \qquad \quad  \text{ or }  \quad k=j \text{ and } \bar p_j < \frac{1}{\tim'} <  1+ \frac{\log \log n}{9d \log n}.
\end{equation}
Although in the second case we indeed have
$\bar p_j < \frac{1}{\tim'}$, we would 
incur some technical difficulties if the 
probability $\bar p_j$ is very `close'
to $\frac{1}{\tim'}$. Hence, in the following calculations we need to replace
$\tim'$ by a slightly smaller value. 
More precisely,
we consider
\begin{equation*} \label{eq:timminus}
\tim^- \ :=\  1 - \frac{\log \log n}{5 d \log n}.
\end{equation*}
We will show that whp for any $\tim \ge \tim^-$, 
a local $j$-obstacle in $\cG_\tim$ also exists 
in $\cG_{\tim^-}$, thus obtaining the statement for any $\tim \ge \tim'>\tim^-$ as well.
In particular, observe that
\[  \tim^- \bar p_k < 1 \qquad \text{ for every } k\ge j, \]
by Remark~\ref{rem:pkprob} and \eqref{eq:condlocalobst}.

Fix $k\ge j$, let $K$ be a $(k+1)$-set,
and recall that $\tim_K \in [0,1/\bar p_k]$ denotes its scaled birth time, i.e.\ if $t_K$ is the birth time
of $K$ as a $k$-simplex in $\cG_\tim$, then $\tim_K = t_K/\bar p_k$ 
(see~\eqref{eq:scaledbirth}). 
In order to \emph{become} a local $j$-obstacle in $\cG_{\tim}$ for some $\tim> \tim^-$,
$K$ must contain a collection
$\cJ$ of $k-j+1$ many $(j+1)$-sets such that the following conditions are satisfied:
 \renewcommand{\theenumi}{(L\arabic{enumi})}
\begin{enumerate}
\item\label{locobst:notsimpl}
$\tim_K > \tim^-$;
\item\label{locobst:Klocalised} 
every $J\in \cJ$ is $K$-localised in $\cG_{\tim^-}$;
\item\label{locobst:bornbefore}
$K$ is born as a $k$-simplex before any other simplex $I$ that contains
some $J\in \cJ$, but which is not contained in $K$, i.e.\ $\tim_K < \tim_I$ 
for all such $I$.
\end{enumerate}
\renewcommand{\theenumi}{(\roman{enumi})}

Fix the $(k+1)$-set $K$ and the collection 
$\cJ$ of $(j+1)$-sets in $K$.
For this choice of $K$ and $\cJ$,
we denote by 
$L_1$, $L_2$, and $L_3$ the
events that conditions \ref{locobst:notsimpl}, \ref{locobst:Klocalised}, and \ref{locobst:bornbefore} hold, respectively.
 
By definition of our model, we have that 
\begin{equation}\label{eq:L1}
\Pr(L_1) = (1- \tim^- \bar p_k  ).
\end{equation}

In order to compute $\Pr(L_2 \; |\; L_1)$, first observe that $L_2$ is independent of $L_1$. By Proposition~\ref{prop:localisedprob} applied with $\allp=\tim^-\barallp$, we have
\begin{align}
\Pr(L_2 \; | \; L_1) &\stackrel{\phantom{\eqref{eq:expXjk},\eqref{eq:expXjj}}}{=}  
\Pr(L_2) \nonumber \\
&\stackrel{\phantom{\eqref{eq:expXjk},\eqref{eq:expXjj}}}{=} (1+o(1))\bq^{\; \tim^- (k-j+1)} \nonumber \\
&\stackrel{\eqref{eq:expXjk},\eqref{eq:expXjj}}{=} \left( \frac{\Theta(1) \cdot \EE(\barvarNjk)}{n^{k+1} \min\{\bar{p}_k,1\} } \right)^{\tim^-}
\stackrel{(\bar p_k < 1+o(1))}{=} \left( \frac{\Theta(1) \cdot \EE(\barvarNjk)}{n^{k+1} \bar{p}_k } \right)^{\tim^-}, \label{eq:L2}
\end{align}
where $\barvarNjk$ denotes the number of copies of $\Njk$ in $\cG_{1}=\cG(n,\barallp)$ (i.e.
$\tim=1$),
and thus $\EE(\barvarNjk) \le 1+o(1)$ (see \eqref{eq:expMjkbar}).
 
We now want to bound 
$\Pr\big( L_3 \; | \; (L_1 \land L_2) \big)$. 
For any $i$ such that $j+1 \le i \le d$ and 
for any $J\in\cJ$, there are $\binom{n-k-1}{i-j}$ 
many $(i+1)$-sets which contain $J$ and whose  remaining vertices are outside $K$.
In order for $L_3$
to hold,
 all these $(i+1)$-sets (among others) must be born as simplices after $K$
and observe that all of these $(i+1)$-sets are distinct for different choices of $J$. 
 It will be convenient to pick $i=k_0$,
recalling from Definition~\ref{def:indices}~\ref{def:indices:kzero} that $k_0\ge j+1$ is such that
$\bar \alpha_{k_0}\neq 0$ and $\bar \gamma_{k_0}=0$.
 Thus we have a family $\cZ$ of
 \begin{equation}\label{eq:badk0+1sets}
 z\ :=\ |\cZ|=(k-j+1) \binom{n-k-1}{k_0-j} = 
 \Theta\left( \frac{\log n}{\bar p_{k_0}}\right)
 \end{equation}
 many $\emph{bad}$ $(k_0+1)$-sets whose scaled  
 birth times
 are 
 uniformly distributed in the interval $[\tim^-, \frac{1}{\bar{p}_{k_0}}]$
 (since the corresponding simplices are 
 not present in $\cG_{\tim^-}$
 by $L_2$),
 but must all be larger than $\tim_K$,
 in order for $L_3$ to hold.
Similarly, conditioned on 
$L_1$, the scaled birth time $\tim_K$ is uniformly distributed in $[\tim^-,\frac{1}{\bar{p}_k}]$.  
This allows us to prove the following.

\begin{claim}\label{claim:firstbirthtime}
Let $L_3'$ be the event that
$K$ is born as a $k$-simplex before any 
of the bad $(k_0+1)$-sets in $\cZ$.
Then
\[ \Pr\big( L_3' \;|\; (L_1 \land L_2)\big) = 
(1+o(1))  \frac{\bar p_k}{z \bar p_{k_0}
(1 -  \tim^- \bar  p_k  )}
 \stackrel{\eqref{eq:badk0+1sets}}{=} 
 \Theta\left( 
\frac{\bar p_k}{(1- \tim^- \bar p_k )\log n }\right).\]
\end{claim}

We will delay the proof of Claim~\ref{claim:firstbirthtime} until Appendix~\ref{sec:proofoffirstbirthtime}.
We now complete the proof of Lemma~\ref{lem:obstacles}.
Note that
$L_3 \subseteq L_3'$, thus
Claim~\ref{claim:firstbirthtime} in particular
implies that
\begin{equation}\label{eq:L3givenL1L2}
\Pr\big( L_3 \;|\; (L_1 \land L_2)\big) 
\le \Pr\big( L_3' \;|\; (L_1 \land L_2)\big) 
= \Theta\left( 
\frac{\bar p_k}{(1- \tim^- \bar p_k )\log n }\right).
\end{equation}

Putting \eqref{eq:L1}, \eqref{eq:L2},
and \eqref{eq:L3givenL1L2} together
we have
\begin{align*}
\Pr(L_1 \land L_2 \land L_3) & = 
\Pr(L_1) \Pr(L_2 \; | \; L_1)
\Pr\big(L_3 \; | \; (L_1 \land L_2) \big) \\
& =  O \left((1-  \tim^- \bar p_k )
\left( \frac{\EE(\barvarNjk)}{n^{k+1} 
\bar{p}_k } \right)^{\tim^-}  
\frac{\bar p_k}{ (1- \tim^- \bar p_k )\log n}  \right).
\end{align*}
Recalling that $\EE(\barvarNjk)\le 1+o(1)$
by \eqref{eq:expMjkbar}, we thus deduce that
\begin{equation}
\Pr(L_1 \land L_2 \land L_3)
= O\left(\frac{{\bar p_k}^{1- \tim^-}}{
n^{\tim^- (k+1)}\log n}\right).\label{eq:probL1L2L3}
\end{equation}

There are $\Theta(n^{k+1})$ choices for the 
$(k+1)$-set $K$ and, once $K$ is fixed, there 
are $\Theta(1)$ choices for the collection 
$\cJ$ of $k-j+1$ many $(j+1)$-sets in $K$.
Since $\barallp$ is a $j$-admissible
direction (cf.\ Definition~\ref{def:criticalp})
we know that $\bar p_k = O\left( \frac{\log n}{n^{k-j}}\right)$, hence
the expected numbers of 
pairs $(K,\cJ)$ satisfying
\ref{locobst:notsimpl}, \ref{locobst:Klocalised}, and
\ref{locobst:bornbefore} is 
\begin{align*}
\Theta(n^{k+1}) \Pr(L_1 \land L_2 \land L_3)
&\stackrel{\eqref{eq:probL1L2L3}}{=}
O\left(\frac{\left(n^{k+1}\bar p_k\right)^{1- \tim^-}}{\log n}\right)\\
& \stackrel{\phantom{\eqref{eq:probL1L2L3}}}{=}
O\left(\frac{n^{(j+1)(1-\tim^-)}}{(\log n)^{
\tim^-}} \right)\\
& \stackrel{\phantom{\eqref{eq:probL1L2L3}}}{=}
 O \left( \exp\left( 
\left(\frac{j+1}{5d} - (1-o(1)) \right) 
\log \log n\right) \right)\\
& \stackrel{\phantom{\eqref{eq:probL1L2L3}}}{=} O \left( \exp \left(
\frac{-\log \log n}{2} \right) \right) 
\xrightarrow{n \to \infty} 0.
\end{align*}
Therefore by Markov's inequality, whp there are no such pairs
$(K,\cJ)$, as required.
\end{proof}

We are now ready to prove the main result of this section.

\begin{proof}[Proof of Lemma~\ref{lem:hitting}]
Observe that in particular a copy of $\Njk$ (or, more precisely, the associated $k$-simplex)
is a local $j$-obstacle.
Lemma~\ref{lem:obstacles} shows that if a local $j$-obstacle is present
in $\cG_\tim$ for some
$\tim \ge \tim'$, then whp it already existed in $\cG_{\tim'}$. 
Therefore, if $\tim'' < \tim_j^*$ then a copy of $\Mjkhat$ appears in between these two times,
but whp the associated copy of $\Njk$ would already exist at time $\tim''$ and thus whp would form a copy
of $\Mjkhat$ at that time too, by Lemma~\ref{lem:shells}. 
This cannot happen by definition of $\tim''$, which gives $\tim'' = \tim_j^*$ whp, as required.  

To prove the second statement, observe that
by Lemmas~\ref{lem:shells} and \ref{lem:manyNjk}, whp we have $\tim_j^* > 1 - \frac{\omega}{\log n}$,
proving the lower bound. 

In the proof of the upper bound it will be convenient to assume that 
$\omega = o(\log n)$---this assumption is permissible since the statement becomes stronger for smaller $\omega$.

For $\tim= 1+ \frac{\omega}{\log n}$ and any $j\le k \le d$, applying
\eqref{eq:expXjk} (for $k \ge j+1$) or \eqref{eq:expXjj} (for $k=j$) to
$\varNjk$ and to the number $\barvarNjk$ of copies
of $\Njk$ at time $1$, we deduce that
\begin{align*}
  \EE(\varNjk)
  &\stackrel{\phantom{\eqref{eq:q_computing}}}{=}
   (1+o(1)) \EE(\barvarNjk) \cdot \bq^{(\tim-1)(k-j+1)}\\
  &\stackrel{\eqref{eq:q_computing}}{=}
   (1+o(1)) \EE(\barvarNjk) \cdot \exp\left(-\frac{\omega}{\log n}(k-j+1)\sum\nolimits_{k=j+1}^d
\left(\bar\alpha_k \log n+ \frac{\bar \beta_k}{n^{\bar \gamma_k}}\right)\right)\\
&\stackrel{\phantom{\eqref{eq:q_computing}}}{\le} (1+o(1)) \EE(\barvarNjk) \exp(-\bar\alpha_{k_0} \omega) =o(1),
\end{align*}
where we are using that $\EE(\barvarNjk) \le 1 + o(1)$ by \eqref{eq:expMjkbar} and that the index $k_0$ is such that $\bar\alpha_{k_0} \neq 0$ and $\bar\gamma_{k_0} = 0$.

Hence, by Markov's inequality whp there are no copies of $\Njk$
and thus also no copies of $\Mjkhat$.
This means that whp $\tim'' < 1+\frac{\omega}{\log n}$, and we have already shown that whp $\tim_j^* = \tim''$.
\end{proof}

\bigskip
\section{Subcritical case: proof of Lemma~\ref{lem:jinterval}}
\label{sec:subcritical}

In this section we first derive some auxiliary results
and combine them to prove
Lemma~\ref{lem:jinterval}, which plays
a crucial role in the proof of
the subcritical case
(i.e. statement~\ref{main:subcritical}) of Theorem~\ref{thm:main}.

Given a constant $\eps>0$,
in order to show that whp
$H^j(\cG_\tim;R)\neq 0$ for every
$\tim \in I_j(\eps)= [\eps/n, \tim_j^*)$,
we split this range into three separate intervals,
$$
\left[\frac{\eps}{n},\tim_j^*\right)=
\left[\frac{\eps}{n},\frac{\delta}{\log n}\right]
\cup \left[\frac{\delta}{\log n},1-\frac{1}{(\log n)^{1/3}}\right]
\cup \left[1-\frac{1}{(\log n)^{1/3}},\tim_j^*\right)
$$
for some constant $\delta>0$,
and prove that
for each of these ranges there is some $k$ and one copy of $\Mjkhat$ which exists throughout
the subinterval (Lemmas~\ref{lem:firstinterval},~\ref{lem:secondinterval}, 
and~\ref{lem:thirdinterval}).

\begin{lem}\label{lem:firstinterval}
  For every constant $\eps>0$, there exists a constant $\delta>0$
  such that whp
  there is at least one copy of $\Mjkzerohat$ that is present
  in the process $(\cG_{\tim}) = (\cG(n,\tim\bar\allp))_\tim$ for all values
  \begin{equation*}
    \tim \in \left[\frac{\eps}{n},\frac{\delta}{\log n}\right].
  \end{equation*}
\end{lem}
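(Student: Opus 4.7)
The plan is to combine Lemma~\ref{lem:firstMjk}, which provides many copies at the left endpoint $\tim_0 := \eps/n$, with a second-moment argument showing that a positive fraction of these copies persists throughout $[\tim_0, \delta/\log n]$.

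First, I would observe that of the three conditions~\ref{Mjkhat:simplex}--\ref{Mjkhat:shell} defining $\Mjkhat$, only~\ref{Mjkhat:flower} is monotone decreasing under the addition of simplices, whereas~\ref{Mjkhat:simplex} and~\ref{Mjkhat:shell} are monotone increasing. Consequently, a copy $M = (K,C,w,a)$ valid at time $\tim_0$ remains valid for every $\tim \in [\tim_0, \delta/\log n]$ if and only if~\ref{Mjkhat:flower} still holds at the right endpoint, i.e., no \emph{bad simplex}---a simplex $\sigma \not\subseteq K$ containing a petal of $\cF(K,C)$---is born in the time interval $(\tim_0, \delta/\log n]$.

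Next, for a fixed $M$ valid at $\tim_0$, I would bound the probability of destruction by a union bound over bad simplices. For each of the $k_0 - j + 1$ petals and each $k'$ with $j+1 \le k' \le d$, there are at most $\binom{n}{k'-j}$ candidate $(k'+1)$-sets containing that petal, and each becomes a simplex during $(\tim_0, \delta/\log n]$ with probability at most $(\delta/\log n)\bar p_{k'}$. Using the parametrisation~\eqref{eq:parameters} together with properties~\ref{parameters:zero}--\ref{parameters:sublog}, the resulting sum is bounded by a constant $C$ times $\delta$. Choosing $\delta$ so that $C\delta \le 1/2$, the probability that any given $M$ is destroyed is at most $1/2$.

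Setting
\[
Z \ :=\  \#\bigl\{\text{copies of } \Mjkzerohat \text{ valid at } \tim_0 \text{ whose~\ref{Mjkhat:flower} still holds at } \delta/\log n\bigr\},
\]
the above yields $\EE[Z] \ge \tfrac{1}{2}\,\EE[\varMjkzerohat(\tim_0)] = \Theta((\log n)^{j+2})$, where the last estimate reuses the first-moment computation behind Lemma~\ref{lem:firstMjk}. A second-moment calculation---structurally parallel to the one used for that lemma but carrying an additional `no bad simplex' factor for each copy---then gives $\EE[Z^2] = (1 + o(1))\,\EE[Z]^2$, and Chebyshev's inequality yields $Z \ge 1$ whp, completing the proof.

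The main obstacle is this second-moment computation: for a pair $(M_1, M_2)$ of copies whose $k_0$-simplices $K_1, K_2$ have substantial overlap, a single bad $(k'+1)$-set could simultaneously destroy both copies, introducing non-trivial correlation. However, only $O(1)$ such shared candidates exist per overlapping pair, so the correction contributes a multiplicative $1 + o(1)$ factor that mirrors the treatment of petal-probability correlations already carried out in the proof of Lemma~\ref{lem:firstMjk}. This suffices to close the argument.
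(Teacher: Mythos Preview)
Your approach is correct but takes a different route from the paper. Both start from Lemma~\ref{lem:firstMjk} to obtain $\Theta((\log n)^{j+2})$ copies of $\Mjkzerohat$ at time $\eps/n$, and both observe that only condition~\ref{Mjkhat:flower} can fail as $\tim$ increases. You then run a second-moment argument on the number $Z$ of copies that survive the entire interval. The paper instead counts \emph{destroyers}: call a $(k+1)$-set \emph{dangerous} if it is not yet a simplex at time $\eps/n$ but contains a petal of some existing copy. There are at most $O((\log n)^{j+2} n^{k-j})$ dangerous $(k+1)$-sets, each becoming a simplex by time $\delta/\log n$ with probability $O(\delta/n^{k-j})$ independently, so a Chernoff bound shows that whp only $O(\delta(\log n)^{j+2})$ of them are born. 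Since each newly-born simplex contains at most $\binom{d+1}{j+1}=O(1)$ petals and hence destroys at most $O(1)$ copies, choosing $\delta$ small enough leaves at least one copy intact. This sidesteps any second-moment computation on $Z$ and uses Lemma~\ref{lem:firstMjk} as a black box rather than reopening its calculation.

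One small correction to your outline: the claim that overlapping pairs share only $O(1)$ bad candidate simplices is not correct in general---if the two flowers share a petal there are $\Theta(n^{k'-j})$ shared candidates. This does not break your argument, however: for the upper bound on $\EE[Z^2]$ you may simply bound the joint survival probability by $1$ on overlapping pairs, after which their total contribution is already $o(\EE[\varMjkzerohat]^2)=o(\EE[Z]^2)$ by the case analysis carried out in the proof of Lemma~\ref{lem:firstMjk}. The positive correlation of survival events induced by shared bad simplices would only threaten a lower bound, which you do not need.
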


\begin{remark}
Indeed, Lemma~\ref{lem:firstinterval} would
also hold with $k_0$ replaced by any index $k$ with $j+1 \le k \le d$ such that $\bar\alpha_{k}\neq 0$.
\end{remark}

\begin{proof}[Proof of Lemma~\ref{lem:firstinterval}]
  By Lemma~\ref{lem:firstMjk}, there exist constants $0<c_1<c_2$
  (depending on $\eps$) such that whp the number $\varMjkzerohat$ of
  copies of $\Mjkzerohat$ in $\cG_{\frac{\eps}{n}}$
  satisfies
  \begin{equation*}
    c_1(\log n)^{j+2} \le \varMjkzerohat \le c_2(\log n)^{j+2},
  \end{equation*}
  and all these copies of $\Mjkzerohat$ originate from distinct
  copies of $\Njkzero$.
  We will show that whp at least one of these copies survives
  (i.e.\ remains a copy of $\Mjkzerohat$) until time $\tim = \frac{\delta}{\log n}$, for
  a suitable constant $\delta >0$.

  For each index $k$ with $j+1 \le k \le d$, call a $(k+1)$-set
  \emph{dangerous} if it is \emph{not} a $k$-simplex in
  $\cG_{\frac{\eps}{n}}$ and contains a petal of at least one
  copy of $\Mjkzerohat$.
  Since there are at most $c_2(k-j+1)(\log n)^{j+2}$ petals,
  and each is contained in at most $\binom{n-j-1}{k-j}\le \frac{n^{k-j}}{(k-j)!}$ many $(k+1)$-sets,
  setting $c_3= \max_k\frac{(k-j+1)c_2}{(k-j)!}$,
  for
  all $k$ the number of dangerous $(k+1)$-sets is at most
  \begin{equation*}
    c_3(\log n)^{j+2}n^{k-j}.
  \end{equation*}
  For each dangerous $(k+1)$-set, the probability that it becomes a simplex
  by time $\tim=\delta/(\log n)$ is the probability that its scaled birth time
  is at most $\delta/(\log n)$ conditioned on the event that it is at least $\eps /n$,
  which is
  $$
  \frac{\left(\frac{\delta}{\log n}-\frac{\eps}{n}\right)\bar p_k}{1-\frac{\eps }{n}\bar p_k}
  \le \frac{\delta \bar p_k}{\log n} \le \frac{(k-j)!(\bar \alpha_k +1) \delta}{n^{k-j}}.
  $$
  Setting $c_4\ :=\ \max_k (k-j)!(\bar \alpha_k+1)$,
  the number of dangerous $(k+1)$-sets that turn into $k$-simplices
  in the time interval we are considering is dominated by
  \begin{equation*}
    \Bi\left(c_3(\log n)^{j+2}n^{k-j},\frac{c_4\delta}{n^{k-j}}\right),
  \end{equation*}
  so by a Chernoff bound, we deduce
  that the number of dangerous sets of \emph{any} size that turn into
  simplices by time $\tim=\delta/(\log n)$ is whp smaller than
  \begin{equation*}
    2(d-j)c_3c_4\delta (\log n)^{j+2}.
  \end{equation*}
  Note that each $(k+1)$-set can contain at most $\binom{k+1}{j+1}\le \binom{d+1}{j+1}=:c_5$
  petals, and therefore each of these dangerous
  sets makes at most $c_5$ copies of $\Mjkzerohat$ disappear by becoming a simplex. If we choose
  $\delta<\frac{c_1}{2(d-j)c_3c_4c_5}$, then whp the number of copies of $\Mjkzerohat$
  that disappear by time $\tim=\delta/(\log n)$ is at most
  \begin{equation*}
    2(d-j)c_3c_4c_5\delta (\log n)^{j+2} < c_1(\log n)^{j+2} \le \varMjkzerohat.
  \end{equation*}
  In other words, at least one copy of $\Mjkzerohat$ that exists
  at the beginning of the interval survives until the end of the interval.
\end{proof}

\begin{lem}\label{lem:secondinterval}
  For every constant $\delta>0$ and every critical dimension $k$ with
  $j\le k\le d$, whp there is a
  copy of $\Mjkhat$ that is present in the process $(\cG_{\tim}) =
  (\cG(n,\tim\barallp))_\tim$ for all values
  \begin{equation*}
    \tim \in \left[\frac{\delta}{\log n},1-\frac{1}{(\log n)^{1/3}}\right].
  \end{equation*}
\end{lem}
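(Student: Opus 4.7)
Set $\tim_0\ :=\ \delta/\log n$ and $\tim_1\ :=\ 1-(\log n)^{-1/3}$, and fix a critical dimension $k$. The plan is to produce, via a first/second moment argument, a pair $(K,C)$ that forms a copy of $\Njk$ at every time in $[\tim_0,\tim_1]$, and then to upgrade it to a persistent copy of $\Mjkhat$ by invoking Corollary~\ref{cor:NjkMjk}. The key starting observation is a monotonicity decomposition: condition~\ref{Njk:simplex} is monotone increasing in $\tim$, while the flower condition~\ref{Njk:flower} is monotone decreasing in $\tim$. Consequently $(K,C)$ is a copy of $\Njk$ throughout $[\tim_0,\tim_1]$ if and only if $\tim_K\le\tim_0$ and the flower condition already holds at time $\tim_1$; let $Y$ denote the number of such persistent pairs.

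For the first moment, the two defining events depend on disjoint collections of birth-time data --- the single variable $\tim_K$ versus the scaled birth times of all $(k'+1)$-sets with $k'\ge j+1$ that meet some petal of $\cF(K,C)$ but differ from $K$ --- and are therefore independent. Proposition~\ref{prop:localisedprob} applied at time $\tim_1$ to the petal collection gives
$$
\EE[Y]\;=\;\Theta(n^{k+1})\cdot\min\{\tim_0\bar p_k,1\}\cdot(1+o(1))\,\bq^{\,\tim_1(k-j+1)}.
$$
Since $k$ is a critical dimension ($\bar\logp_k=0$ and $\bar\sublogp_k+\bar\constp_k=O(1)$), the calculation in the proof of Lemma~\ref{lem:manyNjk} applied with $\omega_0=(\log n)^{1/3}$ gives $\EE[\varNjk(\tim_1)]=\exp(\Theta((\log n)^{2/3}))$; dividing the above display by \eqref{eq:expXjk} (or by \eqref{eq:expXjj} when $k=j$) shows that $\EE[Y]$ differs from $\EE[\varNjk(\tim_1)]$ only by an at worst subpolynomial factor $\min\{\tim_0\bar p_k,1\}/(\tim_1\bar p_k)$, so $\EE[Y]=\exp(\Theta((\log n)^{2/3}))\to\infty$.

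The core of the argument is the second moment estimate. Writing $\EE[Y^2]=\sum\Pr[A(K_1,C_1)\cap A(K_2,C_2)]$ where $A(K,C)$ is the survival event, I split the sum according to $t\ :=\ |K_1\cap K_2|$. When $t\le j$ the two $K_i$ are distinct and no $(j+1)$-set lies in both flowers, so the joint event decomposes into independent events on disjoint birth-time variables; one more application of Proposition~\ref{prop:localisedprob} to the union of petal collections shows that these pairs together contribute $(1+o(1))\EE[Y]^2$. For the overlapping regime $j+1\le t\le k+1$, the number of pairs with $|K_1\cap K_2|=t$ loses a factor $n^{t-(k+1)}$, while the positive correlation from shared petals in the joint flower event contributes a gain of at most a power of $\bq^{-\tim_1}$ whose exponent depends combinatorially on $t$; a case analysis mirroring the overlap bookkeeping from the proofs of Lemmas~\ref{lem:firstMjk} and~\ref{lem:manyNjk} shows that the combinatorial loss dominates in every regime, and the total overlap contribution is $o(\EE[Y]^2)$. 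Consequently $\var(Y)=o(\EE[Y]^2)$, and Chebyshev's inequality yields $Y\ge1$ whp.

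Finally, Corollary~\ref{cor:NjkMjk} applied with $\eps\ :=\ \delta$ whp supplies, for the surviving pair $(K,C)$, vertices $w\in K\setminus C$ and $a\in[n]\setminus K$ such that $C\cup\{w\}\cup\{a\}$ is already a $j$-shell in $\cG_{\tim_0}$. Since $j$-shells only grow and condition~\ref{Mjkhat:flower} coincides with~\ref{Njk:flower}, the tuple $(K,C,w,a)$ is a copy of $\Mjkhat$ at every $\tim\in[\tim_0,\tim_1]$. The main obstacle I anticipate is the overlap bookkeeping in the second moment step: one must verify, for every $j+1\le t\le k+1$, that the gain from shared petals in the joint flower event is strictly dominated by the combinatorial loss from the $t$ vertices being shared rather than free. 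This is technical but conceptually parallel to the overlap analyses already carried out in the paper, and I expect it to go through without new ideas.
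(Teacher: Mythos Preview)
Your approach is correct but follows a different route from the paper's. You count persistent copies directly via a second moment argument on $Y$; the paper instead invokes Lemma~\ref{lem:manyNjk} (with $\omega_0=(\log n)^{1/3}$) at the endpoint $\tim_1$ to obtain more than $\exp(\sqrt{\log n})$ copies of $\Njk$ with pairwise distinct $k$-simplices, and then argues conditionally: given that $(K,C)$ is a copy of $\Njk$ at $\tim_1$, the scaled birth time $\tim_K$ is uniform on $[0,\min\{\tim_1,1/\bar p_k\}]$ (this is precisely your monotonicity/independence decomposition), so $\Pr[\tim_K\le\tim_0]\ge\delta/\log n$; since the $\tim_K$ for distinct $K$ are independent, the probability that none of them is at most $\tim_0$ is $o(1)$. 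Both proofs then finish identically via Corollary~\ref{cor:NjkMjk}. Your route is more self-contained, while the paper's avoids repeating the second moment by recycling Lemma~\ref{lem:manyNjk} and replacing your Chebyshev step with a one-line independence argument on birth times. Incidentally, the overlap bookkeeping you flag as the main obstacle is easier than you anticipate: when $K_1\ne K_2$, any petal of one flower lying in the other $k$-simplex forces the joint survival event to have probability zero (since $K_2$, being a simplex at time $\tim_1$, would itself violate condition~\ref{Njk:flower} for $(K_1,C_1)$), so there is no positive-correlation gain to offset --- exactly as in the $s=2$ case of the proof of Lemma~\ref{lem:manyNjk}.
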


\begin{proof}
  By Lemma~\ref{lem:manyNjk} with $\omega_0=(\log n)^{1/3}$,
  whp there are more than $\exp(\sqrt{\log n})$
  many copies of $\Njk$ in $\cG_{\tim}$ at the upper end $\tim =
  1-\frac{1}{(\log n)^{1/3}}$ of the interval. Observe that only
  $\Theta(1)$ such copies can share the same $k$-simplex $K$, thus
  we have $\Omega(\exp(\sqrt{\log n}))$ many copies with distinct
  $k$-simplices. For each such copy $(K,C)$, the scaled birth time  
  of $K$  is uniformly distributed within
  $$\left[0,\min\left\{1-\frac{1}{(\log n)^{1/3}},\frac{1}{\bar p_k}\right\}\right],$$
  meaning that
  $(K,C)$ formed a copy of $\Njk$ at time $\tim = \frac{\delta}{\log n}$
  with probability
  $$
  \frac{\delta/(\log n)}{\min\left\{1-\frac{1}{(\log n)^{1/3}},\frac{1}{\bar p_k}\right\}}
  \ge \frac{\delta}{\log n}.
  $$

  The birth times of the simplices $K$ are independent, thus the
  probability that at least one of them was present at time $\tim = 
  \frac{\delta}{\log n}$ is at least
  \begin{equation*}
    1-\left(1-\frac{\delta}{\log n}\right)^{\Theta\left(\exp(\sqrt{\log n})\right)}
    \ge 1-\exp\left(-\Theta\left(\frac{\exp(\sqrt{\log n})}{\log n}\right)\right)
    = 1-o(1).
  \end{equation*}
  In other words, whp some copy $(K,C)$ of $\Njk$ that exists at time $\tim =
  1-\frac{1}{(\log n)^{1/3}}$ already existed at time $\tim =
  \frac{\delta}{\log n}$. 
  By Corollary~\ref{cor:NjkMjk}
  applied at time 
  $\tim = \frac{\delta}{\log n}$, whp there exist $w\in K\setminus C$ and $a\in [n]\setminus K$
  such that $(K,C,w,a)$ is a copy of $\Mjkhat$
  in $\cG_\tim$ and therefore 
  throughout the interval
  $\left[\frac{\delta}{\log n},
  1 - \frac{1}{(\log n)^{1/3}} \right]$, as
  claimed.
\end{proof}

\begin{lem}\label{lem:thirdinterval}
  Whp the minimal obstruction which vanishes
  at time $\tim_j^*$ (defined in Theorem~\ref{thm:main}) was already present in $(\cG_{\tim}) =
  (\cG(n,\tim\barallp))_\tim$ for all
  values $\tim$ with
  \begin{equation*}
    \tim \in \left[1-\frac{1}{(\log n)^{1/3}},\tim_j^*\right).
  \end{equation*}
\end{lem}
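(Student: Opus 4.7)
The plan is to show that the underlying copy $(K,C)$ of $\Njl$ of a minimal obstruction vanishing at $\tim_j^*$ was already present at time $\tim_1 := 1-(\log n)^{-1/3}$, and then to use Corollary~\ref{cor:NjkMjk} to furnish a full shell extension at $\tim_1$ which persists by monotonicity. First, by Lemma~\ref{lem:hitting} applied with any function $\omega = o((\log n)^{2/3})$ tending to infinity, whp $\tim_j^* - \tim_1 = O((\log n)^{-1/3})$. Let $(K,C,w,a)$ denote a copy of $\Mjlhat$ present in $\cG_{\tim_j^{*-}}$ whose flower is destroyed at $\tim_j^*$. Since condition~\ref{Mjkhat:flower} is monotone decreasing as the complex grows, once it holds at $\tim_j^{*-}$ it also holds at every earlier time at which $K$ is an $\ell$-simplex. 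Hence it suffices to prove that whp the $\ell$-simplex $K$ was born before $\tim_1$, because then $(K,C)$ is automatically a copy of $\Njl$ throughout $[\tim_1,\tim_j^{*-}]$.

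Next, I would bound the expected number of copies of $\Njl$ in $\cG_{\tim_j^{*-}}$ whose underlying $\ell$-simplex $K$ has scaled birth time in $(\tim_1,\tim_j^*]$. Since conditional on $K$ being an $\ell$-simplex by time $\tim_j^*$ its scaled birth time is uniform on $[0,\tim_j^*]$, the conditional probability of late birth is $(\tim_j^*-\tim_1)/\tim_j^*=O((\log n)^{-1/3})$. Combining this with the bound $\EE(\varNjl)=O(1)$, valid for $\tim$ close to $1$ by \eqref{eq:expMjkbar} in the critical case and by Lemma~\ref{lem:expectedNjk} in the other cases, the expected number of late copies is $o(1)$. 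A Markov's inequality argument (splitting on the high-probability event supplied by Lemma~\ref{lem:hitting} to handle the randomness of $\tim_j^*$) then yields that whp no copy of $\Njl$ is late, so $(K,C)$ is already a copy of $\Njl$ in $\cG_{\tim_1}$.

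Finally, since $\tim_1 \ge \eps/\log n$ for any fixed $\eps>0$ and $n$ large, Corollary~\ref{cor:NjkMjk} produces an extension $(K,C,w',a')$ that is a copy of $\Mjlhat$ in $\cG_{\tim_1}$. Monotonicity of conditions~\ref{Mjkhat:simplex} and~\ref{Mjkhat:shell} (going forwards in time), together with the monotone decreasing flower condition (going backwards from $\tim_j^{*-}$), ensures that $(K,C,w',a')$ is a copy of $\Mjlhat$ throughout $[\tim_1, \tim_j^{*-}]$. Because the simplex born at $\tim_j^*$ destroys the flower of $(K,C)$ and hence of every extension of $(K,C)$, the 4-tuple $(K,C,w',a')$ also vanishes at $\tim_j^*$ and thereby serves as a minimal obstruction of the required form. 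The main delicacy is that $(w',a')$ may differ from the original $(w,a)$: a direct union bound aiming to show that the specific shell $C\cup\{w\}\cup\{a\}$ was present at $\tim_1$ is not strong enough, because there are typically $\Theta(n)$ copies of $\Mjlhat$ sharing the same $(K,C)$ at $\tim_j^{*-}$ and the probability that a given side of the shell is born only in the interval is merely polynomially small in~$n$. Reading ``the minimal obstruction'' as ``some minimal obstruction sharing the same $\Njl$-part'' resolves this and supplies exactly what is required for the subsequent application in Lemma~\ref{lem:jinterval}.
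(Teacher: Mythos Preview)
Your overall structure matches the paper's proof exactly: show that the $\ell$-simplex $K$ of the last obstruction was already present at $\tim_1 = 1-(\log n)^{-1/3}$, and then invoke Corollary~\ref{cor:NjkMjk} to supply a (possibly different) shell $(w',a')$ at $\tim_1$; the paper also explicitly replaces $(w,a)$ by some $(\tilde w,\tilde a)$, so your closing remark about re-reading ``the minimal obstruction'' is precisely what the paper does.

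There is, however, a gap in your middle step. You argue via a first moment over \emph{all} copies of $\Njl$ at $\tim_j^{*-}$, multiplying the late-birth probability $O((\log n)^{-1/3})$ by $\EE(\varNjl)$, which you claim is $O(1)$. But $\tim_j^*$ is random, and~\eqref{eq:expMjkbar} only gives $\EE(\varNjl)=O(1)$ at the deterministic time $\tim=1$. At the lower end of the window (e.g.\ at $\tim_1$ itself), Proposition~\ref{prop:scaledparameters} shows that for a critical dimension $\ell$ one has $\EE(\varNjl(\tim_1))=\exp\bigl(\Theta((\log n)^{2/3})\bigr)$, so a naive first moment over late copies at a time that might be this small cannot yield $o(1)$. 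Splitting on the high-probability event from Lemma~\ref{lem:hitting} does not resolve this, because that event still allows $\tim_j^*<1$ by an amount large enough to make $\EE(\varNjl)$ blow up.

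The fix is simply to drop the first-moment wrapper: you already observe that, conditional on $(K,C,w,a)$ being the last obstruction, the birth time of the specific simplex $K$ is uniform on $[0,\min\{\tim_j^*,1/\bar p_\ell\})$ (since the flower, shell, and ``last'' conditions depend only on birth times of simplices other than $K$, together with the single constraint $\tim_K<\tim_j^*$). Applying this uniformity directly to the one last $K$, together with $\tim_j^*\le 1+(\log n)^{-1/3}$ whp, gives $\Pr(\tim_K\le\tim_1)\ge \tim_1/\tim_j^*=1-o(1)$ with no need to control how many copies exist. This is exactly the paper's argument.
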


\begin{proof}
  Recall that by Definition~\ref{def:indices}, the last minimal
  obstruction to vanish is a copy $(K,C,w,a)$ of $\Mjlhat$.
  Similar to the proof of Lemma~\ref{lem:secondinterval},
  the birth time of $K$ is uniformly
  distributed within $\left[0,\min \{\tim_j^*,1/\bar p_\ell\}\right)$ and
  by Lemma~\ref{lem:hitting}, whp $\tim_j^*\le 1+\frac{1}{(\log n)^{1/3}}$.
  Conditioned on this high probability event, the probability that
  $K$ already existed as a simplex in $\cG_\tim$ at time $\tim=1-\frac{1}{(\log n)^{1/3}}$ is at least
  $$
  \frac{1-\frac{1}{(\log n)^{1/3}}}{1+\frac{1}{(\log n)^{1/3}}}=1-o(1),
  $$
  i.e.\ whp $(K,C)$ formed a copy of $\Njk$ already at time
  $\tim = 1-\frac{1}{(\log n)^{1/3}}$. By Corollary~\ref{cor:NjkMjk}, this
  means that whp there exist $\tilde w,\tilde a$ such that
  $(K,C,\tilde w,\tilde a)$ forms a copy of $\Mjlhat$ throughout the
  interval.
\end{proof}

\begin{proof}[Proof of
Lemma~\ref{lem:jinterval}]
Lemmas~\ref{lem:firstinterval},
\ref{lem:secondinterval}, and~\ref{lem:thirdinterval} imply that
whp for any $\tim \in [\eps/n, \tim_j^*)$ a copy
of $\Mjkhat$ (for some $j\le k\le d$)
exists in $\cG_\tim$. Therefore,
for any $\tim$ in this range
$H^j(\cG_\tim;R) \neq 0$
by Corollary~\ref{cor:obstruction}.
\end{proof}

\bigskip
\section{Critical and supercritical cases: proof of Lemma~\ref{lem:supercritical}}\label{sec:supercritical}

In this section we present some auxiliary results
and prove
Lemma~\ref{lem:supercritical}, which
we used in Section~\ref{sec:proofmainthm} to show that
whp the process $(\cG_{\tim}) =
  (\cG(n,\tim\barallp))_\tim$ is
\connected\ for all $\tim\ge \tim_j^*$
(Theorem~\ref{thm:main}~\ref{main:supercritical}).
Furthermore, the results of this section
will be fundamental for the proof
of Theorem~\ref{thm:criticalwindow} (Section~\ref{sec:proofofcritwind}).

Recall that in order to have
$H^j(\cG_\tim;R)$ not vanishing, $\cG_\tim$ would have
to admit a \emph{bad function}, i.e.\ 
 a $j$-cocycle that is not
a $j$-coboundary.
We aim to show that no bad function exists by considering what
such a function with smallest possible support might look like, if it exists.
We show that the support must be \emph{traversable}
(Definition~\ref{def:traversable}, Lemma~\ref{lem:traversability}),
and then use this property to show that whp the support cannot be small (Lemma~\ref{lem:smallsupp}).
Subsequently, we use traversability and a result of Meshulam and Wallach~\cite{MeshulamWallach08}
to show that whp the support cannot be large (Lemma~\ref{lem:largesupp}), which is a contradiction.

However, so far this only proves that for any $\tim \ge \tim_j^*$, whp 
$H^j(\cG_\tim;R)=0$.
We need to know that whp, for any $\tim \ge \tim_j^*$ the group
$H^j(\cG_\tim;R)$ vanishes
(i.e.\ with a different order of quantifiers).
We achieve this by observing that 
at time $\tim_j^*$ the $j$-th
cohomology group is zero,
and proving that whp no new bad functions can appear (Lemma~\ref{lem:nonewobstructions}).

Slightly more generally than described above, we will actually prove that
for $\tim$ large enough, but slightly smaller than $\tim_j^*$, the only bad
functions that exist are the result of copies of $\Njk$ existing.

\begin{definition}
Let $(K,C)$ be a copy of $\Njk$ in a $d$-complex $\cG$. We say that a $j$-cochain $f \in C^j(\cG)$ \emph{arises from $(K,C)$} if its support $\cS=\mathrm{supp}(f)$ is such that
\[ \cS = \mathcal{F}(K,C).  \] 

We say that a $j$-cocycle $f$ (i.e.\ $f \in \ker(\delta^j)\subseteq C^j(\cG)$) is
\emph{generated by copies of $\Njk$} if it belongs to the same cohomology class
as some $f_1 + f_2 + \ldots + f_m$, where each $f_i$ is a $j$-cocycle that arises
from a copy of $M_{j,k_i}$.
We denote by $\cN_{\cG}$ the set of $j$-cocycles in $\cG$ that are
\emph{not} generated by copies of $\Njk$. If $\cG=\cG_\tim=\cG(n,\tim\barallp)$,
we will ease notation by defining $\cN_\tim \ :=\ \cN_{\cG_\tim}$.
\end{definition}

The goal is to prove that whp for all $\tim \ge \tim_j^*$ we have $\cN_\tim = \emptyset$. 
Since in this range there are no copies of $\Mjkhat$ and thus by Corollary~\ref{cor:NjkMjk} whp also no copies of $\Njk$, 
this will imply that whp $H^j(\cG_{\tim};R)=0$.  To this end, we need the following notation.
\begin{definition}
For every $\tim$, we denote by $f_\tim$ a function in $\mathcal{N}_\tim$ with smallest support $\cS_\tim$, if such a function exists.
\end{definition}

In order to bound the number of possible such supports $\cS_\tim$, we first show
(Lemma~\ref{lem:traversability}) that $\cS_\tim$ must satisfy the following
concept of \emph{traversability}.
\begin{definition}\label{def:traversable}
Let $(\cS,\cT)$ be a pair where $\cS$ is a collection of $j$-simplices in $\cG_{\tim}$ and $\cT$ is a collection of simplices in $\cG_\tim$ of dimensions $j+1,\ldots, d$. 

We say that $\cS$ is \emph{$\cT$-traversable} if it 
\emph{cannot} be partitioned into two non-empty subsets such that every simplex of $\cT$
contains elements of $\cS$ in at most one of the two subsets.
Equivalently, $\cS$ is $\cT$-traversable if for every $J,J'\in \cS$ there exists a sequence
$J=J_0,J_1,\ldots,J_m=J'$ of $j$-simplices in $\cS$ and a sequence $K_1,\ldots,K_m$ of
simplices in $\cT$ (not necessarily all of the same dimension) such that
$(J_{i-1}\cup J_i) \subseteq K_i$ for all $i \in [m]$.
We may think of these sequences of simplices as a generalisation of a path between $J$ and $J'$,
and thus traversability may be considered a form of connectedness.

We say that $\cS$ is \emph{traversable}\label{comb:traversability} (in $\cG_\tim$)
if it is $\cT$-traversable with $\cT$ consisting of \emph{all} $k$-simplices of
$\cG_{\tim}$ for every $j+1\le k \le d$.
\end{definition}

\begin{lem}\label{lem:traversability}
For every $\tim$, the support $\cS_\tim$, if it exists, is traversable.
\end{lem}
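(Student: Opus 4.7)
The plan is a proof by contradiction exploiting the minimality built into $f_\tim$. Suppose $\cS_\tim$ is not traversable; then by Definition~\ref{def:traversable} there is a partition $\cS_\tim = \cS_1 \sqcup \cS_2$ into non-empty sets such that no $k$-simplex of $\cG_\tim$ (for any $j+1 \le k \le d$) contains $j$-faces lying in both $\cS_1$ and $\cS_2$.

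For $i=1,2$, I will define a $j$-cochain $f_i$ by letting it agree with $f_\tim$ on every ordering of every $j$-simplex in $\cS_i$ and vanish on every ordering of every $j$-simplex outside $\cS_i$. The alternating-sign property needed in Section~\ref{sec:cohom} is inherited from $f_\tim$, so each $f_i$ is a well-defined element of $C^j(\cG_\tim;R)$; by construction $f_\tim = f_1 + f_2$ and $\mathrm{supp}(f_i) = \cS_i$ is a proper non-empty subset of $\cS_\tim$. The main technical step, and the one I view as the core of the argument, is to verify that each $f_i$ is itself a $j$-cocycle. For any $(j+1)$-simplex $\rho$ of $\cG_\tim$, the partition hypothesis applied to $\rho$ places all $j$-faces of $\rho$ that lie in $\cS_\tim$ in the same part, say in $\cS_1$; then $f_2$ vanishes on every $j$-face of $\rho$, so $(\delta^j f_2)(\rho) = 0$ trivially, whereas $f_1$ agrees with $f_\tim$ on every $j$-face of $\rho$, yielding $(\delta^j f_1)(\rho) = (\delta^j f_\tim)(\rho) = 0$. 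The symmetric case and the case in which $\rho$ has no $j$-face in $\cS_\tim$ are handled identically.

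With $f_1$ and $f_2$ now known to be $j$-cocycles whose supports have strictly smaller cardinality than $|\cS_\tim|$, the minimality of $f_\tim$ within $\cN_\tim$ forces $f_1, f_2 \notin \cN_\tim$, meaning that each is generated by copies of $\Njk$. Concatenating the lists of cocycles arising from copies of $\Njk$ that witness this for $f_1$ and for $f_2$ (and adding the corresponding $(j-1)$-coboundary witnesses) produces a representation of $f_\tim = f_1 + f_2$ as generated by copies of $\Njk$, contradicting $f_\tim \in \cN_\tim$. Apart from the cocycle verification in the previous paragraph, every step is essentially routine; the only subtlety to guard against is making sure the partition hypothesis is used at the $(j+1)$-simplex level, since that is precisely where $\delta^j$ is evaluated.
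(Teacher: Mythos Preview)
Your proposal is correct and follows essentially the same argument as the paper: partition the non-traversable support, restrict $f_\tim$ to each part to obtain two $j$-cocycles of strictly smaller support, invoke minimality to conclude both lie outside $\cN_\tim$, and sum to contradict $f_\tim\in\cN_\tim$. The cocycle verification you highlight as the core step is exactly the point the paper treats, and your observation that the partition hypothesis need only be applied at the $(j+1)$-simplex level (since that is where $\delta^j$ is evaluated) matches the paper's reasoning.
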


\begin{proof}
Suppose $\cS_\tim$ is not traversable and let $\cS_\tim = \cS_{(1)} \mathop{\dot{\cup}} \cS_{(2)}$
be a partition
into non-empty parts such that (in particular) each $(j+1)$-simplex of $\cG_\tim$
contains elements of $\cS_\tim$ in at most one of the two parts. 

For $i=1,2$, let $f_{(i)}$ be the $j$-cochain  defined by
\[
f_{(i)}(\sigma) = 
\begin{cases}
f_\tim(\sigma) & \mbox{if } \sigma \in \cS_{(i)}, \\
0_R & \mbox{otherwise}.
\end{cases}
\]
Suppose that $\rho$ is a $(j+1)$-simplex that contains $j$-simplices from only one $\cS_{(i)}$,
without loss of generality from $\cS_{(1)}$ and not $\cS_{(2)}$. 
Then trivially $(\delta^jf_{(2)})(\rho) = 0$ and $(\delta^jf_{(1)})(\rho) = (\delta^j f_\tim)(\rho) = 0$,
because $f_\tim \in \ker \delta^j$. 
Thus both functions $f_{(i)}$ are $j$-cocycles, and neither of them lies in $\cN_\tim$
by the minimality of $\cS_\tim$. 
Hence $f_\tim = f_{(1)} + f_{(2)}$ is generated by copies of $\Njk$, since this property is closed under summation, a contradiction to  $f_\tim \in \cN_\tim$.
\end{proof}

It is clear that given a traversable $\cS$ in $\cG_\tim$,
there exists a minimal collection $\cT$ of simplices of $\cG_\tim$
such that $\cS$ is $\cT$-traversable and every $\sigma \in \cT$
has scaled birth time at most $\tim$.
We fix some such minimal collection and denote it by  $\cT(\cS)$. We also define the sequence 
$\allt(\cS)=(t_{j+1},\ldots,t_d)$ 
where $t_k\ge 0$ is the number of $k$-simplices in $\cT(\cS)$, for every $j+1\le k \le d$.

\begin{remark}\label{rem:exploration}
  We note that if $\cS$ is traversable, $(\cS,\cT(\cS))$ can be explored
  in a natural way using
  a breadth-first search process: start from some $j$-simplex in $\cS$
  and reveal all $k$-simplices of $\cT(\cS)$ containing it, for every
  $k \in \{j+1,\ldots,d\}$. We thus `discover' any further
  $j$-simplices of $\cS$ contained in these simplices of $\cT(\cS)$,
  and from each of these $j$-simplices in turn we repeat the process.
  The $\cT(\cS)$-traversability of $\cS$ implies that all
  $j$-simplices of $\cS$ (and also all simplices of $\cT(\cS)$) are
  discovered in this process.
\end{remark}

This viewpoint allows us to observe some important properties.
First, note that by the minimality of $\cT(\cS)$, every simplex of $\cT(\cS)$ must
contain a previously undiscovered $j$-simplex of $\cS$, and therefore
\begin{equation}\label{eq:slowerbound}
|\cT(\cS)| = \sum\nolimits_{k=j+1}^d t_k \le |\cS|.
\end{equation}
On the other hand, each $k$-simplex of $\cT(\cS)$ is discovered
from a $j$-simplex it contains, and therefore contains at most
$k-j$ previously undiscovered vertices. Thus if $v$ is the number of vertices that
are contained in some $j$-simplex of $\cS$, we have
\begin{equation}\label{eq:verttrav}
v \leq (j+1) + \sum\nolimits_{k=j+1}^d (k-j)t_k.
\end{equation}

In the next lemma we show that at around $\tim=1$, while we may have copies of
$\Njk$, whp there are no `small' traversable supports of $j$-cocycles
other than those arising from these $\Njk$.

\begin{lem}\label{lem:smallsupp}
Let $\tim=1+o(1)$ and let $\largecon \in \mathbb{R}^+$ be a constant. 
Then whp there is no $j$-cocycle in $\cG_{\tim}$ with traversable support of size $s\le \largecon$, apart from those arising from copies of $\Njk$.

In particular, whp $|\cS_\tim|>\largecon$, if it exists.
\end{lem}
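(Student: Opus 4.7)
My plan is to combine a structural classification of the minimal traversable support $\cS$ with a first-moment calculation that exploits a strong ``isolation'' condition forced by the cocycle property.

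I would first use Lemma~\ref{lem:traversability} to fix a minimal traversal witness $\cT = \cT(\cS)$, and Lemma~\ref{lem:minobst} to constrain the local structure of $\cS$ inside each $K \in \cT$. In the simplest cases---$\cT = \emptyset$, so that $\cS = \{\sigma\}$ for an isolated $j$-simplex; or $\cT = \{K\}$ with $|\cS| = k-j+1$, in which case Lemma~\ref{lem:minobst} forces $\cS = \cF(K,C)$ for some $C$---the cocycle condition together with Lemma~\ref{lem:minobst} actually rules out any external $k'$-simplex $K' \supseteq \sigma$ with $\sigma \in \cS$: such a $K'$ would need $|\cS_{K'}| \ge k'-j+1 > 0$ and hence $K' = \bigcup \cS_{K'} \subseteq K$, contradicting externality. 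Consequently $(K,C)$ (respectively $(\sigma, C_\sigma)$) satisfies the flower condition of Definition~\ref{def:Njk} (respectively Definition~\ref{def:Njj}) and is a copy of $\Njk$, so that $f$ arises from this copy---a situation permitted by the lemma.

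The remaining cases are $|\cT| = 1$ with $|\cS| \ge k-j+2$, or $|\cT| \ge 2$; I would handle these by a first-moment argument ranging over the (finitely many, since $|\cS| \le \largecon$) isomorphism types of traversable pairs $(\cS, \cT)$. For each type with $v$ vertices and $t_k$ simplices of dimension $k$ in $\cT$, the number of embeddings into $[n]$ is $O(n^v)$, the probability that all simplices of $\cT$ lie in $\cG_\tim$ is $\prod_k p_k^{t_k}$, and---crucially---the cocycle condition combined with Lemma~\ref{lem:minobst} forces every $\sigma \in \cS$ to be isolated outside the structure in the following strong sense: for all but $O(1)$ vertices $v' \in [n]$, the $(j+1)$-set $\sigma \cup \{v'\}$ contains only $\sigma$ from $\cS$, and must therefore fail to be a $(j+1)$-simplex of $\cG_\tim$. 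A computation in the spirit of Proposition~\ref{prop:localisedprob} applied to these $\Theta(n)$ forbidden $(j+1)$-sets per $\sigma$ yields an isolation probability of order $\bq^{\Theta(n)}$, where $\bq$ decays polynomially in $n$ because the $j$-critical direction satisfies $\bar\alpha_{k_0} > 0$ (Definitions~\ref{def:indices} and~\ref{def:criticalp}), so that $\bq^{\Theta(n)}$ is super-polynomially small.

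Combining these estimates with the vertex bound $v \le (j+1) + \sum_{k\ge j+1}(k-j)t_k$ from~\eqref{eq:verttrav} and $\bar p_k = O(\log n / n^{k-j})$ from Remark~\ref{rem:pkprob}, the expected count of bad configurations of each type is at most $O(n^{j+1}(\log n)^{O(1)}) \cdot \bq^{\Theta(n)} = o(1)$, and Markov's inequality finishes the argument. The main obstacle is verifying that the isolation factor of $\bq^{\Theta(n)}$ really is the correct estimate: the bad $(j+1)$-sets $\sigma \cup \{v'\}$ across different $\sigma \in \cS$ overlap in general, and one must also simultaneously control higher-dimensional external simplices $K' \supseteq \sigma$, which requires an adaptation of Proposition~\ref{prop:localisedprob} to collections of forbidden sets whose size grows with $n$.
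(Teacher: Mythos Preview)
Your overall framework—classify the ``allowed'' supports as exactly the flowers $\cF(K,C)$ and then run a first-moment argument over finitely many isomorphism types of traversable pairs $(\cS,\cT)$—is the same as the paper's. The classification of the cases $\cT=\emptyset$ and $|\cT|=1$, $|\cS|=k-j+1$ is correct.

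The gap is in your isolation estimate. You assert that forbidding the $\Theta(n)$ sets $\sigma\cup\{v'\}$ from being $(j+1)$-simplices yields probability $\bq^{\Theta(n)}$. This is not a technicality to be verified; it is wrong by several orders of magnitude. The events ``$\sigma\cup\{v'\}$ is not a $(j+1)$-simplex'' for different $v'$ are massively positively correlated: their conjunction over all $v'$ outside the structure is precisely the event that $\sigma$ lies in no $k$-simplex $K\not\subseteq\bigcup\cS$ for any $k\ge j+1$, i.e.\ exactly the event in Proposition~\ref{prop:localisedprob} with $\cJ=\{\sigma\}$, of probability $(1+o(1))\bq^{\tim}$. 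Over all $\sigma\in\cS$ the isolation factor is therefore only $\bq^{\tim s}=n^{-s\sum_i\bar\alpha_i+o(1)}$, polynomially and not super-polynomially small. (Equivalently: the underlying forbidden \emph{edges} of $G(n,\allp)$ are the $s\binom{n-v}{k-j}$ many $(k+1)$-sets containing some $\sigma\in\cS$ and $k-j$ external vertices; this is what the paper counts directly.)

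With the correct factor, the first-moment bound becomes
\[
O\bigl(n^{\,v-\sum_k(k-j+\bar\gamma_k)t_k-s\sum_k\bar\alpha_k+o(1)}\bigr)
\le
O\bigl(n^{\,j+1-\sum_k\bar\gamma_kt_k-s\sum_k\bar\alpha_k+o(1)}\bigr),
\]
and showing this is $o(1)$ is the actual content of the lemma. The paper picks any $i$ with $t_i\ge1$, uses the key structural fact (from Lemma~\ref{lem:minobst}) that a cocycle \emph{not} arising from some $\Njk$ must have $s\ge i-j+2$ rather than merely $i-j+1$, and then invokes the $j$-criticality condition $\bar\logp_i\le0$ to obtain $\bar\gamma_i+(i-j+2)\sum_k\bar\alpha_k = j+1-\bar\logp_i+\sum_k\bar\alpha_k>j+1$. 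Your write-up skips exactly this balancing step, which is where $j$-criticality and the ``one extra'' petal both enter.
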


\begin{proof}
We want to bound the expected number of pairs $(\cS,\cT(\cS))$,
where $\cS$ is a traversable support of a $j$-cocycle not arising from a copy of $\Njk$
and with size $s\le \largecon$.

Recall that $v$ is the number of vertices that are contained in some $j$-simplex of $\cS$
and $\allt(\cS) = (t_{j+1},\ldots,t_d)$ is such that $t_k\ge 0$ indicates
the number of $k$-simplices in $\cT(\cS)$.
By~\eqref{eq:slowerbound} we have 
\begin{equation}\label{eq:simpltrav}
\sum\nolimits_{k=j+1}^d t_k \le s \le \largecon.
\end{equation}
Since $\cS$ is the support of a $j$-cocycle, by Lemma~\ref{lem:minobst}~\ref{minobst:size} if a $k$-simplex
contains an element in $\cS$ then all its $k+1$ vertices are contained in some $j$-simplex of $\cS$. 
This means that the $s\binom{n-v}{k-j}$ many $(k+1)$-sets containing a $j$-simplex in $\cS$
and $k-j$ vertices not in any $j$-simplex of $\cS$ are not allowed to be simplices. 
We thus obtain that the probability that a fixed pair $(\cS,\cT(\cS))$ has all the necessary properties
(in terms of which simplices exist and which do not) is bounded from above by
\begin{align*}
& \prod\nolimits_{k=j+1}^{d} p_k^{t_k} (1-p_k)^{s\binom{n-v}{k-j}}\\
& \hspace{2.5cm} = \prod\nolimits_{k=j+1}^{d} \big((1+o(1))\bar p_k\big)^{t_k} \exp \left(-(1+o(1))\bar p_k\left(s \frac{n^{k-j}}{(k-j)!}\right)\right)\\
& \hspace{2.5cm} =O\left( \prod\nolimits_{k=j+1}^{d} \left( n^{-(k-j+\bar\gamma_k)+o(1)}\right)^{t_k} \exp \left(-(1+o(1))s \left( \bar\alpha_k \log n + \frac{\bar\beta_k}{n^{\bar\gamma_k}}  \right) \right ) \right)\\
& \hspace{2.5cm} = O \left( n^{- \sum_{k=j+1}^{d} \big((k-j+\bar\gamma_k)t_k + s \bar\alpha_k \big) +o(1)}\right),
\end{align*}
where we used the observation that
$\bar \beta_k$ can only be negative if $\bar \alpha_k\neq 0$, in which case $\bar \beta_k=o(\log n)$ and $\bar \gamma_k=0$
(see \ref{parameters:subpoly}).

Let $\allt = (t_{j+1},\ldots,t_d)$ and denote by $E_{s,v,\allt}$ the event that a pair
$(\cS,\cT(\cS))$ with $\cS$ a traversable support of size $s$ on $v$ vertices and $\allt(\cS)=\allt$ exists.
Equations~\eqref{eq:verttrav} and~\eqref{eq:simpltrav} together imply that $v \le j+1 + (d-j)h = O(1)$,
and therefore 
there are $O(n^v)$ ways of choosing such a pair $(\cS,\cT(\cS))$, meaning that
\[\Pr\left( E_{s,v,\allt} \right) = O\left( n^{v - \sum_k(k-j) t_k - \sum_k (\bar\gamma_k t_k + s \bar\alpha_k) + o(1)} \right).\]
 
By \eqref{eq:verttrav}, we have 
\[v - \sum\nolimits_{k=j+1}^d (k-j)t_k \leq j+1.\] 
Moreover, for an index $i$ such that $t_i\ge 1$ (such an index exists,
because otherwise the support would be empty), 
by Lemma~\ref{lem:minobst} and since the considered $j$-cocycle does not arise from a copy of $\Njk$, it holds that $s\ge i-j+2$. 
Recalling that 
\[\bar\logp_i = j+1 - \bar\gamma_i - (i-j+1)\sum\nolimits_{k=j+1}^{d} \bar\alpha_k \stackrel{\text{\ref{crit:less},\ref{crit:equal}}}{\le}0\] 
and that $\sum_{k=j+1}^d \bar\alpha_k >0$, we have
\begin{equation*}
\sum\nolimits_{k=j+1}^d (\bar\gamma_k t_k + s \bar \alpha_k ) \ge \bar\gamma_i + (i-j+2)\sum\nolimits_{k=j+1}^d \bar\alpha_k 
= j+1 - \bar\logp_i + \sum\nolimits_{k=j+1}^d \bar\alpha_k > j+1.
\end{equation*}
Thus, 
\[\Pr\left( E_{s,v,\allt} \right) =o(1).\]
Since by \eqref{eq:verttrav} and \eqref{eq:simpltrav} there are only constantly many choices for the values $s$, $v$, and $\allt$, the probability that any such pair $(\cS,\cT(\cS))$ exists is $o(1)$.
\end{proof}

For supports of larger sizes, we will need a lower bound on the number
of $(j+2)$-sets
that are not allowed to be $(j+1)$-simplices in $\cG_\tim$. Such a bound is given by
Meshulam and Wallach \cite[Proposition~3.1]{MeshulamWallach08},
where it was stated for the case when
the cohomology groups considered are over any \emph{finite} abelian group $R$.
We observe however, that the proof still works without the additional condition that $R$ is finite and
we include this proof in Appendix~\ref{sec:proofmeshwal}
for completeness.

\begin{prop}[{\cite[Proposition~3.1]{MeshulamWallach08}}]\label{prop:meshwal}
  Let $n\ge j+2$ and let $\Delta$ be the downward-closure of the 
  $(n-1)$-simplex on $[n]$. Let $f \in C^j(\Delta;R)$ have support 
  $\cS$ and suppose that any other $j$-cochain of the form 
  $f+\delta^{j-1}g$, where $g\in C^{j-1}(\Delta;R)$, has support of
  size at least $|\cS|$. Denote by $\cD(f)$ the support of 
  $\delta^jf$, i.e.\ all $(j+1)$-simplices in $\Delta$ such that for
  some ordering $[v_0,\ldots,v_j]$ (and thus for all orderings) it 
  holds that $\sum_{i=0}^j (-1)^i f[v_0,\ldots,\hat{v}_i,\ldots,v_j]
  \neq 0_R$. Then
  \begin{equation*}
    |\cD(f)| \geq \frac{n}{j+2} |\cS|.
  \end{equation*}
\end{prop}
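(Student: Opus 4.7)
The plan is to use the vertex-averaging argument of Meshulam and Wallach, which in fact never uses the finiteness of $R$ and so extends verbatim to an arbitrary abelian group. First, for each vertex $v\in[n]$ I will define an auxiliary $(j-1)$-cochain $g_v\in C^{j-1}(\Delta;R)$ by a cone contraction: on an ordered $(j-1)$-simplex $\rho=[v_1,\dotsc,v_j]$ with $v\notin\{v_1,\dotsc,v_j\}$, set $g_v(\rho)\ :=\  f([v,v_1,\dotsc,v_j])$, and set $g_v(\rho)\ :=\  0_R$ whenever $v$ appears among the $v_i$. Because $f$ is alternating, swapping two entries of $\rho$ swaps the corresponding entries of $[v,v_1,\dotsc,v_j]$ and hence negates $g_v(\rho)$, so $g_v$ is a genuine $(j-1)$-cochain.

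The heart of the proof is the standard cone identity
\[
(f - \delta^{j-1}g_v)(\sigma) \ =\
\begin{cases}
0_R & \text{if } v\in\sigma,\\
\pm(\delta^j f)([v]\cup\sigma) & \text{if } v\notin\sigma,
\end{cases}
\]
valid for every ordered $j$-simplex $\sigma$; here $[v]\cup\sigma$ means $v$ is prepended to the ordering of $\sigma$, and the sign in the second line depends on a fixed convention but is irrelevant for what follows. This identity is a direct expansion of $\delta^{j-1}$ applied to the definition of $g_v$, combined with the coboundary formula for $\delta^j f$. Introducing the notation $D_v(f)\ :=\ \{\tau\in\cD(f) : v\in\tau\}$, the identity shows that the support of $f-\delta^{j-1}g_v$ is in bijection with $D_v(f)$ via $\sigma\mapsto[v]\cup\sigma$, and therefore $|\mathrm{supp}(f-\delta^{j-1}g_v)|=|D_v(f)|$.

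Since $f-\delta^{j-1}g_v$ is a cochain of the form $f+\delta^{j-1}g$ (with $g=-g_v$), the minimality hypothesis forces $|D_v(f)|\ge|\cS|$ for every $v\in[n]$. Summing over all $n$ vertices and using that each $(j+1)$-simplex of $\cD(f)$ contains exactly $j+2$ vertices yields
\[
(j+2)\,|\cD(f)| \ =\ \sum\nolimits_{v\in[n]}|D_v(f)| \ \ge\ n\,|\cS|,
\]
which rearranges to the claimed bound $|\cD(f)|\ge\frac{n}{j+2}|\cS|$.

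The only real obstacle is the sign bookkeeping in the cone identity: one has to check that the alternating signs from the definition of $g_v$ and from the expansion of $\delta^{j-1}g_v$ telescope correctly against those in $\delta^j f([v]\cup\sigma)$, so that the right-hand side is (up to a global sign) exactly a single evaluation of $\delta^j f$. This is a purely algebraic verification that uses nothing about $R$ beyond the existence of additive inverses, which is why no change to the original argument of~\cite{MeshulamWallach08} is needed when dropping the assumption that $R$ is finite.
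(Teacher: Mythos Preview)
Your proof is correct and follows essentially the same approach as the paper's: both define the cone contraction $g_v$ (the paper calls it $f_v$), establish the identity $(f-\delta^{j-1}g_v)(\sigma)=(\delta^j f)([v,\sigma])$ for $v\notin\sigma$ and $0_R$ otherwise, then apply the minimality hypothesis and double-count via the incidences between vertices and elements of $\cD(f)$.
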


The following lemma shows that whp in the supercritical case a smallest support in $\cN_{\tim}$ cannot be `large'.

\begin{lem} \label{lem:largesupp}
There exists a positive constant $\tilde{\largecon} \in \mathbb{R}^+$ such that
whp for all $\tim \geq \tim_0 \ :=\  1 - \frac{1}{(\log n)^{1/3}}$
we have $|\cS_\tim|<\tilde{\largecon}$ (if $\cS_\tim$ exists). 
\end{lem}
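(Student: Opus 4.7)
The plan is a first-moment argument that uses the Meshulam--Wallach bound (Proposition~\ref{prop:meshwal}) as the core pinch. Suppose $\cS_\tim$ exists with $|\cS_\tim|=s\ge\tilde{\largecon}$, and let $f_\tim\in\cN_\tim$ be its associated cocycle. Extend $f_\tim$ by zero to a $j$-cochain $\tilde f$ on the complete $d$-complex $\Delta$ on $[n]$, and let $\tilde f^*=\tilde f+\delta^{j-1}g$ be a representative of the class $\tilde f+\im\delta^{j-1}$ in $\Delta$ with minimum support. Its restriction $\tilde f^*|_{\cG_\tim}=f_\tim+\delta^{j-1}(g|_{\cG_\tim})$ is cohomologous to $f_\tim$ in $\cG_\tim$, hence still lies in $\cN_\tim$, and the minimality of $\cS_\tim$ in $\cN_\tim$ forces $|\mathrm{supp}(\tilde f^*)|\ge s$. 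Since $\delta^j\tilde f^*=\delta^j\tilde f$, Proposition~\ref{prop:meshwal} gives $|\cD(\tilde f)|=|\cD(\tilde f^*)|\ge \frac{n}{j+2}s$. Every $(j+2)$-set $\rho\in\cD(\tilde f)$ contains a $j$-simplex of $\cS_\tim$ and, because $f_\tim$ is a cocycle in $\cG_\tim$, cannot itself be a $(j+1)$-simplex of $\cG_\tim$.

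To turn this into a probability estimate, I bound the expected number of traversable pairs $(\cS,\cT(\cS))$ with $|\cS|=s\ge\tilde{\largecon}$ and $\allt(\cS)=(t_{j+1},\ldots,t_d)$. Using the breadth-first exploration of Remark~\ref{rem:exploration} with canonical tie-breaking and~\eqref{eq:verttrav}, the number of such pairs is at most $n^{j+1+\sum_k(k-j)t_k}\cdot C^s$ for some absolute constant $C$. The probability that $\cT(\cS)$ is entirely present in $\cG_\tim$ contributes $\prod_k p_k^{t_k}=O\bigl((C'\log n)^{\sum t_k}/n^{\sum_k(k-j)t_k}\bigr)$, cancelling the factor $n^{\sum_k(k-j)t_k}$. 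For the absence condition I apply a double-counting argument at the distinguished dimension $k_0$ from Definition~\ref{def:indices}, where $\bar\alpha_{k_0}>0$ and $\bar\gamma_{k_0}=0$: each of the at least $\frac{n}{j+2}s$ required $(j+2)$-sets lies in $\binom{n-j-2}{k_0-j-1}$ many $(k_0+1)$-sets, while each $(k_0+1)$-set contains at most $\binom{k_0+1}{j+2}$ such $(j+2)$-sets, so at least $\Omega(sn^{k_0-j})$ distinct $(k_0+1)$-sets must fail to be $k_0$-simplices. Because $p_{k_0}n^{k_0-j}=\Theta(\log n)$, this contributes a factor $\exp(-\Omega(s\log n))=n^{-c_1 s}$ for an explicit constant $c_1>0$ depending only on $d$, $j$, and $\barallp$. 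Crucially, the presence and absence events use disjoint sets of $(k+1)$-set births and are therefore independent: if a $(k_0+1)$-set $K\in\cT(\cS)$ contained a required $(j+2)$-set $\rho$, downward-closure would make $\rho$ a $(j+1)$-simplex of $\cG_\tim$, contradicting $\rho\in\cD(\tilde f)$.

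Combining, the expected number of pairs at fixed $(s,\allt)$ is $O\bigl(n^{j+1}(CC'\log n)^s n^{-c_1 s}\bigr)$. Summing over $\allt$ (polynomially many tuples with $\sum t_k\le s$) and then over $s\ge\tilde{\largecon}$ gives a total bound of $O(n^{j+1-c_1\tilde{\largecon}/2})$ once $n$ is large enough that $(CC'\log n)/n^{c_1}\le n^{-c_1/2}$. Choosing $\tilde{\largecon}$ large enough (depending on $d$, $j$, and $\barallp$) makes this $o(n^{-d-2})$, so a final union bound over the at most $O(n^{d+1})$ distinct snapshots of the process $(\cG_\tim)_\tim$ extends the bound uniformly to all $\tim\in[\tim_0,\tim_{\max}]$. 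The main technical difficulty is to canonicalise the BFS exploration of $(\cS,\cT(\cS))$ so that the counting factor $C^s$ is genuinely based on an absolute constant rather than growing with $s$; once this is set up, the cancellation between the presence probability and the $n^{\sum(k-j)t_k}$ counting factor renders the remaining calculation straightforward.
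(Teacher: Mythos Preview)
Your high-level strategy matches the paper's exactly: Meshulam--Wallach to force $\Omega(sn)$ forbidden $(j+2)$-sets, lift these to $\Omega(sn^{k_0-j})$ forbidden $(k_0+1)$-sets, combine with a BFS count of traversable pairs $(\cS,\cT(\cS))$, and finish with a union bound over the $O(n^{d+1})$ snapshots. Your application of Proposition~\ref{prop:meshwal} via ``extend by zero, minimise in $\Delta$, restrict'' is in fact slightly slicker than the paper's, which first establishes that $\cG_\tim$ has full $(j-1)$-skeleton (Claim~\ref{largesupp:meshwal}) in order to identify $C^{j-1}(\Delta;R)$ with $C^{j-1}(\cG_\tim;R)$.

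The two points you explicitly leave open are precisely where the work lies. First, the claim that the number of traversable pairs with fixed $\allt$ is at most $n^{j+1+\sum_k(k-j)t_k}\,C^s$ for an absolute $C$: naively a BFS encoding picks up a factor like $\prod_k s^{t_k}/t_k!$ (from choosing which $j$-simplex each $K\in\cT(\cS)$ is discovered from), and one must use that $\sum_k t_k=\Theta(s)$ together with Stirling/convexity to beat this down to $C^s$. The paper takes a different route, tracking the full exploration matrix $B=(b_{i,k})$ (Claim~\ref{largesupp:numberpairs}), retaining the factorials $\prod_{i,k}b_{i,k}!$, and then summing over $B$ via a small/large-entry dichotomy (Claim~\ref{largesupp:singleh}). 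Second, your probability estimates $p_k=O(\log n/n^{k-j})$ and $p_{k_0}n^{k_0-j}=\Theta(\log n)$ are only valid for $\tim=\Theta(1)$, whereas the lemma covers all $\tim\ge\tim_0$ and $\tim$ can be polynomially large; the per-snapshot bound you feed into the union bound must therefore be made uniform in $\tim$. The paper handles this by bundling the presence and absence factors into a single function $x(\tim)$ and checking that it is monotone decreasing for $\tim\ge\tim_0$ (Claim~\ref{largesupp:rB}), so that one may substitute $\tim=\tim_0$ throughout. Both gaps are fillable along the lines you indicate, but neither is routine enough to be waved away.
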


\begin{proof}
We first note that, if $\tim$ is large enough to imply that there
exists a $k\in \{j+1,\ldots,d\}$ with $p_k =\tim \bar p_k \ge 1$, then
the result is trivial: all $k$-simplices are present and there is no
bad function, so $\cS_\tim$ does not exist. We will therefore assume
for the remainder of the proof that $\tim_0 \le \tim \le \min_{k\in 
\{j+1,\ldots,d\}} 1/\bar p_k$, and in particular $p_k\le 1$ for each
$k\ge j+1$.

By Lemma~\ref{lem:traversability} if $\cS_\tim$ exists it is
traversable. Consider a pair $(\cS,\cT(\cS))$ where $\cS$ is a
traversable support of size $s$, and thus can be found via the search
process described in Remark~\ref{rem:exploration}. For such a pair, we
define the \emph{exploration matrix} $B=(b_{i,k})$ for $i \in [s]$ and
$k \in \{j+1,\ldots,d\}$, where $b_{i,k}\ge 0$ is the number of
$k$-simplices of $\cT(\cS)$ we discover from the $i$-th $j$-simplex of
$\cS$ in the search process. 

For a fixed exploration matrix $B$, we can bound the number of ways
the exploration process can proceed, thus bounding the number of pairs
$(\cS,\cT(\cS))$ with exploration matrix $B$.
\begin{claim}\label{largesupp:numberpairs}
  The number of pairs $(\cS,\cT(\cS))$ in which $\cS$ is traversable
  and has exploration matrix $B$ is at most
  \begin{equation*}
    n^{j+1} \frac{\prod_k \left( \binom{n}{k-j} 2^{\binom{k+1}{j+1}}\right)^{t_k}}{\prod_{i,k} b_{i,k}!}.
  \end{equation*}
\end{claim}
\noindent
We postpone the proof of Claim~\ref{largesupp:numberpairs} to
Appendix~\ref{sec:proofclaim:numberpairs}.

Claim~\ref{largesupp:numberpairs} provides an upper bound for the
number of sets that are candidates for $\cS_\tim$. Our next aim is
to bound the probability that a fixed function $f$ with traversable
support $\cS$ turns out to be $f_\tim$. To this end, we would like to
know that for $k \in \{j+1, \ldots, d\}$, the number of $(k+1)$-sets that are
not allowed to be $k$-simplices in $\cG_\tim$ in order for $f_\tau$ to
be a $j$-cocycle is `large'. We will prove this by applying
Proposition~\ref{prop:meshwal} to $f_\tim$ (if it exists).

In order to see that $f_\tim$ (if it exists) satisfies the hypothesis
of Proposition~\ref{prop:meshwal}, we will need the following
auxiliary result, which we prove in Appendix~\ref{sec:proofclaim:meshwal}
by a simple first moment argument.

\begin{claim}\label{largesupp:meshwal}
  Whp every $\cG_\tim$ with $\tim\ge\tim_0$ has full $(j-1)$-skeleton.
\end{claim}
\noindent
For the rest of the proof, we condition on the high probability event
in Claim~\ref{largesupp:meshwal}.

This means that, with $\Delta$ as in Proposition~\ref{prop:meshwal},
we have $C^{j-1}(\Delta;R) = C^{j-1}(\cG_\tim;R)$. Now let $g$ be a 
$(j-1)$-cochain in $\Delta$ (and thus also in $\cG_\tim$). Consider 
$f_\tim + \delta^{j-1}g$ as a $j$-cochain $f$ in $\cG_\tim$ and as
a $j$-cochain $f_\Delta$ in $\Delta$. Clearly, the support of
$f_\Delta$ contains the support of $f$, which in turn has size at
least $|\cS_\tim|$, because $\cS_\tim$ is minimal. In particular,
$f_\tim$ satisfies the hypothesis of Proposition~\ref{prop:meshwal}.
This enables us to prove that there are `many' $(k+1)$-sets that are
not allowed to be $k$-simplices in $\cG_\tim$ in order for $f_\tim$
to be a $j$-cocycle.

\begin{claim}\label{largesupp:badsets}
  For $j+1 \le k \le d$ and a $j$-cocycle $f$, denote by $\cD_k(f)$
  the set of $(k+1)$-sets in $[n]$ that contain at least one element
  of $\cD(f)$. There exists a positive constant $\largecon_0$ such
  that, if $f_\tim$ exists, then for all $k$ with $j+1 \le k \le d$,
  \begin{equation*}
    |\cD_k(f_\tim)| \ge \largecon_0 |\cS_\tim| n^{k-j}.
  \end{equation*}
\end{claim}
\noindent
The proof of Claim~\ref{largesupp:badsets} is an easy counting 
argument, which we present in Appendix~\ref{sec:proofclaim:badsets}.

The elements in $\cD_k(f_\tim)$ are not allowed to be $k$-simplices
in $\cG_\tim$ and thus all must have
birth time larger than $\tim$.
We can use this fact, together with the upper bound for the number of
`candidates' for $\cS_\tim$, to prove that $\cS_\tim$ existing and
having a fixed exploration matrix is unlikely.

\begin{claim}\label{largesupp:rB}
  There exist positive constants $\largecon_1,\tilde{\largecon}_1$
  such that the following holds for every $s \ge \tilde{\largecon}_1$.
  For every $(s\times(d-j))$-matrix $B$, the probability $r_B$ that
  $f_\tim$ exists and $\cS_\tim$ has size $s$ and exploration matrix
  $B$ satisfies
  \begin{equation*}
    r_B \le \frac{n^{- \largecon_1 s}}{\prod_{i,k}b_{i,k}!}\,.
  \end{equation*}
\end{claim}
\noindent
We prove Claim~\ref{largesupp:rB} in Appendix~\ref{sec:proofclaim:rB}.

We determine a lower bound for the denominator $\prod\nolimits_{i,k}b_{i,k}!$ and sum over all possible exploration matrices
$B$, so as to deduce that whp $\cS_\tim$ cannot be larger than a
given large constant.

\begin{claim}\label{largesupp:singleh}
  There exists a positive constant $\largecon_2$ such that for every
  $\tilde{\largecon}\ge\tilde{\largecon}_1$, the probability that 
  $\cS_{\tim}$ exists with $|\cS_{\tim}|\ge\tilde{\largecon}$ is at most
  \begin{equation*}
    n^{-\largecon_2 \tilde{\largecon} / 2}\,.
  \end{equation*}
\end{claim}
\noindent 
We delay the proof of this claim until Appendix~\ref{sec:proofclaim:singleh}.

Claim~\ref{largesupp:singleh} implies that for \emph{a fixed time}
$\tim \ge \tim_0$, whp $|\cS_\tim| < \tilde{\largecon}$ (if it exists).
Our aim, however, is to prove that whp $|\cS_\tim| < \tilde{\largecon}$
\emph{simultaneously for all $\tim \ge \tim_0$.} To this end, observe
that there are $O(n^{d+1})$ simplices $\sigma$ with scaled birth times
$\tim_\sigma \ge \tim_0$. Taking a union bound over all these scaled
birth times, we deduce from Claim~\ref{largesupp:singleh} that
\begin{equation*}
  \Pr(\exists\tim\ge\tim_0 \text{ such that } \cS_\tim \text{ exists 
  and } |\cS_\tim| \ge \tilde{\largecon}) =
  O(n^{d+1-\largecon_2 \tilde{\largecon} / 2}).
\end{equation*}
Thus, Lemma~\ref{lem:largesupp} holds for $\tilde{\largecon} > 
\max\left\{\tilde{\largecon}_1,\frac{2(d+1)}{\largecon_2}\right\}$.
\end{proof}

Lemma~\ref{lem:largesupp} implies that whp traversable supports of $j$-cocycles of `large' size do not exist in the whole supercritical range. 
For supports of constant size, this is given by Lemma~\ref{lem:smallsupp} only for $\tim = 1+o(1)$. 
We therefore derive the following result, stating that for $\tim$ 
`close' to $1$ whp every $j$-cocycle is generated by 
copies of $\Mjk$.

\begin{cor}\label{cor:Ntauemptycritwind}
For every $\tim = 1 + O\left(\frac{1}{\log n} \right)$, 
we have $\cN_{\tim} = 
\emptyset$ whp.
\end{cor}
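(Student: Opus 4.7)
The plan is to combine Lemmas~\ref{lem:smallsupp} and~\ref{lem:largesupp} directly, where the former rules out \emph{small} traversable supports of bad cocycles and the latter rules out \emph{large} ones. Since $\tim = 1 + O(1/\log n)$ satisfies $\tim \ge \tim_0 = 1 - 1/(\log n)^{1/3}$ for all sufficiently large $n$, and also $\tim = 1+o(1)$, both lemmas apply simultaneously in the range of interest.

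More concretely, first I would assume for contradiction that $\cN_\tim \neq \emptyset$, so that a minimal-support element $f_\tim$ exists with support $\cS_\tim$. By Lemma~\ref{lem:traversability} the support $\cS_\tim$ is traversable, and by construction $f_\tim$ is a $j$-cocycle not generated by copies of $\Njk$; in particular, $f_\tim$ does not arise from a single copy of $\Njk$ either. Lemma~\ref{lem:largesupp} then yields a constant $\tilde{\largecon}$ such that whp $|\cS_\tim| < \tilde{\largecon}$. Applying Lemma~\ref{lem:smallsupp} with the choice $\largecon := \tilde{\largecon}$ (which is valid because $\tim = 1+o(1)$), whp every $j$-cocycle in $\cG_\tim$ with traversable support of size at most $\tilde{\largecon}$ arises from a copy of $\Njk$, and hence lies outside $\cN_\tim$. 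Combining these two high-probability events gives a contradiction, so whp no such $f_\tim$ exists and $\cN_\tim = \emptyset$.

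The argument is essentially an immediate consequence of the two lemmas, so no substantial new work is required. The only point worth checking carefully is the compatibility of hypotheses: Lemma~\ref{lem:smallsupp} is stated under $\tim = 1+o(1)$ (which covers the critical window $\tim = 1 + O(1/\log n)$), and Lemma~\ref{lem:largesupp} provides the uniform bound on $|\cS_\tim|$ for all $\tim \ge \tim_0$, which in particular holds for the fixed $\tim$ we consider. Thus there is no real obstacle; the slight subtlety is just to ensure that the constant $\tilde{\largecon}$ supplied by Lemma~\ref{lem:largesupp} (which does not depend on $\tim$ within the specified range) may be fed back into Lemma~\ref{lem:smallsupp} as the parameter $\largecon$ without circularity.
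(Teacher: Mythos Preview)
Your proposal is correct and follows essentially the same approach as the paper: apply Lemma~\ref{lem:largesupp} to obtain a constant $\tilde{\largecon}$ bounding $|\cS_\tim|$ from above whp, then feed $\largecon:=\tilde{\largecon}$ into Lemma~\ref{lem:smallsupp} (valid since $\tim=1+o(1)$) to conclude that whp $\cS_\tim$ cannot exist. The explicit invocation of Lemma~\ref{lem:traversability} and the contradiction framing are cosmetic differences; the paper simply uses the ``In particular'' clause of Lemma~\ref{lem:smallsupp} directly.
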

\begin{proof}
For every such $\tim$ Lemma~\ref{lem:largesupp} holds
and thus there exists a constant
$\tilde h$ such that  either $\cS_\tim$
does not exist or $|\cS_\tim|\le \tilde h$. 
But by Lemma~\ref{lem:smallsupp} (with $h=\tilde h$),
whp if $\cS_{\tim}$ exists then $|\cS_\tim|>\tilde h$.
Thus whp $\cS_{\tim}$ does not exist, i.e.\
$\cN_{\tim} = 
\emptyset$.
\end{proof}

To exclude the existence of `small' supports
throughout the entire supercritical case, we show that if a new obstruction appears,
then the simplex whose addition to the complex creates the obstruction
in fact forms a local $j$-obstacle, which whp does not exist in this range by Lemma~\ref{lem:obstacles}.

\begin{lem}\label{lem:nonewobstructions}
Let $K$ be the simplex with smallest scaled birth time $\tim_K \ge \tim_j^*$
such that $\cN_{\tim_K} \neq \emptyset$ (if it exists).
Then whp $K$ forms a local $j$-obstacle in $\cG_{\tim_K}$.
\end{lem}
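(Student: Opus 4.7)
The plan is a case distinction on $k := \dim K$. For $k \le j-1$, adding $K$ changes neither the group of $j$-cochains nor the set of $(j+1)$-simplices of $\cG_\tim$, and hence does not alter the set of copies of $\Njk$ either; thus $\cN_{\tim_K} = \cN_{\tim_K^-} = \emptyset$, no such $K$ can exist, and the statement holds vacuously. For $k = j$, any simplex of $\cG_{\tim_K}$ strictly containing $K$ would, by downward-closure, already have contained $K$ as a face in $\cG_{\tim_K^-}$, contradicting $K \notin \cG_{\tim_K^-}$; hence $K$ is an isolated $j$-simplex in $\cG_{\tim_K}$, which is exactly a local $j$-obstacle.

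In the main case $k \ge j+1$, pick any $f \in \cN_{\tim_K}$ of minimum support $\cS$, which is traversable by Lemma~\ref{lem:traversability}. The plan is to show that whp $\cS = \cF(K, C)$ for some $(j-1)$-simplex $C \subseteq K$ and that $(K, C)$ is in fact a copy of $\Njk$ in $\cG_{\tim_K}$: by Definition~\ref{def:Njk} this will force each of the $k-j+1$ petals $C \cup \{w\}$ (with $w \in K \setminus C$) to be a $K$-localised $j$-simplex, giving $K$ the $k-j+1$ many $K$-localised $j$-simplices needed to be a local $j$-obstacle. To carry this out, Lemma~\ref{lem:largesupp} (stated uniformly for all $\tim \ge \tim_0$) gives $|\cS| \le \tilde{\largecon}$ whp for a constant $\tilde{\largecon}$, and Lemma~\ref{lem:hitting} yields $\tim_K = 1 + o(1)$ whp, so that Lemma~\ref{lem:smallsupp} becomes applicable and forces $\cS = \cF(K_f, C_f)$ for some copy $(K_f, C_f)$ of $\Njk$ in $\cG_{\tim_K}$.

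It then remains to argue $K_f = K$. By Corollary~\ref{cor:NjkMjk} together with the definition of $\tim_j^*$, whp $\cG_{\tim_K^-}$ contains no copies of $\Njk$: any such would extend to a copy of $\Mjkhat$, contradicting $\tim_K^- \ge \tim_j^*$. Moreover, adding the single simplex $K$ can only destroy, never create, condition~\ref{Njk:flower} of Definition~\ref{def:Njk} for a pair $(K', C')$ with $K' \neq K$, since a new simplex containing a petal of $\cF(K', C')$ only provides an additional way to violate (M2), never to satisfy it. Hence every copy of $\Njk$ in $\cG_{\tim_K}$ must have $K$ as its $k$-simplex, so $K_f = K$ and $(K, C_f)$ is a copy of $\Njk$ in $\cG_{\tim_K}$, completing the structural deduction and yielding the lemma.

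The main obstacle is the uniform application of Lemma~\ref{lem:smallsupp} at the random birth time $\tim_K$: the lemma as stated holds at a single $\tim = 1 + o(1)$, whereas the argument needs it at the random time $\tim_K$. The most natural remedy is to extract from the first-moment computation underlying the proof of Lemma~\ref{lem:smallsupp} a bound of polynomially small probability and then to union-bound over the polynomially many simplex birth times in $[\tim_j^*, \tim_{\max}]$. A secondary technicality is the monotonicity claim used to conclude $K_f = K$: one must verify carefully that condition~\ref{Njk:flower} of Definition~\ref{def:Njk} is monotone-decreasing with respect to simplex additions, which is straightforward but must be spelled out when handling copies of $\Njk$ based at $k'$-simplices with $k' \ne k$.
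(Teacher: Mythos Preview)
Your approach is genuinely different from the paper's and has a real gap that your proposed remedy does not quite fix.

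\textbf{The gap.} Your assertion ``Lemma~\ref{lem:hitting} yields $\tim_K = 1+o(1)$ whp'' is not justified: Lemma~\ref{lem:hitting} bounds $\tim_j^*$, not $\tim_K$, and a priori $\tim_K$ can be anywhere in $[\tim_j^*,\tim_{\max}]$. You recognise this in your final paragraph, but your proposed remedy --- extracting a polynomially small bound from the first-moment argument in Lemma~\ref{lem:smallsupp} and union-bounding over the $O(n^{d+1})$ birth times --- is not sufficient as stated. The exponent in that first-moment bound is only guaranteed to be smaller than $-\sum_k\bar\alpha_k + o(1)$, a fixed constant that can be much smaller than $d+1$; the union bound then fails. (Contrast Lemma~\ref{lem:largesupp}, where the exponent scales with the free parameter $\tilde\largecon$, which is exactly why the union bound there succeeds.) There is a second, smaller issue: if a uniform version of Lemma~\ref{lem:smallsupp} \emph{did} apply at $\tim_K$, it would tell you that $f$ arises from some copy of $\Njk$, whence by Proposition~\ref{prop:fMrarisesfromMjk}\ref{fMrari:cocycle} $f$ is generated by copies of $\Njk$, contradicting $f\in\cN_{\tim_K}$ directly. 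So your $K_f=K$ step would be superfluous --- you would have shown no such $K$ exists, making the lemma vacuous.

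\textbf{How the paper avoids this.} The paper never invokes Lemma~\ref{lem:smallsupp} at the random time $\tim_K$. Instead it ``pushes back'': writing $\cG_{\tim_K} = \cG_\tim + K$, it asks whether $\cS_{\tim_K}$ contains any $j$-simplex already present in $\cG_\tim$. If so, take a maximal subset $\cS\subseteq\cS_{\tim_K}$ that is traversable in $\cG_\tim$, and let $f$ be $f_{\tim_K}$ restricted to $\cS$. Maximality of $\cS$ makes $f$ a $j$-cocycle in $\cG_\tim$; Lemma~\ref{lem:largesupp} (which \emph{is} uniform) bounds $|\cS|\le|\cS_{\tim_K}|<\tilde\largecon$; Lemma~\ref{lem:shells} (also uniform) then supplies a $j$-shell meeting $\cS$ in a single simplex, so $f$ is not a coboundary; and since $\tim\ge\tim_j^*$ there are whp no copies of $\Njk$ in $\cG_\tim$ (Corollary~\ref{cor:NjkMjk}), so $f\in\cN_\tim$, contradicting minimality of $\tim_K$. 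Hence all of $\cS_{\tim_K}$ consists of $j$-simplices newly created by $K$, which are automatically $K$-localised, and Lemma~\ref{lem:minobst} gives $|\cS_{\tim_K}|\ge k-j+1$, so $K$ is a local $j$-obstacle. This uses only tools that are already uniform in $\tim$, sidestepping the uniformity issue entirely.
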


\begin{proof}
First observe that by Lemma~\ref{lem:hitting} and 
Corollary~\ref{cor:Ntauemptycritwind}, whp 
$\cN_{\tim_j^*} = \emptyset$, and thus
whp $\tim_K > \tim_j^*$. For the rest of
this proof, we condition on this high probability
event.

Suppose now that $|K|=k+1$ and let $\tim\ge \tim_j^*$ be such that $\cG_{\tim_K} = \cG_\tim + K$.
If $\cS_{\tim_K} \cap \cG_\tim \neq \emptyset$, let $\cS$ be a maximal subset of $\cS_{\tim_K}$
which is traversable in $\cG_{\tim}$ and let $f$ be the $j$-cochain in $\cG_{\tim}$ defined by 
\[
f(\sigma) = 
\begin{cases}
f_{\tim_K}(\sigma) & \mbox{if } \sigma \in \cS; \\
0_R & \mbox{otherwise}.
\end{cases}
\]
Then $f$ is a $j$-cocycle in $\cG_\tim$ because every $i$-simplex of $\cG_\tim$,
for $i=j+1,\ldots,d$, containing some element of $\cS$ cannot contain other $j$-simplices
in $\cS_{\tim_k} \setminus \cS$ by the maximality of $\cS$, and because $f_{\tim_K}$ is a $j$-cocycle.

Moreover, by Lemma~\ref{lem:largesupp} there exists a positive constant $\tilde{\largecon}$
such that whp $|\cS|\leq |\cS_{\tim_K}| < \tilde{\largecon}$. 
Lemma~\ref{lem:shells} implies that whp each $j$-simplex of $\cS$ lies in a linear number
of $j$-shells in $\cG_\tim$ and at most $|\cS|-1$ many of them can contain other elements of $\cS$. 
This means that whp there are $j$-shells in $\cG_\tim$ that meet $\cS$ in a single $j$-simplex,
and thus $f$ is a bad function in $\cG_\tim$. 
Since $f$ cannot be generated by copies of $\Njk$,
because $\tim \ge \tim_j^*$ and thus no copies
of $\Njk$ exist,
 this yields $\cN_\tim \neq \emptyset$,
a contradiction to the choice of $K$.

Hence, whp the $j$-simplices of $\cS_\tim$ are all contained in $K$ and are not in other simplices of
$\cG_{\tim_K}$. Then whp $K$ forms a local $j$-obstacle in $\cG_{\tim_K}$,
because $|\cS_{\tim_K}| \ge k-j+1$ by Lemma~\ref{lem:minobst}.
\end{proof}

\begin{cor} \label{cor:N_tauisempty}
Whp, for all $\tim \ge \tim_j^*$ we have
$\cN_{\tim} = \emptyset$.
\end{cor}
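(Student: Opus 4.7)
The plan is to assume for contradiction that there exists some $\tim \ge \tim_j^*$ with $\cN_\tim \neq \emptyset$ and derive a contradiction by combining Lemmas~\ref{lem:hitting}, \ref{lem:obstacles}, and~\ref{lem:nonewobstructions}. First I would observe that since $\cG_\tim$ is piecewise constant in $\tim$, changing only at scaled birth times, the infimum of the set $\{\tim \ge \tim_j^* \mid \cN_\tim \neq \emptyset\}$, if nonempty, is itself attained at the scaled birth time $\tim_K$ of some simplex $K$ with $\tim_K \ge \tim_j^*$ (if the infimum were not a birth time, then $\cN$ would be constant in a small neighbourhood of it, violating either non-emptiness of the infimum or minimality). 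This places us precisely in the setting of Lemma~\ref{lem:nonewobstructions}, which yields that whp $K$ forms a local $j$-obstacle in $\cG_{\tim_K}$.

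The second ingredient is a quantitative lower bound on $\tim_j^*$: by Lemma~\ref{lem:hitting} applied with any $\omega = \omega(n) \to \infty$ satisfying $\omega = o(\log\log n)$ (for instance $\omega = \sqrt{\log\log n}$), whp
\[
\tim_K \;\ge\; \tim_j^* \;>\; 1-\frac{\omega}{\log n} \;>\; 1-\frac{\log\log n}{10 d \log n} \;=\; \tim'.
\]
Then Lemma~\ref{lem:obstacles}, applied at time $\tim_K > \tim'$, yields that whp the local $j$-obstacle $K$ in $\cG_{\tim_K}$ must already exist as a simplex in $\cG_{\tim'}$. But $K$ has scaled birth time $\tim_K > \tim'$, so $K \notin \cG_{\tim'}$, which is the desired contradiction.

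The corollary is essentially a one-step synthesis of the three cited results, and the only bookkeeping required is to note that the intersection of finitely many high-probability events is still a high-probability event; on that intersection the argument above forces $\cN_\tim = \emptyset$ for every $\tim \ge \tim_j^*$. There is no real obstacle: the conceptual heart of the proof has already been done in Lemma~\ref{lem:nonewobstructions}, which reduces the appearance of a new bad function to the appearance of a local $j$-obstacle, and in Lemma~\ref{lem:obstacles}, which forbids such local obstacles from appearing beyond $\tim'$; the role of Lemma~\ref{lem:hitting} is merely to certify that $\tim_j^*$ is safely past this cutoff $\tim'$ with high probability.
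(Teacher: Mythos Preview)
Your proposal is correct and follows essentially the same approach as the paper: invoke Lemma~\ref{lem:nonewobstructions} to turn the first appearance of a bad cocycle into a new local $j$-obstacle, then combine Lemma~\ref{lem:hitting} (to get $\tim_K \ge \tim_j^* > \tim'$) with Lemma~\ref{lem:obstacles} (no new local $j$-obstacles after $\tim'$) to reach a contradiction. The paper additionally cites Corollary~\ref{cor:Ntauemptycritwind} to establish $\cN_{\tim_j^*}=\emptyset$ explicitly, but this is already used inside the proof of Lemma~\ref{lem:nonewobstructions}, so your omission of it is harmless.
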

\begin{proof}
By Lemma~\ref{lem:hitting} 
and Corollary~\ref{cor:Ntauemptycritwind} whp $\cN_{\tim_j^*}$ is empty. 
If there is $\tim > \tim_j^*$ such that $\cN_{\tim} \neq \emptyset$, then Lemma~\ref{lem:nonewobstructions} tells
us that the simplex whose birth creates a $j$-cocycle
that is
not generated by copies of $\Mjk$ would create a local
$j$-obstacle. But by Lemma~\ref{lem:hitting} whp
$\tim > \tim_j^* \ge \tim'= 1 - \frac{\log \log n}{10 d \log n}$,
thus by Lemma~\ref{lem:obstacles} 
whp no new local $j$-obstacle can appear in $\cG_\tim$.
\end{proof}

We can now use Corollary~\ref{cor:N_tauisempty}
to prove Lemma~\ref{lem:supercritical}
\begin{proof}[Proof of
Lemma~\ref{lem:supercritical}]
By the definition of $\tim_j^*$, there are no copies of $\Mjkhat$ in
$\cG_\tim$ for any $\tim\ge \tim_j^*$, so in order for the $j$-th
cohomology group \emph{not} to vanish, $\cN_\tim$ would have to be
non-empty, but this is excluded by Corollary~\ref{cor:N_tauisempty}.
\end{proof}

\bigskip
 \section{Rank in the critical window: proofs of Theorem~\ref{thm:criticalwindow} and Corollary~\ref{cor:probcritwind}}
 \label{sec:proofofcritwind}

In order to prove the Rank Theorem (Theorem~\ref{thm:criticalwindow}),
we first want to describe
the asymptotic joint distribution
of the number of copies
of $\Mjk$  within the critical window.
To this end, we will make use of 
Lemma~\ref{lem:Njkdistribution},
for which we need the following notation. 
Given a sequence $(X_k)_{k\in I}$ of
random variables (for some finite, ordered index set $I$){
we denote by $\mathcal{L} (X_k)$ the probability distribution of $X_k$
and by $\mathcal{L} \left((X_k)_{k\in I} \right)$ the 
joint probability distribution of the sequence $(X_k)_{k\in I}$.
If $X_k = X_k(n)$ for every $k \in I$, 
we say that the sequence $(X_k)_{k\in I}$ \emph{converges in distribution}
to the sequence of random variables $(Y_k)_{k\in I}$ if 
$\Pr\big((X_k)_{k \in I} = (x_k)_{k\in I}\big) \xrightarrow{n\to \infty}  \Pr\big((Y_k)_{k \in I} = (x_k)_{k\in I}\big)$
for every sequence of values $(x_k)_{k\in I}$,
and we write $(X_k)_{k\in I} \xrightarrow{\; \dist \;} (Y_k)_{k\in I}$.

We adopt the convention that $\Po(0)\equiv 0$
and recall the definition of a critical dimension from Definition~\ref{def:criticaldim}. 

\begin{lem} \label{lem:Njkdistribution}
Let $c\in \mathbb{R}$ be a constant and $(c_n)_{n\ge 1}$ be a sequence of real numbers such that
$c_n \xrightarrow{n \to \infty} c$. 
For any $j\leq k \leq d$ define
\[\critm_k \ :=\  
\begin{cases}
\exp (\bar\sublogp_k + \bar\constp_k + c(\bar\gamma_k 
- j -1) ) & \text{if }
k \text{ is a critical dimension}, \\
0 & otherwise
\end{cases}\]
and let $\tim = 1 + \frac{c_n}{\log n}$. Then, 
setting $\mathbf{X}\ :=\  (\varisol, X_{j,j+1},\ldots,X_{j,d})$,
we have
\[ \mathcal{L}(\mathbf{X}) \xrightarrow{\; \dist \;} (\Po(\critm_j),\ldots,\Po(\critm_d)) .\]
\end{lem}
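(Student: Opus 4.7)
The plan is to establish joint Poisson convergence of $\mathbf{X}$ via the method of factorial moments. Since a vector of mutually independent Poisson random variables is characterised by its joint factorial moments, it suffices to show that for every tuple $\mathbf{r} = (r_j,r_{j+1},\ldots,r_d) \in \mathbb{Z}_{\ge 0}^{d-j+1}$,
\[
\EE\Bigl(\prod_{k=j}^{d}(\varNjk)_{r_k}\Bigr) \;\xrightarrow{n\to\infty}\; \prod_{k=j}^{d}\critm_k^{r_k},
\]
where $(Y)_r := Y(Y-1)\cdots(Y-r+1)$ is the $r$-th falling factorial and $\critm_k = 0$ for non-critical $k$.

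The factorial moment on the left expands as
\[
\EE\Bigl(\prod_{k}(\varNjk)_{r_k}\Bigr) = \sum_{\mathcal{T}} \Pr\bigl(\text{every pair in }\mathcal{T}\text{ forms a copy of its assigned }\Njk\text{ in }\cG_\tim\bigr),
\]
the sum running over ordered, type-labelled tuples $\mathcal{T}$ of $R := \sum_{k} r_k$ distinct candidate pairs $(K,C)$. I would split this sum into the \emph{main term}, where the $k$-sets $K$ occurring in $\mathcal{T}$ are pairwise vertex-disjoint, and the \emph{overlap term}. For a disjoint tuple, the $k$-simplex indicator events are independent since they involve disjoint $(k+1)$-sets, and the localisation conditions~\ref{Njk:flower} for all flowers can be treated simultaneously by a direct extension of Proposition~\ref{prop:localisedprob} applied to the union of all petal families. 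The joint probability therefore factorises as $(1+o(1))\prod_{k,s}\Pr\bigl((K_k^{(s)},C_k^{(s)})\text{ is a copy of }\Njk\bigr)$. Combined with Corollary~\ref{cor:expectatcritwind} and the fact that only an $O(1/n)$-fraction of tuples fails to be disjoint, the main term contributes
\[
(1+o(1))\prod_{k=j}^{d}\EE(\varNjk)^{r_k} \;\xrightarrow{n\to\infty}\; \prod_{k=j}^{d}\critm_k^{r_k}.
\]

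The hard part will be controlling the overlap term. If some pair of $k$-sets $K,K'$ in $\mathcal{T}$ shares at least one vertex, we lose a factor of $n$ in the count of such tuples but retain the full weight $\bar p_k = O(\log n / n^{k-j})$ per simplex indicator. A case analysis on how the centres, petals and remaining vertices of two overlapping copies can intersect, combined with~\ref{crit:less} and~\ref{crit:equal} (which force $\EE(\varNjk)=O(1)$ for every $k\in\critset$ and $\EE(\varNjk)=o(1)$ otherwise), shows that every overlap pattern contributes at most $O((\log n)^{O(1)}/n) = o(1)$. Summing over the $O(1)$ many overlap patterns and the $O(1)$ many pairs inside $\mathcal{T}$ keeps the total overlap contribution at $o(1)$. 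Combining this with the main-term estimate verifies the factorial-moment criterion and thus proves the claimed joint convergence $\mathcal{L}(\mathbf{X}) \xrightarrow{\;\dist\;} (\Po(\critm_j),\ldots,\Po(\critm_d))$ with independent components.
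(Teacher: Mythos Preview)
Your approach via joint factorial moments is correct and is a genuinely different route from the paper's proof. The paper instead invokes the Stein--Chen coupling method, specifically Theorem~10.J of Barbour--Holst--Janson (stated as Lemma~\ref{lem:multpoisson}): for each candidate pair $a=(K_a,C_a)$ it constructs coupling variables $J_{ba}$ with $\mathcal{L}\big((J_{ba})_b\big) = \mathcal{L}\big((I_b)_b \mid I_a = 1\big)$, and then shows that the bound $\sum_a \pi_a\big(\pi_a + \sum_{b\ne a}\EE|J_{ba}-I_b|\big)$ is $o(1)$ through a case analysis on how $(K_a,C_a)$ and $(K_b,C_b)$ interact. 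The underlying second-moment-type estimates are essentially the same as those you need for your overlap term, but the packaging differs: the coupling method yields an explicit total variation bound, whereas your moment method is more elementary and bypasses the coupling construction entirely.

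One small imprecision worth flagging: your stated overlap bound $O\big((\log n)^{O(1)}/n\big)$ is not the right shape in every case. When $K_b=K_a$ but $C_b\ne C_a$ (possible only for $k\ge j+1$), you lose the full factor $n^{k+1}$ in the count but also drop one $p_k$ factor, and the two flowers share at most one petal; the contribution is $O\big(\EE(\varNjk)\,\bq^{\,\tim(k-j)}\big)$, which is $o(1)$ because $\bq=o(1)$ (cf.~\eqref{eq:qissmall}), not because of a $1/n$ saving. Also note that when $K_a\ne K_b$ and one of the two simplices contains a petal of the other's flower without being contained in that simplex, the joint probability is exactly~$0$ by~\ref{Njk:flower}, which prunes most of the delicate overlap patterns. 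With these adjustments your sketch goes through.
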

\noindent To prove Lemma~\ref{lem:Njkdistribution} we 
use a multivariate Poisson approximation technique 
from~\cite{BarbourHolstJansonBook}, which will be presented in Appendix~\ref{sec:multipoisson}. 
The proof of Lemma~\ref{lem:Njkdistribution} then
appears in Appendix~\ref{sec:proofofNjkdistr}.
\begin{proof}[Proof of Theorem~\ref{thm:criticalwindow}]
Consider $\mathcal{X}=\sum_{k=j}^d \varNjk$ and define
\[\critm :  = \exp(-c(j+1)) \sum\nolimits_{k\in \critset}
\exp(\bar\sublogp_k + \bar\constp_k + c\bar\gamma_k ),\]
where $\critset=\critset(\barallp,j)$ is
the set of critical dimensions for 
the $j$-critical direction $\barallp$. 
By Lemma~\ref{lem:Njkdistribution}, we have 
\begin{equation} \label{eq:Xconverges}
\mathcal{L}(\mathcal{X}) 
\xrightarrow{\enskip \dist \enskip} \sum_{k=j}^d \Po(\critm_k)
 =
\Po\left(  \sum_{k\in \critset}
\exp\big(\bar\sublogp_k + \bar\constp_k + c(\bar\gamma_k 
- j -1) \big)
\right)= \Po(
\critm).   
\end{equation}

We first show that 
$H^j(\cG_\tim;R) = R^\mathcal{X}$ whp.
Let $M_1,M_2, \ldots,M_{\mathcal{X}}$ denote
the copies of $\Mjk$, for every $j\le k\le d$ that are present in $\cG_\tim$.
By Corollary~\ref{cor:Ntauemptycritwind} we have 
$\cN_\tim = \emptyset$ whp and by Proposition~\ref{prop:fMrarisesfromMjk}~\ref{fMrari:cocycle} 
we know that
the only $j$-cocycles arising from $M_i$ are 
of the form $f_{M_i,r_i}$ with $r_i \in R$. Thus whp
each cohomology class contains an element of the form
$\sum_{i=1}^{\mathcal{X}}f_{M_i,r_i}$ with  
$r_i \in R$, i.e.\ 
whp the set of cohomology classes of those elements
generates $H^j(\cG_\tim;R)$.

We now need to show that if we take two tuples 
$(r_1,\ldots,r_{\mathcal{X}}) \neq 
(r_1',\ldots,r_{\mathcal{X}}')$ with $r_i,r_i' \in R$
for every $i$, then the cohomology classes of 
$\sum_{i=1}^\mathcal{X} f_{M_i,r_i}$ and of 
$\sum_{i=1}^\mathcal{X} f_{M_i,r_i'}$ are distinct.
Note that this is equivalent to showing that if 
$(r_1,\ldots,r_{\mathcal{X}})$ is not the $0_R$-vector,
then $f=\sum_{i=1}^\mathcal{X} f_{M_i,r_i}$
is not in the same cohomology class as the 
$0_R$-function, 
i.e.\ the $j$-cochain $f$ is not a 
$j$-coboundary. 

We first observe that by
Markov's inequality
$\mathcal{X}=\sum_{k=j}^{d} \varNjk = o(n)$, because 
$\EE(\mathcal{X})=O(1)$ by 
Corollary~\ref{cor:expectatcritwind}. 
We further claim that
for each $j\le k\le d$ whp no two copies
 of $\Njk$ share the same $k$-simplex. 
Indeed, for $k=j$ by 
Definition~\ref{def:Njj} all copies of $\Njj$ 
come from different $j$-simplices.
If $k\ge j+1$, for two copies of $\Mjk$
sharing the same $k$-simplex
there are $\binom{n}{k+1}$ ways to choose the common
$k$-simplex and $O\left(\binom{k+1}{j}^2\right)$ ways to
choose the centres of the two flowers. Moreover,
these two copies are present in $\cG_\tim$ with 
probability $O(p_k \bq^{\; \tim (2k-2j+1)})$, because 
the common $(k+1)$-set is a $k$-simplex 
in $\cG_\tim$ with
probability $p_k$ and the two 
flowers can share at most one petal, thus in total there are at least $(k-j+1)+(k-j) =
2k - 2j +1$ many $j$-simplices that are petals,
and these satisfy \ref{Njk:flower} with 
probability at
most $(1+o(1)) \bq^{\;\tim (2k-2j + 1)}$ by Proposition~\ref{prop:localisedprob}.

Therefore, for $k\ge j+1$, the expected number of pairs of copies of 
$\Mjk$ with the same $k$-simplex is 
\[ O \left( \binom{n}{k+1} \binom{k+1}{j}^2
 p_k  \bq^{\;\tim (2k-2j+1)}\right) 
 \stackrel{\eqref{eq:expXjk}}{=} 
O\left( \EE(\varNjk) \bq^{\;\tim(k-j)} \right) = O(\bq^{\;\tim(k-j)}),\]
because $\EE(\varNjk)=O(1)$. Furthermore, we have that
\begin{equation} \label{eq:qissmall}
\bq \stackrel{\eqref{eq:q_computing}}{=} O \left( \exp\left(- \sum\nolimits_{i=j+1}^d
\left(\bar\alpha_i \log n+ \frac{\bar\beta_i}{n^{\bar\gamma_i}}\right)\right) \right) =
O(n^{-\bar\alpha_{k_0}/2}) = o(1),
\end{equation}
where we are using that $k_0$ is such that
$\bar\alpha_{k_0}\neq 0$ and $\bar\gamma_{k_0}=0$ 
(see \ref{parameters:zero} and \ref{parameters:k0}
in Definition~\ref{def:criticalp}). 
Since $k\ge j+1$ and $\tim = 1 + o(1)$ we also have that $\bq^{\;\tim(k-j)} = o(1)$,
and thus by Markov's inequality, whp
there exist no such pairs of $\Njk$.

Hence, by condition~\ref{Njk:flower} in 
Definition~\ref{def:Njk}, whp the $f_{M_i,r_i}$
have pairwise disjoint supports,
and in particular, for our choice of
the $r_i$, the support $S$ of $f$ is not empty. 
Pick a $j$-simplex $L \in S$. 
Lemma~\ref{lem:shells} yields that in the 
range of $\tim$ we are considering,
whp $L$ is contained in $\Theta(n)$ many
$j$-shells which meet only in $L$, and therefore
at most $|S| \le \sum_{k=j}^d (k-j+1) \varNjk 
\le (d-j+1)\mathcal{X}=o(n)$ of them can contain
another $j$-simplex in $S$. Thus whp
there exists a $j$-shell that meets the support of $f$
only in $L$, i.e.\ $f$ is not a $j$-coboundary
by Lemma~\ref{lem:shellobstruction}.

We therefore have $H^j(\cG_\tim;R) 
= R^{\mathcal{X}}$ whp.
Since $\mathcal{L}(\mathcal{X}) \xrightarrow{\;\text{d}\;} \Po(\critm)$ by \eqref{eq:Xconverges},
there exists a coupling $Y \sim \Po(\critm)$
such that $\mathcal{X} = Y$ whp.
Thus, whp
\[H^j(\cG_\tim;R) 
= R^{Y},\]
as required.
\end{proof}

We now combine Theorems~\ref{thm:main} and~\ref{thm:criticalwindow} 
to prove Corollary~\ref{cor:probcritwind}
\begin{proof}[Proof of Corollary~\ref{cor:probcritwind}]
As in the proof of Theorem~\ref{thm:main},
by applying Lemma~\ref{lem:supercritical} and
Corollary~\ref{cor:iintervals}~\ref{iintervals:timi*} we deduce that for $\tim = 1+\frac{c_n}{\log n}$,
whp $H^i(\cG_\tim;R)=0$ for every $i \in [j-1]$. 
Furthermore,
whp $H^0(\cG_\tim;R)=R$ by Lemma~\ref{lem:topconn}.
This in particular implies that
\[\Pr\left(\cG_\tim \text{ is \connected}\right) = \Pr\left(H^j(\cG_\tim;R) = 0\right) + o(1).\]
Moreover, by Theorem~\ref{thm:criticalwindow} whp 
$H^j(\cG_\tim;R) = R^Y$ with $Y \sim \Po(\critm)$, hence
\[ \Pr\left(\cG_\tim \text{ is \connected}\right) = (1+o(1)) \Pr \big(\Po(\critm)=0\big)
= (1+o(1)) \exp\left( - \critm \right) ,\]
as required.
\end{proof}

\smallskip
\section{Concluding remarks}
\label{sec:concrem}

\subsection{Non-triviality of cohomology groups}
To prove Theorem~\ref{thm:main}~\ref{main:subcritical}, 
our strategy was to show that for every $\eps>0$
and for each $i \in [j]$,
whp $H^i(\cG_\tim;R) \neq 0$ for every $\tim \in I_i(\eps) =
[\eps/n^{j-i+1},\tim_i^*)$, because of the 
existence of copies of $\hat M_{i,k}$ for some
$i \le k \le d$ throughout the interval
$I_i(\eps)$ (Corollary~\ref{cor:iintervals}).
However, it is likely that $H^i(\cG_\tim;R)$
would already be non-trivial
for even smaller $\tim$.
In particular, it would be interesting
to precisely determine from which point
on $H^i(\cG_\tim;R)\neq 0$ whp and in this case
to describe its rank, analogously to
Theorem~\ref{thm:criticalwindow}.

\smallskip
\subsection{Dimension of the last minimal obstruction}
In Theorem~\ref{thm:criticalwindow} we obtain an asymptotic
description of the $j$-th cohomology group
in the critical window. More strongly,
in this regime Lemma~\ref{lem:Njkdistribution} yields
the asymptotic (joint) distribution of the number of
copies of $\Njk$, for every index $k$ with $j\le k\le d$.
This leads to the natural question:
what is (the asymptotic probability distribution of) the dimension 
of the last copy of $\Njk$ that vanishes?

\smallskip
\subsection{Determining the critical dimensions}
For a given $j$-critical direction 
$\barallp = (\bar p_1,\ldots,\bar p_d)$,
it is interesting to determine which
indices $k$ with $j\le k\le d$ 
represent critical dimensions for $\barallp$.
Recall from Definition~\ref{def:criticaldim}
that $k$ is a critical dimension
if $\bar \logp_k \log n+ \bar \sublogp_k + \bar \constp_k
= O(1)$.
The main term of this expression is $\bar \logp_k \log n$;
all other terms are $o(\log n)$. 
The constant $\bar{\lambda}_k$ depends on the parameters
$\bar \gamma_k$, $\bar \alpha_{j+1}$, $\ldots$, $\bar\alpha_d$.
Ignoring the lower order terms, we can therefore 
plot the ranges where $\bar\lambda_k =0$
as a function of those parameters, for
$j\le k\le d$.

We present some examples of these plots
in Figure~\ref{fig:critdim}.
Further examples can be found at:\\
\url{https://www.wolframcloud.com/obj/delgiudice/CriticalDimensions}.

\begin{figure}[htbp] 
  \begin{subfigure}{6cm}
    \centering
    \begin{tikzpicture}[scale=0.611]
     \draw (-4.5,0) node[above]{\tiny $\bar \gamma_1$} -- (4.5,0) node[above] {\tiny $\bar \alpha_1$};
     \draw (0,-4.5) node[right]{\tiny $\bar \gamma_2$} -- (0,4.5) node[right] {\tiny $\bar \alpha_2$};
     
     \draw[domain=-1:0,smooth,variable=\y,gray,opacity=0.6,thick] plot ({-2},{\y});
     \draw[domain=-2:-1,smooth,variable=\x,gray,opacity=0.6,thick] plot ({\x},{2+\x});
     \draw[domain=-1:0,smooth,variable=\x,black,ultra thick] plot ({\x},{2+\x});
     \draw[domain=0:4.5,smooth,variable=\x,black,ultra thick] plot ({\x},{2});
     
     \draw[domain=-1:4.5,smooth,variable=\x,gray,dashed,opacity=0.6,thick] plot ({\x},{-2});
     \draw[domain=-1:4.5,smooth,variable=\x,gray,dashed,opacity=0.6,thick] plot ({\x},{1});
     \draw[domain=-4.5:-1,smooth,variable=\x,black,dashed,ultra thick] plot ({\x},{1});
     
     \fill[opacity=0.35,pattern=north west lines,pattern color=gray]
      (-4.5,-4.5) -- (-4.5,-1) -- (-1,-1) -- (-1,-4.5)-- cycle;
   
     \end{tikzpicture}
    \subcaption{(a)}
  \end{subfigure}
  \begin{subfigure}{6cm}
    \centering
    \begin{tikzpicture}[scale=0.611]
     \draw (-4.5,0) node[above]{\tiny $\bar \gamma_2$} -- (4.5,0) node[above] {\tiny $\bar \alpha_2$};
     \draw (0,-4.5) node[right]{\tiny $\bar \gamma_3$} -- (0,4.5) node[right] {\tiny $\bar \alpha_3$};

     \draw[domain=-1:0,smooth,variable=\y,gray,opacity=0.6,thick] plot ({-2},{\y});
     \draw[domain=-2:-2/3,smooth,variable=\x,gray,opacity=0.6,thick] plot ({\x},{1+\x/2});
     \draw[domain=-2/3:0,smooth,variable=\x,black,ultra thick] plot ({\x},{1+\x/2});
     \draw[domain=0:1,smooth,variable=\x,black,ultra thick] plot ({\x},{1-\x});
     \draw[domain=-4.5:0,smooth,variable=\y,black,ultra thick] plot ({1},{\y});
     
     \draw[domain=-1:0,smooth,variable=\x,gray,dashed,opacity=0.6,thick] plot ({\x},{-2});
     \draw[domain=0:2/3,smooth,variable=\x,gray,dashed,opacity=0.6,thick] plot ({\x},{-2+3*\x});
     \draw[domain=0:2/3,smooth,variable=\x,gray,dashed,opacity=0.6,thick] plot ({\x},{2/3-\x});
     \draw[domain=-2/3:0,smooth,variable=\x,gray,dashed,opacity=0.6,thick] plot ({\x},{2/3});
     \draw[domain=-4.5:-2/3,smooth,variable=\x,black,dashed,ultra thick] plot ({\x},{2/3});
     
     \fill[opacity=0.35,pattern=north west lines,pattern color=gray]
      (-4.5,-4.5) -- (-4.5,-1) -- (-1,-1) -- (-1,-4.5)-- cycle;
    \end{tikzpicture}
    \subcaption{(b)}
  \end{subfigure}
  \caption{Plots of the equations $\bar\logp_k =0$ ($k=1,2$ in~(a)
  and $k=2,3$ in~(b), respectively).
  For each $i\in[k]$, we use the same axis for $\bar\alpha_i$
  and $\bar\gamma_i$, because property~\ref{parameters:zero}
  in Definition~\ref{def:criticalp} states that only one
  of $\bar\alpha_i, \bar\gamma_i$ is non-zero. \newline
  The bold sections denote the ranges for which
  $\barallp$ is a $j$-critical direction and the corresponding $k$ is a critical dimension.
  The striped portions in the lower left
  corners indicate the regions
  in which whp $\cG_\tim$ has
  no simplices of positive dimensions.\newline
  (a) Plots of $\bar\logp_1 =0$ (plain) and
  $\bar\logp_2 =0$ (dashed), for $d=2$, $j=1$.
    Recall that the equation $\bar\logp_1=0$
  refers to copies of $M_{1,1}$,
  i.e.\ isolated $1$-simplices in $\cG_\tim$. \newline
  (b) Plots of $\bar\logp_2 =0$ (plain) and
  $\bar\logp_3 =0$ (dashed), for $d=3$, $j=1$.
  In this case, the plots are under the condition that 
  $\bar p_1=0$.}
  \label{fig:critdim}
\end{figure}

\subsection{Integer homology}
Recently, Newman and Paquette~\cite{NewmanPaquette18} proved a hitting time result for the
$(d-1)$-th homology group over $\mathbb{Z}$ in the Linial-Meshulam model
(the case $d=2$ was previously proved by {\L}uczak and Peled~\cite{LuczakPeled16});
this is a stronger result than for the corresponding cohomology group.
It would be interesting to know whether the analogous result also holds in $\cG_{\tim}$,
i.e.\ does the $j$-th homology group with integer coefficients vanish at the same time as the last copy of $\Mjkhat$ for any $k$ disappears?

\newpage
\bibliographystyle{plain}

\newpage

\appendix

\section{Parametrisation}\label{sec:parameters}
In this appendix
we clarify the parametrisation of $\barallp$
and the assumptions made for $j$-admissibility and $j$-criticality
in Definitions~\ref{def:criticalp} and~\ref{def:criticaldirection}.
Note that the arguments here are independent of the proof of
Theorem~\ref{thm:main}---rather, they justify why the assumptions
made in the theorem are reasonable and cover all interesting cases.

We first justify the parametrisation of $\barallp$ in terms of the 
$\bar\alpha_k,\bar\beta_k,\bar\gamma_k$. We note that scaling
$\barallp$ by a factor $c$ (which may be a function of $n$)
has no effect on the evolution of the process $(\cG_\tim)$,
since $\cG(n,c\tim\barallp) = \cG(n,\tim'\barallp)$,
where $\tim'=c\tim$.
We therefore aim to choose $\barallp$ such that the critical
range for \connectedness\ occurs around time $\tim=1$, i.e.\ when
$\allp=\barallp$.

Observe that the probabilities $p_i$ with $i\in[j-1]$ have no influence
on the $j$-th cohomology group $H^j(\cG_\tim;R)$. To see this,
we note that the $j$-th cohomology group depends only on the set
of $(j-1)$-simplices, the set of $j$-simplices, and the set of
$(j+1)$-simplices of $\cG_{\tim}$. The probabilities $p_i$ with
$i\in[j-2]$ have no influence on any of these sets, while $p_{j-1}$
only affects the set of \emph{isolated} $(j-1)$-simplices. Isolated
$(j-1)$-simplices, however, have no effect on the set of
$j$-coboundaries, and thus do not influence $H^j(\cG_{\tim};R)$.
Therefore, when we consider whether or not the $j$-th cohomology group
vanishes, we will only take the probabilities $\pj,\dotsc,p_d$ into
account.

\smallskip
\subsection{Approximate order: Justifying \texorpdfstring{$\bar p_k=O\left(\frac{\log n}{n^{k-j}}\right)$}{order of pk}}
We first explain why we may assume that $\bar p_k=O\left(\frac{\log n}{n^{k-j}}\right)$.
In particular, this will imply the assumption $\bar \gamma_k\ge 0$, once $\bar \gamma_k$ is defined (see~\eqref{eq:gamma}).

What range of $\allp$ do we expect to be critical for
\connectedness\ of $\cG(n,\allp)$? Let us first look at a single
probability $p_k$, i.e.\ consider
$$\allp = (0,\dotsc,0,p_k,0\dotsc,0).$$
For $R
= \FF_2$ and $j+1 \le k \le d$, \cite[Theorem~1.11]{CooleyDelGiudiceKangSpruessel20}
states that the critical range lies around
\begin{equation*}
  p_k = \frac{(j+1)\log n + \log\log n}{(k-j+1)n^{k-j}}(k-j)!.
\end{equation*}
It is therefore reasonable to expect the critical range for general
coefficient group $R$ and general $\allp = (p_1,\dotsc,p_d)$ to lie
around
\begin{equation}\label{eq:parametersprelim}
  p_k = \frac{\alpha_k\log n + r_k}{n^{k-j}}(k-j)!, \quad\forall j\le k\le d,
\end{equation}
where each $\alpha_k$ is a non-negative constant, at least one $\alpha_k$
is non-zero, and each $r_k = r_k(n)$ is a function of order $o(\log n)$.

To justify this more precisely, note that if
$p_k\ge \frac{c_k\log n}{n^{k-j}}(k-j)!$ for some constant
$c_k>j+1$, a simple first moment calculation shows that whp
$\cG_\tim$ has a complete $j$-skeleton, and therefore if it is
\connected,
adding further $k$-simplices for $j\le k \le d$ will not change this.
Furthermore, it follows from the results of~\cite{CooleyDelGiudiceKangSpruessel20} (for $R=\FF_2$),
and indeed also from Theorem~\ref{thm:main} (for general $R$), that the complex
will in fact be \connected\ whp if $p_k$ is this large, and therefore
it is reasonable to scale the chosen direction $\barallp$ in such a way that
$\bar p_k = O(\frac{\log n}{n^{k-j}})$ for every $j+1\le k \le d$.

Thus, let us suppose that for each $k$ with $j \le k \le d$, the limit
\begin{equation}\label{eq:alpha}
  \bar \alpha_k \ :=\  \lim_{n\to\infty}\left(\frac{\bar p_k n^{k-j}}{(k-j)!\log n}\right)
\end{equation}
exists.
This is a reasonable assumption, because in terms of phase transitions,
we are interested in how the model behaves depending on the asymptotic
behaviour of the probabilities $p_k$, which should not
fluctuate between, say, $\frac{\log n}{n^{k-j}}$ and
$\frac{2\log n}{n^{k-j}}$.
Observe that if $\bar p_k = o(\log n/n^{k-j})$, then $\bar\alpha_k = 0$.
Indeed, we next argue that
we may also assume that
at least one $\bar\alpha_k$ is non-zero.

\smallskip
\subsection{Existence of \texorpdfstring{$k_0$}{k0}: Justifying~\ref{parameters:k0}}\label{sec:parameters:k0}
So far we have only guaranteed certain properties of $\bar\allp$ by scaling
appropriately.
By rescaling once more if necessary, we can certainly
guarantee that $\bar \alpha_k \neq 0$ for some $j\le k \le d$,
but we would like to ensure that this does not \emph{only} hold for $k=j$,
i.e.\ that there is in fact some $j+1 \le k \le d$ such that $\bar \alpha_k \neq 0$.
Note that this cannot necessarily be achieved by a simple rescaling without
potentially violating the condition that $p_j=O(\log n)$.

Instead, we consider the two cases:
\begin{enumerate}
\item \label{param:pjlarge} $\bar p_j\ge 1$ and $\bar\alpha_k=0$ for all $k=j+1,\ldots,d$;
\item \label{param:k0exists} $\bar p_j \ge 1$
and for some $j+1 \le k \le d$ we have $\bar \alpha_k \neq 0$.
\end{enumerate}
We argue that case~\ref{param:pjlarge} can be easily reduced
to case~\ref{param:k0exists}
and therefore we may assume that there exists $k_0\ge j+1$ with
$\bar \alpha_{k_0}\neq 0$ as stated in~\ref{parameters:k0}.

Indeed, suppose we have the slightly more general case than case~\ref{param:pjlarge},
that
$\bar p_j\ge 1$ and $\bar p_k\le \frac{\log n}{C n^{k-j}}$ for all $k=j+1,\ldots,d$
and for some sufficiently large $C$. In this case,
a simple second moment argument shows that there exists a constant $c>0$
such that
for $\tim = cn^{-j}$, whp $\cG_\tim=\cG(n,\tim \barallp)$ contains
an isolated $k$-simplex for some $0 \le k \le j-1$, which guarantees the existence
of an isolated simplex of dimension at most $k$ for any $\tim \le cn^{-j}$,
and therefore $(\cG_\tim)$
is not \connected\ in the interval $[0,cn^{-j}]$. Furthermore, another second moment argument shows
that if $C$ is large enough, whp
$\cG_{1}=\cG(n,\barallp)$ contains $\Omega(n^{j+\frac12})$ isolated $j$-simplices. Conditioned
on its presence in $\cG_{1}$, the probability that an isolated $j$-simplex
was already present in $\cG_{cn^{-j}}$ is at least $cn^{-j}$ independently for each such
simplex, and therefore with high probability one of these was present
throughout the entire range $\tim \in [cn^{-j},1]$. In other words,
either the presence of isolated
$k$-simplices for some $k \le j-1$ or of isolated $j$-simplices
ensure that 
whp the process is certainly not \connected\ until the time when it has a complete $j$-skeleton.
Therefore we may increase $\bar p_k$
for all $k\ge j+1$ by the same factor
(equivalent to decreasing $\bar p_j$)
until $\bar p_k = \frac{\log n}{Cn^{k-j}}$ for some $k$ without affecting
which appearances of simplices cause the process becomes \connected.
In other words, we may assume that $\bar \alpha_{k_0} > 0$ for some $k_0\ge j+1$.

\smallskip
\subsection{Lower bound on \texorpdfstring{$\bar p_k$}{pk}: Justifying \texorpdfstring{$\bar \gamma_k<\infty$}{gammak}}
Furthermore, we may assume that each non-zero probability $\bar p_k$ is not
`too small', or in other words that any $\bar p_k$ which is very small is in fact $0$.
More precisely, we have shown the existence of an index $k_0$
with $\bar \alpha_{k_0}\neq 0$, which implies that $\bar p_{k_0} = \Theta(\frac{\log n}{n^{k_0-j}})$.
Now if $\bar p_k \le n^{-(k+k_0-j+1)}$, then a simple first moment calculation shows
that whp all $k_0$-simplices are born (and so in particular the complex
is \connected) before any $k$-simplices are born. Thus
we may set $\bar p_k=0$ without affecting when the process is \connected.
Therefore we may assume that
\begin{equation}\label{eq:gamma}
  \bar \gamma_k \ :=\  \sup\{\gamma\in\mathbb{R} \mid \bar p_k n^{k-j+\gamma} = o(1)\}
\end{equation}
exists for every $k$ with $j \le k \le d$ and $\bar p_k \not= 0$. By the
existence of the limit in~\eqref{eq:alpha}, we have $\bar \gamma_k \ge 0$.

\smallskip
\subsection{Fine-tuning}
Finally, let $\bar \beta_k$ be the function of $n$ for which
\begin{equation}\label{eq:parametersgeneral}
  \bar p_k = \frac{\bar \alpha_k\log n + \bar \beta_k}{n^{k-j+\bar \gamma_k}}(k-j)!.
\end{equation}
Note that the function $\bar \beta_k$ might be negative if $\bar \alpha_k \not= 0$.

\smallskip
\subsection{\texorpdfstring{$j$}{j}-admissibility}
So far we have only ensured that properties~\ref{parameters:zero}--\ref{parameters:sublog}
hold for $j\le k \le d$ (note that~\ref{parameters:k0} is independent of $k$).
To show that we may assume that these properties
also hold for $1 \le k \le j-1$, we use a similar argument to the one in
Section~\ref{sec:parameters:k0}:
if for some $k \in [j-1]$ we have $\bar p_k \ge Cn^{j-k}\log n$ for some sufficiently large constant $C$,
then whp $(\cG_\tim)$ contains an isolated $i$-simplex for some $i \in [k]_0$ (and is therefore not \connected)
until the moment when it has a complete $k$-skeleton. Therefore we may decrease $\bar p_k$ to $Cn^{j-k}\log n$
without changing when the process becomes \connected. In other words, we may assume
that $\bar p_k = O\left(\frac{\log n}{n^{k-j}}\right)$,
and thus also that $\bar \alpha_k, \bar \beta_k,\bar \gamma_k$ are well-defined,
and properties~\ref{parameters:zero}--\ref{parameters:sublog} follow.

\smallskip
\subsection{\texorpdfstring{$j$}{j}-criticality}
It only remains to justify the assumptions
of Definition~\ref{def:criticaldirection}. These properties
can also be guaranteed by appropriate scaling of $\bar \allp$.

  To see this, observe that scaling $\barallp$ by a 
  constant $C^*$ also scales the $\bar\alpha_k$ by the same factor $C^*$,
  while leaving the $\bar\gamma_k$ unchanged.
  Thus if we let $\barallp=C^*\allp$, where $\allp$ is a $j$-admissible direction,
  since $\alpha_{k_0}>0$, we have
  $$ \bar \lambda_k j+1 - \bar \gamma_k - \Theta(C^*). $$
  Thus by choosing $C^*$ large enough, we can ensure that $\bar \lambda_k <-1$ for all $k\ge j$.
  Since $\bar\logp_k \log n$ is the
  main term in $\bar\logp_k \log n + \bar\sublogp_k + \bar\constp_k$,
  this would mean that~\ref{crit:less} certainly holds if $C^*$ is
  large enough.
  On the other hand, since $\bar \gamma_{k_0}=\gamma_{k_0}=0$,
  if $C^*$ is small enough we have $\bar\logp_{k_0}\ge j+1-\frac{1}{2}>0$,
  i.e.\ at least one of the
  $\bar\logp_k$ is positive.
  By continuity, we may choose $C^*$ such that~\ref{crit:less}
  and~\ref{crit:equal} both hold.

\newpage  

\section{Proofs of auxiliary results}\label{sec:proofaux}

\subsection{Proof of Lemma~\ref{lem:criticalscaling}}
\label{sec:proofofcriticalscaling}

We prove the statement for $i=j-1$; for general $i \in [j-1]$ it
suffices to iterate the procedure $j-i$ times.

We thus need to show that we can choose a positive constant $\eta =
\eta_{j-1}$ and a function $\epsilon = \epsilon_{j-1}(n) = o(1)$ such
that
\begin{equation*}
  \barallp' = \barallp'(\eta,\epsilon)\ :=\
  \frac{\eta+\epsilon}{n} \barallp
\end{equation*}
is a $(j-1)$-critical direction
(Definition~\ref{def:criticaldirection}).

Recall that the $j$-critical direction  $\barallp
= (\bar p_1,\ldots,\bar p_d)$ is in particular a $j$-admissible direction (Definition~\ref{def:criticalp}), i.e.\
for every $k\in [d]$
\begin{equation*}
    \bar p_k = \frac{\bar\alpha_k\log n + \bar\beta_k}{n^{k-j+\bar\gamma_k}}(k-j)!,
  \end{equation*}
with $\bar\alpha_k$, $\bar\beta_k= \bar\beta_k(n)$, and $\bar\gamma_k$ satisfying
conditions \ref{parameters:zero}--\ref{parameters:k0}.

This implies that $\barallp'$ is a $(j-1)$-admissible direction:
for every $k\in [d]$, we have
\begin{align*}
\bar p_k' = \frac{\eta+\epsilon}{n} \bar p_k 
     & = \frac{\bar\alpha_k'\log n + \bar\beta_k'}{n^{k-(j-1)+\bar\gamma_k'}}(k-(j -1))!,
\end{align*} 
where
\begin{equation} \label{eq:paramprime}
\bar\alpha_k' \ :=\  \frac{\eta \bar\alpha_k}{k-j+1},
\qquad \bar\beta_k'\ :=\  \frac{\eta \bar\beta_k
+ \epsilon \bar \alpha_k \log n + \epsilon
\bar\beta_k}{k-j+1}, \qquad \bar\gamma_k' \ :=\ 
\bar\gamma_k,
\end{equation}
and it is easy to check that for any choices of 
$\eta$ and $\epsilon$, the parameters
$\bar\alpha_k'$, $\bar\beta_k'$, and 
$\bar\gamma_k'$ satisfy conditions~\ref{parameters:zero}--\ref{parameters:k0} 
in Definition~\ref{def:criticalp}.

We now want to prove that, for the appropriate choices of $\eta$ and 
$\epsilon$, the vector $\barallp'$ is also a $(j-1)$-critical 
direction (Definition~\ref{def:criticaldirection}), i.e. 
\renewcommand{\theenumi}{(C\arabic{enumi}$'$)}
\begin{enumerate}
\item \label{crit:prime:less}
  $\bar{\logp}'_k\log n + \bar{\sublogp}'_k + \bar{\constp}'_k \le 0$, $\quad$ for all indices $k$ with $j-1 \le k \le d$ and $\bar{p}'_k \not= 0$;
\item \label{crit:prime:equal}
  $\bar{\logp}'_{\bar k}\log n + \bar{\sublogp}'_{\bar k} + \bar{\constp}'_{\bar k} = 0$, $\quad$ for some $\bar k$ with $j-1 \le \bar k \le d$,
\end{enumerate}
\renewcommand{\theenumi}{(\roman{enumi})}
where the parameters $\bar{\logp}'_k$, $\bar{\sublogp}'_k$, and 
$\bar{\constp}'_k$ are defined as in \eqref{eq:lmn}, but with $j$
replaced by $j-1$.

Recall that $\bar{\logp}'_k$ and $\bar{\constp}'_k$ are constants, 
while each $\bar{\sublogp}'_k$ is a function with
$\bar{\sublogp}'_k = o(\log n)$. Thus, \ref{crit:prime:less} 
and~\ref{crit:prime:equal} will both hold if and only if
\begin{align}
\begin{split}
\bar\logp_k' \le 0 
&\qquad \text{ for every  } j-1\le k\le d \text{ with } \bar{p}'_k \not= 0; \\
\bar\logp_{\bar k}' = 0 
&\qquad \text{ for some }
j-1\le \bar k\le d;
\end{split}\label{eq:condlogp'}\\
\text{and}\nonumber\\
\begin{split}
\bar\sublogp_k' \le -\bar\constp_k'& \qquad \text{ for every  } j-1 \le k \le d \text{ such that }
\bar{\logp}'_k = 0; \\
\bar\sublogp_{\bar k}' = - \bar\constp_{\bar k}' & \qquad \text{ for some } \bar k \text{ such that }
\bar{\logp}'_{\bar k} = 0.
\end{split}\label{eq:condsublogp'}
\end{align}
Let us consider~\eqref{eq:condlogp'} first.
For every $k$ with $\bar{p}'_k \not= 0$ (which is equivalent to
$\bar{p}_k \not= 0$), we have
\begin{align*}
\bar\logp_k' &\stackrel{\phantom{\eqref{eq:paramprime}}}{=} j - \bar\gamma_k'
- (k-j+2) \sum\nolimits_{i=j}^d \bar\alpha_i'\\
& \stackrel{\eqref{eq:paramprime}}{=}
j - \bar\gamma_k - \eta (k-j+2) 
\sum\nolimits_{i=j}^d \frac{\bar\alpha_i}{i-j+1}.
\end{align*}
Recall that $k_0$ is an index  with $j+1 \le k_0 \le d$ such
that $\bar\alpha_{k_0} \neq 0$ and $\bar\gamma_{k_0} = 0$ 
(cf.~\ref{parameters:zero} and~\ref{parameters:k0}
for $\barallp$).
We observe that
\begin{equation*}
  \lim_{\eta\to0}\bar{\logp}'_{k_0} = j > 0
  \qquad\text{and}\qquad
  \lim_{\eta\to\infty}\bar{\logp}'_{k} = -\infty
  \quad \text{for every } j-1 \le k \le d \text{ with } \bar{p}'_k \not= 0.
\end{equation*}
Hence, since each $\bar{\logp}'_{k}$ is a continuous function of
$\eta$, we can choose $\eta > 0$ such that \eqref{eq:condlogp'} holds.

For the rest of the proof, let this value of $\eta$ be fixed. We will
now show that we can choose the function $\epsilon$ so that
\eqref{eq:condsublogp'} is satisfied as well. To simplify notation,
whenever we consider $\bar{\sublogp}'_{k}$ or $\bar{\constp}'_k$ in
the following, we will assume that $\bar{\logp}'_k = 0$.

By \eqref{eq:lmn}, with $j$ replaced by $j-1$, we have
\begin{align*}
  \bar\constp_k' &=
      \begin{cases}
        -\log(j!) & \text{if }k=j-1,\\
        -\log((j-1)!)-\log(k-j+2) +\log(\bar{\alpha}'_k) & \text{if } k\not=j-1 \text{ and }\bar\alpha_k'\neq 0,\\
        -\log((j-1)!)-\log(k-j+2) & \text{otherwise},
      \end{cases}
\end{align*}
which by \eqref{eq:paramprime} is independent from $\epsilon$.
Furthermore,
\begin{equation}\label{eq:sublogprime}
\bar\sublogp_k' =
- (k-j+2)\sum\nolimits_{i=j}^{d}\frac{\bar\beta_i'}{n^{\bar\gamma_i'}} +
      \begin{cases}
        0 & \text{if } \bar p_k'>1,\\
        \log\log n & \text{if } \bar p_k'\le 1 \text{ and } \bar\alpha_k'\not=0,\\
        \log(\bar\beta_k') & \text{if } \bar p_k'\le 1 \text{ and } \bar\alpha_k'=0.
      \end{cases}
\end{equation}
By \eqref{eq:paramprime},
\begin{equation*}
  \sum\nolimits_{i=j}^{d}\frac{\bar\beta_i'}{n^{\bar\gamma_i'}}
  = \sum\nolimits_{i=j}^{d}\frac{\eta \bar\beta_i
+ \epsilon \bar \alpha_i \log n + \epsilon
\bar\beta_i}{(i-j+1)n^{\bar\gamma_i}}.
\end{equation*}
Now \ref{parameters:zero}--\ref{parameters:k0} imply that
$c := \sum_{i}\frac{\bar{\alpha}_i}{(i-j+1)n^{\bar\gamma_i}}$ is a
positive constant, while $\sum_{i}\frac{\bar\beta_i}{(i-j+1)n^{\bar\gamma_i}}
= o(\log n)$. Moreover, all possible summands in the case distinction
of~\eqref{eq:sublogprime} are $o(\log n)$ as well. Therefore, there
exists a positive function $f(n) = o(\log n)$, which does not depend
on the choice of $\epsilon$, such that
\begin{equation*}
  |\bar\sublogp_k' + c(k-j+2)\epsilon\log n| \le f(n)
\end{equation*}
for all $j-1 \le k \le d$ with $\bar{p}'_k \not= 0$.
Fix a function $\omega$ of $n$ with $\omega \to \infty$, but
$\omega = o\left(\frac{\log n}{f(n)}\right)$. Then for all indices $k$
as above,
\begin{equation*}
  \bar{\sublogp}'_k \xrightarrow{n\to\infty}
  \begin{cases}
    \infty & \text{for }\epsilon = -\frac{1}{\omega},\\
    -\infty & \text{for }\epsilon = \frac{1}{\omega}.
  \end{cases}
\end{equation*}
Therefore, by continuity we can choose 
$\epsilon = o(1)$ such that 
\eqref{eq:condsublogp'} holds.
Since we have now found $\eta$ and $\epsilon$ such that
\eqref{eq:condlogp'} and \eqref{eq:condsublogp'}
simultaneously hold,
\ref{crit:prime:less} and~\ref{crit:prime:equal} are both satisfied, i.e.\ 
$\barallp'$ is a $(j-1)$-critical direction,
as required.
\qed

\smallskip
\subsection{Proof of Lemma~\ref{lem:expectedNjk}}
\label{sec:proofofexpectedNjk}
Observe that in Definition~\ref{def:generalparameters}, $\alpha_k$
and $\gamma_k$ exist, because $\tim=O(1)$ and $\tim=\Omega(n^{-c})$,
respectively. Thus, $\logp_k$, $\sublogp_k$, and $\constp_k$ can be
defined analogously to \eqref{eq:lmn}. Moreover, since $\bar\alpha_i / n^{\bar\gamma_i} = \bar\alpha_i$ 
for each $j+1\le i\le d$ by \ref{parameters:zero} in Definition~\ref{def:criticalp}, 
we have
\begin{align} 
\bq^{\;\tim} &\stackrel{\eqref{eq:q_computing}}{=}
(1+o(1))
\exp\left(- \tim \sum\nolimits_{i=j+1}^d\left(\bar\alpha_i \log n+ \frac{\bar\beta_i}{n^{\bar\gamma_i}}\right)\right) \nonumber \\
& \stackrel{\;\eqref{eq:parameters}\;}{=}
(1+o(1))
\exp\left(-\sum\nolimits_{i=j+1}^d \tim \bar p_i \frac{n^{i-j}}{(i-j)!}  \right) \nonumber \\
& \stackrel{\phantom{\eqref{eq:q_computing}}}{=}
(1+o(1))
\exp\left(-\sum\nolimits_{i=j+1}^d  p_i \frac{n^{i-j}}{(i-j)!}  \right) \nonumber \\
& \stackrel{\phantom{\eqref{eq:q_computing}}}{=} (1+o(1))
\exp\left(-\sum\nolimits_{i=j+1}^d\left(\alpha_i \log n+ \frac{\beta_i}{n^{\gamma_i}}\right)\right). \label{eq:qtim_computing}
\end{align}

Suppose first that $k\ge j+1$.
Recall that in this case $p_k \le 1$ by Remark~\ref{rem:pkprob}. Furthermore, we observe that
  \begin{align}\label{eq:scaledpk}
    \frac{n^{k+1}p_k}{j!(k-j+1)!} &= \frac{n^{j+1-\gamma_k}(\alpha_k\log n+\beta_k)}{j!(k-j+1)},
    \end{align}
by Definition~\ref{def:generalparameters}. 
We thus have
  \begin{align*}
    \log\left(\EE(\varNjk)\right) &\stackrel{\eqref{eq:expXjk}}{=} \log\left((1+o(1))\frac{n^{k+1}p_k}{j!(k-j+1)!}\bq^{\; \tim (k-j+1)}\right)\\
    &\stackrel{\eqref{eq:scaledpk}}{=} (j+1-\gamma_k)\log n + \log (\alpha_k \log n + \beta_k) - \log (j!)-\log (k-j+1)\\
  & \hspace{2cm} +(k-j+1)\log\left(\bq^{\;\tim}\right) + o(1)\\
  & \stackrel{\eqref{eq:qtim_computing}}{=} \left(j+1-\gamma_k-(k-j+1)\sum\nolimits_{i=j+1}^d\alpha_i\right)\log n \\
  & \hspace{2cm} -(k-j+1)\sum\nolimits_{i=j+1}^d \frac{\beta_i}{n^{\gamma_i}} + \log (\alpha_k \log n + \beta_k)\\
  & \hspace{2cm} -\log(j!)-\log(k-j+1) + o(1).
  \end{align*}
Now \eqref{eq:lmn}, together with the fact that $p_k \le 1$, implies
that
\begin{equation*}
  \log\left(\EE(\varNjk)\right) =
  \logp_k \log n + \sublogp_k + \constp_k + o(1),
\end{equation*}
as required.

If $k=j$, substituting \eqref{eq:qtim_computing} 
into \eqref{eq:expXjj} we have
\begin{equation*}
\EE(\varisol) = (1+o(1)) 
\frac{n^{j+1} \min\{p_j,1\} }{(j+1)!} 
\exp\left(-\sum\nolimits_{i=j+1}^d \left(\alpha_i\log n + \frac{\beta_i}{n^{\gamma_i}} \right)\right).
\end{equation*}
Therefore we obtain
\begin{align}\label{eq:logexpXjj}
\log(\EE(\varisol)) = \left(j+1-\sum\nolimits_{i=j+1}^d \alpha_i\right)\log n & + \sum\nolimits_{i=j+1}^d \frac{\beta_i}{n^{\gamma_i}} - \log((j+1)!)
\nonumber \\
&
+ \log \left( \min\{p_j,1\} \right) + o(1).
\end{align}

We first consider the case when $p_j\le 1$, in which case by
Definition~\ref{def:generalparameters} we have $\alpha_j=0$, and so
$$
\log \left( \min\{p_j,1\} \right) = \log p_j = \log
\left( \frac{\beta_j}{n^{\gamma_j}} \right) 
= -\gamma_j \log n + \log \beta_j.
$$
Substituting this into~\eqref{eq:logexpXjj}, we obtain
\begin{align*}
\log(\EE(\varisol)) & = \left(j+1-\gamma_j-\sum\nolimits_{i=j+1}^d \alpha_i\right)\log n + \sum\nolimits_{i=j+1}^d \frac{\beta_i}{n^{\gamma_i}} +\log \beta_j  - \log((j+1)!) + o(1)\\
& = \logp_j \log n + \sublogp_j + \constp_j + o(1)
\end{align*}
as required.

On the other hand, if $p_j>1$, then we must have $\gamma_j=0$. Furthermore,
$$
\log \left( \min \{p_j,1\} \right)= \log 1 =0,
$$
and therefore~\eqref{eq:logexpXjj} gives
\begin{align*}
\log(\EE(\varisol)) & = \left(j+1-\sum\nolimits_{i=j+1}^d \alpha_i\right)\log n + \sum\nolimits_{i=j+1}^d \frac{\beta_i}{n^{\gamma_i}} - \log((j+1)!) + o(1)\\
& = \logp_j \log n + \sublogp_j + \constp_j + o(1)
\end{align*}
since we are in the case when $k=j$ and $p_j>1$.
\qed

\smallskip
\subsection{Proof of Lemma~\ref{lem:firstMjk}}
\label{sec:prooffirstMjk}
  Let $\hat\cT$ be the set of all 4-tuples $(K,C,w,a)$ that might form a copy of $\Mjkzerohat$ in
  $\cG_\tim$ (i.e.\ all sizes and containment relations are
  correct, but we make no assumptions about which simplices are present
  or absent), and let $\hat T = (K,C,w,a)\in \hat\cT$.
  Property~\ref{Mjkhat:simplex} holds with probability 
\begin{equation}\label{eq:pk0}
p_{k_0}=\frac{\eps}{n}\bar p_{k_0}
\stackrel{\eqref{eq:parameters}}{=}
\Theta \left( \frac{\log n}{n^{k_0-j+1}}\right),
\end{equation}  
 since the choice of $k_0$ is such that $\bar \alpha_{k_0}\neq 0$
  (see Definition~\ref{def:indices}~\ref{def:indices:kzero}).
 By Proposition~\ref{prop:localisedprob} the probability that
  \ref{Mjkhat:flower} holds is $(1+o(1))
  \bq^{\;\tim (k-j+1)}$, where  since $\tim = \eps/n$ we have
  \begin{equation*} 
    \bq^{\;\tim} \stackrel{\eqref{eq:q_computing}}{=}
   (1+o(1)) \exp \left( - \frac{\eps}{n}
   \cdot \sum\nolimits_{k=j+1}^d
   \left( \bar\alpha_k \log n +
   \frac{\bar \beta_k}{n^{\bar\gamma_k}} \right)
   \right) = 1+o(1),
  \end{equation*}
and therefore~\ref{Mjkhat:flower}
holds---independently of whether \ref{Mjkhat:simplex} holds---with
probability $1+o(1)$.

  In order to calculate the probability that~\ref{Mjkhat:shell} also holds, first observe that
  if~\ref{Mjkhat:flower} holds, then no simplex can contain more than
  one side of the (potential) $j$-shell $C\cup\{w\}\cup\{a\}$. Thus,
  conditioned on the event that~\ref{Mjkhat:simplex} and~\ref{Mjkhat:flower} hold,
  each of the $j+1$ sides of $C\cup\{w\}\cup\{a\}$ forms a $j$-simplex independently
  with probability
  \begin{equation*}
    1-\prod\nolimits_{k=j}^{d}(1-p_k)^{\binom{n-j-1}{k-j}+O(n^{k-j-1})}
    = (1+o(1))r,
  \end{equation*}
  where
  \begin{equation}\label{eq:r}
    r \ :=\  \sum\nolimits_{k=j}^{d}\frac{p_k n^{k-j}}{(k-j)!} =
    \Theta\left(\frac{\log n}{n}\right).
  \end{equation}
  Combining all the probabilities, we obtain
  \begin{equation}\label{eq:probMjkzerohat}
    \Pr(\hat T \text{ forms a copy of }\Mjkzerohat) = (1+o(1))p_{k_0}r^{j+1}.
  \end{equation}

  Recall that $\varMjkzerohat$ denotes the number of copies of $\Mjkzerohat$ in
  $\cG_{\tim}$. Now~\eqref{eq:probMjkzerohat} implies that
  \begin{align}\label{eq:expecMjkzerohat}
    \EE(\varMjkzerohat)
    & = (1+o(1))\binom{n}{k_0+1}\binom{k_0+1}{j}(k_0-j+1)(n-k_0-1)p_{k_0}r^{j+1} \nonumber \\
    & = (1+o(1))\frac{p_{k_0}r^{j+1}n^{k_0+2}}{j!(k_0-j)!}
    \stackrel{\eqref{eq:pk0},\eqref{eq:r}}{=} 
    \Theta((\log n)^{j+2}).
  \end{align}
  
  We now aim to calculate the second moment $\EE((\varMjkzerohat)^2)$.
  Given two $4$-tuples $\hat T_1=(K_1,C_1,w_1,a_1)$ and
  $\hat T_2=(K_2,C_2,w_2,a_2)$, we define
  \begin{itemize}
  \item $I = I(\hat T_1,\hat T_2) \ :=\ 
    (K_1 \cup \{a_1\}) \cap (K_2 \cup \{a_2\})$ and $i\ :=\ |I|$;
  \item $s = s(\hat T_1,\hat T_2) \ :=\ 
    \begin{cases}
      1 & \mbox{if }K_1 = K_2,\\
      2 & \mbox{otherwise};
    \end{cases}$
  \item $\cJ_x$ to be the set of all $(j+1)$-subsets of
    $\{C_x\cup\{a_x\}\cup\{w_x\}\}$ for $x=1,2$ and 
    \begin{equation*}
      t=t(\hat T_1,\hat T_2) \ :=\ 
      |(\cJ_1\cup\cJ_2)\setminus\{C_1\cup\{w_1\},C_2\cup\{w_2\}\}|,
    \end{equation*}
    i.e.\ the number of $(j+1)$-sets that are sides of the (potential)
    $j$-shells of $\hat T_1$ and $\hat T_2$, but not a base of either
    $j$-shell. 
  \end{itemize}
  If $s=2$ and the intersection of the two simplices contains a petal,
  then $\hat T_1$ and $\hat T_2$ cannot both form a copy of $\Mjkzerohat$,
  because~\ref{Mjkhat:flower} would be violated. In the following, we
  therefore assume that this is not the case.

  Clearly,~\ref{Mjkhat:simplex} holds for both $\hat T_1$ and
  $\hat T_2$ simultaneously with probability $(p_{k_0})^s$,
  while conditioned on~\ref{Mjkhat:simplex},
  by Proposition~\ref{prop:localisedprob}, the probability that~\ref{Mjkhat:flower} holds
  for both $\hat T_1$ and $\hat T_2$ simultaneously
  is (at least) $(1+o(1))\bq^{\;\tim 2(k_0-j+1)}=1+o(1)$. Conditioned
  on~\ref{Mjkhat:simplex} and~\ref{Mjkhat:flower} holding, observe
  that each of the $t$ sides of the (potential) $j$-shells lies in some
  $k$-simplex (and hence forms a $j$-simplex) with probability $r$.
  Moreover, no simplex in $\cG_{\tim}$ can contain more than two of
  those sides (at most one from each potential shell since otherwise
  it would contain a petal, which is ruled out by the conditioning on~\ref{Mjkhat:flower}).
  Furthermore, the
  probability of a side lying in any $k$-simplex that contains two
  distinct sides is
  \begin{equation*}
    1-\prod\nolimits_{k=j+1}^{d}(1-p_k)^{O(n^{k-j-1})} = O\left(\frac{\log n}{n^2}\right)
    \stackrel{\eqref{eq:r}}{=} o(r^2).
  \end{equation*}
  Therefore, the probability that all $t$ sides form $j$-simplices is
  $(1+o(1))r^t$ and thus
  \begin{equation}\label{eq:psqt}
    \Pr(\hat T_1,\hat T_2 \mbox{ both form copies of }\Mjkzerohat) = (1+o(1))(p_{k_0})^s r^t.
  \end{equation}

  Define $\hat \cT^2(i,s,t)$ to be the set of pairs
  $(\hat T_1,\hat T_2)
  \in \hat\cT \times \hat\cT$ with parameters $i,s$ and
  $t$. With this
  notation,~\eqref{eq:psqt} implies that
  \begin{equation*}
    \EE((\varMjkzerohat)^2) = (1+o(1))\sum_{(i,s,t)} \,
    \sum_{(\hat T_1,\hat T_2) \in \hat\cT^2(i,s,t)} (p_{k_0})^s r^t.
  \end{equation*}
  Observe that $|\hat\cT^2(i,s,t)| = O(n^{2k_0+4-i})$.We can now
  estimate the contributions of all the summands, distinguishing according to the
  possible values of $s$ and $i$. 

  \emph{Case 1:} s=1. This means that $K_1 = K_2$, and thus $k_0+1\le i\le k_0+2$.
  \begin{itemize}
  \item $i=k_0+1$. In this case $a_1 \neq a_2$ and thus the sets of
    sides of the two $j$-shells would be disjoint, i.e.\ $t=2j+2$.
    Therefore we get a contribution of order
    \begin{equation*}
      O\left(p_{k_0}r^{2j+2}n^{2k_0+4-(k_0+1)}\right)
      \stackrel{\eqref{eq:expecMjkzerohat}}{=}
      O\left(\frac{\EE(\varMjkzerohat)^2}{p_{k_0}n^{k_0+1}}\right)
      \stackrel{\eqref{eq:pk0}}{=} o\left((\EE(\varMjkzerohat)^2\right).
    \end{equation*}
    Indeed, in order to prove the final property of the lemma, that
    the associated copies of $\Mjkzero$ are distinct,
    we observe something even stronger:
    we have
    \begin{equation*}
      p_{k_0}r^{2j+2}n^{2k_0+4-(k_0+1)} = \Theta\left(\frac{(\log n)^{2j+3}}{n^j}\right)
      \stackrel{\eqref{eq:pk0},\eqref{eq:r}}{=} o(1).
    \end{equation*}
    Thus by Markov's inequality, whp there are no two copies of $\Mjkzerohat$ that share the same
    $k_0$-simplex but have distinct apex vertices.
  \item $i=k_0+2$. The two $j$-shells have the same apex vertex and thus
    the $j$-shells coincide if and only if they have the same base.
    This means that $t\geq j+1$, which gives a contribution of order
    \begin{equation*}
      O(p_{k_0}r^{j+1}n^{2k_0+4-(k_0+2)}) \stackrel{\eqref{eq:expecMjkzerohat}}{=}
      O\left( \EE(\varMjkzerohat) \right) =
      o\left( \EE(\varMjkzerohat)^2 \right).
    \end{equation*}
  \end{itemize}

  \emph{Case 2:} s=2.
  \begin{itemize}
  \item $i=0$. We show that this case represents the dominant
    contribution to $\EE((\varMjkzerohat)^2)$. The two $j$-shells are
    disjoint, hence $t=2j+2$. Observe that we have
    \begin{equation*}
      \binom{n}{k_0+1}\binom{k_0+1}{j}(k_0-j+1)(n-k_0-1) = (1+o(1))\frac{n^{k_0+2}}{j!(k_0-j)!}
    \end{equation*}
    choices for $\hat T_1$. For any fixed $\hat T_1$, the number of choices
    for $\hat T_2$ that yield $i=0$ is
    \begin{equation*}
      \binom{n-k_0-2}{k_0+1}\binom{k_0+1}{j}(k_0-j+1)(n-2k_0-3) =
      (1+o(1))\frac{n^{k_0+2}}{j!(k_0-j)!}.
    \end{equation*}
    Thus, the contribution of all such pairs is
    \begin{equation*}
      (1+o(1))\frac{p_{k_0}^2
      r^{2j+2}n^{2k_0+4}}{(j!(k_0-j)!)^2}
      \stackrel{\eqref{eq:expecMjkzerohat}}{=} (1+o(1)) \EE(\varMjkzerohat)^2.
    \end{equation*}
  \item $1\le i\le j$. In this case $\hat T_1$ and $\hat T_2$ cannot share a
    $j$-simplex of their shells, i.e.\ $t=2j+2$. Therefore the
    contribution is
    \begin{equation*}
      O\left(p_{k_0}^2
      r^{2j+2}n^{2k_0+4-i}\right)
      \stackrel{\eqref{eq:expecMjkzerohat}}{=}
      O\left(\frac{\EE(\varMjkzerohat)^2}{n^{i}}\right) = o(\EE(\varMjkzerohat)^2).
    \end{equation*}
  \item $i=j+1$. Here, $\hat T_1$ and $\hat T_2$ can share at most one
    $j$-simplex of their shells, which means $t \geq 2j+1$ and we have
    a contribution of order
    \begin{equation*}
      O\left(p_{k_0}^2
      r^{2j+1}n^{2k_0+4-(j+1)}\right)
      \stackrel{\eqref{eq:expecMjkzerohat}}{=}
      O\left(\frac{\EE(\varMjkzerohat)^2}{r 
      n^{j+1}}\right)
      \stackrel{\eqref{eq:r}}{=} o(\EE(\varMjkzerohat)^2).
    \end{equation*}
  \item $j+2 \le i \le k_0+2$. In this case $t \ge j$, because $\hat T_1$
    and $\hat T_2$ may share their $j$-shells, meaning that at least $j+2$ many $j$-simplices
    must be present, but have different bases,
    i.e.\ up to two sides of the (potential) $j$-shells may be
    automatically present as $j$-simplices because of $K_1$ and $K_2$. Therefore the
    contribution is
    \begin{equation*}
      O\left(p_{k_0}^2
      r^jn^{2k_0+4-i}\right) \stackrel{\eqref{eq:expecMjkzerohat}}{=}
      O\left(\frac{\EE(\varMjkzerohat)^2}{r^{j+2}n^i}\right)
      \stackrel{\eqref{eq:r}}{=} o(\EE(\varMjkzerohat)^2).
    \end{equation*}
  \end{itemize}
  Summing over all cases shows that $\EE((\varMjkzerohat)^2) =
  (1+o(1))\EE(\varMjkzerohat)^2$, as desired. Thus, Chebyshev's inequality
  implies that $\varMjkzerohat = (1+o(1))\EE(\varMjkzerohat)$ whp.

  Finally, recall that
  in the case $s=1$, $i=k_0+1$, we observed that
  whp there are no two copies of $\Mjkzerohat$
  that contain a common $\Njkzero$, in which case all copies of
  $\Mjkzerohat$ must have distinct associated copies of $\Mjkzero$, as claimed.
\qed

\smallskip
\subsection{Proof of Proposition~\ref{prop:scaledparameters}}
\label{sec:proofofscaledparam}
Observe that for each $k \ge j$ we have
\begin{align*}
p_k = \frac{\alpha_k\log n + \beta_k}{n^{k-j+\gamma_k}}(k-j)!
& = \left(1 + \xi \right)\frac{\bar \alpha_k\log n + \bar \beta_k}{n^{k-j+\bar \gamma_k}} (k-j)!\\
& = \frac{\bar \alpha_k \log n + \left(1+\xi \right)
\bar \beta_k + \bar \alpha_k \xi \log n}{n^{k-j+\bar \gamma_k}}
(k-j)!,
\end{align*}
and therefore the first three statements follow directly.
Furthermore, since $\logp_k$ and $\constp_k$ are dependent only on $\alpha_k$ and $\gamma_k$, and not on $\beta_k$,
the fourth and sixth statements also follow.

For $k\ge j+1$, recall that $p_k \le 1$ by Remark~\ref{rem:pkprob} and therefore we have
\[ \sublogp_k  = -(k-j+1)\sum\nolimits_{i=j+1}^d \frac{\beta_i}{n^{\gamma_i}} + 
\begin{cases}
\log\log n  &\text{if }  \alpha_k\not=0,\\
\log(\beta_k) & \text{if }  \alpha_k=0.
\end{cases}\]
We have that 
\begin{align}
\sum_{i=j+1}^d \frac{\beta_i}{n^{\gamma_i}} 
& \stackrel{\phantom{\text{\ref{parameters:subpoly}-\ref{parameters:sublog}}}}{=}
 \sum_{i=j+1}^{d}\frac{\left(1+\xi\right)
\bar \beta_i+ \bar \alpha_i \xi \log n}{n^{\bar\gamma_i}}
\nonumber \\ &
\stackrel{\text{\quad \ref{parameters:zero} \quad}}{=} \sum_{i=j+1}^{d}  \frac{\bar \beta_i}{n^{\bar\gamma_i}} + 
\xi \cdot \left(\sum_{i=j+1}^{d} \bar\alpha_i \log n +
\sum_{i=j+1}^{d} \frac{\bar\beta_i}{n^{\bar\gamma_i}} \right)\nonumber \\
& \stackrel{\text{\ref{parameters:subpoly}-\ref{parameters:sublog}}}{=} 
\sum_{i=j+1}^{d}  \frac{\bar \beta_i}{n^{\bar\gamma_i}} + 
\xi \cdot \left(\sum_{i=j+1}^{d} \bar\alpha_i \log n +
\sum_{i:\bar\gamma_i \neq 0} \frac{\bar\beta_i}{n^{\bar\gamma_i}} + o(\log n)\right) \nonumber \\
& \stackrel{\phantom{\text{\ref{parameters:subpoly}-\ref{parameters:sublog}}}}{=}
 \sum_{i=j+1}^{d}  \frac{\bar \beta_i}{n^{\bar\gamma_i}} 
+ (1+o(1)) \xi \sum_{i=j+1}^{d} \bar\alpha_i \log n.
\label{eq:betaterm}
\end{align}
Observe that \eqref{eq:betaterm} does not depend on $k$.
For all $k$ with $\alpha_k=0$, note that $\sublogp_k$ contains the 
additional term
\[ \log(\beta_k) = \log((1+\xi)\bar\beta_k) = \log(\bar\beta_k) + O(\xi), \]
while for all $k$ with $\alpha_k \not=0$, we have the same additional
term $\log\log n$ in both $\sublogp_k$ and $\bar{\sublogp}_k$.
Thus in total we have
\[ \sublogp_k = \bar\sublogp_k - (1+o(1))
(k-j+1) \xi \sum\nolimits_{i=j+1}^d \bar \alpha_i \log n.\]

On the other hand, for $k=j$, observe that~\ref{parameters:zero} implies
that if $p_j\le 1$, then $\alpha_j=\bar\alpha_j = 0$. Furthermore,
\begin{equation}\label{eq:crossone}
\mbox{if } p_j\le 1 \le \bar p_j, \mbox{ then } \bar \alpha_j=\bar \gamma_j=0 \mbox{ and } \bar p_j=\bar \beta_j = 1+O(\xi).
\end{equation}
Therefore, we have
\begin{align*}
\sublogp_j & \stackrel{\phantom{\eqref{eq:betaterm},\eqref{eq:crossone}}}{=}
 -\sum\nolimits_{i=j+1}^d \frac{\beta_i}{n^{\gamma_i}} + 
\begin{cases}
0 & \text{if }  p_j>1,\\
\log(\beta_j) & \text{if }  p_j\le 1
\end{cases}\\
& \stackrel{\phantom{\eqref{eq:betaterm},\eqref{eq:crossone}}}{=}
 - \sum\nolimits_{i=j+1}^{d}\frac{\left(1+\xi\right)\bar \beta_i + \bar \alpha_i \xi \log n}{n^{\bar\gamma_i}} +
\begin{cases}
0 & \text{if }  p_j>1,\\
\log(\bar \beta_j) + \log (1+\xi) & \text{if }  p_j\le 1
\end{cases}\\
& \stackrel{\phantom{\eqref{eq:betaterm},\eqref{eq:crossone}}}{=} \bar \sublogp_j - \xi \sum\nolimits_{i=j+1}^d \frac{\bar \beta_i
 + \bar \alpha_i \log n }{n^{\bar \gamma_i}} + O(\xi)
+ \begin{cases}
\log(\bar \beta_j) & \text{if }  p_j\le 1 \le \bar p_j,\\
0 & \text{otherwise}
\end{cases}\\
& \stackrel{\eqref{eq:betaterm},\eqref{eq:crossone}}{=}  \bar \sublogp_j - (1+o(1)) \
\xi \sum\nolimits_{i=j+1}^d 
 \bar \alpha_i \log n ,
\end{align*}
as required.
\qed

\smallskip
\subsection{Proof of Corollary~\ref{cor:expectatcritwind}}
\label{sec:proofexpectcritwind}
For any $j\le k\le d$, Proposition~\ref{prop:scaledparameters} applied 
with $\xi = \frac{c_n}{\log n}$ tells us that
\begin{align} \label{eq:paramcritwind}
\logp_k &= \bar\logp_k, &
\sublogp_k &= \bar\sublogp_k - (1+o(1)) (k-j+1)
c_n \sum\nolimits_{i=j+1}^d \bar\alpha_i, &
\constp_k &= \bar\constp_k.
\end{align}

If $k$ is not a critical dimension, by
Definition~\ref{def:criticaldirection} and
Lemma~{\ref{lem:expectedNjk}} 
we have
\[ \log \left(\EE(\bar X_{j,k}) \right) = \bar \logp_k \log n + \bar \sublogp_k + 
\bar\constp_k  + o(1) \to - \infty,\]
where $\bar X_{j,k}$ denotes the number of copies of $\Njk$ in $\cG(n,\bar\allp)$
(i.e.\ for $\tim=1$)
and thus $\EE(\bar X_{j,k})  = o(1)$.
Hence, by applying Lemma~\ref{lem:expectedNjk}  at time $\tim$ we have
\[ \EE(\varNjk) \stackrel{\eqref{eq:paramcritwind}}{=}
(1+o(1)) \EE(\bar X_{j,k}) = o(1), \]
as required.

If $k$ is a critical dimension,
we have $\bar\logp_k = 0$ and $\bar\sublogp_k = O(1)$ 
(see Definition~\ref{def:criticaldim}).
Thus by Lemma~\ref{lem:expectedNjk} we have
\begin{align*}
\EE(\varNjk) &\stackrel{\phantom{\eqref{eq:paramcritwind}}}{=}  \exp(\logp_k \log n + \sublogp_k +
\constp_k + o(1) )\\
& \stackrel{\eqref{eq:paramcritwind}}{=} (1+o(1)) \frac{\exp(\bar\sublogp_k +
\bar\constp_k)}{\exp\left((k-j+1)c_n \sum_{i=j+1}^d 
\bar\alpha_i\right)}\\
& \stackrel{\phantom{\eqref{eq:paramcritwind}}}{=} 
(1+o(1))\exp (\bar\sublogp_k + \bar\constp_k + c(\bar\gamma_k - j -1) ),
\end{align*}
where we are using 
that $0=\bar\logp_k = j+1 - \bar\gamma_k -
(k-j+1)\sum_{i=j+1}^d \bar\alpha_i$.
\qed

\smallskip
\subsection{Proof of Lemma~\ref{lem:manyNjk}}
\label{sec:proofmanyNjk}

We will prove the lemma with $c= \bar \alpha_{k_0}/4$, where
$k_0$ is as defined in Definition~\ref{def:indices}~\ref{def:indices:kzero}.

First observe that by Proposition~\ref{prop:scaledparameters} applied with
$\xi=-1/\omega_0$, for
any $j\le k\le d$ we have 
$\logp_k = \bar\logp_k$, $\constp_k = \bar\constp_k$, and
\begin{align*}
\sublogp_k  &= \bar \sublogp_k + (1+o(1))
(k-j+1) \frac{1}{\omega_0} \sum\nolimits_{i=j+1}^d \bar \alpha_i \log n\ge \bar \sublogp_k + \frac{\bar\alpha_{k_0} \log n}{
2\omega_0},
\end{align*}
where we are using that $k-j+1 \ge 1$.
Thus we have
$$
\logp_k\log n + \sublogp_k + \constp_k \ge \bar \logp_k\log n + \bar \sublogp_k + \bar \constp_k + \frac{\bar \alpha_{k_0}\log n}{2\omega_0},
$$
therefore Lemma~\ref{lem:expectedNjk} and the fact that $\bar \logp_k\log n + \bar \sublogp_k + \bar \constp_k =O(1)$ (because 
$k$ is a critical dimension) imply that
\begin{equation}\label{eq:boundexpectatomega0}
\EE(\varNjk) \ge \exp\left(\frac{\bar \alpha_{k_0}\log n}{2\omega_0} + O(1)\right)
\ge \exp\left(\frac{\bar \alpha_{k_0}\log n}{3\omega_0}\right),
\end{equation}
and thus in particular $\EE(\varNjk) = \omega(1)$.

In order to apply a second moment argument, we will show that 
\[\EE((\varNjk)^2) = (1+o(1))\EE(\varNjk)^2 ,\] 
implying that whp $\varNjk$ is concentrated around its expectation.
We first consider the case when $k\ge j+1$.

Let $\mathcal{T}_k$ denote the family of pairs $T=(K,C)$, where $K \subset [n]$ with $|K|=k+1$ and $C$ is a $j$-subset of $K$.
Each of these pairs may form a copy of $\Njk$ with $K$ as $k$-simplex and $C$ as
centre of the flower $\mathcal{F}(K,C)$.

Given two pairs $T_1=(K_1,C_1)$ and $T_2=(K_2,C_2)$, we define
\begin{itemize}

\item $s = s(T_1,T_2) \ :=\  \begin{cases}1 & \mbox{if } K_1 = K_2,\\
2 & \mbox{otherwise}; \end{cases}$

\item $\mathcal{F}_h \ :=\  \cF(K_h,C_h)$ for $h =1,2$;

\item $t=t(T_1,T_2) \ :=\  |\mathcal{F}_1 \cup \mathcal{F}_2|$, 
i.e.\ the total number of (potential) petals. 

\end{itemize}
By Proposition~\ref{prop:localisedprob}, the probability that two pairs in $\mathcal{T}_k$ both form a copy 
of $\Njk$ is $(1+o(1))p_k^s \bq^{\;\tim t}$. With this observation, we can
determine the contribution to $\EE((\varNjk)^2)$ made by those pairs with a fixed value of $s$.
\begin{itemize}
\item $s=1$. Petals can be shared, but certainly $t\geq k-j+1$ and the contribution is at most of order
\[ O\left(n^{k+1} p_k \bq^{\;\tim(k-j+1)}\right) \stackrel{\eqref{eq:expXjk}}{=} O(\EE(\varNjk))
  = o(\EE(\varNjk)^2).\]

\item $s=2$. By definition, a petal cannot lie in any other $k$-simplex
  and thus only the pairs with $t=2(k-j+1)$ have a positive probability
  of both forming a copy of $\Njk$. The number of such pairs is
  \begin{equation*}
    (1+o(1))\binom{n}{k+1}^2\binom{k+1}{j}^2
  \end{equation*}
  and thus these pairs provide a contribution of
  \begin{equation*}
    (1+o(1))\binom{n}{k+1}^2
    \binom{k+1}{j}^2
    p_k^2
    \bq^{\;\tim 2(k-j+1)}
    \stackrel{\eqref{eq:expXjk}}{=} (1+o(1))\EE(\varNjk)^2.
  \end{equation*}
\end{itemize} 
In total, we therefore have $\EE((\varNjk)^2) = (1+o(1))\EE(\varNjk)^2$.

We now consider the case $k=j$. The proof is similar but simpler, since for a pair
$(K,C)$ to form a copy of $\Njj$
we only require $K$ to be an isolated $j$-simplex,
and $C$ to be the canonical choice (see Definition~\ref{def:Njj}).
On the other hand, we need to be careful if $p_j>1$,
since then $p_j$ must be replaced by $1$ in any probability calculations.

Recall that since $j$ is a critical dimension we have that 
$\bar\logp_j\log n + \bar\sublogp_j + \bar\constp_j = O(1)$
(see Definition~\ref{def:criticaldim}).
For the second moment of $\varisol$, we count pairs of isolated $j$-simplices according to 
the size of their intersection $i$.
Applying Proposition~\ref{prop:localisedprob}, we obtain
\begin{align*}
\EE((\varisol)^2) & \stackrel{\phantom{\eqref{eq:expXjj}}}{=}\EE(\varisol) + \sum_{i=0}^j \binom{n}{j+1} \binom{j+1}{i}\binom{n-j-1}{j+1-i} \min\{p_j,1\}^2  \cdot (1+o(1))\bq^{\;2\tim}\\
& \stackrel{\eqref{eq:expXjj}}{\le} \EE(\varisol)^2 \left(1+o(1)+\sum_{i=1}^j O(n^{-i})\right)\\
&  \stackrel{\phantom{\eqref{eq:expXjj}}}{=} \EE(\varisol)^2 \left(1+o(1)\right).
\end{align*}

Thus in both cases we have
$\EE((\varNjk)^2) = (1+o(1))\EE(\varNjk)^2$ and so 
by Chebyshev's inequality  whp
\[\varNjk = (1+o(1))\EE(\varNjk)\stackrel{\eqref{eq:boundexpectatomega0}}{>} 
\exp\left(\frac{\bar \alpha_{k_0}\log n}{4\omega_0}\right),\]
as required.
\qed

\smallskip
\subsection{Proof of Claim~\ref{claim:firstbirthtime}}
\label{sec:proofoffirstbirthtime}
We split the proof into two cases,
according to which of $\bar p_k$ and
$\bar p_{k_0}$
is larger. 
In both cases, we will use the
fact that, since
$k_0 \ge j+1$, 
by Remark~\ref{rem:pkprob} we have 
$\frac{1}{\bar p_{k_0}} = \omega(1)$, and thus
\begin{equation} \label{eq:approx1/pk0}
\frac{1}{\bar p_{k_0}} - \tim^{-} = (1+o(1)) \frac{1}{\bar p_{k_0}}.
\end{equation}

\vspace{0.3cm}

\noindent \emph{Case 1:} $\frac{1}{\bar{p}_k} \ge \frac{1}{\bar{p}_{k_0}}$.
Let $S$ be the event that $\tim_K \le 
\frac{1}{\bar p_{k_0}}$.
Note that $L_3' \subseteq S$, hence
\begin{equation} \label{eq:L3'inS}
\Pr \big(L_3' \;|\; (L_1 \land L_2) \big)
= \Pr \big( S \;| \; (L_1 \land L_2) \big)
\Pr \big( L_3' \; | \; ( L_1 \land L_2 \land 
S) \big).
\end{equation}

Recall that conditioned on $L_1$, the birth time $\tim_K$ is uniformly distributed in $[\tim^-, \frac{1}{\bar p_k}]$. 
Therefore, since $S$ is independent of $L_2$, we have
\begin{equation}\label{eq:SgivenL1L2}
\Pr \big( S \;| \; (L_1 \land L_2) \big)
= \Pr ( S \;| \;  L_1)
= \frac{1/\bar{p}_{k_0} - \tim^-}{1/\bar{p}_{k} - \tim^-}
\stackrel{\eqref{eq:approx1/pk0}}{=}
(1+o(1)) \frac{\bar p_k}{\bar{p_{k_0}} (1 -
\tim^- \bar p_k)}.
\end{equation}
Moreover, conditioned on $S$, the set $K$ has the same
birth time distribution as the $z$ many bad 
$(k_0+1)$-sets in $\cZ$ and all these birth times are independent, thus
\begin{equation}\label{eq:L3'givenS}
\Pr \big( L_3' \; | \; ( L_1 \land L_2 \land 
S) \big) = \frac{1}{1+z} 
=
(1+o(1)) \frac{1}{z}\,,
\end{equation}
because $z=\omega(1)$ by~\eqref{eq:badk0+1sets} and the fact that
$p_{k_0} = o(1)$.

Putting \eqref{eq:L3'inS}, \eqref{eq:SgivenL1L2}, and \eqref{eq:L3'givenS}
together, we have
\begin{equation*}
\Pr \big(L_3' \;|\; (L_1 \land L_2) \big)
= (1+o(1)) \frac{\bar p_k}{z \bar p_{k_0}
(1 -  \tim^- \bar  p_k  )},
\end{equation*}
as claimed.

\vspace{0.3cm}

\noindent \emph{Case 2:} 
$\frac{1}{\bar{p}_k} < \frac{1}{\bar{p}_{k_0}}$.
First observe that if $k=j$, by \eqref{eq:condlocalobst} we have
$\bar p_j < 1 + \frac{\log \log n}{9d \log n}$, while if $k\ge j+1$, then
$\bar p_k <1$  by Remark~\ref{rem:pkprob}.
Thus by the definition of $\tim^-$,
for any $k\ge j$ we have
\begin{equation}\label{eq:boundon1-timpk}
1 - \tim^- \bar p_k > \frac{\log \log n}{12 d \log n}.
\end{equation}

Let $\cZ_{\bar{p}_k}$ be
the set of bad $({k_0}+1)$-sets 
in $\cZ$ with birth times in the interval $[\tim^-,\frac{1}{\bar{p}_k}]$
and let $\zeta_k\ :=\  |\cZ_{\bar{p}_k}|$.
Since the birth times of the sets in $\cZ$
are uniformly distributed in 
$[\tim^-, \frac{1}{\bar p_{k_0}}]$,
the random variable
$\zeta_k$ has binomial distribution
$\text{Bi}\left(z, \frac{1/\bar{p}_k - \tim^-}{1/\bar{p}_{k_0} - \tim^-} \right)$
and observe that
\begin{equation}\label{eq:expzetak_1}
\EE(\zeta_k) = z\cdot \frac{1/\bar{p}_k - \tim^-}{1/\bar{p}_{k_0} - \tim^-}
 \stackrel{\eqref{eq:approx1/pk0}}{=}(1+o(1))\frac{z\bar p_{k_0}}{\bar p_k} 
 (1 - \tim^- \bar p_k).
\end{equation} 
Since $z \bar p_{k_0} = \Theta(\log n)$
by \eqref{eq:badk0+1sets}
and $\bar p_k < 1 + \frac{\log \log n}{9d \log n}$, we obtain
\begin{equation*}
\EE(\zeta_k) 
\stackrel{\eqref{eq:boundon1-timpk}}{=}
\Omega \left( \log n  \cdot \frac{\log \log n}{12d \log n}  \right)= \Omega
 \left(\log \log n\right) \to 
 \infty.
\end{equation*}

By the Chernoff bound,
the probability that $\zeta_k$ is not within a multiplicative factor
$1\pm \frac{1}{\EE(\zeta_k)^{1/4}}$
of the mean is 
$\exp(-\Omega(\EE(\zeta_k)^{1/2}))$.

Furthermore, conditioned on the value of 
$\zeta_k$ (and the events $L_1$, $L_2$), 
the probability
of $L_3'$ 
is $\frac{1}{1+\zeta_k}$, 
because the birth times of $K$ and 
of the bad sets in $\cZ_{\bar{p}_k}$ all have
the same (conditional) distribution.
Thus, since $\EE(\zeta_k) \to \infty$,
we have
\begin{align*}
\Pr\big(L_3' \; | \; (L_1 \land L_2) \big) 
&\stackrel{\phantom{\eqref{eq:expzetak_1}}}{=} \frac{1+o(1)}{1 + \left(1 \pm \frac{1}{
\EE(\zeta_k)^{1/4}}\right) \EE(\zeta_k)}
+ 
\exp(-\Omega(\EE(\zeta_k)^{1/2}))
\\
&\stackrel{\phantom{\eqref{eq:expzetak_1}}}{=} \frac{1+o(1)}{\EE(\zeta_k)}\\
& \stackrel{\eqref{eq:expzetak_1}}{=}
(1+o(1)) \frac{\bar{p}_k}{z\bar{p}_{k_0}
(1 - \tim^- \bar p_k) },
\end{align*} 
as claimed.
\qed

\smallskip
\subsection{Proof of Claim~\ref{largesupp:numberpairs}}
\label{sec:proofclaim:numberpairs}
Consider the $j$-simplex in step $i$ of the exploration process
described in Remark~\ref{rem:exploration}: there are
at most $\binom{n}{k-j}$ many $(k+1)$-sets which we could potentially discover from this
$j$-simplex, and of these we must choose $b_{i,k}$. 
From each of the chosen $k$-simplices,
we find at most $\binom{k+1}{j+1}-1<\binom{k+1}{j+1}$
undiscovered $j$-simplices of $\cS$. 
This holds for every $k \in \{j+1,\ldots,d\}$, thus this can happen in at most
$\prod_{k=j+1}^{d} \binom{\binom{n}{k-j}}{b_{i,k}} 2^{\binom{k+1}{j+1}b_{i,k}}$ different ways. 
Hence, considering the choices for the initial $j$-simplex,
the number of pairs $(\cS,\cT(\cS))$ with $\cS$ traversable and with exploration matrix $B$
is bounded from above by
\[ \binom{n}{j+1} \prod\nolimits_{i,k} \binom{\binom{n}{k-j}}{b_{i,k}} 2^{\binom{k+1}{j+1}b_{i,k}} \le n^{j+1} \frac{\prod_k \left( \binom{n}{k-j} 2^{\binom{k+1}{j+1}}\right)^{t_k}}{\prod_{i,k} b_{i,k}!},\]
using that $\sum_{i=1}^s b_{i,k} = t_k$, for each $k \in \{j+1,\ldots,d\}$.\qed

\smallskip
\subsection{Proof of Claim~\ref{largesupp:meshwal}}
\label{sec:proofclaim:meshwal}
The expected number of $j$-sets that do not form a $(j-1)$-simplex in $\cG_\tim$ is bounded from above by
\begin{align*}
\binom{n}{j} (1 - p_{k_0})^{\binom{n-j}{k_0-j+1}}
& \le \binom{n}{j} (1 - \tim_0 \bar p_{k_0})^{\binom{n-j}{k_0-j+1}} \\
& \le (1+o(1)) n^j
\exp\left( - \tim_0 n \cdot \frac{\bar\alpha_{k_0}\log n + \bar\beta_{k_0}}{k_0-j+1} \right) = o(1),
\end{align*}
thus by Markov's inequality whp $\cG_\tim$ has a complete $(j-1)$-dimensional skeleton.\qed

\smallskip
\subsection{Proof of Claim~\ref{largesupp:badsets}}
\label{sec:proofclaim:badsets}
Recall that we condition on the high probability event in
Claim~\ref{largesupp:meshwal} and that this implies that
Proposition~\ref{prop:meshwal} can be applied to $f_\tim$ (if it
exists). Suppose that $f_\tim$ exists and write $s := |\cS_\tim|$.
Then $\cD(f_\tim)$ comprises at least $\frac{sn}{j+2}$ many
$(j+2)$-sets by Proposition~\ref{prop:meshwal}. Each $A\in\cD(f_\tim)$ is
not allowed to be part of $k$-simplices of $\cG_\tim$, for every $k
\in \{j+1,\ldots,d\}$. There are $\binom{n-j-2}{k-j-1}$ many
$(k+1)$-sets in $[n]$ that contain $A$, each of which contains
$\binom{k+1}{j+2}$ many $(j+2)$-sets. Thus we have
\begin{equation*}
  |\cD_k(f_\tim)|
  \ge \frac{sn \binom{n-j-2}{k-j-1}}{(j+2) \binom{k+1}{j+2}}
  \ge \largecon_0 s n^{k-j}
\end{equation*}
for some positive constant $\largecon_0$ and for every $k=j+1,\ldots,d$.\qed

\smallskip
\subsection{Proof of Claim~\ref{largesupp:rB}}
\label{sec:proofclaim:rB}
Let a pair $(\cS,\cT(\cS))$ with $\allt(\cS)=(t_{j+1},\ldots,t_d)$ be
given and recall that $s \ge \sum_{k=j+1}^{d} t_k =: Y_{\allt}$ by
\eqref{eq:simpltrav}. By Claim~\ref{largesupp:badsets}, the
probability that $\cS_\tim=\cS$ is at most
\begin{equation}\label{eq:xtim}
  \prod\nolimits_{k=j+1}^d  p_k^{t_k} (1-p_k)^{\largecon_0 n^{k-j} s} \le \tim^{Y_{\allt}} \prod\nolimits_{k=j+1}^d \bar{p}_k^{t_k} (1-\tim \bar{p}_k)^{\largecon_0 n^{k-j} Y_{\allt}} =: x(\tim).
\end{equation}
The function $x(\tim)$ is positive and its derivative (with respect to
$\tim$) is
\[ \frac{dx}{d\tim} = \frac{x(\tim) Y_{\allt}}{\tim} \left(1 - \tim \largecon_0  \sum\nolimits_{k=j+1}^d \frac{n^{k-j} \bar{p}_k}{1 - \tim \bar{p}_k} \right). \]
Recalling that the index $k_0$ is such that $\bar\alpha_{k_0} \neq 0$ and $\bar\gamma_{k_0} = 0$, we deduce that
\[ \frac{dx}{d\tim} \le \frac{x(\tim) Y_{\allt}}{\tim} \left( 1 - \tim \largecon_0 \frac{\bar\alpha_{k_0} \log n + \bar\beta_{k_0}}{1 - \tim \bar{p}_{k_0}} \right)<0\]
for $\tim =\omega(1/\log n)$. 
Thus, since the derivative of $x(\tim)$ is negative throughout the whole range $\tim\ge \tim_0=1-o(1)$,
we have $x(\tim) \le x(\tim_0)$ for all $\tim \ge \tim_0$, and therefore
in the following calculations we may substitute $\tim_0 $ for $\tim$.

Now Claim~\ref{largesupp:numberpairs} implies that
\begin{align*}
  r_B \prod\nolimits_{i,k} b_{i,k}!
  &\stackrel{\phantom{\eqref{eq:xtim}}}{\le} x(\tim_0) n^{j+1} \prod\nolimits_{k=j+1}^d \left( \binom{n}{k-j} 2^{\binom{k+1}{j+1}} \right)^{t_k}\\
  &\stackrel{\eqref{eq:xtim}}{\le} n^{j+1} \prod\nolimits_{k=j+1}^d \left( \binom{n}{k-j} 2^{\binom{k+1}{j+1}} \tim_0 \bar{p}_k\right)^{t_k} \left( 1 - \tim_0 \bar{p}_k \right)^{\largecon_0 n^{k-j} Y_{\allt}}\\
  &\stackrel{\,\eqref{eq:parameters}\,}{\le} n^{j+1} \prod\nolimits_k \Bigg( \left( \Theta(1) \frac{\bar\alpha_k \log n + \bar\beta_k}{n^{\bar\gamma_k}} \right)^{t_k}  \\
  & \hspace{3cm} \cdot \exp \left(-(1+o(1)) \largecon_0 Y_{\allt} \left( \frac{\bar\alpha_k \log n + \bar\beta_k}{n^{\bar\gamma_k}} (k-j)! \right) \right) \Bigg)\\
  &\stackrel{\phantom{\eqref{eq:xtim}}}{\le} n^{j+1} \left( O(\log n)\right)^{Y_\allt} n^{-(1+o(1)) \largecon_0 \bar\alpha_{k_0}(k_0-j)! Y_\allt} \\
  &\stackrel{\phantom{\eqref{eq:xtim}}}{\le} n^{j+1} n^{-\hat\largecon_0 Y_{\allt}},
\end{align*}
where $\hat\largecon_0\ :=\ \frac{\largecon_0\bar \alpha_{k_0}(k_0-j)!}{2}$.
Now suppose that $s \ge \tilde{\largecon}_2 \ge \frac{2(j+1)\sum_{k}\binom{k+1}{j+1}}{\hat\largecon_0}$.
Since $Y_{\allt} \ge \frac{s}{\sum_k \binom{k+1}{j+1}}$, we can find another positive constant $\largecon_1$ such that
\begin{equation*}
  r_B \le \frac{n^{- \largecon_1 s}}{\prod_{i,k}b_{i,k}!},
\end{equation*}
as desired.\qed

\smallskip
\subsection{Proof of Claim~\ref{largesupp:singleh}}
\label{sec:proofclaim:singleh}
For any exploration matrix $B=(b_{i,k})$, define $\allu(B) = (u_{j+1}, \ldots, u_d)$, where
\[ u_k \ :=\  | \{ i :  b_{i,k} \ge n^{\largecon_1/(d-j+1)}\}|.\]
Conversely, given $\allu=(u_{j+1},\ldots,u_d)$ with $u_k \ge 0$, let
$\cB_{\allu}$ be 
the set of all matrices $B$ such that $\allu(B) = \allu$.
Observe that each $B\in \cB_\allu$ is an $(s\times (d-j))$-matrix. There
are $\prod_{k}\binom{s}{u_k}$ choices for which entries are large
(i.e.\ which contribute to $u_k$),
at most $n^{\largecon_1/(d-j+1)}$ possibilities for each of the small entries and,
since the sum of all the entries is $\sum_k t_k \le s$, at most $s$ possibilities
for each of the large entries.
Thus we obtain the (rather crude) upper bound
\begin{align}
|\mathcal{B}_{\allu}| & \le \left(\prod\nolimits_k \binom{s}{u_k}\right) \left( n^{\largecon_1/(d-j+1)}\right)^{s(d-j) - \sum_k u_k} s^{\sum_k u_k}\nonumber\\
&\le s^{2\sum_k u_k} \left( n^{\largecon_1/(d-j+1)}\right)^{s(d-j) - \sum_k u_k}.\label{eq:Bu}
\end{align}
Moreover, for $B \in \mathcal{B}_{\allu}$ 
\begin{equation}\label{eq:prodbik}
\prod\nolimits_{i,k} b_{i,k}! \ge \left( \left( n^{\frac{\largecon_1}{d-j+1}}\right)!\right)^{\sum_k u_k} \ge n^{n^{\frac{\largecon_1}{d-j+2}} \sum_k u_k}.
\end{equation}
Putting everything together, the probability $r_s$ that $\cS_\tim$ of fixed size $s\ge \tilde{\largecon}_1$ exists (together with the collection $\cT(\cS_\tim)$ of simplices) satisfies
\begin{align*}
  r_s &\stackrel{\phantom{\eqref{eq:Bu},\eqref{eq:prodbik}}}{\le} \sum\nolimits_{\allu} \sum\nolimits_{B \in \mathcal{B}_{\allu}} r_B
  \stackrel{\mbox{\footnotesize (Cl.~\ref{largesupp:rB})}}{\le} \sum\nolimits_{\allu} |\mathcal{B}_{\allu}| \frac{n^{-\largecon_1 s}}{\prod_{i,k} b_{i,k}!}\\
  &\stackrel{\eqref{eq:Bu},\eqref{eq:prodbik}}{\le} \sum\nolimits_{\allu} \left[\prod\nolimits_k \left( \frac{s^2}{n^{\frac{\largecon_1}{d-j+1}} n^{n^{\frac{\largecon_1}{d-j+2}}}}\right)^{u_k}\right] \frac{n^{\frac{\largecon_1}{d-j+1}s(d-j)}}{n^{\largecon_1 s}}\\
  &\stackrel{\phantom{\eqref{eq:Bu},\eqref{eq:prodbik}}}{\le} \sum\nolimits_{\allu} 1 \cdot n^{-\frac{\largecon_1}{d-j+1}s}
  \le (s+1)^{d-j}\cdot n^{-\frac{\largecon_1}{d-j+1}s} \le n^{-\largecon_2 s},
\end{align*}
for some positive constant $\largecon_2$.

Thus, the probability that $\cS_{\tim}$ exists with $|\cS_{\tim}|\ge
\tilde{\largecon} \ge \tilde{\largecon}_1$ is at most
\[\sum\nolimits_{s\ge\tilde{\largecon}} r_s \le \sum\nolimits_{s\ge\tilde{\largecon}}
n^{-\largecon_2 s} \le \frac{n^{-\largecon_2 \tilde{\largecon}}}{1-n^{-\largecon_2 }} \le n^{-\largecon_2 \tilde{\largecon} / 2},\]
which concludes the proof of Claim~\ref{largesupp:singleh}.\qed

\newpage

\section{Further standard proofs}\label{sec:proofaux2}

In this appendix we provide for completeness some further proofs which are simply applications of standard ideas and techniques,
or which are obvious generalisations of previously existing proofs.

\subsection{Proof of Lemma~\ref{lem:topconn}}
\label{sec:prooftopconn}
\ref{topconn:no}
Since topological connectedness is a monotone property, it is enough to prove the statement
in the case when $p_k=\frac{c^-\log n}{n^k}$ for all $k\in [d]$, which will be convenient in the proof.

Let $U$ be the number of isolated vertices in $\cG(n,\allp)$. 
Every vertex is contained in $\binom{n-1}{k}$ many $(k+1)$-sets, each of which does not form an $i$-simplex with probability $(1-p_k)$, all independently. 
Hence each vertex is isolated with probability $\prod_{k=1}^{d}(1-p_k)^{\binom{n-1}{k}}$, and therefore we have
\begin{align*}
\EE(U) = n \prod\nolimits_{k=1}^{d}(1-p_k)^{\binom{n-1}{k}} 
&\geq n \cdot \exp \left( - \sum\nolimits_{k=1}^{d} \frac{n^k}{k!} \left(p_k+ O(p_k^2)\right) \right) \\
&\geq n \cdot \exp\left( - \sum\nolimits_{k=1}^{d} \frac{n^k}{k!} \cdot \frac{c^- \log n}{n^k} + o(1) \right)\\
& = (1+o(1)) n^{1-\tilde dc^{-}},
\end{align*}
where $\tilde d\ :=\  \sum_{k=1}^d 1/k!$.
Moreover, the probability that two fixed distinct vertices are both isolated is
\begin{align*}
\prod\nolimits_{k=1}^d (1-p_k)^{2 \binom{n-1}{k} - \binom{n-2}{k-1}} &\leq \exp \left(-\sum\nolimits_{k=1}^d p_k \binom{n-1}{k} \left( 2-\frac{k}{n-1} \right)\right) \\
&\leq\exp \left( -2\sum\nolimits_{k=1}^d \frac{c^-\log n}{k!} \left(1+O\left(\frac{1}{n}\right)\right)\right)\\
&\leq (1+o(1)) n^{-2\tilde dc^-}.
\end{align*}
By choosing $c^-$ such that $c^-<1/\tilde d$ (so in particular $\EE(U) \to \infty$), we obtain 
\begin{align*}
 \EE(U^2) &\leq \EE(U) + n(n-1) (1+o(1)) n^{-2\tilde dc^-} \\
 &=\EE(U) + (1+o(1)) n^{2(1-\tilde dc^-)} = (1+o(1))\EE(U)^2,
\end{align*} so by Chebyshev's inequality whp there are isolated vertices, implying that whp $\cG(n,\allp)$ is not topologically connected.\newline
\ref{topconn:yes}  Consider $\tilde\allp$ obtained from $\allp$ by replacing all probabilities except $p_k$ by zero,
where $k\in [d]$ is an index such that $p_k\ge \frac{c^+\log n}{n^k}$.
If $k=1$, then \ref{topconn:yes} follows from the corresponding results
for graphs. For $k \ge 2$, \ref{topconn:yes} holds because of the fact
that we can choose $c^+$ such that whp $\cG(n,\tilde\allp)$ is
topologically connected by \cite[Lemma~4.1]{CooleyDelGiudiceKangSpruessel20}.
\qed

\smallskip
We note that the proof idea of Lemma~\ref{lem:topconn} is a standard generalisation
of the very well-known hitting time result for graphs: whp the random graph process becomes
connected at exactly the moment its last isolated vertex disappears. Indeed,
Theorem~\ref{thm:main} is also a generalisation of this result, albeit a far more
complex one.

The vertex-connectedness threshold for uniform random hypergraphs, which we quoted from~\cite{CooleyDelGiudiceKangSpruessel20}
for the proof of \ref{topconn:yes}, also follows as a special case of earlier and
much stronger results from~\cite{CooleyKangKoch16}
and from~\cite{Poole15}. The proof in~\cite{CooleyDelGiudiceKangSpruessel20} has the advantage that it
is a simple and elementary extension of the standard graph argument.

\smallskip
\subsection{Proof of Proposition~\ref{prop:localisedprob}}
\label{sec:proofoflocalisedprob}
We first observe that for $j+1\le k \le d$, the number of $(k+1)$-sets which
contain at least two distinct $(j+1)$-sets of $\cJ$ is at most
$\binom{|\cJ|}{2}\binom{n}{k-j-1} = O(n^{k-j-1})$, and therefore
the number of $(k+1)$-sets that must \emph{not} be $k$-simplices in order for $A$ to hold
is
$$
|\cJ|\binom{n-j-1}{k-j} - O(n^{k-j-1}) - |\cS| = |\cJ|\binom{n-j-1}{k-j} - O(n^{k-j-1}).
$$
Thus, since $p_k n^{k-j-1}=o(1)$, we have
\begin{align*}
\Pr(A) &  \stackrel{\phantom{\eqref{eq:q_computing}}}{=} \prod\nolimits_{k=j+1}^d (1-p_k)^{|\cJ|\binom{n-j-1}{k-j} - O(n^{k-j-1})}\\
& \stackrel{\phantom{\eqref{eq:q_computing}}}{=} (1+o(1)) \prod\nolimits_{k=j+1}^d (1-\tim \bar p_k)^{|\cJ|\binom{n-j-1}{k-j}} \\
& \stackrel{\phantom{\eqref{eq:q_computing}}}{=} (1+o(1)) \exp \left( - |\cJ| \tim
 \sum\nolimits_{k=j+1}^{d}  
 \binom{n-j-1}{k-j} \bar p_k 
 \right)\\
& \stackrel{\eqref{eq:q}}{=} (1+o(1)) \bq^{\; \tim |\cJ|},
\end{align*}
as claimed.
\qed

\smallskip
\subsection{Proof of Proposition~\ref{prop:meshwal}}
\label{sec:proofmeshwal}
Given an ordered $m$-simplex $\Phi=[v_0,\ldots,v_m]$ and a vertex $u \notin \Phi$, define the ordered $(m+1)$-simplex
\[ [u,\Phi]\ :=\  [u,v_0,\ldots,v_{m}],\]
for any $m\in[n-2]_0$.

Let $v \in [n]$ and consider a $j$-cochain $f$ as in the statement. We define the $(j-1)$-cochain $f_v$ that maps every $(j-1)$-simplex $\rho$ to the value 
\[ f_v(\rho) = 
\begin{cases}
f([v,\rho]) & \mbox{if } v \notin \rho; \\
0_R & \mbox{otherwise}.
\end{cases}\]
For any ordered $j$-simplex $\sigma=[v_0,\ldots,v_j]$ we have
\begin{equation}\label{eq:meshwall1}
 f(\sigma) - (\delta^{j-1} f_v)(\sigma) = 
f(\sigma) - \sum\nolimits_{i=0}^j (-1)^i f_v \left([v_0,\ldots,\hat{v_i},\ldots,v_j]\right). 
\end{equation}

If $v \notin \sigma$, then
\begin{equation*}
f(\sigma) - (\delta^{j-1} f_v)(\sigma) \stackrel{\eqref{eq:meshwall1}}{=} f(\sigma) - \sum\nolimits_{i=0}^j (-1)^i f \left([v,v_0,\ldots,\hat{v_i},\ldots,v_j]\right) = (\delta^jf)([v,\sigma]),
\end{equation*}
by definition of the operator $\delta^{j}$.

If $v \in \sigma$ then $v=v_\ell$ for some $\ell \in [j]_0$, implying that $f_v([v_0,\ldots,\hat{v_i},\ldots,v_j])=0$ for every $i\neq \ell$ and $f_v([v_0,\ldots,\hat{v_\ell},\ldots,v_j])=(-1)^{\ell} f(\sigma)$. Thus
\[ f(\sigma) - (\delta^{j-1} f_v)(\sigma) \stackrel{\eqref{eq:meshwall1}}{=} f(\sigma) - (-1)^{2\ell} f(\sigma) = 0_R.\]

Putting everything together
\begin{equation}\label{eq:meshwall2}
f(\sigma) - (\delta^{j-1} f_v)(\sigma) = 
\begin{cases}
(\delta^j f)([v,\sigma]) & \mbox{if } v \notin \sigma; \\
0_R & \mbox{otherwise}.
\end{cases}
\end{equation}

Recalling that every $j$-cochain of the form $f+g$ with $g$ a $j$-coboundary
has support of size at least $|\cS|$, we have 
\begin{equation} \label{eq:meshwall3}
n|\cS| \le \sum_{v\in[n]}|\text{supp}(f - \delta^{j-1} f_v)|  = |\{(v,\sigma) : v\in [n],\text{ } \sigma \in \text{supp}(f - \delta^{j-1} f_v) \}|.
\end{equation}
For a pair $(v,\sigma)$, by \eqref{eq:meshwall2} it holds that $\sigma$ is in the support of $f - \delta^{j-1} f_v$ if and only if $v \notin \sigma$ and the $(j+1)$-simplex $[v,\sigma]$ is in the support of $(\delta^{j}f)(u\sigma)$. Hence
\begin{align*}
n |\cS| &\stackrel{\eqref{eq:meshwall3}}{\le} |\{(v,\rho) : v\in \rho, \text{ } \rho \in \text{supp}(\delta^j f) \}|\\
&\stackrel{\phantom{\eqref{eq:meshwall3}}}{=} (j+2) |\text{supp}(\delta^j f)| \\
&\stackrel{\phantom{\eqref{eq:meshwall3}}}{=} (j+2) |\cD(f)|,
\end{align*}
as required.
\qed

\smallskip
\subsection{Multivariate Poisson approximation}
\label{sec:multipoisson}
In order to prove Lemma~\ref{lem:Njkdistribution}, 
we will use a method from \cite{BarbourHolstJansonBook} that we briefly explain in this section.

Given a discrete set $H$, the \emph{total variation distance} between the distributions
of two $H$-valued random variables $Y$ and $Z$ is defined by
\[d_{TV} \left(\mathcal{L}(Y),\mathcal{L}(Z) \right)  \ :=\  
\frac{1}{2} \sum\nolimits_{h \in H} | \Pr(Y = h) - \Pr(Z = h) |.\]

\begin{lem}[{\cite[Theorem 10.J]{BarbourHolstJansonBook}}] \label{lem:multpoisson}
Given a set $\Gamma$ with a partition 
$\Gamma = \dot\cup_{k=1}^r \Gamma_k$ and a collection $(I_a)_{a \in \Gamma }$ of indicator random variables
defined on a common probability space, let
\begin{itemize}
\item $\pi_a \ :=\  \Pr(I_a = 1)$, for every $a \in \Gamma$;
\item $W_k \ :=\  \sum_{a \in \Gamma_k} I_a$, for $k \in [r]$;
\item $\mathbf{W} \ :=\  (W_1,\ldots,W_r)$;
\item $m_k \ :=\  \EE(W_k) = \sum_{a \in \Gamma_k} \pi_a$, for $k \in [r]$;
\item $\mathbf{m}\ :=\ (m_1,\ldots,m_r)$.
\end{itemize}
Suppose that for each $a \in \Gamma$
there exist random variables $(J_{b a})_{b \in \Gamma}$ 
defined on the same probability space as $(I_b)_{b \in \Gamma}$ with
\[ \mathcal{L} \left( (J_{b a})_{b \in \Gamma}\right) = \mathcal{L} \left( (I_b )_{b \in \Gamma} | I_a = 1\right) .\]
Then
\[ d_{TV} \Big( \mathcal{L} \big( \mathbf{W} \big) ,
\Po(\mathbf{m})  \Big) 
\le 
\sum\nolimits_{a \in \Gamma} \pi_a \left( \pi_a + 
\sum\nolimits_{b \neq a} \EE|J_{b a} - I_b | \right),   \]
where $\Po(\mathbf{m})$ denotes the joint Poisson
distribution $(\Po(m_1)\ldots,\Po(m_r))$ and $\Po(0)\equiv 0$.
\end{lem}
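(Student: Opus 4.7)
My plan is to use Stein's method in the guise developed by Barbour and coauthors, adapted to the multivariate Poisson distribution with (possibly different) mean components $m_1,\dots,m_r$.

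First, I would introduce the generator (Stein operator) associated with $\Po(\mathbf{m})$, namely
\begin{equation*}
  (\mathcal{A}h)(\mathbf{w}) \ :=\ \sum\nolimits_{k=1}^r \Big[ m_k\bigl(h(\mathbf{w}+e_k)-h(\mathbf{w})\bigr) + w_k\bigl(h(\mathbf{w}-e_k)-h(\mathbf{w})\bigr) \Big],
\end{equation*}
where $e_k$ is the $k$-th standard basis vector in $\mathbb{Z}^r_{\ge 0}$. For any subset $A\subseteq \mathbb{Z}^r_{\ge 0}$, standard Stein theory (treating the $r$ independent Poisson coordinates via a product chain, or iterating the univariate solution) produces a function $h=h_A$ solving
\begin{equation*}
  (\mathcal{A}h)(\mathbf{w}) \;=\; \mathbbm{1}_A(\mathbf{w}) - \Pr(\Po(\mathbf{m})\in A),
\end{equation*}
for which the coordinatewise differences $\Delta_k h(\mathbf{w}):=h(\mathbf{w}+e_k)-h(\mathbf{w})$ satisfy the uniform Stein bound $\|\Delta_k h\|_\infty\le 1$. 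Since $d_{TV}(\mathcal{L}(\mathbf{W}),\Po(\mathbf{m})) = \sup_A \EE[(\mathcal{A}h_A)(\mathbf{W})]$, the task reduces to estimating $\EE[(\mathcal{A}h)(\mathbf{W})]$ for a generic $h$ with $\|\Delta_k h\|_\infty\le 1$.

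Next, I would process the two kinds of terms in $\EE[(\mathcal{A}h)(\mathbf{W})]$ separately, for each $k\in[r]$. The $m_k$-term expands as $\sum_{a\in\Gamma_k}\pi_a\,\EE[\Delta_k h(\mathbf{W})]$, while the $W_k$-term is a sum of $I_a$-weighted contributions. The key manoeuvre is to rewrite the $W_k$-term using the coupling: by the defining property $\mathcal{L}((J_{ba})_b)=\mathcal{L}((I_b)_b\mid I_a=1)$, one has $\EE[I_a\, h(\mathbf{W})] = \pi_a\,\EE[h(\mathbf{J}^{(a)})]$, where $\mathbf{J}^{(a)}_k=\sum_{b\in\Gamma_k}J_{ba}$. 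Since $J_{aa}=1$ almost surely, substituting $\mathbf{J}^{(a)}=\mathbf{J}^{(a)}-e_k+e_k$ turns the $W_k$-term into $\sum_{a\in\Gamma_k}\pi_a\,\EE[h(\widetilde{\mathbf{J}}^{(a)}+e_k)]$, where $\widetilde{\mathbf{J}}^{(a)}:=\mathbf{J}^{(a)}-e_k$. Pairing this against the $m_k$-term, the sum over $k$ gives
\begin{equation*}
  \EE[(\mathcal{A}h)(\mathbf{W})] \;=\; \sum\nolimits_{k,\,a\in\Gamma_k}\pi_a\,\EE\bigl[\Delta_k h(\mathbf{W}) - \Delta_k h(\widetilde{\mathbf{J}}^{(a)})\bigr] \;+\; O(\text{diagonal}),
\end{equation*}
where the diagonal collects the contribution of $a$ to itself, bounded by $\sum_a \pi_a^2$.

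Finally, I would control the off-diagonal telescoping. Writing $\Delta_k h(\mathbf{W}) - \Delta_k h(\widetilde{\mathbf{J}}^{(a)})$ as a telescoping sum over coordinates $b\ne a$ in which $\mathbf{W}$ is transformed into $\widetilde{\mathbf{J}}^{(a)}$ step by step, each step contributes at most $|I_b-J_{ba}|\cdot \|\Delta\Delta h\|_\infty$, and by the monotonicity/coupling of the Stein solution the second-order differences are bounded by $1$. Summing over $k$, $a\in\Gamma_k$, and $b\ne a$ produces exactly
\begin{equation*}
  \sum\nolimits_{a\in\Gamma}\pi_a\Bigl(\pi_a+\sum\nolimits_{b\ne a}\EE|J_{ba}-I_b|\Bigr),
\end{equation*}
which is the claimed bound. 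The main technical obstacle is the third step: obtaining a uniform bound on the mixed second differences of $h_A$ in the multivariate setting, so that the telescoping argument goes through with constant $1$ independent of $\mathbf{m}$. The cleanest way to get this, following Barbour, Holst and Janson, is to express $h_A$ via the semigroup of the $r$-dimensional product immigration-death chain and differentiate under the semigroup representation, which automatically delivers the required Lipschitz-type bound on $\Delta_k h$ with constant $1$.
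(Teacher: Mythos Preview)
The paper does not prove this lemma at all: it is quoted verbatim as Theorem~10.J of Barbour, Holst and Janson and used as a black box in the proof of Lemma~\ref{lem:Njkdistribution}. So there is no ``paper's proof'' to compare against.

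Your sketch follows the Stein--Chen route that underlies the original proof in Barbour--Holst--Janson, and the overall architecture is right: set up the multivariate immigration--death generator $\mathcal{A}$, solve the Stein equation for indicator test functions, rewrite $\EE[W_k\,\Delta_k h(\mathbf{W}-e_k)]$ via the size-biased coupling $\mathcal{L}((J_{ba})_b)=\mathcal{L}((I_b)_b\mid I_a=1)$, and then telescope $\Delta_k h(\mathbf{W})-\Delta_k h(\widetilde{\mathbf{J}}^{(a)})$ coordinate by coordinate. Two small points worth tightening. First, the diagonal term $\sum_a\pi_a^2$ does not arise as a separate ``$O(\text{diagonal})$'' correction; it comes out naturally because $\widetilde{\mathbf{J}}^{(a)}$ differs from $\mathbf{W}$ also in coordinate $a$ (where $J_{aa}=1$ versus $I_a$), and $\EE|1-I_a|=1-\pi_a$ combines with the $\pi_a$ prefactor to give $\pi_a-\pi_a^2$; the stated bound keeps only the $\pi_a^2$ part after a trivial estimate. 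Second, your telescoping step needs a bound on the \emph{mixed second differences} $\Delta_\ell\Delta_k h_A$, not on $\Delta_k h_A$; you flag this correctly as the technical crux, and the semigroup representation you invoke is exactly how Barbour--Holst--Janson obtain the required uniform bound (with constant $1$, independent of $\mathbf{m}$). With those two clarifications your outline would constitute a faithful reconstruction of the cited proof.
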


It is easy to see that if there exists $\tilde{\mathbf{m}}\ :=\ (\tilde{m}_1,\ldots,
\tilde{m_r})$
 such that for every $k \in [r]$, 
 $\tilde{m}_k \in \mathbb{R}$ and 
$m_k=m_k(n) \xrightarrow{n \to \infty} \tilde{m}_k$, 
then
\[\mathcal{L} \big( \mathbf{W} \big) 
\xrightarrow{\enskip \dist \enskip}
\Po(\tilde{\mathbf{m}})  \]
if and only if
\[d_{TV} \left( \mathcal{L} \big( \mathbf{W} \big) ,
 \Po(\mathbf{m})  \right) \xrightarrow{n \to \infty} 0.\]

\smallskip
\subsection{Proof of Lemma~\ref{lem:Njkdistribution}}
\label{sec:proofofNjkdistr}

We will first show that we can apply Lemma~
\ref{lem:multpoisson} with $W_k = \varNjk$ and $m_k = 
\EE(\varNjk)$ for $k=j,\ldots,d$.
Subsequently, we show the bound on the total variation 
distance is indeed $o(1)$ and that $\lim_{n\to \infty}
\EE(\varNjk) = \critm_k$.

We want to define the set $\Gamma$ 
of potential copies of $\Njk$ 
in $\cG_\tim$ for each $j\le k\le d$.
As in the proof of Lemma~\ref{lem:manyNjk}, we consider
the sets
\begin{equation*}
  \cT_k = \left\{ (K,C) : K \in \binom{[n]}{k+1}, C \in \binom{K}{j} \right\},
\end{equation*}
for each $j+1\le k\le d$. Furthermore we define the set
$\cT_j$ analogously but with the additional condition  
that given a $(j+1)$-set $K$, then the set $C$ 
consists of the first $j$ vertices of $K$ according
to the increasing order of $[n]$ (cf.\
Definition~\ref{def:Njj}).
Following the notation of Lemma~\ref{lem:multpoisson},
we set $\Gamma \ :=\  \dot\cup_{k=j}^d \cT_k$ and we use 
$a=(K_a,C_a)$ to denote an element of $\Gamma$.

For any $a \in \cT_k \subseteq \Gamma$, we define the following quantities:
\begin{itemize}
\item $k_a \ :=\  k$;
\item $I_a$ is the indicator random variable of the event that $a$ forms a copy of $\Njk$;
\item $\pi_a \ :=\  \Pr(I_a = 1) = \EE(I_a)$;
\item $\mathcal{B}_a$ is the collection
 of \emph{forbidden sets} for $a$, i.e.\ 
\[\mathcal{B}_a = \{ B \subset [n] : |B|\le d+1 , B \not\subseteq K_a ,  B \supset P \mbox{ for some } P \in \mathcal{F}(K_a,C_a) \},\]
where
$\mathcal{F}(K_a,C_a) = 
\{C_a \cup \{w\} \mid w \in K_a \setminus C_a\}$ is 
the $j$-flower in $K_a$ with centre $C_a$ 
(see Definition~\ref{def:flower} 
and \eqref{eq:flowerinKwithcentreC}).
In other words, $\mathcal{B}_a$ is the collection 
of subsets of $[n]$ that are not allowed to be simplices 
in $\cG_\tim$ in order for $a$ to form a copy of $\Njk$
(cf.\ \ref{Njk:flower} in Definition~\ref{def:Njk}). 
\end{itemize}
Observe that if $a = (K_a,C_a) \in \cT_j$, then 
$\cF(K_a,C_a)=\{K_a\}$, therefore the set $\mathcal{B}_a$
consists of all subsets of $[n]$ (of cardinality
at most $d+1$) that contain $K_a$, except for $K_a$ itself.

Given a family $\mathcal{D}$ of sets of vertices,
we say that the indicator random variable of an event 
$E$ \emph{depends only on} 
$\mathcal{D}$ if $E$ only depends on whether the sets in 
$\mathcal{D}$ are simplices in $\cG_\tim$ or not.
Observe that by Definitions~\ref{def:Njk} and \ref{def:Njj} we can write
\[ I_a = \mathbbm{1}\; \big\{\{K_a \in \cG_\tim\}  \land 
\{B \not\in \cG_\tim, \forall B \in \mathcal{B}_a \}\big\},\]
therefore the random variable $I_a$ depends only on the family of sets 
\[\mathcal{D}_a \ :=\  \{ K_a\} \cup \mathcal{B}_a.\]

We now aim to define the random variables
$(J_{ba})_{b\in \Gamma}$ needed to apply
Lemma~\ref{lem:multpoisson}.
Given $a,b \in \Gamma$, we define the events
\begin{itemize}
\item $E^1_{ba} = \{ K_b \in \cG_\tim\} \lor \{ K_b = K_a\}$,
\item $E^2_{ba} = \{B \not\in \cG_\tim, \forall B \in \mathcal{B}_b \setminus \mathcal{B}_a \}$,
\item $E^3_{ba} = \{ K_b \not\in \mathcal{B}_a \} \land
\{ K_a \not\in \mathcal{B}_b \}$,
\end{itemize}
and the indicator random variable
\begin{equation}\label{eq:indicatorcoupling}
J_{ba} = \mathbbm{1} \; \{E^1_{ba} \land
E^2_{ba} \land E^3_{ba} \}.
\end{equation}
We claim that 
\begin{equation} \label{eq:conditioningonIalpha}
\mathcal{L} \left( (J_{b a})_{b \in \Gamma}\right) = \mathcal{L} \left( (J_{b a} )_{b \in \Gamma} | I_a = 1\right).
\end{equation}
To see this, let $\mathcal{D}_{ba}$ be the family of 
sets of vertices which $J_{ba}$ depends only on.
If $K_b \in \mathcal{B}_a$ or $K_a \in \mathcal{B}_b$, by~\eqref{eq:indicatorcoupling} 
and the definition of $E_{ba}^3$ we have
 $J_{ba}=0$ deterministically 
and we set $\mathcal{D}_{ba} \ :=\  \emptyset$. 
Otherwise, by~\eqref{eq:indicatorcoupling} we have
\[ \mathcal{D}_{ba} \ :=\  \left(\{ K_b\} \setminus \{K_a\}
 \right) \cup \left( \mathcal{B}_b \setminus 
 \mathcal{B}_a  \right) \]
and if $K_b = K_a$ then $K_a$ is not in 
$\mathcal{B}_b$ because the event
$E_{ba}^3$ holds, hence we have 
$\mathcal{D}_{b a} = \mathcal{D}_b \setminus 
\mathcal{D}_a$. In particular, this implies that
$\mathcal{D}_{ba}$ and $\mathcal{D}_{a}$ are 
always disjoint 
and this holds for every $b \in \Gamma$, 
thus the joint distribution of 
$(J_{b a})_{b \in \Gamma}$ does not change 
if we condition on $I_a = 1$, yielding 
\eqref{eq:conditioningonIalpha}.

We further claim that for every $b \in \Gamma$
\begin{equation} \label{eq:JiffI}
\big( (I_a=1) \land (J_{ba} = 1) \big) \quad \Longleftrightarrow
\quad \big( (I_a=1) \land (I_b = 1) \big).
\end{equation}
Suppose $I_a = J_{b a} = 1$. 
We have $K_b\in \cG_\tim$ by $E_{ba}^1$ and the fact that 
$K_a\in\cG_\tim$, since $I_a=1$.
Moreover, $I_a=1$
yields that none of the sets in $\mathcal{B}_a$ is in
$\cG_\tim$ and 
by definition of $E_{ba}^2$ also none of the sets in 
$\mathcal{B}_b \setminus \mathcal{B}_a$  
is in $\cG_\tim$, therefore in particular 
every set in $\mathcal{B}_b$ is not in $\cG_\tim$. 
Thus, by definition of
$I_b$ we have that $I_b=1$. 

Vice versa, suppose that $I_a=I_b=1$. 
By definition of $I_b$, clearly the events 
$E_{ba}^1$ and $E_{ba}^2$ hold. Moreover, $I_a$ and
$I_b$ can only both be equal to $1$ if $K_b$ is not
forbidden for $a$ and $K_a$ is not forbidden for
$b$, i.e.\ the event $E^3_{ba}$ must hold. Thus, 
it follows that $J_{b a}=1$. 
This proves \eqref{eq:JiffI}.

Hence, conditioned on $I_a=1$, for every $b\in \Gamma$
\eqref{eq:JiffI} yields that
$J_{ba}$ and $I_b$ are the same random variable,
and thus in particular
\begin{equation} \label{eq:J=1iffI=1}
 \mathcal{L} \big( (J_{b a} )_{b \in \Gamma} | I_a = 1\big)
=
 \mathcal{L} \big( (I_b )_{b \in \Gamma} | I_a = 1\big).
\end{equation}
In total, we have
\begin{equation} \label{eq:condtionpoissonapprox}
\mathcal{L} \big( (J_{b a})_{b \in \Gamma}\big) 
\stackrel{\eqref{eq:conditioningonIalpha}}{=}
 \mathcal{L} \big( (J_{b a} )_{b \in \Gamma} | I_a = 1\big)
 \stackrel{\eqref{eq:J=1iffI=1}}{=}
 \mathcal{L} \big( (I_b )_{b \in \Gamma} | I_a = 1\big).
\end{equation}
Since $\varNjk=\sum_{a \in \mathcal{T}_k} I_a$ for any $j\le k\le d$, we can therefore apply Lemma~\ref{lem:multpoisson}.
Setting $Z_k\ :=\ \Po(\EE(\varNjk))$ independently for each $k$, we obtain
\begin{equation} \label{eq:distancedistributions}
 d_{TV} \bigg( \mathcal{L} (\mathbf{X}) ,
\big(Z_j,\ldots,Z_d \big)   \bigg)
\le 
\sum_{a \in \Gamma} \pi_a \left( \pi_a + 
\sum_{b \neq a} \EE|J_{b a} - I_b | \right).
\end{equation}

We want to show that the right-hand side of
\eqref{eq:distancedistributions} is $o(1)$. 
Recall that for every $b \in \Gamma$ by \eqref{eq:q} and Proposition~\ref{prop:localisedprob}  we have 
\begin{equation} \label{eq:expIb}
\EE(I_b)= \pi_b = (1+o(1)) p_{k_b}\bq^{\;\tim
(k_b-j+1)},
\end{equation}
and therefore (cf.~\eqref{eq:expXjk})
\begin{equation} \label{eq:expsumIb}
 \EE(\varNjk) = \sum\nolimits_{b \in \cT_k } \EE(I_b) =
\Theta( n^{k+1} p_k \bq^{\;\tim(k-j+1)}) = O(1),
\end{equation}
where the last equality holds
because we are considering $\cG_\tim$ 
within the critical window.
Furthermore, by \eqref{eq:condtionpoissonapprox} we have
\begin{equation}\label{eq:expJba}
 \EE(J_{b a}) = \Pr(I_b = 1 | I_a = 1).
\end{equation}
We now fix $a \in \Gamma$ and estimate the sum 
$\sum_{b \neq a} \EE|J_{b a} - I_a |$, by distinguishing some cases.

\emph{Case 1:} $K_b = K_a$.
First observe that since $b \neq a$, 
this case can only be possible if $k_b=k_a \ge j+1$ and $C_b 
\neq C_a$. 
Moreover, conditioned on $I_a=1$,
i.e.\ $a$ forming a copy of $M_{j,k_a}$, 
there are $\binom{k_a +1}{j} - 1 = O(1)$ ways to choose 
$b$ such that $K_b=K_a$ and $C_b 
\neq C_a$. 
Furthermore, 
$b$ forms a copy of $M_{j,k_b}=M_{j,k_a}$
 with probability
 $O(\bq^{\;\tim(k_a - j)})$, because  $K_b=K_a$ already exists in $\cG_\tim$ as simplex (and so there is
  no $p_{k_b}=p_{k_a}$ term) and because the flower
 $\mathcal{F}(K_b,C_b)$ 
can share at most one petal with the flower 
$\mathcal{F}(K_a,C_a)$ (and so we lose at most one 
factor $\bq^{\;\tim}$). Thus if we set
\[B_1 = B_1(a)\ :=\ \{b\in\Gamma: b\neq a, K_b=K_a\}\]
we have
\begin{align*}
\sum\nolimits_{b \in B_1} \EE|J_{b a} - I_b| 
& \stackrel{\phantom{\eqref{eq:expIb}}}{\le} \sum\nolimits_{b \in B_1}\big( \EE(J_{b a}) + \EE(I_b) \big)\\
& \stackrel{\eqref{eq:expJba}}{=} 
\sum\nolimits_{b \in B_1}\big( \Pr(I_b = 1 |I_a=1) + \pi_b \big)\\
& \stackrel{\eqref{eq:expIb}}{=}
O\left( 1 \right) \cdot \big( O(\bq^{\;\tim
(k_a - j)}) 
+ O(p_{k_a} \bq^{\;\tim(k_a-j+1)}) \big)\\
&\stackrel{\phantom{\eqref{eq:expIb}}}{=} O(\bq^{\;\tim(k_a -j)}) = o(1),
\end{align*}
where the last equality follows from the facts that 
$\tim(k_a -j)> 1 + o(1)$ and $\bq=o(1)$ (cf.\ \eqref{eq:qissmall}).

\emph{Case 2:} $K_b \neq K_a$, but $K_b \in \mathcal{B}_a$ or $K_a \in \mathcal{B}_b$. 
This means that the event $E^3_{ba}$  does not happen, 
thus $J_{ba}=0$ deterministically by \eqref{eq:indicatorcoupling}.

\emph{Case 2.1:} $K_b \in \mathcal{B}_a$.
Given $K_a$, the set $K_b$ 
must contain at least $j+1$ vertices of $K_a$ 
in order to be forbidden for $K_a$,
because $K_b$ contains at least one petal
(i.e.\ $(j+1)$-set) of
the flower $\mathcal{F}(K_a,C_a)$.
Hence, there are 
$O(n^{k_b -j})$ possible choices for $b$, and thus
if we set
\[ B_{2.1} = B_{2.1}(a) \ :=\ \{b \in\Gamma: b\neq a, K_b\neq K_a,
K_b \in \mathcal{B}_a\},\]
we have
\begin{align*}
\sum\nolimits_{b\in B_{2.1}} \EE|J_{b a} - I_b| 
& \stackrel{\phantom{\eqref{eq:expIb}}}{\le}  \sum\nolimits_{b\in B_{2.1}}\EE(I_b) \\
& \stackrel{\eqref{eq:expIb}}{=} 
\sum\nolimits_{k=j}^d O\left(n^{k-j} p_k \bq^{\;\tim(k-j+1)}  \right) \\
& \stackrel{\eqref{eq:expsumIb}}{=} \sum\nolimits_{k=j}^d 
O\left( \frac{\EE(\varNjk)}{n^{j+1}}\right) =o(1).
\end{align*}

\emph{Case 2.2:} $K_a \in \mathcal{B}_b$. Set
\[ B_{2.2} = B_{2.2}(a) \ :=\  \{b \in\Gamma: b\neq a, K_b\neq K_a,
K_a \in \mathcal{B}_b\}.\]
By exchanging
the roles of $K_a$ and $K_b$ in Case 2.1, with the same argument
we have
\[\sum\nolimits_{b \in B_{2.2}} \EE|J_{b a} - I_b| = o(1). \]

\emph{Case 3:} $K_b \neq K_a$, $K_b \notin \mathcal{B}_a$, and $K_a \notin \mathcal{B}_b$. 
This case contains almost all the summands of 
$\sum_{b \neq a} \EE|J_{b a} - I_a |$, thus we need 
the main terms in the sum to cancel.
The event $E^3_{ba}$ holds,
yielding that if $I_b=1$ then also $J_{ba} = 1$, 
that is $J_{b a} \ge I_b$ deterministically 
and therefore 
\begin{equation} \label{eq:noabsolutevalue}
  \EE|J_{b a} - I_b| = \EE(J_{b
a}) - \EE(I_b).
\end{equation}
There are $(k_b - j +1)$
(potential) petals in $b$ each contained in 
$\binom{n-j-1}{k-j}$ many $(k+1)$-sets that must not form 
$k$-simplices in $\cG_\tim$ 
in order for $b$ to form a copy of $\Mjk$, for each 
$j+1\le k\le d$. 
However some of these forbidden $(k+1)$-sets 
might be double-counted because they contain
more than one petal in $b$, and additionally some of 
these forbidden $(k+1)$-sets
might be forbidden for both $a$ and $b$, and therefore
we already know that they are not simplices if we
condition on $I_a = 1$. 
In either case, any of these $(k+1)$-sets
contains at least two petals and 
so at least $j+2$ vertices are already fixed, thus there
are $O \left(\binom{n-(j+2)}{k+1-(j+2)}\right) = 
O(n^{k-j-1})$ many $(k+1)$-sets that we have to exclude
when counting the sets of size $k+1$
that are forbidden for $b$. In other words, 
the number of $(k+1)$-sets that must
not be simplices is 
$(k_b-j+1)\binom{n-j-1}{k-j} - O(n^{k-j-1})$, 
yielding 
\begin{align*}
\EE(J_{b a}) &\stackrel{\eqref{eq:expJba}}{=} 
\Pr(I_b = 1 | I_a = 1)  \\
&\stackrel{\phantom{\eqref{eq:expJba}}}{=} p_{k_b} \prod\nolimits_{k=j+1}^d (1-\tim \bar p_k)^{(k_b-j+1)
\binom{n-j-1}{k-j} - O(n^{k-j-1})}  \\
& \stackrel{\eqref{eq:q}}{=} 
p_{k_b} \bq^{\;\tim(k_b-j+1)}
\prod\nolimits_{k=j+1}^d (1-\tim \bar p_k)^{-O(n^{k-j-1})}
\\
& \stackrel{\phantom{\eqref{eq:expJba}}}{=} p_{k_b} \bq^{\;\tim(k_b-j+1)} 
\exp \left( \sum\nolimits_{k=j+1}^{d} 
O \left( \frac{\log n}{n^{k-j}} n^{k-j-1} \right) 
\right) \\
&\stackrel{\phantom{\eqref{eq:expJba}}}{=} (1+o(1)) p_{k_b} \bq^{\;\tim(k_b-j+1)}, 
\end{align*}
and thus
\begin{equation} \label{eq:expect_case3}
\EE|J_{b a} - I_b| 
\stackrel{\eqref{eq:noabsolutevalue}}{=} 
\EE(J_{b a}) - \EE(I_b)
= \EE(J_{ba}) - \pi_b 
 \stackrel{\eqref{eq:expIb}}{=} 
 o( p_{k_b} \bq^{\;\tim(k_b-j+1)}).
\end{equation}
Given $a$, the number of $b \in \cT_k$ satisfying the conditions of Case $3$ is $O(n^{k+1})$, hence if
we set
\[  B_3 = B_3(a) \ :=\  \{b \in\Gamma: b\neq a, K_b\neq K_a,
K_b \not\in \mathcal{B}_a,
K_a \not\in \mathcal{B}_b \},\]
we have
\begin{align*}
\sum\nolimits_{b\in B_3} 
\EE|J_{b a} - I_b| & \stackrel{\eqref{eq:expect_case3}}{=}
\sum\nolimits_{k=j}^{d} O(n^{k+1}) \cdot o( p_{k} 
\bq^{\;\tim(k-j+1)}) \\
& \stackrel{\eqref{eq:expsumIb}}{=} 
o(1)\cdot  \sum\nolimits_{k=j}^{d} O\left( \EE(\varNjk) \right) = o(1).
\end{align*}
Since 
$\{ b\in \Gamma : b \neq a\} =
B_1 \cup B_{2.1} \cup B_{2.2} \cup B_3$, putting all the cases together we have that 
$\sum_{b\neq a} \EE|J_{b a} - I_b| = o(1)$ for any fixed $a \in \Gamma$,
as required.

Observe that for symmetry reasons, the quantity $\sum_{b\neq a} \EE|J_{b a} - I_b|=o(1)$ remains the same if the sum is over $b\neq a'$ with $k_{a'}=k_a$. Thus we have
\begin{align}
\sum\nolimits_{a \in \Gamma} \pi_a \sum\nolimits_{b\neq a} 
  \EE|J_{b a} - I_b|
&\stackrel{\phantom{\eqref{eq:expsumIb}}}{=} \sum\nolimits_{k=j}^d o(1) \cdot \sum\nolimits_{a\in \Gamma_k } \pi_a \nonumber\\
& \stackrel{\phantom{\eqref{eq:expsumIb}}}{=} \sum\nolimits_{k=j}^d o(1) \cdot \EE(\varNjk) \nonumber \\
& \stackrel{\eqref{eq:expsumIb}}{=} \sum\nolimits_{k=j}^d o(1)
\cdot O(1) = o(1). \label{eq:unifo(1)}
\end{align}
The right-hand side of \eqref{eq:distancedistributions} is therefore
\begin{align*}
\sum\nolimits_{a \in \Gamma} \pi_a \left( \pi_a + 
\sum\nolimits_{b \neq a} \EE|J_{b a} - I_b | \right) & 
\stackrel{\eqref{eq:unifo(1)}}{=} 
\left(\sum\nolimits_{a \in \Gamma} \pi_a^2 \right) +o(1)
 \\
& \stackrel{\phantom{\eqref{eq:expsumIb}}}{\le}   \left( \max_{a \in \Gamma} \pi_a\right)
\left( \sum\nolimits_{a \in \Gamma} \pi_a \right)+ o(1)\\
& \stackrel{\phantom{\eqref{eq:expsumIb}}}{=}  \left( \max_{a \in \Gamma} \pi_a\right)
\left( \sum\nolimits_{k=j}^d \EE(\varNjk) \right) + o(1)\\ 
& \stackrel{\eqref{eq:expsumIb}}{=}
 \left( \max_{a \in \Gamma} \pi_a\right) 
 \cdot O(1) + o(1), \\
\end{align*}
and 
\begin{align*}
\max_{a \in \Gamma} \pi_a & \stackrel{\eqref{eq:expIb}}{=}
\max_{j\le k \le d} \left( (1+o(1)) p_k 
\bq^{\;\tim(k-j+1)}
\right) \\
& \stackrel{\eqref{eq:expsumIb}}{=} 
\max_{j\le k \le d} 
\frac{\EE(\varNjk)}{\Theta(n^{k+1})}  = O\left(\frac{1}{n^{j+1}} \right) = o(1).
\end{align*}

In conclusion, we have
\begin{equation*}
 d_{TV} \bigg( \mathcal{L} (\mathbf{X}) ,
\big(Z_j,\ldots,Z_d \big)   \bigg)
\xrightarrow{n \to \infty} 0.
\end{equation*}
Since by Corollary~\ref{cor:expectatcritwind}, 
$\lim_{n\to\infty} \EE(\varNjk) = \critm_k$
for every
$j\le k\le d$, we have  
\[ \mathcal{L}(\mathbf{X}) 
\xrightarrow{\enskip \dist \enskip}  \left(
\Po \big(\critm_j \big), \ldots, \Po \big(\critm_d \big) \right),\]
as required.
\qed

\newpage
\section{Glossary}\label{ap:glossary}

For the reader's convenience, we include a glossary of some of the most important terminology and notation defined in the paper.

\vspace{0.3cm}

\subsection{Combinatorial terminology}

\ 

\vspace{0.3cm}

\begin{center}
\begin{tabular}{|l|m{10cm}|l|}
\hline
\textbf{Term} \raisebox{0.2cm}{${\phantom{\big(\big)}}$}
 & \textbf{Informal description} & \textbf{First}\\
 & & \textbf{defined}\\
\hline
$j$-shell & all $j$-simplices on $j+2$ vertices & p.~\pageref{comb:shell}${\phantom{\bigg(\bigg)}}$ \\
\hline
$j$-flower & $j$-simplices within a $k$-simplex containing a common centre & p.~\pageref{comb:flower}${\phantom{\bigg(\bigg)}}$ \\
\hline
copy of $\Mjkhat$ & $K$-localised $j$-flower with $j$-shell containing one petal & p.~\pageref{comb:Mjkhat}${\phantom{\bigg(\bigg)}}$ \\
\hline
copy of $\Mjk$ & $K$-localised $j$-flower & p.~\pageref{comb:Mjk}${\phantom{\bigg(\bigg)}}$ \\
\hline
$K$-localised $j$-simplex & all simplices containing 
the $j$-simplex are contained in $K$ & p.~\pageref{comb:Klocal}${\phantom{\bigg(\bigg)}}$\\
\hline
local $j$-obstacle & $(k+1)$-set $K$ containing $k-j+1$
many $K$-localised $j$-simplices & p.~\pageref{comb:localobst}${\phantom{\bigg(\bigg)}}$\\
\hline
traversability & notion of connectedness on $j$-simplices & p.~\pageref{comb:traversability}${\phantom{\bigg(\bigg)}}$\\
\hline
\end{tabular}
\end{center} 

\vspace{1cm}

\subsection{Cohomology terminology}

\begin{center}
\begin{tabular}{|l|m{10cm}|l|}
\hline
\textbf{Term} \raisebox{0.2cm}{${\phantom{\big(\big)}}$}
 & \textbf{Informal description} & \textbf{First}\\
 & & \textbf{defined}\\
\hline
$j$-cochain & function on ordered $j$-simplices & p.~\pageref{cohom:cochain}${\phantom{\bigg(\bigg)}}$\\
\hline
$C^j(\cG;R)$ & group of $j$-cochains & p.~\pageref{cohom:Cj}${\phantom{\bigg(\bigg)}}$\\
\hline
coboundary operator $\delta^j$ & generates $(j+1)$-cochain from $j$-cochain & p.~\pageref{cohom:coboperator}${\phantom{\bigg(\bigg)}}$ \\
\hline
$j$-cocycle & $j$-cochain in $\ker \delta^j$, i.e.\ all $(j+1)$-simplices have zero boundary & p.~\pageref{cohom:cocycle}${\phantom{\bigg(\bigg)}}$\\
\hline
$j$-coboundary & $j$-cochain in $\im \delta^{j-1}$, i.e.\ generated from $(j-1)$-cochain  & p.~\pageref{cohom:coboundary}${\phantom{\bigg(\bigg)}}$\\
\hline
$H^j(\cG;R)$ & $j$-th cohomology group over $R$: $\ker\delta^j / \im \delta^{j-1}$ & p.~\pageref{cohom:cohomgroup}${\phantom{\bigg(\bigg)}}$\\
\hline
\end{tabular}
\end{center}

\vspace{1cm}

\newpage
\subsection{Probabilities and birth times}

\ 

\vspace{0.3cm}

\begin{center}
\begin{tabular}{|l|m{6cm}|l|}
\hline
\textbf{Symbol} \raisebox{0.2cm}{${\phantom{\big(\big)}}$}
 & \textbf{Informal description} & \textbf{First}\\
 & & \textbf{defined}\\
\hline
$\barallp$ & vector defining process direction & p.~\pageref{prob:barallp}${\phantom{\bigg(\bigg)}}$\\
\hline
$\tim_{\max}$ & $\tim$ at end of process: $1/\bar p_d$ & p.~\pageref{prob:timmax}${\phantom{\bigg(\bigg)}}$\\
\hline
$\tim_j^*$ & time when last $\Mjkhat$ disappears & p.~\pageref{prob:timstar}${\phantom{\bigg(\bigg)}}$\\
\hline
$\tim'$ & $1-\frac{\log \log n}{10 d \log n}$ & p.~\pageref{prob:timprime}${\phantom{\bigg(\bigg)}}$\\
\hline
$\tim''$ & first time after $\tim'$ that no $\Mjk$ exists & p.~\pageref{prob:timsecond}${\phantom{\bigg(\bigg)}}$\\
\hline
\end{tabular}
\end{center}

\vspace{1cm}

\subsection{Random variables}

\ 

\vspace{0.3cm}

\begin{center}
\begin{tabular}{|l|m{6cm}|l|}
\hline
\textbf{Variable} \raisebox{0.2cm}{${\phantom{\big(\big)}}$}
 & \textbf{Informal description} & \textbf{First}\\
 & & \textbf{defined}\\
\hline
$\varNjk$ & number of copies of $\Mjk$ in $\cG_\tim$ & p.~\pageref{var:Njk}${\phantom{\bigg(\bigg)}}$\\
\hline
$\varMjkhat$ & number of copies of $\Mjkhat$ in $\cG_\tim$ & p.~\pageref{var:Mjkhat}${\phantom{\bigg(\bigg)}}$\\
\hline
\end{tabular}
\end{center}

\newpage

\subsection{Parameters}

\ 

\vspace{0.3cm}

\begin{center}
\begin{tabular}{|l|m{10cm}|l|}
\hline
\textbf{Parameter} \raisebox{0.2cm}{${\phantom{\big(\big)}}$}
 & \textbf{Informal description} & \textbf{First}\\
 & & \textbf{defined}\\
\hline
$\bar\alpha_k$ & logarithmic term in $\bar p_k$ & p.~\pageref{param:alphbetgamm}${\phantom{\bigg(\bigg)}}$\\
\hline
$\bar\beta_k$ & sublogarithmic term in $\bar p_k$ & p.~\pageref{param:alphbetgamm}${\phantom{\bigg(\bigg)}}$\\
\hline
$\bar\gamma_k$ & polynomial correction exponent in $\bar p_k$ & p.~\pageref{param:alphbetgamm}${\phantom{\bigg(\bigg)}}$\\
\hline
$\bar p_k$ & probability of $k$-simplex existing at $\tim =1$: $\displaystyle \frac{\bar \alpha_k \log n + \bar \beta_k}{n^{k-j+\bar \gamma_k}}(k-j)!$ & p.~\pageref{param:barpk}${\phantom{\bigg(\bigg)}}$\\
\hline
$\bar\logp_k$ & $j+1-\bar\gamma_k-(k-j+1)\sum_{i=j+1}^{d}\bar\alpha_i$ & p.~\pageref{param:lambdmunu}${\phantom{\bigg(\bigg)}}$\\
\hline
$\bar\sublogp_k$ & $- (k-j+1)\sum_{i=j+1}^{d}\frac{\bar\beta_i}{n^{\bar\gamma_i}} +
  \begin{cases}
        \log\log n & \text{if }\bar\alpha_k\not=0,\\
        \log(\bar\beta_k) & \text{if }\bar\alpha_k=0
      \end{cases}$ & p.~\pageref{param:lambdmunu}${\phantom{\bigg(\bigg)}}$\\
\hline
$\bar\constp_k$ & 
$\begin{cases}
-\log((j+1)!) & \text{if }k=j, \\
-\log(j!)-\log(k-j+1) +\log(\bar\alpha_k) & \mbox{if } \bar\alpha_k\neq 0,\\
-\log(j!)-\log(k-j+1) & \text{otherwise}
\end{cases}$
& p.~\pageref{param:lambdmunu}${\phantom{\bigg(\bigg)}}$\\
\hline
$\bar k$ & index $j \le \bar k \le d$ such that $\bar\logp_{\bar k}\log n + \bar\sublogp_{\bar k} + \bar\constp_{\bar k} = 0$ & p.~\pageref{param:bark}, (see also \raisebox{0.2cm}{${\phantom{\big(\big)}}$}\\
& & Def.~\ref{def:indices}, p.~\pageref{def:indices})\\
\hline
$k_0$ & index $j+1 \le k_0 \le d$ such that $\bar\alpha_{k_0} \not= 0$ & p.~\pageref{param:alphbetgamm}, (see also \raisebox{0.2cm}{${\phantom{\big(\big)}}$}\\
& & Def.~\ref{def:indices}, p.~\pageref{def:indices})\\
\hline
$\ell$ & index $j\le \ell \le d$ such that at time $\tim_j^*$, a copy of $\Mjlhat$ vanishes & p.~\pageref{param:ell}${\phantom{\bigg(\bigg)}}$\\
\hline
\end{tabular}
\end{center}

\end{document}